\title{Hessian-guided Perturbed Wasserstein Gradient Flows for Escaping Saddle Points}
\author{%
Naoya Yamamoto\\
The University of Tokyo\\
\small\texttt{yamamoto-naoya251@g.ecc.u-tokyo.ac.jp}
\And
Juno Kim\thanks{This work was primarily conducted while the author was at the University of Tokyo and RIKEN AIP.}\\
UC Berkeley\\
\small\texttt{junokim@berkeley.edu}
\And
Taiji Suzuki\\
The University of Tokyo, RIKEN AIP\\
\small\texttt{taiji@mist.i.u-tokyo.ac.jp}
  % examples of more authors
  % \And
  % Coauthor \\
  % Affiliation \\
  % Address \\
  % \texttt{email} \\
  % \AND
  % Coauthor \\
  % Affiliation \\
  % Address \\
  % \texttt{email} \\
  % \And
  % Coauthor \\
  % Affiliation \\
  % Address \\
  % \texttt{email} \\
  % \And
  % Coauthor \\
  % Affiliation \\
  % Address \\
  % \texttt{email} \\
}
\newcommand{\diff}{\mathrm{d}}
\newcommand{\id}{\mathrm{Id}}
\newcommand{\R}{\mathbb{R}}
\let\P\relax
\newcommand{\P}{\mathcal{P}} 
\newcommand{\inpro}[2]{\left < #1, #2 \right >} %inner product
\newcommand{\otm}[2]{\mathcal{T}_{#1}^{#2}} %optimal transport map
\newcommand{\tansp}[1]{\mathrm{Tan}_{#1} \P_2(\R^d)} %tangent space
\let\var\relax
\newcommand{\var}[2]{\frac{\delta {#1}}{\delta {#2}}} %first variation
\newcommand{\vvar}[2]{\frac{\delta^2 {#1}}{\delta {#2}^2}} %second variation
\let\epsilon\relax
\newcommand{\epsilon}{\varepsilon}
\let\phi\relax
\newcommand{\phi}{\varphi}
\theoremstyle{plain}
\newtheorem{assumption}{Assumption}
\theoremstyle{definition}
\newtheorem{example}{Example}
\theoremstyle{plain}
\newtheorem{theorem}{Theorem}[section]
\newtheorem{proposition}[theorem]{Proposition}
\newtheorem{lemma}[theorem]{Lemma}
\newtheorem{corollary}[theorem]{Corollary}
\theoremstyle{definition}
\newtheorem{definition}[theorem]{Definition}
\theoremstyle{remark}
\newtheorem{remark}[theorem]{Remark}
\begin{document}

\maketitle

\begin{abstract}
%Optimization over probability measures arises in various contexts within statistics and machine learning, including mean-field approximations, variational inference, and robust learning. 
Wasserstein gradient flow (WGF) is a common method to perform optimization over the space of probability measures. While WGF is guaranteed to converge to a first-order stationary point, for nonconvex functionals the converged solution does not necessarily satisfy the second-order optimality condition; i.e., it could converge to a saddle point.
In this work, we propose a new algorithm for probability measure optimization, \emph{perturbed Wasserstein gradient flow} (PWGF), that achieves second-order optimality for general nonconvex objectives. PWGF enhances WGF by injecting noisy perturbations near saddle points via a Gaussian process-based scheme. By pushing the measure forward along a random vector field generated from a Gaussian process, PWGF helps the solution escape saddle points efficiently by perturbing the solution towards the smallest eigenvalue direction of the Wasserstein Hessian. 
We theoretically derive the computational complexity for PWGF to achieve a second-order stationary point. Furthermore, we prove that PWGF converges to a global optimum in polynomial time for strictly benign objectives.
\end{abstract}

\section{Introduction}

We consider the general problem of optimizing a probability measure: $\min_{\mu \in \P_2(\R^d)} F(\mu)$, where $\P_2(\R^d)$ denotes the space of Borel probability measures on $\R^d$ with finite second moment and $F : \P_2(\R^d) \to \R$ is a given functional which is not necessarily convex. 
Optimization of measures appears extensively in machine learning and statistics either explicitly or implicitly, such as in sampling and variational inference \citep{jordan1998variational,liu2016stein}, generative models \citep{arbel19,Chu2019Probability} and training neural networks \citep{mei2018mean, nitanda2022convex}, and has garnered significant attention both theoretically and practically.
%In the context of probability measure optimization, the theory of optimal transport serves as a crucial theoretical foundation. The Wasserstein distance, or optimal transport distance, provides a metric and differential structure on the space of probability measures \citep{ambrosio2008gradient}. This structure enables us to define differentiability on the space of probability measures in a manner analogous to Euclidean spaces.
To solve such problems, the \textbf{Wasserstein gradient flow} (WGF) is extensively employed:
\begin{align}
    \label{WGF}
    \partial_t \mu_t + \nabla \cdot \qty(- \nabla \var{F}{\mu}(\mu_t) \mu_t) = 0, \quad (\mu_t)\subset \P_2(\R^d). 
\end{align}
WGF is the continuity equation of the velocity field $\nabla \var{F}{\mu}$ and can be interpreted as gradient descent with respect to the 2-Wasserstein metric \citep{jordan1998variational}. Moreover, \eqref{WGF} is equivalent to the continuous-time and infinite-particle limit of first-order optimization algorithms such as gradient descent, and serves as the foundation for numerous machine learning methods; see Appendix \ref{appendix_a} for a discussion of related works and applications. It is thus of paramount importance to understand the convergence of WGF, and to develop algorithms which guarantee well-behaved solutions.

\paragraph{Convergence of WGF.} A prominent line of work on the convergence of WGF is the study of neural network optimization using \emph{mean-field theory} \citep{mei2018mean}. Mean-field theory models the evolution of an interacting particle system as the number of particles tends to infinity, such as the training dynamics of an infinite-width neural network, through the WGF of the limiting distribution. For simple problem settings such as regression with two-layer networks, the corresponding loss functional is shown to be (linearly) convex, allowing for global convergence analysis of WGF  \citep{chizat2018global,mei2018mean}. Furthermore, \citet{nitanda2022convex,chizat2022mean,suzuki2024mean} obtained linear convergence under the log-Sobolev inequality (LSI) condition using mean-field Langevin dynamics. This adds isotropic noise via Brownian motion, corresponding to an additional entropy regularization which effectively makes the objective strongly convex.

\paragraph{Non-convex objectives.} While these works primarily rely on convexity, the vast majority of objectives arising in deep learning (such as the loss function for neural networks with three or more layers) are non-convex, even when lifted to the space of measures. However, due to the inherent difficulty of infinite-dimensional non-convex optimization, convergence guarantees for WGF in this regime has been extremely limited. Recently, \citet{kim2024transformers} studied a transformer model combining a mean-field neural network with linear attention for in-context learning, resulting in a non-convex optimization problem. Through landscape analysis, they showed that the objective possesses a desirable benignity property: all second-order optima are either saddle points or global optima. A similar result has been demonstrated for energy kernels such as the MMD functional \citep{bounf24}. 
These findings suggest the necessity of probability measure optimization algorithms that account for second-order optimality. While the first-order optimality of WGF has been shown by \citet{lanzetti2022first}, there are few studies that rigorously consider second-order conditions.

\paragraph{Perturbation methods.} The main difficulty of non-convex optimization is due to the existence of saddle points. 
%For example, in finite-dimensional spaces, naive initial points in gradient descent methods might correspond to saddle points. 
A first-order method with naive initialization may end up converging to a saddle point.
Moreover, \citet{du2017gradient} showed that gradient descent can be significantly delayed near saddle points, taking exponential time to converge. For optimization in finite-dimensional Euclidean space, \citet{ge2015escaping,jin2017escape,li2019ssrgd} proposed methods to overcome this issue by introducing perturbations in the vicinity of saddle points to `fall off' the saddle and escape efficiently. 
These methods, often referred to as \textbf{perturbed gradient descent} (PGD), are particularly useful as they achieve second-order optimality in polynomial time while primarily relying on first-order techniques. 
In particular, PGD guarantees global convergence for problems satisfying a strict benignity property, such as matrix factorization \citep{jin2017escape} and tensor decomposition \citep{ge2015escaping}.

With these issues in mind, we ask the following question:
\begin{center}
\textit{Can we develop a perturbative version of Wasserstein gradient flow for non-convex objectives which converges efficiently to second-order optimal points?}
\end{center}

\paragraph{Our Contributions.}
In this study, we propose a perturbative modification to WGF that efficiently avoids saddle points. Considering the tangent bundle structure of $\P_2(\R^d)$ induced by the Wasserstein distance, it is natural to extend the notion of perturbation in Euclidean space to measure space by defining perturbations of the drift function through randomly generated vector fields. 
Such a perturbative method was conjectured to improve convergence by \citet{kim2024transformers}, but without any theoretical guarantees. Our contributions are summarized as follows:
\begin{itemize}[leftmargin = 5mm]
    \item We propose a new implementable algorithm, \textbf{perturbed Wasserstein gradient flow (PWGF)}, that guarantees second-order optimality for general smooth and non-convex functionals. 
    Unlike the method proposed by \citet{kim2024transformers}, which injects isotropic noise near saddles into the WGF, we guarantee improvement by \emph{directing} the noise using the (Wasserstein) Hessian. 
    Specifically, PWGF pushes the measure along a random velocity field generated from a Gaussian process whose covariance is constructed from the Hessian of the objective. 
    \item We prove that PWGF effectively avoids saddle points and reaches second-order optimal points in time that depends polynomially on precision parameters, enabling the optimization of non-convex distributional objectives.
    Compared to the finite-dimensional setting \citep{li2019ssrgd}, the analysis requires a careful treatment of an infinite dimensional objective. For this purpose, we utilize techniques from Wasserstein geometry, optimal transport and the theory of Gaussian processes. 
\end{itemize} 

\paragraph{Organization.}
The paper is organized as follows. Section \ref{sec:2} provides theoretical preliminaries, supplemented in Appendix \ref{appendix_b}. Second-order optimality conditions are presented in Section \ref{sec:3}. 
In Section \ref{sec:4}, we introduce the proposed PWGF algorithm. 
Section \ref{sec:5} presents the convergence analysis, along with a rough sketch of the proof. Numerical experiments are provided in Appendix \ref{appendix:experiment}.
%Finally, Section 6 provides numerical experiments. % no need to mention conclusion 

\section{Preliminaries}\label{sec:2}
In this paper, we consider the optimization problem $\min_{\mu \in \P_2(\R^d)} F(\mu)$ over the space of probability distributions, where $F : \P(\R^d) \to \R$ is a real-valued lower-bounded functional defined on $\P_2(\R^d)$. This section introduces the problem of probability measure optimization and reviews key concepts in optimal transport and Wasserstein geometry. See Appendix \ref{appendix_b} for further details.

\paragraph{Notation.}
Let $\id$ be the identity map on $\R^d$. 
The canonical projection to the $i$th coordinate is denoted by $p_i$. 
The Euclidean inner product and operator norm are $\inpro{\cdot}{\cdot}, \norm{\cdot}$. 
The Frobenius norm is $\norm{\cdot}_{\mathrm{F}}$. The set of real-valued functions on $\R^d$ that are infinitely differentiable with compact support is $C_0^\infty(\R^d)$. The inner product and norm or operator norm in $L^2(\mu)^d$ is $\inpro{\cdot}{\cdot}_{L^2(\mu)},\norm{\cdot}_{L^2(\mu)}$. 
The trace norm of an operator is $\norm{\cdot}_{\mathrm{Tr},L^2(\mu)}$, and the Hilbert-Schmidt norm is $\norm{\cdot}_{\mathrm{HS},L^2(\mu)}$. 
$T \succeq O$ indicates that an operator $T$ is positive semi-definite. 
The smallest eigenvalue of a compact operator $T$ is denoted by $\lambda_{\min} T$. 
The exponential of an operator $T$ is denoted by $e^{T}$ or $\exp T$.
We use $\tilde{O}$ to denote big $O$ notation ignoring logarithmic factors.

The set of all Borel probability measures on $\R^d$ with finite second moments is denoted by $\P_2(\R^d)$, and the subset of measures absolutely continuous with respect to Lebesgue measure is denoted by $\P^a_2(\R^d)$. 
The Dirac measure on $x \in \R^d$ is $\delta_{x} \in \P_2(\R^d)$. 
$f \# \mu$ denotes the pushforward of $\mu \in \P_2(\R^d)$ by a measurable map $f:\R^d \to \R^d$.

\subsection{Wasserstein Geometry}
\begin{definition}[Wasserstein metric]
    The 2-Wasserstein metric between $\mu, \nu \in \P_2(\R^d)$ is defined as
    \begin{align}
        \label{wasserstein_metric}
        W_2(\mu,\nu)^2 \coloneq \min_{\gamma \in \Gamma(\mu,\nu)} \int \norm{x - y}^2 \gamma(\diff x \diff y),
    \end{align}
    where $\Gamma(\mu,\nu)$ represents the set of all transport plans (or the set of couplings) of $\mu,\nu$, that is, all joint distributions on $\R^d \times \R^d$ whose marginal distributions are $\mu$ and $\nu$. 
    We denote the set of all optimal transport plans by $\Gamma_o (\mu,\nu)$ and the optimal transport map by $\otm{\mu}{\nu}$.
\end{definition}
%An optimal transport plan always exists, and an optimal transport map exists in certain cases \citep{ambrosio2008gradient,santambrogio2015optimal}. This fact enables us to describe the relationship between two measures.

A fundamental dynamics in Wasserstein space is the continuity equation with velocity field $v_t$,
\begin{align}
    \label{continuous_equation}
    \partial_t \mu_t + \nabla \cdot (v_t \mu_t) = 0, \quad \mu_t \in \P_2(\R^d),~t \in I.
\end{align}
Intuitively, \eqref{continuous_equation} describes how a particle distribution $\mu_t$ evolves along a vector field $v_t$.
In particular, WGF \eqref{WGF} moves particles according to the vector field $v_t = - \nabla \var{F}{\mu}(\mu_t)$ (see Section \ref{sec:WGF}). Moreover, the continuity equation with velocity field $v_t$ such that
\begin{align*}
    v_t \in \tansp{\mu_t} \coloneq \overline{\left \{ \nabla \phi ~ \middle | ~ \phi \in C_0^\infty(\R^d) \right \} }^{L^2(\mu_t)}. 
\end{align*}
can be locally approximated by a pushforward along $v_t$ (Proposition \ref{prop_infinitesimal_accurve}). Therefore the continuity equation is computationally approximated by the pushforward representation:
\begin{align}
    \label{approx_c_e}
    \mu_{t + \Delta t} \gets (\id + \Delta t v_t) \# \mu_t. 
\end{align}
The absolute continuity of the curve $\mu_t$ with respect to the Wasserstein distance is equivalent to satisfying \eqref{continuous_equation} for some $v_t \in L^2(\mu_t)$ (\citet{ambrosio2008gradient}, Theorem 8.3.1).
In this sense, the continuity equation provides a concept of differentiation consistent with the Wasserstein metric. For further background on optimal transport theory, see Appendix \ref{appendix_b}. 

\subsection{Wasserstein Gradient}
The Wasserstein gradient is the fundamental quantity for first-order analysis in Wasserstein space. 
\begin{definition}[Wasserstein gradient]
    The Wasserstein gradient of $F$ at $\mu \in \P_2(\R^d)$ is a vector field $\nabla_\mu F : \R^d \to \R^d$ such that for any $\nu \in \P_2(\R^d)$ and $\gamma \in \Gamma_o(\mu,\nu)$,
    \begin{align*}
        F(\nu) - F(\mu) 
        = \int \nabla_\mu F(x)^\top (y-x) \gamma(\diff x \diff y) + O\qty(W_2(\mu,\nu)^2). 
    \end{align*}
\end{definition}
The first variation also frequently appears in the context of probability measure optimization (cf. proximal Gibbs measure \citep{nitanda2022convex}). 
\begin{definition}[First variation]
    The first variation $\var{F}{\mu} : \P_2(\R^d) \times \R^d \to \R$ is defined as a functional satisfying for any $\nu \in \P_2(\R^d)$, 
    \begin{align}        
        \label{first_variation}
        \left . \frac{\diff}{\diff h} \right|_{h=0} \hspace{-10pt} F(\mu + h (\nu - \mu)) = \int \var{F}{\mu} (\mu,x) (\mu - \nu) (\diff x). 
    \end{align}
\end{definition}
A naive computation might suggest that $\nabla_\mu F = \nabla \var{F}{\mu}$; however, this is not generally true without additional conditions. Nevertheless, we do not distinguish between the two, see Appendix \ref{app:wassgrad}.

\paragraph{First-order optimality.}
The Wasserstein gradient allows us to construct first-order approximations of functionals. Furthermore, \citet{lanzetti2022first} demonstrated two analogies to finite-dimensional optimization. The first is that $\nabla_\mu F = 0$ serves as a necessary condition for local optimality. The second is that $\nabla_\mu F = 0$ becomes a sufficient condition for global optimality in the convex case. Based on these considerations, we define the following.

\begin{definition}[First-order stationary point]
    \label{def_first_stationary_point}
    Supoose that a functional $F:\P_2(\R^d) \to \R$ satisfies sufficient smoothness. 
    We say that $\mu \in \P_2(\R^d)$ is a \emph{first-order stationary point}, if $\mu$ satisfies $\nabla_\mu F = 0 ~ \mu$-a.e.
\end{definition}

\subsection{Wasserstein Gradient Flow (WGF)}\label{sec:WGF}
As a counterpart of gradient descent in Euclidean space, the WGF in Wasserstein space is defined as 
\begin{align}
    \label{WGF_restate}
    \partial_t \mu_t + \nabla \cdot \qty(-\nabla_\mu F(\mu_t) \mu_t) = 0, \quad (\mu_t)\subset \mathcal{P}_2(\mathbb{R}^d).
    \end{align}
From previous observations, % in the previous sections (\ref{approx_c_e}), 
the direction $ v_t = - \nabla_\mu F(\mu_t) $ yields the steepest descent direction of the objective $F$. 
Furthermore, by the chain rule (Proposition \ref{prop_chain_rule}) it holds that
\begin{align*}
    \frac{\diff}{\diff t}F(\mu_t) = - \norm{\nabla_\mu F (\mu_t)}_{L^2(\mu_t)}^2 \leq 0
\end{align*} 
This indicates that the WGF monotonically decreases the objective function. Indeed, the WGF can be interpreted as a gradient descent method in the space of probability measures \citep{jordan1998variational}. 
Moreover, WGF becomes stationary iff the solution is at a first-order stationary point (Definition \ref{def_first_stationary_point}).

\section{Second Order Optimality on Probability Space}\label{sec:3}

In order to study second-order behavior, we define a suitable class of sufficiently regular functionals.
\begin{definition}[Sufficient smoothness]
    A functional $F : \P_2(\R^d) \to \R$ is \emph{sufficiently smooth} if 
    \begin{itemize}[leftmargin = 5mm]
        \item $F$ admits a $L^2(\mu)$-integrable Wasserstein gradient $\nabla_\mu F$ at all $\mu \in \P_2(\R^d)$. 
        \item $\nabla_\mu F(\mu,x)$ further admits Wasserstein gradient $\nabla_\mu^2 F : \P_2(\R^d) \times \R^d \times \R^d \to \R^{d\times d}$ and is differentiable with respect to the second coordinate $x$ for any $\mu \in \P_2(\R^d)$ and $\mu$-a.e. $x$. Furthermore, $\nabla_\mu^2 F(\mu)$ is $L^2(\mu \otimes \mu)$ integrable and $\mu \text{-ess sup} \norm{\nabla \nabla_\mu F(\mu,x)} < \infty$. 
    \end{itemize}
\end{definition}

\begin{assumption}
\label{assumpotion_differentiable}
    The objective $F : \P_2(\R^d) \to \R$ is a sufficiently smooth functional. 
\end{assumption}

%\begin{remark}
%    $\nabla_\mu F$ is associated with the first-order condition, while $\nabla_\mu^2 F$ and $\nabla \nabla_\mu F$ are associated with the second-order condition. Moreover, the integrability condition requires that the Wasserstein gradient and related quantities exhibit regular behavior.
%\end{remark}

Building upon the discussion of first-order optimality, we extend the analysis to second-order conditions. 
For simplicity of notation, we define the following operators $H_{\mu},H'_{\mu}$ for $f \in L^2(\R^d)^d$:
\begin{align*}
    H_{\mu} f(x) &= \int \nabla_\mu^2 F (\mu, x, y) f(y) \mu(\diff y),\quad H'_{\mu} f(x) = \nabla \nabla_\mu F(\mu, x) f(x).
\end{align*} 

We establish the following proposition. 
\begin{proposition}
For $F:\P_2(\R^d) \to \R$ a sufficiently smooth functional, for all $v \in L^2(\mu)^d$, 
    \begin{align}
        \label{second_order_taylor_expansion}
        \left . \frac{\diff^2}{\diff h^2} \right |_{h=0} \hspace{-5pt} F((\id + h v) \# \mu) 
        = \inpro{v}{(H_{\mu} + H'_{\mu})v}_{L^2(\mu)}\!.
    \end{align}
\end{proposition}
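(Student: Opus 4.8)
The plan is to reduce \eqref{second_order_taylor_expansion} to differentiating twice in $h$ the scalar function $g(h) := F(\mu_h)$, where $\mu_h := (\id + h v)\#\mu$. The first step is to obtain an exact formula for $g'(h)$. The curve $h \mapsto \mu_h$ is $\norm{v}_{L^2(\mu)}$-Lipschitz in $W_2$, hence absolutely continuous, and its material velocity is $v \circ (\id + hv)^{-1}$ on the support of $\mu_h$ (it solves the continuity equation for $\mu_h$); since $\nabla_\mu F(\mu_h) \in \tansp{\mu_h}$, only the tangential part of this velocity matters in the chain rule (Proposition \ref{prop_chain_rule}), so after the change of variables $y = (\id + hv)(x)$ one gets
\begin{align*}
g'(h) = \inpro{\nabla_\mu F(\mu_h)}{v \circ (\id + hv)^{-1}}_{L^2(\mu_h)} = \int \nabla_\mu F\big(\mu_h,\, x + h v(x)\big)^\top v(x)\,\mu(\diff x) =: \Phi(h).
\end{align*}
It then remains to show that $\Phi$ is differentiable at $h=0$ with $\Phi'(0) = \inpro{v}{(H_\mu + H'_\mu)v}_{L^2(\mu)}$, which yields $g''(0) = \Phi'(0)$.

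To compute $\Phi'(0)$ I would form the difference quotient and split the increment of $\nabla_\mu F$ into a ``measure part'' and a ``space part'':
\begin{align*}
\nabla_\mu F(\mu_h, z_h) - \nabla_\mu F(\mu, x) = \underbrace{\big[\nabla_\mu F(\mu_h, z_h) - \nabla_\mu F(\mu, z_h)\big]}_{=: A_h(x)} + \underbrace{\big[\nabla_\mu F(\mu, z_h) - \nabla_\mu F(\mu, x)\big]}_{=: B_h(x)}, \qquad z_h := x + h v(x).
\end{align*}
For $B_h$, a first-order Taylor expansion in the spatial variable gives $h^{-1}B_h(x) \to \nabla\nabla_\mu F(\mu, x) v(x)$ pointwise, while the mean value theorem and $C := \mu\text{-ess sup}\,\norm{\nabla\nabla_\mu F(\mu,\cdot)} < \infty$ yield the domination $h^{-1}\norm{B_h(x)}\,\norm{v(x)} \le C\norm{v(x)}^2 \in L^1(\mu)$, so dominated convergence gives $h^{-1}\int B_h(x)^\top v(x)\,\mu(\diff x) \to \inpro{v}{H'_\mu v}_{L^2(\mu)}$. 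For $A_h$, I would apply the definition of the Wasserstein gradient $\nabla_\mu^2 F(\mu, z, \cdot)$ of the map $\nu \mapsto \nabla_\mu F(\nu, z)$ between $\mu$ and $\mu_h$; replacing the optimal plan $\gamma_o(\mu,\mu_h)$ by its barycentric projection $\bar T_h$ and using the standard first-order expansion $\bar T_h = \id + h P_\mu v + o_{L^2(\mu)}(h)$, where $P_\mu$ is the $L^2(\mu)$-orthogonal projection onto $\tansp{\mu}$, one obtains $A_h(z) = h \int \nabla_\mu^2 F(\mu, z, y) P_\mu v(y)\,\mu(\diff y) + o(h) = h\, H_\mu v(z) + o(h)$, the last equality because each row of $\nabla_\mu^2 F(\mu, z, \cdot)$ lies in $\tansp{\mu}$, so the divergence-free part $v - P_\mu v$ is annihilated. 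Pairing with $v$, dividing by $h$, and using Cauchy--Schwarz together with $\nabla_\mu^2 F(\mu) \in L^2(\mu \otimes \mu)$ to control the remainders gives $h^{-1}\int A_h(x)^\top v(x)\,\mu(\diff x) \to \inpro{v}{H_\mu v}_{L^2(\mu)}$. Adding the two limits gives $\Phi'(0) = \inpro{v}{(H_\mu + H'_\mu)v}_{L^2(\mu)}$.

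The main obstacle is the treatment of the measure part $A_h$: the natural pushforward coupling $(\id \times (\id + h v))\#\mu$ is generally far from optimal when $v$ has a nonzero rotational component, so it cannot simply be substituted into the definition of $\nabla_\mu^2 F$; one must instead pass through the optimal plan and its first-order expansion, and exploit the fact that Wasserstein gradients are insensitive to divergence-free perturbations (so $H_\mu v = H_\mu P_\mu v$) to recover $H_\mu v$ exactly. The remaining points — that $h \mapsto \mu_h$ is absolutely continuous with the stated velocity, and that the Taylor remainders are uniform enough in $h$ to license differentiation under the integral sign and the passage from $\Phi'(0)$ back to $g''(0)$ — are routine consequences of the uniform bounds built into sufficient smoothness, namely essential boundedness of $\nabla\nabla_\mu F(\mu,\cdot)$ and $L^2(\mu\otimes\mu)$-integrability of $\nabla_\mu^2 F(\mu)$.
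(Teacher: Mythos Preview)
Your overall strategy---compute $g'(h)$ explicitly, then differentiate once more by splitting the increment of $\nabla_\mu F$ into a measure part and a space part---matches the paper's route (Lemma~\ref{lemma_for_second_order} followed by Proposition~\ref{prop_second_order_expansion}). The formula $g'(h)=\int \nabla_\mu F(\mu_h,x+hv(x))^\top v(x)\,\mu(\diff x)$ is exactly what the paper obtains. Your justification via the chain rule and $v\circ(\id+hv)^{-1}$ is loose, since $\id+hv$ need not be injective for general $v\in L^2(\mu)^d$; the paper instead derives this formula from Proposition~\ref{prop_strong_first_order_expansion}, which asserts that the first-order expansion defining the Wasserstein gradient holds for \emph{any} transport plan, not only optimal ones. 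Applying it with the pushforward coupling $(\id\times(\id+hv))\#\mu$ yields the formula directly.

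The main divergence is in your treatment of the measure part $A_h$. You correctly identify that the pushforward coupling need not be optimal, and you propose to remedy this by passing to the barycentric projection $\bar T_h$ of the optimal plan, invoking the expansion $\bar T_h=\id+hP_\mu v+o_{L^2(\mu)}(h)$, and then using $H_\mu P_\mu v=H_\mu v$. This can be made to work, but the expansion of $\bar T_h$ is a nontrivial stability statement for optimal transport that is not established in the paper and is not as routine as you suggest. The paper sidesteps the entire issue: since Proposition~\ref{prop_strong_first_order_expansion} (applied componentwise to $\nu\mapsto\nabla_\mu F(\nu,z)$) holds for arbitrary couplings, one may simply use $(\id\times(\id+hv))\#\mu$ again and read off $A_h(x)=h\int\nabla_\mu^2 F(\mu,z_h,y)v(y)\,\mu(\diff y)+O(h^2)$ with no projection needed. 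Your detour through optimal plans and tangent projections is unnecessary once you have this lemma; what it buys you is the same identity at the cost of an extra regularity claim about $\bar T_h$.
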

\vspace{-3mm}
This demonstrates that the second order term of a vector field perturbation is characterized by the operator $H_{\mu} + H_{\mu}'$. 
For a more general version, see Proposition \ref{prop_second_order_expansion}. In particular, when examining stability at first-order stationary points, we have $\nabla_{\mu} F(\mu) = 0$, which implies $\nabla \nabla_{\mu} F(\mu) = 0$, that is, $H_{\mu}'=0$. Consequently, the change in $F$ due to perturbations along vector fields is determined solely by the integral operator $H_\mu$ up to second order. Note that this does not necessarily hold for $\mu \notin \P^a_2(\R^d)$; for further details, refer to Appendix \ref{appendix_c}.

Next, paralleling the work by \citet{lanzetti2022first}, we demonstrate that $H_\mu \succeq O$ serves as a sufficient condition for local stability, establishing the analogy with second-order conditions in Euclidean space.
\begin{proposition}[second-order necessary condition]
    \label{second_order_necessity}
    Let $F:\P_2(\R^d) \to \R$ be sufficiently smooth. If $\mu^* \in \P_2^a(\R^d)$ is a local minimum of $F$, then it holds that $H_{\mu^*} \succeq O$. 
\end{proposition}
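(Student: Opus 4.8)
The plan is to argue by contradiction, transplanting the elementary Euclidean fact ``local minimum $\Rightarrow$ positive semidefinite Hessian'' to Wasserstein space, with pushforward curves $h\mapsto(\id+hv)\#\mu^*$ playing the role of line segments through the minimizer. Suppose $H_{\mu^*}\not\succeq O$. Then there is a direction $v\in L^2(\mu^*)^d$ with $c:=\inpro{v}{H_{\mu^*}v}_{L^2(\mu^*)}<0$, and the goal is to use this $v$ to build a curve along which $F$ drops strictly below $F(\mu^*)$ arbitrarily close to $\mu^*$. First I would reduce to the ``Hessian'' term alone: since $\mu^*$ is a local minimum, the first-order necessary condition of \citet{lanzetti2022first} gives $\nabla_\mu F(\mu^*)=0$ $\mu^*$-a.e., so $\mu^*$ is a first-order stationary point (Definition \ref{def_first_stationary_point}); because $\mu^*\in\P_2^a(\R^d)$ this further forces $\nabla\nabla_\mu F(\mu^*)=0$, i.e. $H'_{\mu^*}=0$, as noted after \eqref{second_order_taylor_expansion}.

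Writing $\mu_h:=(\id+hv)\#\mu^*$ and combining the first-order expansion coming from the Wasserstein gradient (whose linear term $\inpro{\nabla_\mu F(\mu^*)}{v}_{L^2(\mu^*)}$ vanishes since $\nabla_\mu F(\mu^*)=0$ $\mu^*$-a.e.) with the second-order expansion in its general form (Proposition \ref{prop_second_order_expansion}), I would obtain
\begin{align*}
    F(\mu_h)
    &= F(\mu^*) + h\inpro{\nabla_\mu F(\mu^*)}{v}_{L^2(\mu^*)} + \frac{h^2}{2}\inpro{v}{(H_{\mu^*}+H'_{\mu^*})v}_{L^2(\mu^*)} + o(h^2)\\
    &= F(\mu^*) + \tfrac{h^2}{2}\,c + o(h^2).
\end{align*}
Next I would check admissibility, namely that $\mu_h$ genuinely probes a neighborhood of $\mu^*$: the coupling $(\id,\,\id+hv)\#\mu^*$ yields $W_2(\mu_h,\mu^*)\le\norm{hv}_{L^2(\mu^*)}=|h|\,\norm{v}_{L^2(\mu^*)}\to0$ as $h\to0$, so for $|h|$ small $\mu_h$ lies in the $W_2$-neighborhood on which $\mu^*$ minimizes $F$. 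Since $c<0$, the display gives $F(\mu_h)-F(\mu^*)=\tfrac{h^2}{2}c+o(h^2)<0$ for all sufficiently small $h\neq0$, contradicting local minimality; hence $H_{\mu^*}\succeq O$.

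The main obstacle is controlling the remainder: legitimizing the $o(h^2)$ term for an \emph{arbitrary} $L^2(\mu^*)$ direction $v$ — not merely a smooth compactly supported gradient field — is exactly where sufficient smoothness of $F$ and the absolute continuity $\mu^*\in\P_2^a(\R^d)$ are essential, the latter also being precisely what guarantees $H'_{\mu^*}=0$, which can fail for non-absolutely-continuous $\mu$ (cf. Appendix \ref{appendix_c}). If the general expansion supplies only the pointwise second derivative appearing in \eqref{second_order_taylor_expansion}, one can instead argue directly from local minimality of $g(h):=F(\mu_h)$: since $\mu_{\pm h}$ both lie near $\mu^*$, $g(h)+g(-h)-2g(0)\ge0$ for small $|h|$, and dividing by $h^2>0$ and letting $h\to0$ gives $c=g''(0)\ge0$, a contradiction. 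Finally, note that ``$H_{\mu^*}\succeq O$'' is to be read on all of $L^2(\mu^*)^d$, which is exactly what the all-$v$ form of \eqref{second_order_taylor_expansion} delivers, so no restriction to $\tansp{\mu^*}$ is needed, and non-injectivity or non-optimality of $\id+hv$ as a transport map is harmless since the expansion is stated for pushforwards along general vector fields.
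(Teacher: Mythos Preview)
Your proof is correct and follows essentially the same approach as the paper: invoke the first-order necessary condition to kill the linear term, use absolute continuity of $\mu^*$ to force $H'_{\mu^*}=0$, then apply the second-order pushforward expansion (Proposition~\ref{prop_second_order_perturbation}) along $h\mapsto(\id+hv)\#\mu^*$ and check that this curve stays in a $W_2$-neighborhood of $\mu^*$. The only cosmetic difference is that the paper argues directly (fix $v$, divide the expansion by $h^2$, and let $h\to0$ to get $\inpro{v}{H_{\mu^*}v}_{L^2(\mu^*)}\ge0$), whereas you frame it as a contradiction from a bad direction; your use of the coupling bound $W_2(\mu_h,\mu^*)\le|h|\,\norm{v}_{L^2(\mu^*)}$ is in fact cleaner than the paper's claimed equality, since only the upper bound is needed.
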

Based on this observation, we define the second-order optimality condition for probability measures.
\begin{definition}[second-order stationary point]
    \label{def_second_stationary_point}
For $F:\P_2(\R^d) \to \R$ a sufficiently smooth functional and $\mu \in \P_2(\R^d)$,
    \begin{itemize}[leftmargin = 5mm]
        \item We say that $\mu$ is a \emph{second-order stationary point} if $\mu$ is a first-order stationary point and $H_{\mu} \succeq O$. 
        \item We say that $\mu$ is a \emph{saddle point} if $\mu$ is a first order stationary point and satisfies $H_{\mu} \nsucceq O$, i.e., the smallest eigenvalue of $H_{\mu}$ is strictly negative : $\lambda_{\min} H_{\mu} < 0$. 
    \end{itemize}
\end{definition}

Since we seek approximate solutions for $F$, we also define \emph{approximate} second order stationary points and saddle points.

\begin{definition}[approximate second-order stationary point]
    Suppose that $ F : \P_2(\R^d) \to \R $ is sufficiently smooth and $\mu \in \P_2(\R^d)$. 
    \begin{itemize}[leftmargin = 5mm]
        \item We say that $ \mu $ is an $(\epsilon,\delta)$-stationary point, if $\norm{\nabla_\mu F (\mu)}_{L^2(\mu)} \leq \epsilon$ and $\lambda_{\mathrm{min}} H_{\mu} \geq -\delta$.
        \item We say that $ \mu $ is an $(\epsilon,\delta)$-saddle point, if $\norm{\nabla_\mu F (\mu)}_{L^2(\mu)} \leq \epsilon$ and $\lambda_{\mathrm{min}} H_{\mu} < -\delta$.
    \end{itemize}
\end{definition}
While this definition is useful for convergence analysis, it is not fully appropriate in light of the second-order expansion \eqref{second_order_taylor_expansion}. This is because the condition $\norm{\nabla_\mu F(\mu)}_{L^2(\mu)} \leq \epsilon $ only implies that $ \nabla_\mu F(\mu) $ is small in the sense of $ L^2 $ norm and does not indicate that $ \nabla \nabla_{\mu} F (\mu) $ is close to zero. Therefore, in that case, second order optimality should be determined by $ H_{\mu} + H'_{\mu} $ rather than $ H_{\mu} $. 
Consequently, this paper assumes that the $ L^2 $ norm of Wasserstein gradients being small implies that the supremum norm of their gradients is also small. 
\begin{assumption}
    \label{assumption_regularity_wg}
    For any $ \mu \in \P_2(\R^d) $, it holds that 
    \begin{align}
        \underset{x \in \R^d}{\mu \text{-} \mathrm{ess} \sup}\,\norm{\nabla \nabla_{\mu} F(\mu,x)} \leq R_2 \norm{\nabla_{\mu} F (\mu)}_{L^2(\mu)}.
    \end{align}
\end{assumption}
Under this assumption, a small $ \norm{\nabla_\mu F(\mu)}_{L^2(\mu)}$ implies that ${\mu \text{-} \mathrm{ess} \sup}\,\norm{\nabla \nabla_\mu F(\mu,x)} = \lVert H'_{\mu}\rVert_{L^2(\mu)}$ is also small, justifying the definition of $ (\epsilon, \delta) $-stationary points.

\subsection{Global Convergence for Strictly Benign Objectives}
It is known that certain non-convex optimization problems possess a desirable property called \emph{benignity}, i.e. all local minima must be global minima. In Euclidean spaces, examples include tensor decomposition \citep{ge2015escaping, jin2017escape}. 
With our definitions of (approximate) second-order optimality, a similar property can be considered for the Wasserstein space.%\footnote{There are some cases where the definition of a strict saddle objective considers local optima instead of global optima \citep{ge2015escaping}. Here, we adopt the global definition in line with \citet{kim2024transformers}.}

\begin{definition}[Strict benignity]
    The functional $F:\P_2(\R^d) \to \R$ is said to be $(\epsilon,\delta,\alpha)$-strictly benign if at least one of the following conditions holds for any $\mu \in \P_2(\R^d)$:
    \begin{center}
    \begin{enumerate}[topsep=0mm]
        \item $\norm{\nabla_{\mu} F(\mu)}_{L^2(\mu)} > \epsilon$.
        \item $\lambda_{\mathrm{min}} H_{\mu} < - \delta$.
        \item $W_2(\mu,\mu^o) \leq \alpha$ for some global optima $\mu^o$.
    \end{enumerate}
    \end{center}

\end{definition}

We provide examples of non-convex objective functions that exhibit strict benignity. For details on the properties of each objective function, refer to the appendix and the cited papers. 

\begin{example}[Matrix decomposition]{\label{eg:matrix_decomposition}}
    This example is inspired by the fact that finite-dimensional tensor decomposition exhibits strict benignity.
    We use a mean-field two-layer neural network 
    \begin{align*}
        h_{\mu}(z) = \int a \sigma(w^\top z) \mu(\mathrm{d} a \mathrm{d} w)
    \end{align*}
    to learn a rank-one matrix induced by the target measure $\mu^o$, where $z$ is a data input that follows a certain distribution, the parameter is $x = (a,w) \in \R^{k+l}$, and $\sigma$ is an activation function, which we set as the sigmoid function. 
    The objective functional can be expressed as follows:
    \begin{align*}
        F(\mu) = \mathrm{E}_z[\lVert h_{\mu^o}(z) h_{\mu^o}(z)^\top - h_{\mu}(z) h_\mu (z)^\top \rVert_{\mathrm{F}}^2]
    \end{align*}
    We defer a detailed analysis of this objective to Appendix \ref{appendix_g}.
\end{example}

\begin{example}[3-layer neural network]
    Consider a three-layer neural network model consisting of a mean-field two-layer network followed by a linear layer.
    Assuming realizability, the $L^2$ loss with respect to a teacher network $T^* h_{\mu^*}(z)$ can be written as follows:
    \begin{align}{\label{three_layer}}
        \tilde{F}(\mu,T) = \mathrm{E}_z \left [ \lVert T^* h_{\mu^*} (z) - T h_{\mu}(z) \rVert^2 \right ]
    \end{align}
    In this case, the optimization problem with respect to $T$ can be explicitly solved. 
    Defining $\mu^o = (T^* \times \mathrm{Id}_{\R^l}) \# \mu^*$ and $\Sigma_{\mu,\nu} = \mathrm{E}_z \left[ h_{\mu}(z) h_{\nu}(z) ^\top \right ]$, the optimal $T$ satisfies $T = \Sigma_{\mu^o,\mu} \Sigma_{\mu,\mu}^{-1}$ and \eqref{three_layer} reduces to the following probability measure optimization problem:
    \begin{align}{\label{icfl}}
        F(\mu) =  \mathrm{E}_z \left [ \lVert h_{\mu^o}(z) - \Sigma_{\mu^o,\mu} \Sigma_{\mu,\mu}^{-1} h_{\mu}(z) \rVert^2 \right ].
    \end{align}
    \citet{kim2024transformers} analyze this optimization problem and essentially show strict benignity, assuming $\Sigma_{\mu,\mu}$ is bounded away from degeneracy. %Moreover, an in-context learning objective with a transformer model combining a two-layer neural network with linear self-attention also reduces to \eqref{icfl} \citep{kim2024transformers}.
\end{example}

\begin{example}[Coulomb MMD]
    The maximum mean discrepancy (MMD) with Coulomb kernel can be expressed as:
    \begin{align*}
        F(\mu) = \int \frac{(\mu - \mu^o)^{\otimes 2}(\mathrm{d} x \mathrm{d} y)}{\lVert x-y \rVert^{d-2} }.
    \end{align*}
    This energy functional has been studied by \citet{bounf24}, who showed that any absolutely continuous stationary point must be a global optimum.
    Therefore, $F$ becomes benign in regions that do not involve singular distributions.
\end{example}

\section{Perturbed Wasserstein Gradient Flow}\label{sec:4}
%\subsection{Wasserstein Gradient Flow}
In this section, we introduce our proposed algorithm that incorporates perturbations in measure space along random velocity fields to escape saddle points.

\subsection{Perturbations in Wasserstein Space} 

In Euclidean spaces, perturbed gradient descent (PGD) is a first-order optimization method capable of escaping saddle points efficiently by adding perturbations \citep{ge2015escaping, jin2017escape, li2019ssrgd}. 
Despite relying solely on first order information, this method effectively achieves second order optimality, making it highly practical especially for problems with known benign structure. 
A typical perturbation technique in Euclidean spaces involves adding vectors sampled uniformly from a ball of small radius. 
Intuitively, such perturbations can uniformly explore all directions, making it likely to include the unstable direction corresponding to the smallest eigenvalue of the Hessian, thereby quickly `falling off' the saddle.

To extend this idea to the space of probability measures, two issues must be addressed: 
{\bf (1)} how to induce perturbations in Wasserstein space, and 
{\bf (2)} whether the perturbation includes the unstable direction that maximally reduces the objective. For {\bf (1)}, \citet{kim2024transformers} proposed introducing perturbations via a multivariate Gaussian process $ \xi \sim \mathrm{GP}(0, K) $ with a fixed positive semi-definite kernel $ K : \mathbb{R}^d \times \mathbb{R}^d \to \mathbb{R}^{d \times d} $, transforming $ \mu $ into $ (\id + \eta \xi) \# \mu $. 
Leveraging vector fields to represent infinitesimal changes in probability measures is both natural and practical. 
However, they did not provide a theoretical justification for the effectiveness of this method.

We resolve this issue and guarantee {\bf (2)} by constructing a \emph{measure-dependent} kernel $K=K_\mu$ based on the Wasserstein Hessian of the objective:
\begin{align}\label{hessian_guided_kernel}
K_\mu(x, y) &= \textstyle{\int \nabla_\mu^2 F(\mu, x, z) \nabla_\mu^2 F(\mu, z, y) \mu(\diff z)}.
\end{align}
The integral operator defined by this kernel coincides with the squared Hessian operator $ H_\mu^2 $. Specifically, it holds for $f \in L^2(\mu)^d$ that
\begin{align*}
    H_{\mu}^2 f(x)
    &:= \textstyle{\int \nabla_\mu^2 F(\mu, x, z) \left ( \int \nabla_\mu^2 F(\mu, z, y) f(y) \mu(\diff y) \right ) \mu(\diff z)} \\
    &= \textstyle{\int \qty(\int \nabla_\mu^2 F(\mu, x, z) \nabla_\mu^2 F (\mu, z, y) \mu(\diff z)) f(y) \mu(\diff y)} \\
    &= \textstyle{\int K_\mu (x,y) f(y) \mu(\diff y).}
\end{align*}
The Hessian-based kernel $K_{\mu}$ is symmetric, positive semi-definite, and meets the trace-class condition due to the integrability of $\nabla_\mu F (\mu)$.
This means that $K_{\mu}$ satisfies the requirements for a Gaussian process kernel. 
More importantly, the Gaussian process $ \xi $ is `directed' by $ H_\mu $, ensuring that the perturbation is likely to include the direction corresponding to the smallest eigenvalue of $H_\mu$, thereby achieving a reduction in the objective (Proposition \ref{decrease_around_saddle}). 
Details are provided in Appendix \ref{appendix:gp}. 

\subsection{Algorithm of PWGF}
Based on the above considerations, we propose \textbf{perturbed Wasserstein gradient flow (PWGF)} as a method for solving non-convex optimization problems of probability measures. PWGF alternates between perturbing near saddle points using a Gaussian process with the kernel defined in (\ref{hessian_guided_kernel}) and evolving via WGF when not near saddle points.

We present the specific algorithm in Algorithm \ref{alg_PWGF}, including the saddle detection mechanism. 
In the space of probability measures, the Hessian $ H_\mu $ is an operator on $ L^2(\mu)^d $, and determining whether its smallest eigenvalue is below a certain threshold is computationally challenging. 
Drawing on \citet{jin2017escape}, we propose a practical criterion: 
Proposition \ref{decrease_around_saddle} says that if a perturbation is introduced at an $(\epsilon, \delta)$-saddle point, the objective function decreases with high probability after a certain period of WGF. 
Consequently, by always introducing perturbations at first-order stationary points, we can determine whether a given stationary point is a saddle point based on the decrease on the objective within a fixed time threshold.
\begin{algorithm}[tb]
   \caption{PWGF (continuous-time)}
   \label{alg_PWGF}
\begin{algorithmic}
    \STATE set hyperparameter $\eta_p = \tilde{O}\qty(\delta^{\frac{3}{2}} \wedge \frac{\delta^3}{\varepsilon}), ~T_\mathrm{thres} = \tilde{O}\qty(\frac{1}{\delta})$, and $F_{\mathrm{thres}} = \tilde{O}(\delta^3)$
    \STATE initialize $\mu^{(0)}$ and $t_p = - T_{\mathrm{thres}}$
    \FOR {$t>0$}
    \IF {$\norm{\nabla_{\mu} F (\mu_t)}_{L^2(\mu_t)} \leq \varepsilon$ and $t - t_\mathrm{p}>t_{\mathrm{thres}}$}
        \STATE $\xi \sim \mathrm{GP}(0,K_{\mu_t}), ~ \mu_t \gets (\id + \eta_p \xi) \sharp \mu_t, ~ t_{\mathrm{p}} \gets t$
    \ENDIF
    \IF{$t = t_{\mathrm{p}} + T_{\mathrm{thres}}$ and $F(\mu_{t_{\mathrm{p}}}) - F(\mu_t) \leq F_{\mathrm{thres}}$}
        \STATE \bf{return} $\mu_{t_{\mathrm{p}}}$
    \ENDIF
    \STATE $\partial_t \mu_t + \nabla \cdot \left (- \nabla_{\mu} F (\mu_t) \mu_t \right ) = 0$
    \ENDFOR
\end{algorithmic}
\end{algorithm}

The time-discretized version of PWGF is provided in Algorithm~\ref{alg_PWGD}. To implement PWGF numerically, the optimal probability measure is approximated by the ensemble average of $N$ particles as $\mu = \frac{1}{N} \sum_{j=1}^N \delta_{x_j}$ and the pushforward and descent steps are directly applied to each particle as
\begin{align*}
\text{(perturbation)}\quad& x_j\gets x_j+\eta_p\cdot\xi(x_j),\\
\text{(gradient descent)}\quad& x_j\gets x_j-\eta \nabla F(\mu^{(k)},x_j).
\end{align*}

\begin{algorithm}[tb]
    \caption{PWGF (discrete-time)}
    \label{alg_PWGD}
    \begin{algorithmic}
        \STATE set hyperparameter $\eta_p = \tilde{O}\qty(\delta^{\frac{3}{2}} \wedge \frac{\delta^3}{\varepsilon})$, $\eta = O(1), k_\mathrm{thres} = \tilde{O}\qty(\frac{1}{\delta})$, and $F_{\mathrm{thres}} = \tilde{O}(\delta^3)$
        \STATE initialize $\mu^{(0)}$ and $k_p = k_{\mathrm{thres}}$
        \FOR {$k = 0,1,...$}
            \IF {$\norm{\nabla_\mu F(\mu^{(k)})}_{L^2(\mu^{(k)})} \leq \varepsilon$ and $k - k_{\mathrm{p}} > k_{\mathrm{thres}} $}
                \STATE $\xi \sim \mathrm{GP}(0,K_{\mu^{(k)}}), ~ \mu^{(k)} \gets (\id + \eta_p \xi) \# \mu^{(k)}$
                \STATE $k_p \gets k$
            \ENDIF
            \IF {$k = k_{\mathrm{p}} + k_{\mathrm{thres}}$ and $F(\mu^{(k_{\mathrm{p}})}) - F(\mu^{(k)}) \leq F_{\mathrm{thres}}$}
                \STATE \bf{return} $\mu^{(k_{\mathrm{p}})}$
            \ENDIF
            \STATE $ \mu^{(k+1)} \gets (\id - \eta \nabla_{\mu} F(\mu^{(k)})) \# \mu^{(k)}$
        \ENDFOR
    \end{algorithmic}
\end{algorithm}

% \begin{algorithm}[tb]
%     \caption{PWGD ( time/ space discrete )}
%     \label{alg_PWGD}
%     \begin{algorithmic}
%         \STATE initialize $x_1^{(0)},...x_N^{(0)}$, $\mu^{(0)} \gets \frac{1}{N} \sum_{j=1}^N \delta_{x_j^{(0)}}$
%         \FOR {$k = 0,1,...$}
%             \IF {$\norm{\nabla \frac{\delta F}{\delta \mu}(\mu^{(k)})}_{L^2(\mu^{(k)})} \leq \varepsilon$ and $k - k_{\mathrm{p}} > k_{\mathrm{thres}} $}
%                 \STATE $\xi \sim \mathrm{GP}(0,k_{\mu^{(k)}})$
%                 \STATE $(\xi_1,...\xi_N) \gets (\xi(x_1^{(k)}),...,\xi(x_N^{(k)}))$
%                 \STATE $x_j^{(k)} \gets x_j^{(k)} + \eta_p \xi_j \quad (j=1,...,N)$
%                 \STATE $\mu^{(k)} \gets \frac{1}{N} \sum_{j=1}^N \delta_{x_j^{(k)}}$
%                 \STATE $k_p \gets k$
%             \ENDIF
%             \IF {$k = k_{\mathrm{p}} + k_{\mathrm{thres}}$ and $F(\mu^{(k_{\mathrm{p}})}) - F(\mu^{(k)}) \leq F_{\mathrm{thres}}$}
%                 \STATE \bf{return} $\mu^{(k_{\mathrm{p}})}$
%             \ENDIF
%             \STATE $x_j^{(k+1)} \gets x_j^{(k)} - \Delta t \nabla \frac{\delta F}{\delta \mu}(\mu^{(k)}, x_j^{(k)}),~ (j=1,...,N)$,
%             $ \mu^{(k+1)} \gets \frac{1}{N} \sum_{j=1}^N \delta_{x_j^{(k+1)}}$
%         \ENDFOR
%     \end{algorithmic}
% \end{algorithm}

\section{Convergence Analysis}\label{sec:5}

In this section, we provide the theoretical guarantee for the PWGF algorithm. In addition to the assumptions introduced in the previous sections, we impose the following Lipschitz continuity of the Wasserstein gradient and Hessian. 
\begin{assumption}
    \label{assumption_lipschitz}
    $F$ satisfies the following smoothness:
    \begin{itemize}[leftmargin = 5mm]
        \item $\nabla_\mu F$ is $L_1$-Lipschitz, i.e. for any $\gamma \in \Gamma(\mu,\nu)$, 
        \begin{align}
            \label{grad_lipschitz}
            \textstyle \int \lVert \nabla_\mu F (\mu,x) -  \nabla_\mu F(\nu,y)\rVert^2 \gamma(\diff x \diff y) 
            \leq L_1^2 \int \norm{x-y}^2 \gamma(\diff x \diff y).
        \end{align}
        \item $\nabla_\mu^2 F$ is $L_2$-Lipschitz, i.e. for any $\gamma \in \Gamma(\mu,\nu)$,
        \begin{align}
            \label{hessian_lipschitz}
            \textstyle \int \lVert \nabla_\mu^2 F(\mu,x_1,x_2)  -  \nabla_\mu^2 F(\nu,y_1,y_2)\rVert^2 \gamma^{\otimes 2} (\diff x \diff y) 
            \leq L_2^2 \int \norm{x-y}^2 \gamma(\diff x \diff y).
        \end{align}
        \item $\nabla \nabla_{\mu} F$ is $L_3$-Lipschitz, i.e. for any $\gamma \in \Gamma(\mu,\nu)$,
        \begin{align}
            \label{hessian_lipschitz_beta}
            \textstyle \gamma \mathrm{-ess sup}_{x \in \R^d} ~ \lVert \nabla  \nabla_\mu F(\mu,x)  - \nabla \nabla_\mu F(\nu,y)\rVert^2
            \leq L_3^2 \int \norm{x-y}^2 \gamma(\diff x \diff y).
        \end{align}
    \end{itemize}
\end{assumption}
\begin{remark}
Similar gradient and Hessian Lipschitz continuity assumptions are made in the convergence analysis of perturbed gradient descent methods in Euclidean spaces \citep{ge2015escaping,jin2017escape,li2019ssrgd}. Additionally, the first-order Lipschitz assumption is similar to works on convex functionals \citep{chizat2022mean, suzuki2024mean}. 
\end{remark}

\subsection{Convergence Results for Continuous-time PWGF}
The following is the main theorem of this paper, asserting that PWGF terminates in polynomial time and reaches an $(\epsilon,\delta)$-stationary point with high probability.
\begin{theorem}
    \label{convergence_continuous_time_space}
    Let $\varepsilon,~\delta,~\zeta > 0$ be chosen such that $(L_2 + L_3)~\epsilon \leq \delta^2$ and $\epsilon,\delta \le \tilde{O}(1)$.\footnote{In the analysis of PGD in Euclidean spaces, $ \rho \epsilon \leq \delta^2 $ is often assumed, where $ \rho $ is the Lipschitz constant of the Hessian \citep{jin2017escape,li2019ssrgd}. We adopt this assumption to the probabilistic measure space setting, where the Hessian Lipschitz constant becomes $ \rho = L_2 + L_3 $.} Let the initial point be $\mu_0 \in \P_2(\R^d)$, and  $\Delta F = F(\mu_0) - \inf_{\mu \in \P_2(\R^d)} F(\mu)$. 
    For hyperparameters $\eta_p = \tilde{O}\qty(\delta^{\frac{3}{2}} \wedge \frac{\delta^3}{\varepsilon})$, $T_\mathrm{thres} = \tilde{O}\qty(\frac{1}{\delta})$, and $F_{\mathrm{thres}} = \tilde{O}(\delta^3)$, 
    PWGF halts after $$
    t = \tilde{O} \qty(\Delta F \left(\frac{1}{\varepsilon^2} + \frac{1}{\delta^4}\right))$$ time steps and reaches an $(\epsilon, \delta)$-second-order stationary point with probability $1 - \zeta$. 
\end{theorem}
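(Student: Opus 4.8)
The plan is to mirror the standard potential-function / epoch-counting argument for perturbed gradient descent (e.g. \citet{jin2017escape,li2019ssrgd}), but carried out in Wasserstein space using the chain rule (Proposition \ref{prop_chain_rule}) and the second-order expansion \eqref{second_order_taylor_expansion}. The run of PWGF is partitioned into a sequence of phases separated by perturbation events. Between perturbations the dynamics is pure WGF, so by the chain rule $\frac{\diff}{\diff t}F(\mu_t) = -\norm{\nabla_\mu F(\mu_t)}_{L^2(\mu_t)}^2$. I would distinguish two kinds of behavior: \emph{descent phases}, during which $\norm{\nabla_\mu F(\mu_t)}_{L^2(\mu_t)} > \epsilon$ holds for a positive fraction of time and hence $F$ drops at a rate bounded below by a constant multiple of $\epsilon^2$; and \emph{escape phases}, which begin when a perturbation is injected at an $(\epsilon,\delta)$-first-order stationary point. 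For the latter, Proposition \ref{decrease_around_saddle} is the workhorse: it guarantees that if the perturbation is injected at an $(\epsilon,\delta)$-saddle, then after $T_{\mathrm{thres}}$ units of WGF the objective has decreased by at least $F_{\mathrm{thres}} = \tilde O(\delta^3)$ with high probability. If instead the current point is an $(\epsilon,\delta)$-second-order stationary point, the decrease test fails and the algorithm correctly terminates.

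\textbf{Key steps, in order.} First, I would record the basic descent estimate: over any interval on which $\norm{\nabla_\mu F}_{L^2} > \epsilon$ except possibly a short transient, $F$ decreases at rate $\Omega(\epsilon^2)$; combined with Assumption \ref{assumption_lipschitz} ($L_1$-Lipschitz gradient) this also bounds how quickly $\norm{\nabla_\mu F}_{L^2}$ can change, so the total time spent with large gradient before ever reaching an $(\epsilon,\cdot)$-point is $\tilde O(\Delta F/\epsilon^2)$. Second, I would bound the potential cost of each perturbation: pushing forward along $\eta_p\xi$ can increase $F$, but by the second-order expansion controlled by $H_\mu$ (plus the $L_2$-Lipschitz remainder) together with $\mathbb E\,\norm{\xi}_{L^2(\mu)}^2 = \operatorname{Tr} H_\mu^2 < \infty$, the expected increase is $\tilde O(\eta_p^2) = \tilde O(\delta^3)$, i.e. at most a constant fraction of $F_{\mathrm{thres}}$, so each successful escape phase yields a net decrease of $\Omega(\delta^3)$. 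Third, I would combine these: the number of escape phases is at most $\tilde O(\Delta F/\delta^3)$, each lasting $T_{\mathrm{thres}} = \tilde O(1/\delta)$ time, contributing $\tilde O(\Delta F/\delta^4)$; adding the $\tilde O(\Delta F/\epsilon^2)$ from descent phases gives the claimed $t = \tilde O(\Delta F(\epsilon^{-2}+\delta^{-4}))$. Fourth, correctness of termination: PWGF returns only when a decrease test fails after a perturbation at a first-order stationary point; by the contrapositive of Proposition \ref{decrease_around_saddle} this means (with high probability) the point is not an $(\epsilon,\delta)$-saddle, and since a perturbation is only injected when $\norm{\nabla_\mu F}_{L^2}\le\epsilon$, the returned measure is $(\epsilon,\delta)$-stationary. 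Fifth, the probability bookkeeping: each escape phase fails with probability $\le \zeta'$ for a suitably chosen per-phase failure probability, and a union bound over the $\tilde O(\Delta F/\delta^3)$ phases (absorbing the $\log$ into $\tilde O$) gives overall success probability $1-\zeta$; I would also need a concentration bound so that the $\eta_p^2$-scale perturbation cost holds not just in expectation but with high probability, using Gaussian concentration for $\norm{\xi}_{L^2(\mu)}$ (a Lipschitz functional of the Gaussian process $\xi$).

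\textbf{Main obstacle.} The crux is Proposition \ref{decrease_around_saddle} itself — establishing that a single Hessian-guided Gaussian-process perturbation provably forces escape — and wiring its hypotheses cleanly into the epoch argument. Two infinite-dimensional subtleties make this harder than the Euclidean case. (i) The "thin pancake" / coupling argument: in $\R^d$ one shows that the set of bad perturbation directions lies in a thin slab orthogonal to the smallest-eigenvector, so a random perturbation escapes with constant probability; here the analogue must be run in $L^2(\mu)^d$ for the \emph{compact} operator $H_\mu$, and one must handle the fact that the flow map is only approximately a pushforward and that $\mu_t$ itself moves (so the operator $H_{\mu_t}$, and the eigenstructure, drift over the escape window). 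Controlling this drift requires the Lipschitz assumptions \eqref{grad_lipschitz}–\eqref{hessian_lipschitz_beta} to show that $H_{\mu_t}$ stays close to $H_{\mu_{t_p}}$ over a window of length $\tilde O(1/\delta)$, which in turn needs an a priori bound on $W_2(\mu_t,\mu_{t_p})$ along WGF — provable from $\frac{\diff}{\diff t}F = -\norm{\nabla_\mu F}_{L^2}^2$ and Cauchy–Schwarz but requiring care. (ii) The choice of $\eta_p$ as the minimum of $\delta^{3/2}$ and $\delta^3/\epsilon$ reflects balancing the perturbation against both the negative-curvature gain ($\sim \eta_p^2\delta$) and the residual first-order drift ($\sim \eta_p\cdot\epsilon$ through $H'_\mu$, controlled via Assumption \ref{assumption_regularity_wg}); verifying that these choices simultaneously make the expected gain dominate the cost is the delicate bookkeeping step. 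Everything else (descent phases, epoch counting, union bound) is routine once Proposition \ref{decrease_around_saddle} is in hand.
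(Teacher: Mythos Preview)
Your proposal is correct and follows essentially the same route as the paper: partition the run into descent phases (bounded via Lemma \ref{lower_bounding_F} to contribute $\tilde O(\Delta F/\epsilon^2)$) and escape phases (at most $\tilde O(\Delta F/F_{\mathrm{thres}})=\tilde O(\Delta F/\delta^3)$ of them, each of length $T_{\mathrm{thres}}=\tilde O(1/\delta)$), invoke Proposition \ref{decrease_around_saddle} as a black box for the guaranteed drop at saddles, and finish with a union bound over the escape phases with per-phase failure probability $\zeta'=\zeta/m$. Your discussion of the ``main obstacle'' also accurately identifies the content of Proposition \ref{decrease_around_saddle}'s proof (the perturbation-cost bound, the coupling/slab argument in $L^2(\mu)^d$, and the Hessian drift control via Assumption \ref{assumption_lipschitz}), which the paper carries out in Lemmas \ref{not_very_increase} and \ref{lower_bound_metric}.
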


To prove Theorem \ref{convergence_continuous_time_space}, we present several supporting results.
Detailed proofs are deferred to Appendix \ref{appendix_e_convergence}. Lemma \ref{lower_bounding_F} is a fundamental property of WGF, providing a lower bound on the decrease in the objective for non-stationary points. 
\begin{restatable}{lemma}{boundF}
    \label{lower_bounding_F}
    For a curve of probability measures $\mu_t$ following the WGF, the following holds:
    \begin{align*}
        F(\mu_0) - F(\mu_t) = \int_0^t \norm{\nabla_\mu F (\mu_\tau)}_{\mu_\tau}^2 \diff \tau.
    \end{align*}
\end{restatable}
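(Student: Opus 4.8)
The plan is to reduce the identity to the chain rule for functionals along absolutely continuous curves in Wasserstein space followed by a single integration in time. By \eqref{WGF_restate}, the curve $(\mu_\tau)$ solving WGF is the continuity equation \eqref{continuous_equation} driven by the velocity field $v_\tau = -\nabla_\mu F(\mu_\tau)$. Since $F$ admits an $L^2(\mu)$-integrable Wasserstein gradient at every $\mu$ (Assumption~\ref{assumpotion_differentiable}), this velocity field lies in $L^2(\mu_\tau)^d$; moreover $\nabla_\mu F(\mu_\tau) = \nabla \var{F}{\mu}(\mu_\tau)$ is the gradient of a potential, so $v_\tau \in \tansp{\mu_\tau}$, and by \citet{ambrosio2008gradient}, Theorem~8.3.1, the curve is locally absolutely continuous with respect to $W_2$ with metric derivative controlled by $\norm{v_\tau}_{L^2(\mu_\tau)}$.

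First I would apply the chain rule (Proposition~\ref{prop_chain_rule}) to the map $\tau \mapsto F(\mu_\tau)$. It yields that this map is absolutely continuous on $[0,t]$ and, for a.e.\ $\tau$,
\begin{align*}
    \frac{\diff}{\diff \tau} F(\mu_\tau)
    = \inpro{\nabla_\mu F(\mu_\tau)}{v_\tau}_{L^2(\mu_\tau)}
    = - \norm{\nabla_\mu F(\mu_\tau)}_{L^2(\mu_\tau)}^2,
\end{align*}
where the last step substitutes the WGF velocity $v_\tau = -\nabla_\mu F(\mu_\tau)$. In particular $\tau \mapsto F(\mu_\tau)$ is nonincreasing. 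Since it is absolutely continuous on $[0,t]$, the fundamental theorem of calculus gives
\begin{align*}
    F(\mu_t) - F(\mu_0) = \int_0^t \frac{\diff}{\diff \tau} F(\mu_\tau) \, \diff \tau = - \int_0^t \norm{\nabla_\mu F(\mu_\tau)}_{L^2(\mu_\tau)}^2 \, \diff \tau,
\end{align*}
which is the claimed identity after rearranging. Finiteness of the right-hand side is automatic: the integrand is nonnegative and every partial integral is bounded by $F(\mu_0) - \inf_{\mu} F(\mu) < \infty$ because $F$ is lower bounded.

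The main obstacle is the first step: rigorously justifying that $\tau \mapsto F(\mu_\tau)$ is absolutely continuous and identifying its a.e.\ derivative with the $L^2(\mu_\tau)$-pairing of the Wasserstein gradient against the velocity field. This is precisely the content of the chain rule in Proposition~\ref{prop_chain_rule}, whose hypotheses require the sufficient smoothness of $F$ (Assumption~\ref{assumpotion_differentiable}) together with the regularity of the WGF curve established above; one should verify in particular that $v_\tau \in \tansp{\mu_\tau}$ and that $\tau \mapsto \norm{v_\tau}_{L^2(\mu_\tau)}$ is locally integrable, so that the curve has finite length on compact intervals. Once the chain rule is in place, the rest is a routine application of the fundamental theorem of calculus, as carried out above.
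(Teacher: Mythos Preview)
Your proposal is correct and follows exactly the same approach as the paper: apply the chain rule (Proposition~\ref{prop_chain_rule}) along the WGF with velocity $v_\tau = -\nabla_\mu F(\mu_\tau)$ to obtain $\frac{\diff}{\diff \tau} F(\mu_\tau) = -\norm{\nabla_\mu F(\mu_\tau)}_{L^2(\mu_\tau)}^2$, then integrate from $0$ to $t$. The paper's proof is in fact more terse than yours, omitting the justifications of absolute continuity and integrability that you spell out.
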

% \TS{Please add one sentence explanation of these lemmas.}
% \TS{The following Lemma can be Proposition because it is important.}
% Theorem 5.2 の Proof Sketch と proof が並んでいてくどかったので、Proof Sketch を解体して Lemma, Proposition の説明文を入れました。
Proposition \ref{decrease_around_saddle} is the crucial step of our analysis, showing that perturbing an $(\epsilon, \delta)$-saddle point using a Gaussian process with kernel \eqref{hessian_guided_kernel} enables WGF to move along the unstable direction and decrease the objective function. In other words, it allows us to {\bf efficiently escape the saddle point}.

\begin{restatable}{proposition}{escapesaddle}
    \label{decrease_around_saddle}
    Set $\eta = O(1)$ and let $\epsilon,~\delta,~\eta_p,~T_{\mathrm{thres}},~F_{\mathrm{thres}}$ be chosen as in Theorem \ref{convergence_continuous_time_space}. 
    Suppose $\mu^\dagger \in \P^a_2(\R^d)$ satisfies $\norm{\nabla_\mu F (\mu^\dagger)}_{L^2(\mu^\dagger)} < \epsilon$ and $\lambda_{0} \coloneq \lambda_{\mathrm{min}} H_{\mu^\dagger} \leq - \delta$.
    Generating $\xi \sim \mathrm{GP}(0,k_\mu)$ and setting $\mu_0 = (\id + \eta_p \xi)\sharp \mu^\dagger$ as the initial point of the WGF, we have with probability $1-\zeta'$:
    \begin{align*}
        F(\mu^\dagger) - F(\mu_{T_{\mathrm{thres}}}) \geq F_{\mathrm{thres}}.
    \end{align*}
\end{restatable}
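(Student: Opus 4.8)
The plan is to track the second-order expansion of $F$ along the WGF starting from the perturbed measure $\mu_0 = (\id + \eta_p\xi)\#\mu^\dagger$, and to show that the component of $\xi$ along the unstable eigenspace of $H_{\mu^\dagger}$ grows exponentially fast under the flow, forcing $F$ to drop by at least $F_{\mathrm{thres}}$ within time $T_{\mathrm{thres}}$. First I would set up the linearization of WGF near the first-order stationary point $\mu^\dagger$: since $\nabla_\mu F(\mu^\dagger)=0$ (up to the $\epsilon$-error) and hence $H'_{\mu^\dagger}=0$ by Assumption \ref{assumption_regularity_wg}, Proposition \ref{second_order_taylor_expansion} tells us that to second order the objective behaves like $F(\mu^\dagger) + \tfrac12\eta_p^2\langle \xi, H_{\mu^\dagger}\xi\rangle_{L^2(\mu^\dagger)} + (\text{h.o.t.})$, and the WGF acts approximately as the linear semigroup $e^{-tH_{\mu^\dagger}}$ on the displacement field. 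Along the eigenspace with eigenvalue $\lambda_0 \le -\delta$, this semigroup amplifies by $e^{\delta t}$, so after time $T_{\mathrm{thres}}=\tilde O(1/\delta)$ the unstable component has grown by a polylogarithmic factor, which is exactly what is needed to convert an initially tiny perturbation into a macroscopic decrease.

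Next I would quantify the initial perturbation. Because $\xi\sim\mathrm{GP}(0,K_{\mu^\dagger})$ with $K_{\mu^\dagger}$ the kernel whose integral operator is $H_{\mu^\dagger}^2$, the pushforward-covariance of $\xi$ is $H_{\mu^\dagger}^2$; projecting onto the bottom eigenspace, the squared norm of the unstable component of $\xi$ is a (weighted) chi-square-type quantity with mean $\asymp \lambda_0^2$. Standard Gaussian anti-concentration (Carbery--Wright or a one-dimensional Gaussian lower bound on the dominant eigendirection) then gives that with probability $1-\zeta'$ the unstable component has $L^2(\mu^\dagger)$-norm at least $c\,\delta/\mathrm{polylog}$. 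Feeding this through the exponential amplification and the second-order expansion, the quadratic term contributes a decrease of order $\eta_p^2\delta\cdot e^{\Omega(\delta T_{\mathrm{thres}})} \gtrsim \eta_p^2\,\mathrm{polylog} \gtrsim \delta^3$ for the stated choice $\eta_p=\tilde O(\delta^{3/2}\wedge\delta^3/\epsilon)$, matching $F_{\mathrm{thres}}=\tilde O(\delta^3)$.

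The main obstacle — and where most of the work lies — is controlling the error terms that distinguish the true WGF from its linearization over the whole interval $[0,T_{\mathrm{thres}}]$. There are three sources: (i) the third-order Taylor remainder in $F$ along pushforwards, which is $O(\eta_p^3\|\xi\|^3)$ at $t=0$ but must be re-bounded as the displacement grows, using the $L_2$- and $L_3$-Lipschitz continuity of $\nabla_\mu^2F$ and $\nabla\nabla_\mu F$ from Assumption \ref{assumption_lipschitz}; (ii) the drift of the base point, i.e. the fact that $H_{\mu_t}$ changes as $\mu_t$ moves, again controlled by Lipschitzness and by the a priori bound $W_2(\mu_t,\mu^\dagger)=O(\eta_p\,e^{O(\delta T_{\mathrm{thres}})})$ obtained from a Grönwall argument; and (iii) the $\epsilon$-slack coming from $\|\nabla_\mu F(\mu^\dagger)\|_{L^2}<\epsilon$ rather than $=0$, which contributes a term controlled by the condition $(L_2+L_3)\epsilon\le\delta^2$. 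The strategy is a bootstrap/continuity argument: assume $W_2(\mu_t,\mu^\dagger)$ stays within a chosen radius $r=\tilde\Theta(\eta_p)$ up to a stopping time, derive the exponential growth of the unstable mode and the smallness of all error terms on that window, conclude the objective has already dropped by $F_{\mathrm{thres}}$ before the radius is exhausted, and note that by Lemma \ref{lower_bounding_F} (monotonicity of $F$ along WGF) the flow cannot later undo this decrease. The delicate bookkeeping is ensuring the exponential factors $e^{O(\delta T_{\mathrm{thres}})}$ only ever appear as polylogarithmic constants, so that the scales $\eta_p$, $T_{\mathrm{thres}}$, $F_{\mathrm{thres}}$ stay consistent with the hyperparameter choices in Theorem \ref{convergence_continuous_time_space}; this mirrors the finite-dimensional "improve-or-localize" analysis of \citet{jin2017escape,li2019ssrgd}, but requires the Wasserstein-geometric versions of the chain rule (Proposition \ref{prop_chain_rule}) and the infinitesimal pushforward approximation (Proposition \ref{prop_infinitesimal_accurve}) to make each step rigorous in infinite dimensions.
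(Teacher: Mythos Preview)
Your outline misses the key structural device that makes the escape argument close, and the single-trajectory analysis you sketch does not survive the error budget.

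Concretely: you propose to track the unstable component $u_t=\langle\psi_0,Y_t-\id\rangle_{L^2(\mu^\dagger)}$ of a \emph{single} WGF trajectory and show it grows like $e^{|\lambda_0|t}$. But the equation for $u_t$ carries error terms of size $\Delta\cdot\|Y_t-\id\|_{L^2(\mu^\dagger)}$ (from the Hessian--Lipschitz remainder, where $\Delta T_{\mathrm{thres}}\le\log\tfrac32$) and a constant forcing $|\langle\psi_0,\nabla_\mu F(\mu^\dagger)\rangle|\le\epsilon$. Under the ``localize'' hypothesis $F(\mu_0)-F(\mu_t)<2F_{\mathrm{thres}}$, the energy identity only gives $\|Y_t-\id\|\le\sqrt{2T_{\mathrm{thres}}F_{\mathrm{thres}}}+\eta_p M=\tilde O(\delta)$, so the cumulative error in $u_t$ is of order $\tilde O(\delta)\cdot e^{|\lambda_0|t}$. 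Meanwhile the initial signal is $|u_0|=\eta_p|\langle\psi_0,\xi\rangle|=\tilde O(\eta_p\delta)=\tilde O(\delta^{5/2})$ with high probability. Since $\tilde O(\delta^{5/2})\ll\tilde O(\delta)$ for small $\delta$, the error swallows the signal and no lower bound on $|u_t|$ follows. Shrinking the localization radius to $\tilde\Theta(\eta_p)$ as you suggest does not help: the $\epsilon$-forcing already exceeds $|\lambda_0|\,|u_0|$, and in any case the gradient drift alone moves the point by $\tilde O(\delta)\gg\eta_p$.

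The paper circumvents this via a \emph{coupling} argument (Lemma~\ref{lower_bound_metric}): run two WGFs from $\mu_0=(\id+\eta_p\xi)\#\mu^\dagger$ and $\tilde\mu_0=(\id+\eta_p(\xi+r\psi_0))\#\mu^\dagger$. The difference of characteristics $w_t=\tilde Y_t-Y_t$ starts \emph{purely} in the unstable direction ($w_0=\eta_p r\psi_0$), and its evolution equation is $\dot w_t=-H_{\mu^\dagger}w_t-\Delta_t w_t$, where the remainder acts on $w_t$ itself rather than on the large $Y_t-\id$; the stable components and the gradient drift cancel in the difference. Gr\"onwall then gives $\|w_t\|\ge\tfrac12\eta_p r\,e^{|\lambda_0|t}$, which at $t=T_{\mathrm{thres}}$ contradicts the joint localization bound $\|w_t\|\le\|Y_t-\id\|+\|\tilde Y_t-\id\|=\tilde O(\delta)$. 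Hence at least one of the two trajectories must have dropped by $2F_{\mathrm{thres}}$. The proof of Proposition~\ref{decrease_around_saddle} then concludes by the ``thin slab'' probabilistic step: the bad set for $\xi$ (where no $2F_{\mathrm{thres}}$ drop occurs and $\|\xi\|\le M$) is contained in a slab of width $2r_0=\tfrac{\sqrt{2\pi}|\lambda_0|\zeta'}{4}$ in the $\psi_0$-coordinate, which has Gaussian mass $\le\zeta'/2$ since $\langle\psi_0,\xi\rangle\sim\mathcal N(0,\lambda_0^2)$; combined with Lemma~\ref{not_very_increase} bounding the perturbation's increase by $F_{\mathrm{thres}}$, this yields the claim. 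Your outline should be reorganized around this two-trajectory comparison; the bootstrap and Lipschitz bookkeeping you describe are the right tools, but they must be applied to $w_t$, not to $Y_t-\id$.
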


Combining these results establishes convergence. 

\begin{proof}[Proof of Theorem \ref{convergence_continuous_time_space}]
Let $\varepsilon,~\delta,~\zeta > 0$ be chosen arbitrarily chosen such that $(L_2 + L_3)~\epsilon \leq \delta^2$, and set $\zeta' > 0$ such that $\zeta'$ is polynomial in $\frac{1}{\delta}$ and $\zeta$ up to logarithmic factors.

From Proposition \ref{decrease_around_saddle}, perturbations occur at most $m \coloneq \lceil \frac{\Delta F}{F_{\mathrm{thres}}} \rceil$ times. Thus, the probability of failure after $m$ perturbations is at most $1 - (1-\zeta')^m \leq m\zeta'$. Setting $\zeta' = \frac{\zeta}{m}$ ensures that the algorithm reaches an $(\varepsilon, \delta)$-second order stationary point with probability at least $1 - \zeta$.

PWGF consists of a gradient descent phase and an evaluation phase, where the decrease in the objective is assessed after applying a perturbation. Then we define the gradient descent phase as \textit{State 0}, and the evaluation phase as \textit{State 1}. Let $T_0$ denote the total time in \textit{State 0}, where $\norm{\nabla_\mu F (\mu)}_{L^2(\mu)} \geq \varepsilon$. By Lemma \ref{lower_bounding_F}, the decrease in the objective during \textit{State 0} is at least $\varepsilon^2 T_0$, implying $T_0 \leq \frac{\Delta F}{\varepsilon^2}$. Moreover, the total time $T_1$ in \textit{State 1} is bounded as $T_1 \leq m T_{\mathrm{thres}} = \frac{\Delta F T_{\mathrm{thres}}}{F_{\mathrm{thres}}} = \tilde{O}\qty(\frac{\Delta F}{\delta^4})$. Hence, PWGF halts in $T_0 + T_1 = \tilde{O}\qty(\Delta F \qty(\frac{1}{\varepsilon^2} + \frac{1}{\delta^4}))$.
\end{proof}

\subsection{Convergence Analysis for Discrete-time PWGF}
We also prove convergence to a second-order stationary point for the time-discretized PWGF (Algorithm \ref{alg_PWGD}). 
\begin{restatable}{theorem}{discretetime}
    \label{convergence_discrete_time}
    Let the initial point be $\mu_0 \in \P_2(\R^d)$ and denote $\Delta F = F(\mu_0) - \inf_{\mu \in \P_2(\R^d)} F(\mu)$. Set $\eta=O(1)$ and let $\epsilon,~\delta,~\eta_p,~T_{\mathrm{thres}},~F_{\mathrm{thres}}$ be chosen as in Theorem \ref{convergence_continuous_time_space}. 
    Then, discrete time PWGF halts after 
    $$k = \tilde{O} \qty(\Delta F \left(\frac{1}{\varepsilon^2} + \frac{1}{\delta^4}\right))$$ steps and reaches an $(\epsilon, \delta)$-second-order stationary point with probability $1 - \zeta$. 
\end{restatable}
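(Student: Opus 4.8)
The plan is to mirror the proof of Theorem~\ref{convergence_continuous_time_space}, replacing its two analytic ingredients — the descent estimate of Lemma~\ref{lower_bounding_F} and the saddle-escape estimate of Proposition~\ref{decrease_around_saddle} — by discrete-time counterparts, and then rerunning the same bookkeeping. First I would prove a \emph{discrete descent lemma}: for $\eta = O(1)$ small enough relative to $L_1$, the pushforward gradient step of Algorithm~\ref{alg_PWGD} satisfies
\begin{align*}
    F(\mu^{(k+1)}) \le F(\mu^{(k)}) - \tfrac{\eta}{2}\,\norm{\nabla_\mu F(\mu^{(k)})}_{L^2(\mu^{(k)})}^2 .
\end{align*}
To see this, put $g(h) = F\big((\id - h\,\nabla_\mu F(\mu^{(k)}))\#\mu^{(k)}\big)$; the definition of the Wasserstein gradient gives $g'(0) = -\norm{\nabla_\mu F(\mu^{(k)})}_{L^2(\mu^{(k)})}^2$, and the second-order expansion \eqref{second_order_taylor_expansion} (in the general form of Proposition~\ref{prop_second_order_expansion}) bounds $g''(h)$ along $h\in[0,\eta]$: since $\nabla_\mu F$ is $L_1$-Lipschitz the Wasserstein Hessian $H_{\mu_h}$ has operator norm $\le L_1$, and the $H'_{\mu_h}$ contribution is controlled by Assumption~\ref{assumption_regularity_wg}, so that $|g''(h)| \le C\,\norm{\nabla_\mu F(\mu^{(k)})}_{L^2(\mu^{(k)})}^2$; a Taylor expansion with integral remainder and $\eta \le 1/C$ then gives the bound. (For $\eta = O(1)$ small, $\id - \eta\,\nabla_\mu F(\mu^{(k)})$ is also a diffeomorphism since $\mu\text{-ess sup}\,\norm{\nabla\nabla_\mu F(\mu^{(k)})}<\infty$, so the iterates stay in $\P_2^a(\R^d)$ whenever $\mu^{(0)}\in\P_2^a(\R^d)$.)

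Next I would establish a discrete analogue of Proposition~\ref{decrease_around_saddle}: if $\mu^{(k_p)} = \mu^\dagger \in \P_2^a(\R^d)$ is an $(\epsilon,\delta)$-saddle point and we perturb $\mu^{(k_p)} \gets (\id + \eta_p\xi)\#\mu^{(k_p)}$ with $\xi \sim \mathrm{GP}(0,K_{\mu^\dagger})$, then $F(\mu^{(k_p)}) - F(\mu^{(k_p + k_{\mathrm{thres}})}) \ge F_{\mathrm{thres}}$ with probability $1-\zeta'$. The argument is parallel to the continuous case: because $K_{\mu^\dagger} = H_{\mu^\dagger}^2$, a draw $\xi \sim \mathrm{GP}(0,K_{\mu^\dagger})$ has, with high probability, a non-negligible component in the eigenspace of $\lambda_{\min}H_{\mu^\dagger}\le-\delta$, and this unstable component grows at a geometric rate $\gtrsim (1+\eta\delta)^{k}$ along the iterates while the remaining terms (including the objective increase from the perturbation itself, which is $O(\eta_p^2)$ since the gradient at $\mu^\dagger$ is $\le\epsilon$) stay controlled; as $\eta = O(1)$ and $k_{\mathrm{thres}} = \tilde{O}(1/\delta)$, the discrete horizon $\eta\,k_{\mathrm{thres}} = \tilde{O}(1/\delta)$ matches $T_{\mathrm{thres}}$, and the proof of Proposition~\ref{decrease_around_saddle} in Appendix~\ref{appendix_e_convergence} transfers once the Gr\"onwall-type flow estimates are replaced by telescoped discrete ones, with $\eta \le O(1/L_1)$ keeping the per-step errors summable and the choice $\eta_p = \tilde{O}(\delta^{3/2}\wedge\delta^3/\epsilon)$ making the net decrease exceed $F_{\mathrm{thres}} = \tilde{O}(\delta^3)$.

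With these two ingredients the rest is identical to the proof of Theorem~\ref{convergence_continuous_time_space}. Perturbations are triggered at most $m := \lceil \Delta F/F_{\mathrm{thres}}\rceil = \tilde{O}(\Delta F/\delta^3)$ times before termination, so a union bound with $\zeta' = \zeta/m$ gives overall success probability $1-\zeta$, and on that event any returned iterate $\mu^{(k_p)}$ is an $(\epsilon,\delta)$-stationary point — were it an $(\epsilon,\delta)$-saddle, the discrete escape lemma would force a decrease exceeding $F_{\mathrm{thres}}$, contradicting the return condition. For the iteration count, split the run into ``State $0$'' steps, where $\norm{\nabla_\mu F(\mu^{(k)})}_{L^2(\mu^{(k)})} > \epsilon$, and ``State $1$'' steps, the length-$k_{\mathrm{thres}}$ evaluation windows after each perturbation. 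By the discrete descent lemma each State-$0$ step decreases $F$ by at least $\tfrac{\eta}{2}\epsilon^2$, so their number is $K_0 \le 2\Delta F/(\eta\epsilon^2) = \tilde{O}(\Delta F/\epsilon^2)$; State-$1$ steps total $K_1 \le m\,k_{\mathrm{thres}} = \tilde{O}(\Delta F/\delta^4)$. Hence PWGF halts in $K_0 + K_1 = \tilde{O}\big(\Delta F(1/\epsilon^2 + 1/\delta^4)\big)$ steps.

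The main obstacle is the discrete saddle-escape proposition. Unlike the continuous flow, each pushforward gradient step incurs an $O(\eta^2)$ second-order error, and one must show that the accumulation of these errors over the $\tilde{O}(1/\delta)$-step escape window remains dominated by the geometrically growing unstable component; this requires re-deriving the operator-norm and coupling estimates underlying Proposition~\ref{decrease_around_saddle} in telescoped discrete form and balancing them against the scalings $\eta_p = \tilde{O}(\delta^{3/2}\wedge\delta^3/\epsilon)$, $\eta = O(1)$, $k_{\mathrm{thres}} = \tilde{O}(1/\delta)$ and $F_{\mathrm{thres}} = \tilde{O}(\delta^3)$.
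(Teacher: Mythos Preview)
Your proposal is correct and follows essentially the same structure as the paper's proof in Appendix~\ref{appendix_discrete_time}: a discrete descent lemma (the paper's Proposition~\ref{discrete_lowerbound_F}, derived from Lemma~\ref{lemma_for_discrete_time}), a discrete saddle-escape proposition (Propositions~\ref{discrete_lower_bound_metric} and~\ref{discrete_decrease_around_saddle}, using the discrete Gr\"onwall inequality of Proposition~\ref{prop_discrete_gronwall}), and the same State~0/State~1 bookkeeping. The only notable difference is that the paper obtains the descent inequality directly from the $L_1$-Lipschitz property of $\nabla_\mu F$ (Lemma~\ref{lemma_for_discrete_time}) rather than via a second-order expansion together with Assumption~\ref{assumption_regularity_wg}, which is cleaner and avoids having to bound $H_{\mu_h}$ and $H'_{\mu_h}$ along the step.
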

The proof is similar to the continuous-time case; details are deferred to Appendix \ref{appendix_discrete_time}. Since second-order stationary points are global optima for a strictly benign objective, the convergence of PWGF to a global solution is also guaranteed.
\begin{corollary}
    Under the same setting as Theorem \ref{convergence_discrete_time}, discrete-time PWGF for $(\epsilon, \delta, \alpha)$-strictly benign objective $F$ halts after $\tilde{O} \qty(\Delta F (\frac{1}{\varepsilon^2} + \frac{1}{\delta^4}))$ steps and reaches $\alpha$-close to some global optima $\mu^o$ ; $W_2(\mu,\mu^o) \leq \alpha $ with probability $1-\zeta$. 
\end{corollary}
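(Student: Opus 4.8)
The plan is to combine the quantitative guarantee of Theorem~\ref{convergence_discrete_time} with the definition of $(\epsilon,\delta,\alpha)$-strict benignity. First I would invoke Theorem~\ref{convergence_discrete_time} directly: under the stated choice of hyperparameters, discrete-time PWGF halts after $\tilde{O}(\Delta F(\tfrac{1}{\varepsilon^2} + \tfrac{1}{\delta^4}))$ steps and returns a point $\mu^{(k_{\mathrm p})}$ that is an $(\epsilon,\delta)$-second-order stationary point with probability $1-\zeta$. This already gives the claimed iteration complexity, so the only new content is translating the output guarantee from ``$(\epsilon,\delta)$-stationary'' to ``$\alpha$-close to a global optimum.''

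Next I would examine what $(\epsilon,\delta)$-stationarity means for the returned point $\mu := \mu^{(k_{\mathrm p})}$: by definition it satisfies $\norm{\nabla_\mu F(\mu)}_{L^2(\mu)} \le \epsilon$ and $\lambda_{\min} H_\mu \ge -\delta$. Now I apply the definition of $(\epsilon,\delta,\alpha)$-strict benignity at this particular $\mu$: at least one of the three alternatives must hold. Alternative 1 states $\norm{\nabla_\mu F(\mu)}_{L^2(\mu)} > \epsilon$, which is negated by the first stationarity inequality; alternative 2 states $\lambda_{\min} H_\mu < -\delta$, which is negated by the second. Hence the only remaining possibility is alternative 3: $W_2(\mu,\mu^o) \le \alpha$ for some global optimum $\mu^o$. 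Combining this with the $1-\zeta$ success probability of Theorem~\ref{convergence_discrete_time} gives exactly the corollary's conclusion.

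Concretely, the proof I would write is short: ``Apply Theorem~\ref{convergence_discrete_time}: with probability $1-\zeta$, PWGF halts within $\tilde{O}(\Delta F(\tfrac{1}{\varepsilon^2}+\tfrac{1}{\delta^4}))$ steps and outputs an $(\epsilon,\delta)$-second-order stationary point $\mu$, i.e.\ $\norm{\nabla_\mu F(\mu)}_{L^2(\mu)} \le \epsilon$ and $\lambda_{\min} H_\mu \ge -\delta$. By $(\epsilon,\delta,\alpha)$-strict benignity, one of the three listed conditions holds at $\mu$; the first two are ruled out by these inequalities, so $W_2(\mu,\mu^o) \le \alpha$ for some global optimum $\mu^o$.'' I would make sure the hyperparameter choices match verbatim those in Theorem~\ref{convergence_discrete_time} so that its hypotheses are literally satisfied, and note that the probability bound carries over unchanged since no extra randomness is introduced.

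I do not anticipate a genuine obstacle here: the corollary is a direct logical composition of an already-established convergence theorem with a pointwise structural property of the objective, and the benignity definition was evidently designed precisely so that negating conditions 1 and 2 forces condition 3. The only thing requiring mild care is bookkeeping---ensuring that the $\epsilon$ and $\delta$ used in the definition of $(\epsilon,\delta,\alpha)$-strict benignity are the same $\epsilon,\delta$ fed into PWGF (so that the stationarity guarantee and the benignity dichotomy refer to the same thresholds), and confirming that the failure event is still the single event ``some perturbation fails to produce the required decrease,'' which Theorem~\ref{convergence_discrete_time} already controls at level $\zeta$.
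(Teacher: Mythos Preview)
Your proposal is correct and matches the paper's approach exactly: the paper does not give a separate proof for this corollary but simply remarks that ``second-order stationary points are global optima for a strictly benign objective,'' which is precisely your argument of invoking Theorem~\ref{convergence_discrete_time} and then using the benignity trichotomy to eliminate alternatives 1 and 2.
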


\section{Conclusion}\label{sec:conclusion}

We proposed a new method for non-convex probability optimization, perturbed Wasserstein gradient flows (PWGF), which alternates between perturbing near saddle points using a Hessian-guided Gaussian process and evolving via WGF. We have established that PWGF efficiently achieves second-order optimality with high probability. A potential avenue for future work is to reduce computational cost by using a stochastic approximation of the Hessian as the kernel of the Gaussian process, analogous to stochastic gradients. Another direction is to provide a method for analyzing benignity of non-convex distributional objectives, thereby broadening the range of applications of PWGF. 

% \subsection{Future Work}
% % \red{現時点のやつです.} \\
% \begin{itemize}
%     % \item 時間離散化. 
%     \item 具体的な問題に対するLandscape解析、手法の適用
%     \item 数値実験. 
%     \item 今は鞍点がルベーグ測度に関して絶対連続な測度である場合を仮定して考えているので, 粒子離散化. ($\nabla \nabla_\mu F$の項の扱いが難しそう)
%     \item 群不変性を持つ目的関数に対するLandscape解析. 
% \end{itemize}

\section*{Acknowledgments}

NY and JK were partially supported by JST CREST (JPMJCR2015).
TS was partially supported by JSPS KAKENHI (24K02905) and JST CREST (JPMJCR2115).
This research is supported by the National Research Foundation, Singapore, Infocomm Media Development Authority under its Trust Tech Funding Initiative, and the Ministry of Digital Development and Information under the AI Visiting Professorship Programme (award number AIVP-2024-004). Any opinions, findings and conclusions or recommendations expressed in this material are those of the author(s) and do not reflect the views of National Research Foundation, Singapore, Infocomm Media Development Authority, and the Ministry of Digital Development and Information.

\bibliography{reference}
\bibliographystyle{icml2025}

\newpage

\appendix
\onecolumn

%Mean field解析の意味などをまとめる. ほぼ卒論用. 
\section{Applications of Wasserstein Gradient Flow}
\label{appendix_a}
%In this section, we elaborate on applications of WGF.
\subsection{Mean-Field Analysis}
Mean-field analysis can be applied to optimization problems where the objective function is expressed as a function of the ensemble average of some underlying functions. 
We consider the optimization problem : 
\begin{align}
    \label{ensemble}
    \text{minimize} \quad G \qty(\frac{1}{N} \sum_{j=1}^N h (x_j)) \quad \text{s.t.} \quad x_1,\cdots , x_N \in \R^d, 
\end{align}
where $h : \R^d \to \R^{d'},~ G : \R^{d'} \to \R$, and $d' = 1$ for simplicity. 
The gradient direction of the variable $x_j$ is computed as follows : 
\begin{align}
    \label{gradient_ensemble}
    \nabla_{x_j}  G \qty(\frac{1}{N} \sum_{j=1}^N h (x_j)) = \frac{1}{N} G'\qty(\frac{1}{N}\sum_{j=1}^N h(x_j)) \nabla h(x_j). 
\end{align}
Taking the mean field limit $N \to \infty$ of (\ref{ensemble}), we lift this problem to the space of measures:
\begin{align}
    \label{mean_field}
    \text{minimize} \quad G \qty(\int h(x) \mu(\diff x)) \quad \text{s.t.} \quad \mu \in \P_2(\R^d).
\end{align}
The Wasserstein gradient is computed as follows : 
\begin{align}
    \label{wasserstein_gradient_mean_field}
    \nabla_{\mu} G (\mu) = \nabla \var{F}{\mu} (\mu,x) = G'\qty(\int h(x) \mu(\diff x)) \nabla h(x). 
\end{align}
%On a computer, we adopt a particle approximation $\mu = \frac{1}{N}\sum_{j=1}^N \delta_{x_j}$. 
Consequently, from \eqref{gradient_ensemble} and \eqref{wasserstein_gradient_mean_field}, the update rules for gradient descent applied to the original problem \eqref{ensemble} and WGF applied to the lifted problem \eqref{mean_field} are equivalent, up to a constant scaling factor. However, properties differ significantly between the two formulations. For example, when $F$ is the identity function, the original problem (\ref{ensemble}) is not necessarily linear, while the mean-field problem (\ref{mean_field}) is linear with respect to $\mu$. Similarly, for commonly encountered loss functions with convex losses, \eqref{ensemble} is not necessarily convex with respect to the variables $x_1, \ldots, x_N$, but in the mean-field setting \eqref{mean_field} becomes convex with respect to $\mu$.

The properties of functions defined on the space of probability measures facilitate the design of WGF-based algorithms with improved performance. The works of \citet{nitanda2022convex} and \citet{chizat2022mean} have explored the ability of optimization of mean-field Langevin dynamics (MFLD), an approach that augments WGF with Brownian noise, based on the convexity of the objective function. Similarly, \citet{kim2024transformers} have focused on the benignity of objective functions regarding in-context learning of certain transformer models and, leveraging this insight, have proposed a birth-death modification and also an isotropic perturbation scheme. Our proposed algorithm contributes to this line of research by providing the first convergence guarantees for strictly benign problems on the space of probability measures. 

\subsection{Additional Related Works}\label{related}

WGF has important applications in Bayesian inference, where posterior distributions are usually estimated from data via variational inference (VI). VI formulates posterior estimation as an optimization problem of the KL divergence. 
In particular, particle-based VI is founded on the idea of evolving empirical measures formed by particles using WGF. 
This idea originates from the work of \citet{jordan1998variational}, which established a connection between diffusion processes and gradient descent in 2-Wasserstein space with entropy regularization.

Stein variational gradient descent \citep{liu2016stein,he2024regularized} circumvents computational difficulties in calculating descent directions through kernel methods. 
This approach can be seen as a version of WGF where an integral operator acts on the descent direction \citep{chewi2020svgd,duncan2023geometry}.
In this context, derivative methods such as Newton's method on the space of probability measures \citep{detommaso2018stein,wang2020information} and accelerated methods \citep{liu2018accelerated,taghvaei2019accelerated,wang2022accelerated} have also been proposed. 
Furthermore, optimization of measures also appears in contexts such as online optimization \citep{guo2022online, han2024wasserstein} and reinforcement learning \citep{richemond2017wasserstein, zhang2018policy}.

%\subsection{Sampling}
%サンプリングに関するWGFの話題についてできればかく. 
\newpage
%Wasserstein幾何についてまとめる. 
\section{Wasserstein Geometry}
\label{appendix_b}

\subsection{Wasserstein Space}
In this section, we provide basic aspects of Wasserstein geometry and propositions used in this paper. 
We refer to \citet{ambrosio2008gradient} for a comprehensive review. 

We assume that all curves $ \mu_t \in \mathcal{P}_2(\mathbb{R}^d) $ that appear in this section are absolutely continuous and satisfy the continuity equation with respect to a vector field $ v_t \in \tansp{\mu_t} $. The existence condition for solutions to the continuity equation corresponding to a vector field $ v_t $ is provided, for example, by \citet{ambrosio2008gradient}, Section 8.2.

\begin{definition}[Wasserstein distance]
    \label{def_wasserstein_distance_appendix}
    The 2-Wasserstein distance between two points $\mu, \nu \in \P_2(\R^d)$ is defined as
    \begin{align}
        \label{appendix_wasserstein_metric}
        W_2(\mu,\nu)^2 \coloneq \inf_{\gamma \in \Gamma(\mu,\nu)} \int \norm{x - y}^2 \gamma(\diff x \diff y). 
    \end{align}
    Here, $\Gamma(\mu,\nu)$ represents the set of all transport plans:
    \begin{align*}
        \Gamma(\mu,\nu) = \left\{\gamma \in \P_2(\R^{d} \times \R^d) ~\middle | p_1 \# \gamma = \mu,~ p_2 \# \gamma = \nu \right \}.
    \end{align*}
    Moreover, $\gamma \in \Gamma(\mu,\nu)$ is called the optimal transport plan, if the infmum in \eqref{appendix_wasserstein_metric} is attained by $\gamma$. The set of all optimal transport plans is denoted by $\Gamma_o (\mu,\nu)$.
    Furthermore, if a measurable map $f : \R^d \to \R^d$ satisfies $(\id \times f)\sharp \mu \in \Gamma_o (\mu,\nu)$, then $f$ is called the optimal transport map between $\mu$ and $\nu$. 
    In this case, we denote the optimal transport map $f$ as $\otm{\mu}{\nu}$. 
\end{definition}

$W_2: \mathcal{P}_2(\mathbb{R}^d) \times \mathcal{P}_2(\mathbb{R}^d) \to \mathbb{R}$ defines a metric structure on $\P_2(\R^d)$. Therefore, we consider $\mathcal{P}_2(\mathbb{R}^d)$ as a metric space equipped with the $W_2$ metric. 
Convergence in the $W_2$ metric is equivalent to the weak convergence of measures plus uniform integrability of second moments \citep{ambrosio2008gradient}. 

An optimal transport plan is guaranteed to exist for any $\mu, \nu \in \P_2(\R^d)$, as stated in the following proposition.

\begin{proposition}
    \label{prop_existence_otp}
    For $ \mu, \nu \in \mathcal{P}_2(\mathbb{R}^d) $, there exists an optimal transport plan between $ \mu $ and $ \nu $. That is, there exists $ \gamma \in \Gamma(\mu, \nu) $ such that
    \begin{align*}
        W_2({\mu},{\nu})^2 = \int \|x - y\|^2 \, \gamma(\mathrm{d}x \mathrm{d}y).
    \end{align*}
\end{proposition}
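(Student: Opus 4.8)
The plan is to establish existence by the direct method in the calculus of variations: one shows that the constraint set $\Gamma(\mu,\nu)$ is nonempty, weakly (sequentially) compact, and that the transport cost $\gamma \mapsto \int \norm{x-y}^2 \gamma(\diff x \diff y)$ is weakly lower semicontinuous; then any minimizing sequence admits a subsequence converging to a plan that attains the infimum. First I would note that $\Gamma(\mu,\nu)$ is nonempty, since the product measure $\mu \otimes \nu$ has marginals $\mu$ and $\nu$, and moreover $\int \norm{x-y}^2 \diff(\mu\otimes\nu) \le 2\int\norm{x}^2\mu(\diff x) + 2\int\norm{y}^2\nu(\diff y) < \infty$ because $\mu,\nu \in \P_2(\R^d)$, so the infimum defining $W_2(\mu,\nu)^2$ is finite. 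Next I would verify tightness of $\Gamma(\mu,\nu)$: given $\epsilon>0$, pick compact sets $K_1,K_2 \subset \R^d$ with $\mu(\R^d\setminus K_1), \nu(\R^d\setminus K_2) < \epsilon/2$ (possible since every Borel probability measure on $\R^d$ is tight), and observe that for every $\gamma \in \Gamma(\mu,\nu)$, $\gamma\big(\R^{2d}\setminus(K_1\times K_2)\big) \le \gamma\big((\R^d\setminus K_1)\times\R^d\big) + \gamma\big(\R^d\times(\R^d\setminus K_2)\big) = \mu(\R^d\setminus K_1) + \nu(\R^d\setminus K_2) < \epsilon$. By Prokhorov's theorem, $\Gamma(\mu,\nu)$ is relatively compact for the weak (narrow) topology.

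Then I would check that $\Gamma(\mu,\nu)$ is weakly closed: if $\gamma_n \rightharpoonup \gamma$ with $p_1\#\gamma_n = \mu$ and $p_2\#\gamma_n = \nu$, then for any $\phi \in C_b(\R^d)$ we have $\int \phi(x)\,\gamma(\diff x\diff y) = \lim_n \int \phi(x)\,\gamma_n(\diff x\diff y) = \int \phi\,\diff\mu$, and likewise for the second marginal, so $\gamma \in \Gamma(\mu,\nu)$; hence $\Gamma(\mu,\nu)$ is weakly sequentially compact. Taking a minimizing sequence $(\gamma_n) \subset \Gamma(\mu,\nu)$ with $\int \norm{x-y}^2 \gamma_n(\diff x\diff y) \to W_2(\mu,\nu)^2$, I would extract a subsequence $\gamma_{n_k} \rightharpoonup \gamma^* \in \Gamma(\mu,\nu)$. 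Finally, since $c(x,y) = \norm{x-y}^2$ is nonnegative and continuous, hence lower semicontinuous, writing it as the increasing limit $c = \sup_M (c \wedge M)$ of bounded continuous functions and combining the portmanteau theorem applied to each $c\wedge M$ with monotone convergence yields $\int c\,\diff\gamma^* \le \liminf_k \int c\,\diff\gamma_{n_k} = W_2(\mu,\nu)^2$. As $\gamma^* \in \Gamma(\mu,\nu)$ the reverse inequality holds trivially, so $\gamma^*$ is an optimal transport plan.

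The main obstacle is the unboundedness of the cost function: weak convergence of measures only directly controls integrals of bounded continuous test functions, so lower semicontinuity of $\gamma \mapsto \int \norm{x-y}^2 \gamma(\diff x \diff y)$ is not immediate and requires the truncation argument above (equivalently, one may invoke the general fact that $\gamma \mapsto \int g\,\diff\gamma$ is weakly lower semicontinuous for any nonnegative lower semicontinuous $g$). Everything else — nonemptiness, finiteness of the infimum, tightness, and closedness of $\Gamma(\mu,\nu)$ — is routine.
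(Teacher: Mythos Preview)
Your proof is correct and is precisely the standard direct-method argument for existence of optimal couplings. The paper itself does not give a proof but simply refers the reader to \citet{ambrosio2008gradient}, Chapter 6, which contains essentially the same argument you wrote out (tightness of $\Gamma(\mu,\nu)$ from tightness of the marginals, weak closedness, and lower semicontinuity of the cost via truncation); so there is nothing to compare beyond noting that you have supplied the details the paper chose to omit.
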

\begin{proof}
    Refer to \citet{ambrosio2008gradient}, Chapter 6.
\end{proof}

An optimal transport map, not a plan, does not necessarily exist. Specifically, there are cases where even transport maps themselves do not exist. For example, when $d = 1$, $\mu = \delta_0$, and $\nu = \frac{\delta_{-1} + \delta_1}{2}$, no transport map exists, as $T \# \mu \neq \nu$ for any $T: \mathbb{R} \to \mathbb{R}$.
However, The following proposition establishes that under certain conditions, the existence and uniqueness of an optimal transport map are guaranteed.

\begin{proposition}[Brenier's theorem]
    \label{prop_existence_otm}
    For any $\mu \in \P^a_2(\R^d)$ and $\nu \in \P_2(\R^d)$, there exists an optimal transport map $\otm{\mu}{\nu}:\R^d \to \R^d$. 
    Specifically, $\otm{\mu}{\nu}$ satisfies
    \begin{align*}
        W_2(\mu, \nu)^2 = \int \lVert\otm{\mu}{\nu}(x) - x\rVert^2 \mu(\diff x).
    \end{align*}
    Furthermore, the following hold:
    \begin{itemize}
        \item $\Gamma_o (\mu, \nu) = \{(\id \times \otm{\mu}{\nu})\sharp \mu \}$, that is, the unique optimal transport plan from $\mu$ to $\nu$ is induced by $\otm{\mu}{\nu}$. 
        \item The optimal transport map can be expressed as $\otm{\mu}{\nu} = \nabla \varphi$, where $\varphi$ is a convex function defined $\mu$-almost everywhere. 
        \item If $\nu \in \P^a_2(\R^d)$ as well, then $\otm{\nu}{\mu} \circ \otm{\mu}{\nu} = \id$ $\mu$-a.e. and $\otm{\mu}{\nu} \circ \otm{\nu}{\mu} = \id$ $\nu$-a.e.
    \end{itemize} 
\end{proposition}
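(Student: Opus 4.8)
The plan is to follow the classical route through $c$-cyclical monotonicity and Rockafellar's theorem. First I would invoke Proposition~\ref{prop_existence_otp} to obtain some optimal plan $\gamma \in \Gamma_o(\mu,\nu)$. Since the marginals are fixed, expanding $\|x-y\|^2 = \|x\|^2 - 2\langle x,y\rangle + \|y\|^2$ shows that minimizing $\int\|x-y\|^2\,\diff\gamma$ is equivalent to maximizing $\int\langle x,y\rangle\,\diff\gamma$; a standard swapping argument (if a finite cycle in $\mathrm{supp}\,\gamma$ violated monotonicity, one could redistribute mass of $\gamma$ along that cycle and strictly decrease the cost, contradicting optimality) then shows that $\Sigma := \mathrm{supp}\,\gamma$ is cyclically monotone. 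By Rockafellar's theorem there is a proper, lower semicontinuous convex function $\varphi : \R^d \to \R \cup \{+\infty\}$ with $\Sigma \subset \partial\varphi$, i.e. $\gamma$ is concentrated on the graph of the subdifferential $\partial\varphi$.

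Next I would use the absolute continuity of $\mu$. A finite convex function is locally Lipschitz, hence differentiable Lebesgue-a.e. by Rademacher's theorem; since $\mu \ll \mathrm{Leb}$, $\varphi$ is differentiable $\mu$-a.e., and at each such point $x$ the subdifferential $\partial\varphi(x) = \{\nabla\varphi(x)\}$ is a singleton. Consequently, for $\gamma$-a.e. $(x,y)$ one has $y = \nabla\varphi(x)$, which is exactly the statement $\gamma = (\id\times\nabla\varphi)\#\mu$. Thus $\nabla\varphi$ is a transport map from $\mu$ to $\nu$ realizing the optimal cost; setting $\otm{\mu}{\nu} := \nabla\varphi$ gives $W_2(\mu,\nu)^2 = \int\|\nabla\varphi(x)-x\|^2\,\mu(\diff x)$, with $\nabla\varphi \in L^2(\mu)$ from finiteness of $W_2$.

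For uniqueness of the optimal plan and map I would exploit convexity of the functional $\gamma \mapsto \int\|x-y\|^2\,\diff\gamma$ on the convex set $\Gamma(\mu,\nu)$: given $\gamma_1,\gamma_2 \in \Gamma_o(\mu,\nu)$, the average $\tfrac12(\gamma_1+\gamma_2)$ is again optimal, hence concentrated on $\partial\psi$ for a single convex $\psi$, so both $\gamma_i$ are concentrated on $\partial\psi$, and the previous paragraph forces $\gamma_1 = \gamma_2 = (\id\times\nabla\psi)\#\mu$. This yields $\Gamma_o(\mu,\nu) = \{(\id\times\otm{\mu}{\nu})\#\mu\}$ and $\mu$-a.e. uniqueness of the map. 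When $\nu \in \P^a_2(\R^d)$ as well, applying the already-proved part with the roles of $\mu,\nu$ swapped produces $\otm{\nu}{\mu} = \nabla\psi$; since coordinate swap is a bijection $\Gamma_o(\mu,\nu) \to \Gamma_o(\nu,\mu)$, uniqueness gives $(\otm{\mu}{\nu}\times\id)\#\mu = (\id\times\otm{\nu}{\mu})\#\nu$, and testing this identity using $\nu = \otm{\mu}{\nu}\#\mu$ gives $\otm{\nu}{\mu}\circ\otm{\mu}{\nu} = \id$ $\mu$-a.e., and symmetrically $\otm{\mu}{\nu}\circ\otm{\nu}{\mu} = \id$ $\nu$-a.e.

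The main obstacle I expect is the passage from optimality of $\gamma$ to cyclical monotonicity of its support and then to the convex potential — the combination of the swapping/exchange argument with Rockafellar's theorem — together with the measurability bookkeeping in ``$\gamma$ concentrated on $\partial\varphi$'' and the Rademacher differentiability step that consumes the absolute continuity hypothesis; everything else (expanding the square, the averaging argument, the inverse-map computation) is routine. As this is a standard result, I would otherwise simply cite \citet{ambrosio2008gradient}, Chapter~6, for the full details.
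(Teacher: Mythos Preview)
Your proposal is correct and in fact far more detailed than the paper's own treatment: the paper simply writes ``See \citet{ambrosio2008gradient}, Chapter~6'' and gives no argument at all. Your sketch via cyclical monotonicity, Rockafellar's theorem, and Rademacher differentiability is the standard route taken in that reference, and your closing remark that you would cite \citet{ambrosio2008gradient}, Chapter~6, already matches the paper exactly.
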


\begin{proof}
    See \citet{ambrosio2008gradient}, Chapter 6.
\end{proof}

$\P_2(\R^d)$ forms a geodesic metric space due to the existence of optimal transport plans and the pushforward property:

\begin{proposition}
    \label{prop_geodesic}
    Let $\mu, \nu \in \P_2(\R^d)$, and $\gamma \in \Gamma_0(\mu,\nu)$. 
    Then $\mu_t = ((1 - t)p_1 + t p_2) \sharp \gamma$ defines a geodesic between $\mu$ and $\nu$, i.e.,
    \begin{align*}
        W_2(\mu, \mu_t) = t W_2(\mu, \nu) \quad (\forall t \in [0,1]). 
    \end{align*}
    In particular, in case where $\gamma$ is induced by an optimal transport map $\otm{\mu}{\nu}$, $\mu_t = ((1-t)\id + t \otm{\mu}{\nu}) \# \mu$ and $\otm{\mu}{\mu_t} = (1-t) \id + t \otm{\mu}{\nu}$ hold. 
\end{proposition}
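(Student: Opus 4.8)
The plan is to prove the geodesic property by the classical ``explicit coupling plus triangle inequality'' argument for displacement (McCann) interpolation, and then deduce the ``in particular'' clause from a pushforward computation together with Brenier's theorem. First I would introduce the interpolation maps $\pi_s \coloneq (1-s) p_1 + s p_2 : \R^d \times \R^d \to \R^d$, so that $\mu_s = \pi_s \sharp \gamma$ by definition, with $\mu_0 = \mu$ and $\mu_1 = \nu$. A quick preliminary step is to check $\mu_s \in \P_2(\R^d)$: since $\|\pi_s(x,y)\| \le \|x\| + \|y\|$, its second moment is bounded by $\int (\|x\| + \|y\|)^2 \, \gamma(\diff x \diff y) < \infty$, finite because $\mu$ and $\nu$ have finite second moments.

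Next I would establish the upper bound. For $0 \le s \le t \le 1$, the map $(\pi_s, \pi_t)$ pushes $\gamma$ forward to a coupling in $\Gamma(\mu_s, \mu_t)$, hence
\begin{align*}
    W_2(\mu_s, \mu_t)^2 &\le \int \|\pi_s(x,y) - \pi_t(x,y)\|^2 \, \gamma(\diff x \diff y) \\
    &= (t-s)^2 \int \|x - y\|^2 \, \gamma(\diff x \diff y) = (t-s)^2 \, W_2(\mu,\nu)^2,
\end{align*}
where the last equality uses optimality of $\gamma$. Taking $s = 0$ gives $W_2(\mu, \mu_t) \le t\, W_2(\mu,\nu)$, and taking $t = 1$ gives $W_2(\mu_s, \nu) \le (1-s)\, W_2(\mu,\nu)$. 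The reverse inequality is then automatic: $W_2(\mu,\nu) \le W_2(\mu,\mu_t) + W_2(\mu_t,\nu) \le t\, W_2(\mu,\nu) + (1-t)\, W_2(\mu,\nu) = W_2(\mu,\nu)$, so equality holds throughout, proving $W_2(\mu,\mu_t) = t\, W_2(\mu,\nu)$ for every $t \in [0,1]$ (and, more generally, $W_2(\mu_s,\mu_t) = (t-s)\, W_2(\mu,\nu)$).

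For the last claim, when $\gamma = (\id \times \otm{\mu}{\nu}) \sharp \mu$ I would compute $\pi_t \circ (\id \times \otm{\mu}{\nu}) = (1-t)\id + t\, \otm{\mu}{\nu}$, so that $\mu_t = ((1-t)\id + t\, \otm{\mu}{\nu}) \sharp \mu$ follows by composing pushforwards. To identify this map as $\otm{\mu}{\mu_t}$, I would invoke Brenier's theorem (Proposition \ref{prop_existence_otm}): since $\mu \in \P^a_2(\R^d)$ (which is exactly what makes $\otm{\mu}{\nu}$ exist), we may write $\otm{\mu}{\nu} = \nabla \phi$ with $\phi$ convex, and then $(1-t)\id + t\, \otm{\mu}{\nu} = \nabla \psi_t$ where $\psi_t(x) = \tfrac{1-t}{2}\|x\|^2 + t\, \phi(x)$ is again convex; Brenier's theorem then guarantees that the gradient of a convex function transporting $\mu$ to $\mu_t$ is the unique optimal transport map, whence $\otm{\mu}{\mu_t} = (1-t)\id + t\, \otm{\mu}{\nu}$.

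I expect the metric part to be entirely routine. The only step needing a little care is the final one, where one must remember to appeal to the convex-gradient characterization of optimal maps in order to conclude that the interpolated map is actually \emph{optimal}, rather than merely a map transporting $\mu$ to $\mu_t$; beyond that I do not anticipate any real obstacle, since the fact that displacement interpolation traces out a geodesic is textbook.
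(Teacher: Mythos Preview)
Your proof is correct and self-contained; the paper itself gives no argument but simply refers to \citet{ambrosio2008gradient}, Lemma 7.2.1, whose content is exactly the ``explicit coupling plus triangle inequality'' computation you carry out.

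One minor remark on the final clause: by invoking Brenier you tacitly add the hypothesis $\mu \in \P^a_2(\R^d)$, which the proposition does not state (it only assumes that $\gamma$ happens to be induced by some optimal map). A cleaner and more general route is to bypass convex potentials entirely and observe that the map $(1-t)\id + t\,\otm{\mu}{\nu}$ transports $\mu$ to $\mu_t$ with cost
\[
\int \bigl\|(1-t)x + t\,\otm{\mu}{\nu}(x) - x\bigr\|^2 \mu(\diff x) = t^2 \int \|\otm{\mu}{\nu}(x) - x\|^2 \mu(\diff x) = t^2 W_2(\mu,\nu)^2,
\]
which by the first part equals $W_2(\mu,\mu_t)^2$; hence this map is optimal by definition, and you already have everything needed from the metric computation.
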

\begin{proof}
    See \citet{ambrosio2008gradient}, Lemma 7.2.1. 
\end{proof}

The following continuity equation describes how a particle distribution $\mu_t$ evolves along a time-dependent vector field $v_t$. 

\begin{definition}[Continuity equation, \citet{ambrosio2008gradient}]
    \label{def_continuous_equation}
    Let $\mu_t \in \P_2(\R^d) ~ (t \in I)$ be a curve in Wasserstein space, and let $v_t \in L(\mu_t)^d ~ (t \in I)$ be the corresponding vector field.
    The curve $\mu_t$ is said to satisfy the continuity equation with respect to the vector field $v_t$ if the distribution equation: 
    \begin{align}
        \label{continuous_equation_appendix}
        \partial_t \mu_t + \nabla \cdot (v_t \mu_t) = 0
    \end{align}
    holds. Equation (\ref{continuous_equation_appendix}) means that for all $\phi \in C_0^\infty(\R^d)$,
    \begin{align*}
        \frac{\diff}{\diff t} \int \phi(x) \mu_t(\diff x) = \int \nabla \phi(x)^\top v_t(x) \mu_t(\diff x). 
    \end{align*}
\end{definition}

The absolute continuity of the curve $\mu_t$ with respect to the Wasserstein distance is equivalent to the satisfaction of the continuity equation for some $v_t \in L^2(\mu_t)$ (\citet{ambrosio2008gradient} Theorem 8.3.1).
In this sense, the continuity equation provides a concept of differentiation consistent with the distance structure induced by the Wasserstein distance. 

\begin{proposition}[\citet{ambrosio2008gradient}, Theorem 8.3.1]
    \label{prop_ac_continuous_equation}
    Let $I \subset \R$ be an open interval, and let $\mu_t : I \to \P_2(\R^d)$ be a continuous curve. 
    $\mu_t$ is absolutely continuous, if and if only $\mu_t$ satisfies continuity equation (\ref{continuous_equation}) for some vector field $v_t \in L^2(\mu_t)$. 
\end{proposition}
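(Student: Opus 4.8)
The statement is the classical characterization of absolutely continuous curves in $(\P_2(\R^d),W_2)$ of \citet{ambrosio2008gradient} (Theorem 8.3.1), and the plan is to treat the two implications separately. For the direction ``continuity equation $\Rightarrow$ absolute continuity'', given a solution $\mu_t$ of \eqref{continuous_equation} with $\int_I\norm{v_t}_{L^2(\mu_t)}\diff t<\infty$, I would regularize by convolution: fix a Gaussian mollifier $\rho_\epsilon$ and set $\mu_t^\epsilon=\mu_t*\rho_\epsilon$, $E_t^\epsilon=(v_t\mu_t)*\rho_\epsilon$, and $v_t^\epsilon=E_t^\epsilon/\mu_t^\epsilon$. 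Since convolution commutes with $\partial_t$ and $\nabla\cdot$, the smooth curve $\mu_t^\epsilon$ solves the continuity equation with velocity $v_t^\epsilon$, and the joint convexity of $(\varrho,m)\mapsto\norm{m}^2/\varrho$ together with Jensen's inequality gives the contraction $\norm{v_t^\epsilon}_{L^2(\mu_t^\epsilon)}\le\norm{v_t}_{L^2(\mu_t)}$. Because $\mu_t^\epsilon$ has a smooth positive density and $v^\epsilon$ is smooth, the flow $\partial_t X_{s,t}=v_t^\epsilon(X_{s,t})$, $X_{s,s}=\id$, is well defined, pushes $\mu_s^\epsilon$ forward to $\mu_t^\epsilon$, and Minkowski's integral inequality yields $W_2(\mu_s^\epsilon,\mu_t^\epsilon)\le\norm{X_{s,t}-\id}_{L^2(\mu_s^\epsilon)}\le\int_s^t\norm{v_r}_{L^2(\mu_r)}\diff r$. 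Letting $\epsilon\to0$, one has $\mu_t^\epsilon\to\mu_t$ narrowly with convergence of second moments, so lower semicontinuity of $W_2$ transfers the estimate to $W_2(\mu_s,\mu_t)\le\int_s^t\norm{v_r}_{L^2(\mu_r)}\diff r$, i.e.\ $\mu_t$ is absolutely continuous.

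For the converse, assume $W_2(\mu_s,\mu_t)\le\int_s^t m(r)\diff r$ with $m\in L^1(I)$, so that the metric derivative $|\mu'|(t)=\lim_{h\to0}W_2(\mu_{t+h},\mu_t)/|h|\le m(t)$ exists for a.e.\ $t$. The first step is to show that for each $\phi\in C_0^\infty(\R^d)$ the map $t\mapsto\int\phi\,\diff\mu_t$ is absolutely continuous with $|\tfrac{\diff}{\diff t}\int\phi\,\diff\mu_t|\le|\mu'|(t)\,\norm{\nabla\phi}_{L^2(\mu_t)}$ a.e.: for $s<t$ pick an optimal plan $\gamma_{s,t}\in\Gamma_o(\mu_s,\mu_t)$ (Proposition \ref{prop_existence_otp}), write $\phi(y)-\phi(x)=\int_0^1\nabla\phi((1-r)x+ry)^\top(y-x)\,\diff r$, and apply Cauchy--Schwarz in $L^2(\gamma_{s,t})$ together with Minkowski's inequality in $r$; since the optimal cost $\int\norm{x-y}^2\diff\gamma_{s,t}=W_2(\mu_s,\mu_t)^2$ vanishes as $t\downarrow s$, the plans $\gamma_{s,t}$ concentrate narrowly on the diagonal $(\id\times\id)\#\mu_s$, so (using that $\nabla\phi$ is bounded and continuous) the $\nabla\phi$-factor tends to $\norm{\nabla\phi}_{L^2(\mu_s)}$ while $W_2(\mu_s,\mu_t)/(t-s)\to|\mu'|(s)$. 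Then, for a.e.\ fixed $t$, the estimate shows $\nabla\phi\mapsto\tfrac{\diff}{\diff t}\int\phi\,\diff\mu_t$ is a well-defined bounded linear functional of norm $\le|\mu'|(t)$ on the subspace $\{\nabla\phi:\phi\in C_0^\infty(\R^d)\}\subset L^2(\mu_t)^d$; extending it by density to $\tansp{\mu_t}$ and applying the Riesz representation theorem produces $v_t\in\tansp{\mu_t}$ with $\norm{v_t}_{L^2(\mu_t)}\le|\mu'|(t)$ and $\tfrac{\diff}{\diff t}\int\phi\,\diff\mu_t=\inpro{v_t}{\nabla\phi}_{L^2(\mu_t)}$, which is precisely \eqref{continuous_equation} in the distributional sense of Definition \ref{def_continuous_equation}, with $\int_I\norm{v_t}_{L^2(\mu_t)}\diff t\le\int_I m<\infty$.

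The main obstacles are the two ``soft'' passages rather than any routine computation. In the first implication, the mollified velocities $v_t^\epsilon$ carry no uniform regularity, so the Wasserstein estimate must be carried out on the smooth curves $\mu_t^\epsilon$ (where the classical flow argument applies) and only then transferred to $\mu_t$ by lower semicontinuity, which is exactly where the joint-convexity contraction is indispensable. In the converse, the delicate points are that the limiting functional is well defined on $L^2(\mu_t)$-equivalence classes (it vanishes whenever $\nabla\phi=0$ in $L^2(\mu_t)$, by the auxiliary estimate applied to the difference of two test functions) and that $(t,x)\mapsto v_t(x)$ is jointly measurable; for the latter I would run the construction on a single full-measure set of times simultaneously for a countable family of test functions dense in $C^1$, so that $v_t$ appears as a pointwise limit of difference quotients and is Borel, and then select a measurable version. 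Running both directions together additionally yields the sharp identity $\norm{v_t}_{L^2(\mu_t)}=|\mu'|(t)$ for the minimal $v_t$, though only existence is required here.
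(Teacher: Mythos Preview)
The paper does not give its own proof of this proposition: it is stated with the attribution to \citet{ambrosio2008gradient}, Theorem 8.3.1, and no proof environment follows. Your proposal is therefore not comparable to a ``paper's proof'' in the usual sense; rather, you have supplied a faithful and essentially correct sketch of the classical Ambrosio--Gigli--Savar\'e argument (mollification plus the joint-convexity contraction for one direction, metric derivative plus Riesz representation on $\tansp{\mu_t}$ for the other), which is exactly what the citation points to. Your identification of the two delicate points---passing the Wasserstein estimate through the $\epsilon\to0$ limit, and obtaining a jointly measurable $v_t$ via a countable dense family of test functions---matches the issues that actually arise in the full proof.
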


For the vector field that gives the continuity equation (\ref{continuous_equation}),
\begin{align*}
    \nabla \cdot& ((v_t - w_t) \mu_t) = 0 
    \iff v_t - w_t \in \left \{ \nabla \phi ~ \middle | ~ \phi \in C_0^\infty(\R^d) \right \}^{\perp L^2(\mu_t)} \eqcolon X_{\mu_t}
\end{align*}
is equivalent to the fact that the continuity equation gives the same curve $\mu_t$.
Noting the orthogonal decomposition $L^2(\mu_t)^d = X_{\mu_t} \oplus X_{\mu_t}^\perp$, 
the vector field with the minimal $L^2$ norm among those that give the same curve $\mu_t$ must have no component in the subspace $X_{\mu_t}$, and it follows that $v_t \in X_{\mu_t}^\perp$. 
From these observations, we define the tangent space representing infinitesimal changes in the space of probability measures $\P_2(\R^d)$ as follows : 

\begin{definition}[Tangent bundle, \citet{ambrosio2008gradient}]
    \label{def_tangent_space}
    We define the tangent space $\tansp{\mu} \subset L^2(\mu)$ at $\mu \in \P_2(\R^d)$   as 
    \begin{align*}
        \tansp{\mu} \coloneq X_{\mu}^\perp = \overline{\left \{ \nabla \phi ~ \middle | ~ \phi \in C_0^\infty(\R^d) \right \} }^{L^2(\mu)}. 
    \end{align*}
\end{definition}

% Furthermore, if $v_t \in X_{\mu_t}^\perp$, it is known that $\norm{v_t}_{L^2(\mu_t)}$ matches the metric derivative of the curve $\mu_t$ generated by $v_t$ (\citet{ambrosio2008gradient} Theorem 8.3.1). 
The following proposition, referred to as the Benamou-Brenier formula, demonstrates that the Wasserstein distance is characterized by the minimal action among absolutely continuous curves connecting two given probability measures.
\begin{proposition}[Benamou-Brenier formula]
    \label{prop_benamou_brenier}
    For any \( \mu, \nu \in \mathcal{P}_2(\mathbb{R}^d) \) and \( T > 0 \), the following holds:
    \begin{align*}
        W_{2}({\mu},{\nu})^2 = \inf \left\{ T \int_0^T \|v_t\|_{L^2(\mu_t)}^2 \, \mathrm{d}t ~\middle|~ \partial_t \mu_t + \nabla \cdot (v_t \mu_t) = 0~(t \in (0,T)),~ \mu_0=\mu, \mu_T = \nu \right\}. 
    \end{align*}
\end{proposition}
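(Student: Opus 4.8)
The plan is to establish the identity by the usual two inequalities: the infimum is at most $W_2(\mu,\nu)^2$, witnessed by an explicit minimizing curve, and every admissible pair $(\mu_t,v_t)$ has action at least $W_2(\mu,\nu)^2$. For the upper bound I would use the displacement interpolation. Fix an optimal plan $\gamma \in \Gamma_o(\mu,\nu)$, which exists by Proposition~\ref{prop_existence_otp}, and set $\mu_t = \qty((1-\tfrac{t}{T}) p_1 + \tfrac{t}{T} p_2)\#\gamma$ for $t\in[0,T]$; by Proposition~\ref{prop_geodesic} this is a constant-speed geodesic from $\mu$ to $\nu$. Disintegrating $\gamma$ along its first marginal produces a Borel vector field $v_t \in \tansp{\mu_t}$ with $v_t\qty((1-\tfrac{t}{T})x + \tfrac{t}{T}y) = \tfrac{1}{T}(y-x)$ that drives this curve through the continuity equation (when $\mu\in\P_2^a(\R^d)$ this is simply the time-derivative of $(1-\tfrac{t}{T})\id + \tfrac{t}{T}\otm{\mu}{\nu}$ read off along $\mu_t$). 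Since $\norm{v_t}_{L^2(\mu_t)}^2 = \tfrac{1}{T^2}\int\norm{x-y}^2\gamma(\diff x\diff y) = \tfrac{1}{T^2}W_2(\mu,\nu)^2$ is constant in $t$, this curve gives $T\int_0^T\norm{v_t}_{L^2(\mu_t)}^2\diff t = W_2(\mu,\nu)^2$, so the infimum is $\le W_2(\mu,\nu)^2$ and is in fact attained.

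For the lower bound, let $(\mu_t,v_t)_{t\in(0,T)}$ be any admissible pair. The crux is the estimate
\begin{align*}
    W_2(\mu_s,\mu_t) \le \int_s^t \norm{v_\tau}_{L^2(\mu_\tau)}\diff\tau, \qquad 0 \le s < t \le T,
\end{align*}
i.e. the curve traced by the continuity equation is absolutely continuous with metric speed dominated by $\norm{v_\tau}_{L^2(\mu_\tau)}$. To obtain this I would regularize $v_\tau$ (mollifying in space, smoothing in time) so that the characteristic ODE $\dot X_\tau(x) = v_\tau(X_\tau(x))$, $X_s(x)=x$, has a well-defined flow with $\mu_\tau = X_\tau \# \mu_s$; then
\begin{align*}
    W_2(\mu_s,\mu_t)^2 \le \int \norm{X_t(x)-x}^2 \mu_s(\diff x) = \int \left\lVert\int_s^t v_\tau(X_\tau(x))\,\diff\tau\right\rVert^2 \mu_s(\diff x) \le (t-s)\int_s^t \norm{v_\tau}_{L^2(\mu_\tau)}^2\diff\tau
\end{align*}
by Cauchy–Schwarz and Fubini, and one removes the regularization in the limit (this is precisely the content of \citet{ambrosio2008gradient}, Theorem~8.3.1). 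Taking $s=0$, $t=T$ and applying Cauchy–Schwarz in $\tau$ once more,
\begin{align*}
    W_2(\mu,\nu)^2 \le \qty(\int_0^T \norm{v_\tau}_{L^2(\mu_\tau)}\diff\tau)^2 \le T\int_0^T \norm{v_\tau}_{L^2(\mu_\tau)}^2\diff\tau.
\end{align*}
Combining the two bounds yields the claimed formula.

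The hard part will be the lower bound — specifically, turning a merely $L^2$ velocity field into a genuine flow of characteristics. For non-Lipschitz $v_\tau$ the ODE need not be well-posed, so one must either invoke the superposition principle to represent $\mu_\tau$ as the marginal of a measure concentrated on absolutely continuous solution paths of the ODE, or argue by mollification together with a stability/lower-semicontinuity argument for the action functional; either way the genuine technical work lives here. Everything else reduces to the geodesic structure already recorded in Proposition~\ref{prop_geodesic} and two applications of Cauchy–Schwarz.
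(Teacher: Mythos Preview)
Your argument is correct: the displacement interpolation gives the upper bound, the metric-derivative estimate $W_2(\mu_s,\mu_t)\le\int_s^t\norm{v_\tau}_{L^2(\mu_\tau)}\diff\tau$ plus Cauchy--Schwarz gives the lower bound, and you have correctly identified that the only genuine difficulty is the superposition/mollification step needed to justify that estimate for rough velocity fields. However, the paper's proof is considerably shorter: it simply invokes \citet{ambrosio2008gradient}, Theorem~8.3.1, as a black box to obtain the $T=1$ statement
\[
W_2(\mu,\nu)^2=\inf\left\{\int_0^1\norm{v_t}_{L^2(\mu_t)}^2\,\diff t ~\middle|~ \partial_t\mu_t+\nabla\cdot(v_t\mu_t)=0,\ \mu_0=\mu,\ \mu_1=\nu\right\},
\]
and then rescales time via $t\mapsto t/T$ (noting that if $(\mu_t,v_t)_{t\in(0,T)}$ is admissible then $(\tilde\mu_s,\tilde v_s)=(\mu_{sT},Tv_{sT})_{s\in(0,1)}$ is admissible on $(0,1)$ with $\int_0^1\norm{\tilde v_s}^2\diff s=T\int_0^T\norm{v_t}^2\diff t$). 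Your sketch is essentially a reconstruction of the proof of the cited theorem itself, so it is more self-contained but does more work than the paper intends; since the paper already treats \citet{ambrosio2008gradient} as a standing reference, the citation-plus-rescaling route is the economical choice here.
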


\begin{proof}
    From \citet{ambrosio2008gradient}, Theorem 8.3.1,
    \begin{align*}
        W_2({\mu},{\nu})^2 = \inf \left\{ \int_0^1 \|v_t\|_{L^2(\mu_t)}^2 \, \mathrm{d}t ~\middle|~ \partial_t \mu_t + \nabla \cdot (v_t \mu_t) = 0~(t \in (0,1)),~ \mu_0=\mu, \mu_1 = \nu \right\}.
    \end{align*}
    By changing the variable \( t \mapsto \frac{t}{T} \), the claim follows.
\end{proof}

The following proposition establishes that the infinitesimal behavior of an absolutely continuous curve can be expressed by the pushforward $(\id + h v_t) \# \mu_t$.

\begin{proposition}
    \label{prop_infinitesimal_accurve}
    Let $\mu_t$ be an absolutely continuous curve satisfying continuous equation with vector field $v_t \in \tansp{\mu_t}$. Then, 
    \begin{align*}
        \lim_{h \to 0} \frac{W_2(\mu_{t+h},(\id + h v_t)\# \mu_t)}{h} = 0. 
    \end{align*}
\end{proposition}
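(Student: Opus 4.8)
The statement to prove is Proposition~\ref{prop_infinitesimal_accurve}: for an absolutely continuous curve $\mu_t$ satisfying the continuity equation with a tangent vector field $v_t \in \tansp{\mu_t}$, one has $\lim_{h\to 0} W_2(\mu_{t+h},(\id + h v_t)\#\mu_t)/h = 0$.

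\textbf{Approach.} The plan is to compare both $\mu_{t+h}$ and $(\id + h v_t)\#\mu_t$ to the reference measure $\mu_t$ and control the discrepancy via the triangle inequality together with the Benamou--Brenier characterization of $W_2$. First I would fix $t$ and recall that by absolute continuity and Proposition~\ref{prop_ac_continuous_equation}, the restriction of $\mu_s$ to a small interval around $t$ is a curve of finite action generated by $v_s$. The key quantitative input is the metric derivative identity $|\dot\mu_t| = \|v_t\|_{L^2(\mu_t)}$ for the minimal-norm velocity, which gives $W_2(\mu_{t+h},\mu_t) = |h|\,\|v_t\|_{L^2(\mu_t)} + o(h)$, and more precisely the first-order Taylor-type expansion $\mu_{t+h} \approx (\id + h v_t)\#\mu_t$ in the sense we want to establish. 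Concretely, I would write $W_2(\mu_{t+h},(\id+hv_t)\#\mu_t)$ and bound it by constructing an explicit (not necessarily optimal) coupling or an explicit connecting curve.

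\textbf{Key steps, in order.} (1) Use the continuity equation on $[t,t+h]$: integrating the weak form against test functions $\varphi \in C_0^\infty$, $\int \varphi\, d\mu_{t+h} - \int\varphi\, d\mu_t = \int_t^{t+h}\int \nabla\varphi\cdot v_s\, d\mu_s\, ds$. (2) Compare this to the pushforward: $\int \varphi\, d[(\id+hv_t)\#\mu_t] - \int\varphi\, d\mu_t = \int (\varphi(x + h v_t(x)) - \varphi(x))\, d\mu_t(x) = h\int \nabla\varphi\cdot v_t\, d\mu_t + O(h^2\|\nabla^2\varphi\|_\infty \|v_t\|_{L^2(\mu_t)}^2)$. (3) Subtract: the difference of the two measures, tested against $\varphi$, is $\int_t^{t+h}\int\nabla\varphi\cdot v_s\, d\mu_s\, ds - h\int\nabla\varphi\cdot v_t\, d\mu_t + O(h^2) = \int_t^{t+h}\big(\int\nabla\varphi\cdot v_s\, d\mu_s - \int\nabla\varphi\cdot v_t\, d\mu_t\big)\, ds + O(h^2)$. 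The bracketed term tends to $0$ as $s\to t$ by continuity in $L^2$ of $s\mapsto v_s$ along the curve (this is where the structure of absolutely continuous curves — continuity of the velocity in the appropriate sense, or at minimum an $L^2$-Lebesgue-point argument for a.e.\ $t$) enters, so the integral is $o(h)$. (4) Convert the weak estimate into a $W_2$ estimate: the cleanest route is Benamou--Brenier / Kantorovich duality — construct the linear interpolation curve $\nu_r = ((1-r)(\id+hv_t) + r\, \otm{\mu_t}{\mu_{t+h}})\#\mu_t$ wait, more simply, directly estimate by exhibiting a connecting path whose action bounds $W_2^2$, or invoke the fact that the $\dot W_1$-type bound from the weak form upgrades to $W_2$ under a uniform second-moment / velocity bound. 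Alternatively, and perhaps most robustly, build the coupling $((\id + h v_t) \times \text{(something)})$: push $\mu_t$ forward by $x\mapsto (x + hv_t(x), X_{t,t+h}(x))$ where $X_{t,t+h}$ is the flow map of $v_s$ from time $t$ to $t+h$ (Lagrangian representation, valid when $v$ is regular enough — or use the superposition principle of Ambrosio to get a measure on paths), so that $W_2^2(\mu_{t+h}, (\id+hv_t)\#\mu_t) \le \int \|X_{t,t+h}(x) - (x + h v_t(x))\|^2\, d\mu_t(x)$, and then $X_{t,t+h}(x) - x - hv_t(x) = \int_t^{t+h}(v_s(X_{t,s}(x)) - v_t(x))\, ds$, whose $L^2(\mu_t)$ norm is $o(h)$ by continuity of $s\mapsto v_s$ composed with the near-identity flow. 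Divide by $h$ and let $h\to 0$.

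\textbf{Main obstacle.} The delicate point is step (4): passing from the weak/distributional first-order expansion to a genuine $W_2$ bound of order $o(h)$, since $W_2$ is not directly a dual norm against $C_0^\infty$ test functions. The Lagrangian/flow-map argument is conceptually cleanest but requires justifying the existence of a well-behaved flow or invoking the superposition principle to represent $\mu_s$ by a probability measure on absolutely continuous paths with velocity $v_s$; once that representation is in hand, the estimate reduces to $\big\|\int_t^{t+h}(v_s(X_{t,s}(\cdot)) - v_t(\cdot))\, ds\big\|_{L^2(\mu_t)} = o(h)$, which follows from $L^2$-continuity of the velocity field along the curve (a standard fact for a.c.\ curves, e.g.\ \citet{ambrosio2008gradient} Chapter~8) together with a dominated-convergence / approximate-continuity argument at Lebesgue points of $s\mapsto v_s$. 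I would cite the relevant results from \citet{ambrosio2008gradient} (Theorem~8.3.1 and the superposition principle, Theorem~8.2.1) rather than reprove them, and present the flow-map coupling as the core of the argument.
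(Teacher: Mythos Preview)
The paper does not actually prove this proposition: its entire proof is the one-line citation ``See \citet{ambrosio2008gradient}, Proposition 8.4.6.'' So your sketch already goes well beyond what the paper provides.

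Your outline is a legitimate route to the result, and you correctly flag the real difficulty (step~(4): upgrading the weak $o(h)$ expansion to a $W_2$ bound) and the right tool to handle it (the superposition principle, giving a probability measure $\eta$ on paths with $(e_s)\#\eta=\mu_s$ and $\dot\gamma(s)=v_s(\gamma(s))$ for $\eta$-a.e.\ $\gamma$, which yields the coupling $((\id+hv_t)\circ e_t,\,e_{t+h})\#\eta$). You also correctly note that the continuity of $s\mapsto v_s$ needed for the $o(h)$ estimate holds only at Lebesgue points, so the conclusion is for a.e.\ $t$ --- a qualification the paper's statement omits but AGS includes. For comparison, the proof in \citet{ambrosio2008gradient} of Proposition~8.4.6 does \emph{not} go through flow maps or superposition; it instead works directly with optimal plans $\gamma_h\in\Gamma_o(\mu_t,\mu_{t+h})$, shows the rescaled displacements $(p_2-p_1)/h$ are precompact in $L^2$, and identifies any limit with $v_t$ using the minimal-norm characterization of the tangent space. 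Your approach is more constructive and closer to the Lagrangian picture; AGS's is more intrinsic to the Wasserstein structure and avoids invoking superposition. Both are valid, and since you end up citing AGS for the auxiliary machinery anyway, the net dependency is similar.
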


\begin{proof}
    See \citet{ambrosio2008gradient}, Proposition 8.4.6. 
\end{proof}

% Furthermore, the tangent space $\tansp{\mu}$ is characterized by the optimal transport map as follows (\cite{ambrosio2008gradient}, Theorem 8.5.1) : 
% \begin{proposition}
%     For any $\mu \in \P_2(\R^d)$, it holds that 
%     \begin{align*}
%         \tansp{\mu}
%         = \overline{\left \{ \lambda(f - \id) \in L^2(\mu)^d ~ \middle | ~ (\id \times f) \# \mu \in \Gamma_0(\mu, f \# \mu),~ \lambda > 0 \right \}}.
%     \end{align*}
% \end{proposition}
% In rough terms, this means that the optimal transport map is given by infinitesimal changes by the vector fields in the tangent space.

% The following proposition can be interpreted as $\tansp{\mu}$ contains some important map. 
% \begin{proposition}
%     \label{tangent_space_contains_grad_c1}
%     For $\mu \in \P_2(\R^d)$ and $\varphi \in C^1(\R^d)$, if $\nabla \varphi \in L^2(\mu)^d$, then $\nabla \varphi \in \tansp{\mu}$. 
%     In particular:
%     \begin{itemize}
%         \item $\id_{\R^d} \in \tansp{\mu}$.
%         \item For $\mu \in \P^a_2(\R^d)$ and $\nu \in \P_2(\R^d)$, $\otm{\mu}{\nu} \in \tansp{\mu}$.
%     \end{itemize}
% \end{proposition}

% \begin{proof}
%     Approximate $\phi$ using elements of $C_0^\infty(\R^d)$ via mollifiers. 
% \end{proof}

The following propositions provide sufficient conditions for a map to be an optimal transport map.

\begin{proposition}[\citet{santambrogio2015optimal}, Theorem 1.48]
    \label{prop_convex_otm}
    Suppose that $\mu \in \P_2(\R^d)$ and that $\varphi:\R^d \to \R$ is a  convex and $\mu$-a.e. differentiable function with $\nabla \varphi \in L^2(\mu)^d$. 
    Then, the map $\nabla \varphi$ provides the optimal transport map from $\mu$ to $(\nabla \varphi)\sharp \mu$. 
\end{proposition}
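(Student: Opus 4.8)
The plan is to show that the coupling $\gamma := (\id \times \nabla\varphi)\sharp\mu$ is optimal among all transport plans in $\Gamma(\mu,\nu)$, where $\nu := (\nabla\varphi)\sharp\mu$; once this is done, $\nabla\varphi$ is by definition the optimal transport map $\otm{\mu}{\nu}$. First I would verify $\nu \in \P_2(\R^d)$: since $\nabla\varphi \in L^2(\mu)^d$, we have $\int \norm{y}^2\,\nu(\diff y) = \int \norm{\nabla\varphi(x)}^2\,\mu(\diff x) < \infty$, so $W_2(\mu,\nu) < \infty$ and every $\gamma' \in \Gamma(\mu,\nu)$ has finite quadratic cost. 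Expanding $\norm{x-y}^2 = \norm{x}^2 - 2\inpro{x}{y} + \norm{y}^2$ and using that all $\gamma' \in \Gamma(\mu,\nu)$ share the same marginals, minimizing $\int\norm{x-y}^2\,\gamma'(\diff x\,\diff y)$ over $\Gamma(\mu,\nu)$ is equivalent to maximizing the correlation functional $\gamma' \mapsto \int \inpro{x}{y}\,\gamma'(\diff x\,\diff y)$. It therefore suffices to prove that $\gamma$ maximizes this correlation.

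The main tool is the Legendre–Fenchel conjugate $\varphi^*(y) := \sup_{x \in \R^d}\, \qty(\inpro{x}{y} - \varphi(x))$, together with the Fenchel–Young inequality $\varphi(x) + \varphi^*(y) \ge \inpro{x}{y}$, valid for all $x,y \in \R^d$, with equality exactly when $y \in \partial\varphi(x)$ — in particular at every point where $\varphi$ is differentiable, hence for $\mu$-a.e. $x$ with $y = \nabla\varphi(x)$. Integrating Fenchel–Young against an arbitrary $\gamma' \in \Gamma(\mu,\nu)$ yields
$$\int \inpro{x}{y}\,\gamma'(\diff x\,\diff y) \le \int \varphi(x)\,\mu(\diff x) + \int\varphi^*(y)\,\nu(\diff y),$$
whose right-hand side is independent of $\gamma'$. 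Because $\nu = (\nabla\varphi)\sharp\mu$, the second term equals $\int \varphi^*(\nabla\varphi(x))\,\mu(\diff x)$, so the right-hand side becomes $\int \qty(\varphi(x) + \varphi^*(\nabla\varphi(x)))\,\mu(\diff x) = \int \inpro{x}{\nabla\varphi(x)}\,\mu(\diff x)$ by the equality case; and this equals $\int\inpro{x}{y}\,\gamma(\diff x\,\diff y)$ for $\gamma = (\id\times\nabla\varphi)\sharp\mu$. Thus $\gamma$ attains the maximal correlation over $\Gamma(\mu,\nu)$, hence is an optimal plan, and since it is induced by the map $\nabla\varphi$, that map is $\otm{\mu}{\nu}$.

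The step I expect to require the most care is ensuring the integrals are well-defined, i.e. ruling out an $\infty-\infty$ ambiguity in $\int\varphi\,\diff\mu + \int\varphi^*\,\diff\nu$ before rearranging. To deal with this I would use that a finite convex function $\varphi$ is bounded below by an affine map $\inpro{a}{\cdot} + b$, so the negative part of $\varphi$ is $\mu$-integrable by the second-moment bound on $\mu$, and likewise $\varphi^*$ is bounded below by an affine map, so the negative part of $\varphi^*$ is $\nu$-integrable; then the integrated Fenchel–Young inequality is a legitimate statement about $(-\infty,+\infty]$-valued integrals, and finiteness of $\int\inpro{x}{\nabla\varphi(x)}\,\mu(\diff x)$ (Cauchy–Schwarz, using $\id,\nabla\varphi \in L^2(\mu)^d$) forces every term to be finite, legitimizing the rearrangement. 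An alternative route avoids this entirely via Rockafellar's theorem — the graph of $\partial\varphi$, and hence of $\nabla\varphi$, is cyclically monotone — together with the fact that any plan concentrated on a cyclically monotone set is optimal for the quadratic cost; but that fact is itself proved by essentially the same duality argument, so the integrability bookkeeping simply reappears there.
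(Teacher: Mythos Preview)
Your proof is correct and follows essentially the same approach as the paper's: both use the Fenchel--Young inequality $\varphi(x) + \varphi^*(y) \ge \inpro{x}{y}$, its equality case at $y = \nabla\varphi(x)$, and the reduction of quadratic-cost optimality to correlation maximization via the shared marginals. Your treatment is in fact more careful than the paper's, which omits the verification that $\nu \in \P_2(\R^d)$ and does not address the integrability bookkeeping you flag in your final paragraph.
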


\begin{proof}
    Let $\phi^*$ be a Legendre-Fenchel transformation of $\phi$. Then, it holds that 
    \begin{align*}
        \phi(x) + \phi^*(y) &\geq \inpro{x}{y} \quad \forall x,y \in \R, \\
        \phi(x) + \phi^*(y) &= \inpro{x}{y} \quad (\id \times \nabla \phi) \# \mu \text{-a.e.} ~(x,y).
    \end{align*}
    For any transport plan $\gamma \in \Gamma(\mu,\nabla \phi \# \mu)$, it holds that 
    \begin{align*}
        -2\int \inpro{x}{y} \gamma(\diff x \diff y) & \geq -2\int (\phi(x) + \phi^*(y)) \gamma(\diff x \diff y) \\
        &= -2\int \phi(x) \mu(\diff x) - \int \phi^*(\nabla \phi(x)) \mu(\diff x) \\
        &= -2\int \inpro{x}{y} (\id \times \nabla \phi) \# \mu.
    \end{align*}
    By adding $\int (\norm{x}^2 + \norm{y}^2) \gamma (\diff x \diff y) = \int (\norm{x}^2 + \norm{y}^2) (\id \times \nabla \phi) \# \mu(\diff x)$ both sides of the inequality, we obtain 
    \begin{align*}
        \int \norm{x-y}^2 \gamma(\diff x \diff y) \geq \int \norm{x-y}^2 (\id \times \nabla \phi) \# \mu(\diff x). 
    \end{align*}
    Then $\nabla \phi$ is a optimal transport map.
\end{proof}

% \begin{corollary}
%     \label{cor_convex_conbination_otm}
%     Suppose that $\mu \in \P_2(\R^d)$ and that $\phi,~ \psi :\R^d \to \R$ are convex and differentiable $\mu$-a.e. functions with $\nabla \phi, ~ \nabla \psi \in L^2(\mu)^d$. 
%     Then, for any $h \in [0,1]$, the map $(1-h)\nabla \varphi + h\nabla \psi$ provides the optimal transport map from $\mu$ to $((1-h)\nabla \varphi + h\nabla \psi)\# \mu$. 
% \end{corollary}

\begin{proposition}[\citet{lanzetti2022first}, Lemma 2.4]
    \label{prop_perturbation_otm}
    Let $\mu \in \P_2(\R^d)$ and $\phi \in C_c^\infty(\R^d)$. Then, there exists $\bar{h} > 0$ such that $\id + h \nabla \psi$ is an optimal transport map from $\mu$ to $(\id + h \nabla \psi) \# \mu$ for $h \in [- \bar{h}, \bar{h}]$. Furthermore,
    \begin{equation*}
    W_2(\mu, (\id + h \nabla \psi) \# \mu) = h \norm{\nabla \psi}_{L^2(\mu)}.
    \end{equation*}
\end{proposition}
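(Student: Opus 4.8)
The plan is to recognize $\id + h\nabla\psi$ as the gradient of a convex potential for all sufficiently small $|h|$, apply the Brenier-type criterion of Proposition \ref{prop_convex_otm} to deduce optimality of the induced transport plan, and then read off the distance by substituting this explicit optimal map into the definition of $W_2$.

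First I would write $\id + h\nabla\psi = \nabla\Phi_h$ with $\Phi_h(x) := \tfrac12\norm{x}^2 + h\,\psi(x)$, and use the compact support of $\psi$ to obtain a finite bound $M := \sup_{x\in\R^d}\norm{\nabla^2\psi(x)}$. Then $\nabla^2\Phi_h(x) = \id + h\,\nabla^2\psi(x) \succeq (1 - |h|\,M)\,\id$, which is positive definite as soon as $|h| < 1/M$; taking $\bar h$ to be any positive number smaller than $1/M$ (and arbitrary if $M = 0$, in which case $\psi \equiv 0$ and everything is trivial), the potential $\Phi_h$ is convex and $C^\infty$ — in particular differentiable everywhere — for all $h \in [-\bar h,\bar h]$. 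Since $\nabla\psi$ is bounded with compact support and $\mu$ has finite second moment, $\nabla\Phi_h(x) = x + h\,\nabla\psi(x)$ lies in $L^2(\mu)^d$, so Proposition \ref{prop_convex_otm} applies and shows that $\id + h\nabla\psi$ is the optimal transport map from $\mu$ to $\nu_h := (\id + h\nabla\psi)\#\mu$, i.e. $(\id \times (\id + h\nabla\psi))\#\mu \in \Gamma_o(\mu,\nu_h)$.

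It then remains to compute the distance, which is immediate once optimality is known: by the definition of $W_2$,
\begin{align*}
    W_2(\mu,\nu_h)^2 = \int \norm{(\id + h\nabla\psi)(x) - x}^2\,\mu(\diff x) = h^2\int\norm{\nabla\psi(x)}^2\,\mu(\diff x) = h^2\,\norm{\nabla\psi}_{L^2(\mu)}^2,
\end{align*}
so that $W_2(\mu,\nu_h) = |h|\,\norm{\nabla\psi}_{L^2(\mu)}$, the asserted identity. The only genuinely delicate point in this argument is the convexity of $\Phi_h$ for small $h$, which relies crucially on the \emph{uniform} Hessian bound afforded by the compact support of $\psi$; for a merely smooth $\psi$ this would fail globally and one could only assert optimality on bounded regions. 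Everything else — membership of $\nabla\psi$ in $L^2(\mu)^d$, and the substitution into the definition of $W_2$ — is routine.
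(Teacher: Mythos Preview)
Your proof is correct and follows essentially the same approach as the paper: write $\id + h\nabla\psi$ as the gradient of $\Phi_h(x) = \tfrac12\norm{x}^2 + h\psi(x)$, use the uniform Hessian bound from compact support to ensure convexity for small $|h|$, invoke Proposition~\ref{prop_convex_otm}, and then compute $W_2$ directly from the optimal map. If anything, your version is slightly more careful in its choice of $\bar h$ and in writing $|h|$ in the final distance formula.
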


\begin{proof}
    From $\psi \in C_c^\infty(\R^d)$, we have $\sup_{x \in \R^d} \norm{\nabla \psi}(x) < \infty,~ \sup_{x \in \R^d} \norm{\nabla^2 \psi(x)} < \infty$. Then, by taking $\bar{h} = \sup_{x \in \R^d} \norm{\nabla^2 \psi(x)}$, we verify that $\frac{1}{2} \norm{x}^2 + h \psi(x)$ is convex and $\nabla \qty(\frac{1}{2} \norm{x}^2 + h \psi(x)) = (\id + h \nabla \psi)(x)$ is $L^2(\mu)$ integrable for all $h \in [-\bar{h}.\bar{h}]$. Noting that $\nabla \qty(\frac{1}{2} \norm{x}^2 + h \psi(x)) = (\id + h \nabla \psi)(x)$ and applying Proposition \ref{prop_convex_otm}, we obtain that $\id + h \nabla \psi$ yields an optimal transport map. Consequently, the Wasserstein distance between $\mu$ and $(\id + h \nabla \psi) \# \mu$ is computed as:
    \begin{align*}
        W_2(\mu,(\id + h \nabla \psi)\#\mu) &= \qty(\int \norm{x + h \nabla \psi(x) - x}^2 \mu(\diff x))^{\frac{1}{2}}= h\norm{\nabla \psi}_{L^2(\mu)}.
    \end{align*}
\end{proof}

\begin{corollary}
    \label{cor_perturbation_otm}
    Let $\mu \in \P_2(\R^d)$ and $\phi \in C^2(\R^d)$ satisfy $\norm{\nabla \phi}_{L^2(\mu)} < \infty$, $\mu \mathrm{-ess sup}_{x \in \R^d} \norm{\nabla^2 \phi(x)} < \infty $. Then, there exists $\bar{h} > 0$ such that $\id + h \nabla \phi$ is an optimal transport map from $\mu$ to $(\id + h \nabla \phi) \# \mu$ for $h \in [- \bar{h}, \bar{h}]$. Furthermore, $W_2(\mu, (\id + h \nabla \phi) \# \mu) = h \norm{\nabla \phi}_{L^2(\mu)}$ holds. 
\end{corollary}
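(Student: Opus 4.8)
The plan is to follow the proof of Proposition~\ref{prop_perturbation_otm} almost verbatim, the only change being that the global sup bounds on $\nabla\phi$ and $\nabla^2\phi$ are weakened to the $L^2(\mu)$ and $\mu$-essential-sup hypotheses at hand. Write $M := \mu\text{-ess sup}_{x}\,\norm{\nabla^2\phi(x)}$ and set $\bar h := M^{-1}$ (with $\bar h$ arbitrary when $M=0$). For $|h|\le\bar h$, consider $g_h(x) := \tfrac12\norm{x}^2 + h\phi(x)$. First I would record two facts: its classical Hessian is $\nabla^2 g_h(x) = \id + h\nabla^2\phi(x) \succeq (1-|h|M)\id \succeq O$ for $\mu$-a.e.\ $x$, and its gradient $\nabla g_h = \id + h\nabla\phi$ lies in $L^2(\mu)^d$ because $\mu\in\P_2(\R^d)$ gives $\id\in L^2(\mu)^d$ and $\norm{\nabla\phi}_{L^2(\mu)}<\infty$ by hypothesis. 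Granting that $g_h$ is convex on all of $\R^d$, Proposition~\ref{prop_convex_otm} applied with $\varphi = g_h$ shows that $\nabla g_h = \id + h\nabla\phi$ is the optimal transport map from $\mu$ to $(\id+h\nabla\phi)\#\mu$. The distance identity then follows by substituting this optimal map into Definition~\ref{def_wasserstein_distance_appendix}:
\[
W_2\bigl(\mu,\,(\id+h\nabla\phi)\#\mu\bigr)^2 = \int \norm{(\id+h\nabla\phi)(x)-x}^2\,\mu(\diff x) = h^2\,\norm{\nabla\phi}_{L^2(\mu)}^2 ,
\]
so $W_2(\mu,(\id+h\nabla\phi)\#\mu) = |h|\,\norm{\nabla\phi}_{L^2(\mu)}$.

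The only nontrivial step — and the one I expect to be the main obstacle — is upgrading the $\mu$-a.e.\ bound $\nabla^2 g_h\succeq O$ to genuine convexity of $g_h$ on all of $\R^d$, which is exactly what Proposition~\ref{prop_convex_otm} requires. Since $\nabla^2\phi$ is continuous, the set $\{x : \norm{\nabla^2\phi(x)} > M\}$ is open; being $\mu$-null, it is disjoint from $\operatorname{supp}\mu$ (a point of $\operatorname{supp}\mu$ lying in it would have a neighborhood of positive $\mu$-mass on which $\norm{\nabla^2\phi}>M$, contradicting the definition of $M$). Hence $\nabla^2 g_h(x)\succeq O$ for every $x\in\operatorname{supp}\mu$. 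When $\mu$ has full support — in particular for the absolutely continuous measures with everywhere-positive density that arise in our analysis — this already gives $\nabla^2 g_h\succeq O$ on $\R^d$, hence convexity, and the argument closes.

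For a general $\mu$ whose support is a proper (possibly non-convex, possibly unbounded) subset of $\R^d$, one instead needs to produce some convex $\varphi:\R^d\to\R$ that coincides with $g_h$, and whose gradient coincides with $\nabla g_h$, on a set of full $\mu$-measure; I would obtain this via a convex-extension argument applied to the restriction of $g_h$ to a neighborhood of $\operatorname{supp}\mu$ on which $g_h$ is already convex, and then invoke Proposition~\ref{prop_convex_otm} for that extension. Modulo this extension step, the remaining ingredients — the Hessian computation, the $L^2$-integrability of $\nabla g_h$, and the distance formula — are routine and parallel the proof of Proposition~\ref{prop_perturbation_otm}.
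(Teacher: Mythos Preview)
The paper states this as a corollary without proof, so mimicking the proof of Proposition~\ref{prop_perturbation_otm} is the intended route, and for full-support $\mu$ your argument is correct and complete.

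However, the convex-extension step you propose for general $\mu$ cannot be carried out with your choice of $\bar h$, and in fact the corollary as written is false. Take $d=1$, $\mu=\sum_{n\ge 0}2^{-n-1}\delta_n\in\P_2(\R)$, and any $\phi\in C^2(\R)$ with $\phi'(n)=(-1)^n 2^{n/4}$ and $\phi''(n)=0$ for every $n\ge 0$. Then $M=\mu\text{-}\mathrm{ess\,sup}\,|\phi''|=0$ and $\norm{\phi'}_{L^2(\mu)}^2=\tfrac12\sum_{n\ge 0}2^{-n/2}<\infty$, so the hypotheses hold. But for \emph{every} $h>0$, the map $T=\id+h\phi'$ satisfies $T(n)>T(n+1)$ for all sufficiently large even $n$, and rerouting mass $2^{-n-2}$ in the plan $(\id\times T)\#\mu$ so that part of $n$ goes to $T(n+1)$ and $n+1$ goes to $T(n)$ strictly lowers the transport cost by $2^{-n-1}\bigl(T(n)-T(n+1)\bigr)>0$. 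Hence $T$ is never optimal, and no convex $\psi$ on $\R$ can satisfy $\psi'=T$ on $\operatorname{supp}\mu$ (since $\psi'$ would have to be nondecreasing), so there is nothing to extend. The result is valid under the stronger hypothesis $\sup_x\norm{\nabla^2\phi(x)}<\infty$ --- which is what the compact-support assumption in Proposition~\ref{prop_perturbation_otm} actually delivers --- or for full-support $\mu$ as you observe, but not under the stated $\mu$-essential-sup bound alone.
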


\subsection{Wasserstein Gradient}\label{app:wassgrad}

This section provides additional discussion on the relationship between the Wasserstein gradient and the first variation. 
\begin{definition}[Wasserstein gradient, \citet{lanzetti2022first,bonnet2019pontryagin}]
    \label{def_wasserstein_gradient_appendix}
    The Wasserstein gradient $\nabla_\mu F : \P_2(\R^d) \times \R^d \to \R^d$ at $\mu \in \P_2(\R^d)$ is defined as a vector-field such that for any $\nu \in \P_2(\R^d)$ and $\gamma \in \Gamma_0(\mu,\nu)$, it holds that 
    \begin{align}
        \label{appendix_w_g}
        F(\nu) - F(\mu) 
        = \int \nabla_\mu F(\mu, x)^\top (y-x) \gamma(\diff x \diff y) + O\qty(W_2(\mu,\nu)^2). 
    \end{align}
    In particular, there exists a unique element that satisfies $\nabla_\mu F(\mu) \in \tansp{\mu}$ (\citet{lanzetti2022first} Proposition 2.5), and we take this as the Wasserstein gradient. 
\end{definition}

% In this paper, we consider up to second-order optimality and so, define a class of sufficiently smooth functions suitable for this purpose. It should be noted that this definition is introduced for convenience and is used exclusively within the context of this paper.
% \begin{definition}{(sufficiently smooth function)}
%     A functional $F : \P_2(\R^d) \to \R$ is said to be sufficiently smooth, if 
%     \begin{itemize}
%         \item $F$ admits Wasserstein gradient $\nabla_\mu F:\P_2(\R^d) \times \R^d \to \R^d$ at all $\mu \in \P_2(\R^d)$ and $\nabla_\mu F(\mu)$ is $L^2(\mu)$ integrable. 
%         \item $\nabla_\mu F(\mu,x)$ admits Wasserstein gradient $\nabla_\mu^2 F : \P_2(\R^d) \times \R^d \times \R^d \to \R^{d\times d}$ and is differentiable with respect to the second coordinate $x$ for any $\mu \in \P_2(\R^d)$ and $\mu$-a.e. $x$. Furthermore, $\nabla_\mu^2 F(\mu)$ is $L^2(\mu \otimes \mu)$ integrable and $\mu \text{-ess sup} \norm{\nabla \nabla_\mu F(\mu,x)} < \infty$ holds. 
%     \end{itemize}
% \end{definition}
% $\nabla_\mu F$ is associated with the first-order condition, while $\nabla_\mu^2 F$ and $\nabla \nabla_\mu F$ are associated with the second-order condition.
% Moreover, the integrability condition requires that the Wasserstein gradient and related quantities exhibit regular behavior.

The first variation $\var{F}{\mu}$ appearing in the WGF equation (\ref{WGF}) is defined as follows. 
\begin{definition}[First variation]
    \label{def_first_variation_appendix}
    The first variation $\var{F}{\mu} : \P_2(\R^d) \times \R^d \to \R$ is defined as a functional satisfying for any $\nu \in \P_2(\R^d)$, 
    \begin{align}
        \label{first_variation_appendix}
        \left . \frac{\diff}{\diff h} \right |_{h=0} F(\mu + h (\nu - \mu)) 
        = \int \var{F}{\mu} (\mu,x) (\mu - \nu) (\diff x). 
    \end{align}
    The first variation, if it exists, is unique up to a constant difference.
\end{definition}

Recall the definition of the total variation:

\begin{definition}[Total variation]
    \label{def_total_variation}
    Suppose that $\mu,~ \nu \in \P_2(\R^d)$. The total variation between $\mu$ and $\nu$ is: 
    \begin{align*}
        \mathrm{TV} (\mu,\nu) \coloneq \sup_{B \in \mathcal{B}(\R^d)} \qty|\mu(B) - \nu(B)| 
        = \frac{1}{2} \sup_{\mathcal{C} \subset \mathcal{B}(\R^d),\cup \mathcal{C}=\R^d} \sum_{B \in \mathcal{C}} \qty|\mu(B) - \nu(B)|.
    \end{align*}
\end{definition}
As is evident from the definition, the mixture $(1 - h)\mu + h\nu$, defines the constant-speed geodesic between $\mu,\nu$ in the sense of total variation.
Thus, the first variation can be interpreted as the coefficient of differentiation along the geodesic with respect to the total variation distance.
On the other hand, as observed in the previous section, the geodesic of $W_2$ is represented as $((1-h)p_1 + hp_2) \# \gamma$ for $\gamma \in \Gamma_0(\mu,\nu)$.
Therefore, the Wasserstein gradient can be understood as the coefficient of differentiation along the geodesic with respect to the Wasserstein distance.\footnote{There is no strict dominance or subordination between total variation and Wasserstein distance in $\R^d$.} 
%For instance, when the supports of $\mu$ and $\nu$ do not overlap, the total variation between $\mu$ and $\nu$ is $2$. However, if their supports are sufficiently small and close to each other, the Wasserstein distance can be small. On the other hand, when the diameter of the support set of measures is large, it is possible for the total variation to be small while the Wasserstein distance is large. Moreover, for two Dirac measures $\delta_x$ and $\delta_y$ at points $x, y \in \R^d$, the geodesic in the Wasserstein space is not the convex combination in $\P_2(\R^n)$, $(1-h)\delta_x + h\delta_y , (h \in [0, 1])$, but rather the Dirac measure on the convex combination in $\R^n$, $\delta_{(1-h)x + hy}$. Thus, in general, the first variation and the Wasserstein gradient should be understood as distinct mathematical concepts defined through different principles.

Through a naive but mathematically non-rigorous calculation, we have 
\begin{align*}
    F(\nu) - F(\mu) 
    &\approx \int \var{F}{\mu}(\mu,x) (\nu - \mu)(\diff x) \\
    &= \int \qty(\var{F}{\mu}(\mu,y) - \var{F}{\mu}(\mu,x) ) \gamma(\diff x \diff y) \\
    &\approx \int \qty(\inpro{\nabla \var{F}{\mu}(\mu,x)}{y-x} + \frac{1}{2}\inpro{y-x}{\nabla^2 \var{F}{\mu}(\mu,x) (y-x)}) \gamma(\diff x \diff y) \\
    &= \int \inpro{\nabla \var{F}{\mu}(\mu,x)}{y-x} \gamma(\diff x \diff y) + O\qty(W_2(\mu,\nu)^2). 
\end{align*}
This points to $\nabla_{\mu}F = \nabla \var{F}{\mu}(\mu)$. 
This equation does not hold without certain conditions, but it is valid and implicitly assumed to hold in many practical cases. \citet{kent2021modified} make a brief mention of this frustration. 
%This dissatisfaction might stem from the difference of the underlying metric structure between first variation and Wasserstein gradient. 
The equivalence $\nabla_\mu \cdot = \nabla \var{\cdot}{\mu}$ specifically provides the following correspondence.
\begin{align*}
    \nabla_\mu F (\mu, x) &= \nabla \var{F}{\mu} (\mu,x), \\
    \nabla_\mu^2 F (\mu, x, y) &= \nabla_x \nabla_y \vvar{F}{\mu} (\mu, x, y), \\
    \nabla \nabla_\mu F(\mu,x) &= \nabla^2 \var{F}{\mu} (\mu, x).
\end{align*}

% \subsubsection{Wasserstein Gradient Flows}
% In optimization problems in Euclidean spaces, the gradient descent method is a widely used standard technique. This method is based on the gradient flow, a differential equation that evolves over time in the gradient descent direction. Using the concepts introduced in previous sections, we explain that the Wasserstein gradient flow (\ref{WGF}) is the counterpart of the gradient flow in Euclidean spaces, adapted to the space of probability measures:
% \begin{align}
%     \label{WGF_restate}
%     \partial_t \mu_t + \nabla \cdot \qty(-\nabla \var{F}{\mu}(\mu_t) \mu_t) = 0.
% \end{align}
% From the results presented in the previous sections (\ref{approx_continuous_equation}), the continuity equation $\partial_t \mu_t + \nabla \cdot (v_t \mu_t) = 0$ can be locally approximated by the pushforward $(\id + \Delta t v_t)\# \mu_t$ when $ v_t \in \tansp{\mu_t} $ holds. According to Proposition \ref{first_order_talor_expansion}, the direction $ v_t = - \nabla \var{F}{\mu}(\mu_t) $ yields the greatest decrease in the objective function $ F $. Therefore, WGF can be regarded as a gradient descent method in the space of probability measures. Furthermore, by the chain rule (Proposition \ref{chain_rule}), it holde that 
% \begin{align*}
%     \frac{\diff}{\diff t}F(\mu_t) = - \norm{\nabla \var{F}{\mu}(\mu_t)}_{L^2(\mu_t)}^2 \leq 0,
% \end{align*}
% indicating that the Wasserstein gradient flow ensures non-increase of the objective function.
\newpage
% Bonnet and Franlowska, 2021 や Lanzetti et al., 2022 が確率測度汎関数の一次最適性を議論して論文にしていたので、二次最適性の議論もちゃんと結果にならないかなと思って書いています。（Bonnet and Frankowska や Lanzetti らはそれぞれ 最適制御, DRO（Distributional Robust Optimization）への応用を念頭に確率測度最適化を考えて、拘束条件のものもとでの一次最適性の条件まで出しています。）

\section{Optimality Conditions for Functionals on Probability Space}
\label{appendix_c}
In this section, we discuss the details of the optimality conditions for functionals on probability measures based on the Wasserstein gradient.
The first-order optimality conditions have been studied in \citet{bonnet2019pontryagin} and \citet{lanzetti2022first}, and we begin by reviewing these works.
We then extend these results to second-order conditions. 

\subsection{First-order Condition}
The first order condition for probability measure optimization is studied by \citet{lanzetti2022first}, including constrained optimization problem. Here, we review the results of the first-order condition for unconstrained problem. In the next section, we extend these results and obtain the second-order condition.

The following proposition shows that Equation (\ref{appendix_w_g}) holds even when $\gamma$ is not optimal.
\begin{proposition}
    \label{prop_strong_first_order_expansion}
    Let $F: \P_2(\R^d) \to \R$ be a functional on probability space, and differentiable at $\mu$. Then, for any transport plan $\gamma \in \Gamma(\mu,\nu)$ (not necessarily an optimal one),
    \begin{align*}
        F(\nu) - F(\mu) = \int \nabla_\mu F(\mu,x)^\top (y-x) \gamma(\diff x \diff y) + O\qty(\int \norm{x-y}^2 \gamma(\diff x \diff y)).
    \end{align*}
\end{proposition}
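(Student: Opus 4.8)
The plan is to upgrade the definition of the Wasserstein gradient, which a priori only controls the first-order expansion along \emph{optimal} transport plans, to an arbitrary (possibly suboptimal) plan $\gamma \in \Gamma(\mu,\nu)$, at the cost of replacing the error term $O(W_2(\mu,\nu)^2)$ by the (larger) quantity $O(\int \norm{x-y}^2 \gamma(\diff x \diff y))$. The natural strategy is to interpolate: given $\gamma \in \Gamma(\mu,\nu)$, form the curve $\nu_t = ((1-t)p_1 + t p_2)\#\gamma$ for $t \in [0,1]$, so that $\nu_0 = \mu$ and $\nu_1 = \nu$. Along this curve the ``displacement'' $y - x$ is realized coordinatewise by $\gamma$, which is exactly what we want appearing in the first-order term. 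The idea is to differentiate $t \mapsto F(\nu_t)$, or rather to apply the definition of the Wasserstein gradient on short subintervals and sum up, controlling the accumulated error.

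Concretely, I would first observe that $\nu_t$ is an absolutely continuous curve whose velocity is (a version of) the constant vector field pushing $x$ towards $y$; more precisely, by Proposition~\ref{prop_geodesic} if $\gamma$ were optimal this would be a geodesic, but for general $\gamma$ we still have $W_2(\nu_s,\nu_t) \le |t-s|\,(\int\norm{x-y}^2\gamma)^{1/2}$ by using $((1-s)p_1+sp_2,(1-t)p_1+tp_2)\#\gamma$ as a (suboptimal) coupling of $\nu_s$ and $\nu_t$. Then I would apply the defining property of $\nabla_\mu F$ at the base point $\mu = \nu_0$ with target $\nu = \nu_1$, but crucially using a \emph{suboptimal} coupling between them — namely $\gamma$ itself, rescaled — is not directly allowed by the definition, so instead one applies the definition at each $\nu_t$ with an infinitesimal increment. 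The cleanest route: write
\begin{align*}
F(\nu) - F(\mu) = \int_0^1 \frac{\diff}{\diff t} F(\nu_t)\, \diff t,
\end{align*}
identify $\frac{\diff}{\diff t}F(\nu_t) = \int \nabla_\mu F(\nu_t, z)^\top\, w_t(z)\, \nu_t(\diff z)$ where $w_t$ is the velocity field of the curve (via the chain rule / first-order expansion applied locally), and then use the Lipschitz-type regularity of $\nabla_\mu F$ (or just continuity/boundedness implicit in differentiability) to replace $\nabla_\mu F(\nu_t,\cdot)$ by $\nabla_\mu F(\mu,\cdot)$ up to an error of order $W_2(\mu,\nu_t) \le (\int\norm{x-y}^2\gamma)^{1/2}$, which after integrating against $|w_t| \sim (\int\norm{x-y}^2\gamma)^{1/2}$ yields the claimed $O(\int\norm{x-y}^2\gamma)$ remainder.

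Alternatively — and this may be the slicker proof the authors intend — one can avoid the full curve and argue directly: pick an optimal plan $\gamma_o \in \Gamma_o(\mu,\nu)$, apply the definition of $\nabla_\mu F$ to get $F(\nu)-F(\mu) = \int \nabla_\mu F(\mu,x)^\top(y-x)\,\gamma_o(\diff x\diff y) + O(W_2(\mu,\nu)^2)$, and then show that for \emph{any} two plans $\gamma, \gamma_o \in \Gamma(\mu,\nu)$ the linear functionals $\int \nabla_\mu F(\mu,x)^\top(y-x)\,\gamma(\diff x\diff y)$ differ by at most $O(\int\norm{x-y}^2\gamma) + O(W_2(\mu,\nu)^2)$ — but this last claim is false in general (the two integrals can genuinely differ), so this route does not work, confirming that the interpolation argument is the right one. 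I expect the main obstacle to be making the ``differentiate along $\nu_t$ and swap $\nabla_\mu F(\nu_t)$ for $\nabla_\mu F(\mu)$'' step rigorous: one needs enough regularity of $\mu \mapsto \nabla_\mu F(\mu)$ (continuity in $W_2$ with a quantitative modulus, ideally Lipschitz as in Assumption~\ref{assumption_lipschitz}, though for this foundational proposition one wants the weakest hypothesis) to bound $\int \langle \nabla_\mu F(\nu_t,z) - \nabla_\mu F(\mu, z'), w_t\rangle$ by something integrable in $t$ summing to $O(\int\norm{x-y}^2\gamma)$, and to justify the chain-rule identification of $\frac{\diff}{\diff t}F(\nu_t)$ for this particular (non-geodesic) curve. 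Once that regularity input is fixed, the remaining estimates are routine Cauchy–Schwarz bookkeeping.
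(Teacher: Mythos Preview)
The paper does not actually prove this proposition; it simply cites \citet{lanzetti2022first}. So there is no in-paper argument to compare against, and your task is really to reconstruct a viable proof.

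Your interpolation route can be made to work, but only under hypotheses strictly stronger than ``differentiable at $\mu$'': you need $F$ differentiable along the whole curve $\nu_t$ \emph{and} some quantitative continuity of $\mu \mapsto \nabla_\mu F(\mu)$ to swap $\nabla_\mu F(\nu_t)$ for $\nabla_\mu F(\mu)$. You flag this yourself, but it is a genuine gap relative to the stated hypothesis.

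More importantly, you dismiss the direct route too hastily. The claim you call ``false in general'' is in fact true once you use the one structural fact you are ignoring: $\nabla_\mu F(\mu,\cdot)$ is a \emph{gradient field}, namely $\nabla_\mu F(\mu,x) = \nabla_x \var{F}{\mu}(\mu,x)$, and under the paper's regularity $\nabla\nabla_\mu F(\mu,\cdot) = \nabla^2_x\var{F}{\mu}(\mu,\cdot)$ is bounded. Writing $\phi(x) = \var{F}{\mu}(\mu,x)$, Taylor's theorem gives
\[
\int \nabla\phi(x)^\top(y-x)\,\gamma(\diff x\diff y) = \int \phi\,\diff\nu - \int\phi\,\diff\mu + O\!\left(\int\|x-y\|^2\gamma\right),
\]
and the first two terms depend only on the marginals $\mu,\nu$, \emph{not} on the plan. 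Hence for any $\gamma \in \Gamma(\mu,\nu)$ and any optimal $\gamma_o$,
\[
\int \nabla_\mu F(\mu,x)^\top(y-x)\,\gamma - \int \nabla_\mu F(\mu,x)^\top(y-x)\,\gamma_o = O\!\left(\int\|x-y\|^2\gamma\right) + O(W_2(\mu,\nu)^2),
\]
and since $W_2(\mu,\nu)^2 \le \int\|x-y\|^2\gamma$, combining with the definition of $\nabla_\mu F$ (applied with $\gamma_o$) gives the result in one line. This direct argument needs only differentiability at $\mu$ plus boundedness of $\nabla\nabla_\mu F(\mu,\cdot)$; it does not require differentiability anywhere else, and it avoids the chain-rule machinery entirely.
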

\begin{proof}
    See \citet{lanzetti2022first}. 
\end{proof}

The following two propositions are useful for computing changes in the objective function.
The next proposition provides a first-order Taylor expansion for infinitesimal perturbations induced by a vector field.
\begin{proposition}
    \label{prop_first_order_perturbation}
    For a sufficiently smooth functional $F:\P_2(\R^d)\to \R$ and a vector field $v \in L^2(\mu)^d~(\mu \in \P_2(\R^d)$, it holds that  
    \begin{align*}
        F((\id + h v)\# \mu) - F(\mu) = h \inpro{\nabla_{\mu} F(\mu)}{v}_{L^2(\mu)} + O(h^2).
    \end{align*}
\end{proposition}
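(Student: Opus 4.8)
\textbf{Proof proposal for Proposition~\ref{prop_first_order_perturbation}.}
The plan is to reduce the statement directly to the definition of the Wasserstein gradient via the transport plan $\gamma = (\id \times (\id + hv))\#\mu$ and to control the error term uniformly in $h$ using sufficient smoothness. First I would fix $\mu \in \P_2(\R^d)$ and $v \in L^2(\mu)^d$, and observe that the pushforward $\nu_h \coloneq (\id + hv)\#\mu$ is connected to $\mu$ by the (not necessarily optimal) transport plan $\gamma_h = (\id \times (\id + hv))\#\mu \in \Gamma(\mu,\nu_h)$, along which $y - x = hv(x)$. Then I would invoke Proposition~\ref{prop_strong_first_order_expansion}, which crucially allows the use of a non-optimal $\gamma$, to write
\begin{align*}
    F(\nu_h) - F(\mu)
    &= \int \nabla_\mu F(\mu,x)^\top (y-x)\, \gamma_h(\diff x \diff y) + O\!\left(\int \norm{x-y}^2 \gamma_h(\diff x \diff y)\right) \\
    &= h \int \nabla_\mu F(\mu,x)^\top v(x)\, \mu(\diff x) + O\!\left(h^2 \int \norm{v(x)}^2 \mu(\diff x)\right) \\
    &= h \inpro{\nabla_\mu F(\mu)}{v}_{L^2(\mu)} + O(h^2),
\end{align*}
where the last error term is finite since $v \in L^2(\mu)^d$ and $\norm{v}_{L^2(\mu)}^2$ is absorbed into the implied constant.

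There is one subtlety that must be addressed carefully: the $O(\cdot)$ bound in Proposition~\ref{prop_strong_first_order_expansion} is a priori a statement as $\nu \to \mu$, i.e. the implied constant could depend on the ``scale'' of the perturbation, and we are applying it along a one-parameter family. So the key step is to check that the remainder is genuinely quadratic \emph{uniformly in $h$} for $h$ in a neighborhood of $0$. This is where sufficient smoothness of $F$ enters — the boundedness of $\nabla\nabla_\mu F$ (equivalently $H'_\mu$) and the $L^2(\mu\otimes\mu)$-integrability of $\nabla_\mu^2 F$ give a uniform second-order control, so that the constant in $O(h^2)$ depends only on $F$, $\mu$, and $\norm{v}_{L^2(\mu)}$, not on $h$. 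Concretely, I would either cite the uniform version of the expansion that underlies Proposition~\ref{prop_strong_first_order_expansion}, or run a short self-contained argument: interpolate $\mu_s = (\id + shv)\#\mu$ for $s \in [0,1]$, differentiate $s \mapsto F(\mu_s)$ using the chain rule (Proposition~\ref{prop_chain_rule}), and bound the difference between $\frac{d}{ds}F(\mu_s)$ at $s$ and at $s=0$ by the Hessian operator norm times $s h \norm{v}_{L^2(\mu)}^2$, then integrate in $s$.

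The main obstacle I anticipate is precisely this uniformity-of-the-remainder issue, together with making sure $\id + hv$ is a legitimate object to push forward (it always is, as a measurable map, so no optimality of the plan is needed — which is exactly why Proposition~\ref{prop_strong_first_order_expansion} rather than the bare definition of $\nabla_\mu F$ is the right tool). Everything else — expanding $y - x = hv(x)$ along $\gamma_h$, pulling the factor $h$ out, recognizing the resulting integral as the $L^2(\mu)$ inner product — is routine. A minor additional point worth noting is that $\nabla_\mu F(\mu) \in \tansp{\mu}$ while $v$ need not be; but the inner product $\inpro{\nabla_\mu F(\mu)}{v}_{L^2(\mu)}$ is still well-defined and the computation above does not require $v$ to be tangent, so no projection step is needed in the statement as written.
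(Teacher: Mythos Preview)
Your proposal is correct and follows essentially the same approach as the paper: apply Proposition~\ref{prop_strong_first_order_expansion} with $\nu = (\id + hv)\#\mu$ and $\gamma = (\id \times (\id + hv))\#\mu$, then compute the linear term as $h\inpro{\nabla_\mu F(\mu)}{v}_{L^2(\mu)}$ and the transport cost as $h^2\norm{v}_{L^2(\mu)}^2$. The paper's proof does exactly this without the additional discussion of remainder uniformity or tangency of $v$; your extra caution on those points is reasonable but not something the paper treats explicitly.
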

\begin{proof}
    Consider the situation of Proposition \ref{prop_strong_first_order_expansion} where $\nu = (\id + h v) \# \mu, ~\gamma = (\id \times (\id + h v)) \# \mu$. Then,  
    \begin{align*}
        \int \nabla_\mu \inpro{F(\mu,x)}{y-x} \gamma(\diff x \diff y) &= \int \inpro{\nabla_\mu F(\mu,x)}{(x + h v(x) - x} \mu(\diff x) \\
        &= h \inpro{\nabla_\mu F(\mu)}{v}_{L^2(\mu)}, 
    \end{align*}
    and also
    \begin{align*}
        \int \norm{x-y}^2 \gamma(\diff x \diff y) &= \int \norm{(x + hv(x)) - x}^2 \mu(\diff x) = h^2 \norm{v}_{L^2(\mu)}^2
    \end{align*}
    holds. So, by Proposition \ref{prop_strong_first_order_expansion}, the claim follows. 
\end{proof}

The following proposition provides a formula for differentiating the objective along an absolutely continuous curve.
This corresponds to the chain rule for differentiable curves in Euclidean space.

\begin{proposition}[Chain rule]
    \label{prop_chain_rule}
    For a sufficiently smooth functional $F:\P_2(\R^d) \to \R$ and absolutely continuous curve $\mu_t$ satisfying a continuous equation $\partial_t \mu_t + \nabla \cdot (v_t \mu_t) = 0$, $t \mapsto F(\mu_t)$ is differentiable, and the following holds:  
    \begin{align*}
        \frac{\diff}{\diff t}F(\mu_t) = \inpro{\nabla_{\mu} F(\mu_t)}{v_t}_{L^2(\mu_t)}.
    \end{align*}
\end{proposition}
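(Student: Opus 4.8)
The plan is to reduce the derivative of $t \mapsto F(\mu_t)$ to the first-order perturbation formula of Proposition~\ref{prop_first_order_perturbation}, exploiting the fact that an absolutely continuous curve is, to first order in $h$, indistinguishable from a pushforward along its velocity field. Fix a time $t$ at which the velocity $v_t \in \tansp{\mu_t}$ is defined (which holds for a.e.\ $t$ by Proposition~\ref{prop_ac_continuous_equation}), and introduce the comparison curve $\nu_h := (\id + h v_t)\# \mu_t$ for $|h|$ small. I would then split the increment as
\begin{align*}
    F(\mu_{t+h}) - F(\mu_t) = \big(F(\mu_{t+h}) - F(\nu_h)\big) + \big(F(\nu_h) - F(\mu_t)\big).
\end{align*}

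For the second bracket, Proposition~\ref{prop_first_order_perturbation} gives directly $F(\nu_h) - F(\mu_t) = h\,\inpro{\nabla_\mu F(\mu_t)}{v_t}_{L^2(\mu_t)} + O(h^2)$, which already carries the claimed derivative. It remains to show the first bracket is $o(h)$. For this I would apply the Wasserstein-gradient expansion of Proposition~\ref{prop_strong_first_order_expansion} with base point $\nu_h$ and target $\mu_{t+h}$, valid for an \emph{arbitrary} coupling, yielding
\begin{align*}
    |F(\mu_{t+h}) - F(\nu_h)| \le \|\nabla_\mu F(\nu_h)\|_{L^2(\nu_h)}\, W_2(\nu_h,\mu_{t+h}) + C\, W_2(\nu_h,\mu_{t+h})^2.
\end{align*}
By Proposition~\ref{prop_infinitesimal_accurve}, $W_2(\mu_{t+h},\nu_h) = o(h)$, so provided $\|\nabla_\mu F(\nu_h)\|_{L^2(\nu_h)}$ and the constant $C$ remain bounded as $h \to 0$, the first bracket is $o(h)$. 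Combining the two estimates gives $F(\mu_{t+h}) - F(\mu_t) = h\,\inpro{\nabla_\mu F(\mu_t)}{v_t}_{L^2(\mu_t)} + o(h)$, i.e.\ differentiability with the stated derivative; continuity of $t \mapsto F(\mu_t)$ follows separately from $W_2$-continuity of $t \mapsto \mu_t$ and continuity of $F$.

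The main obstacle I anticipate is making the error control uniform in $h$: the $O(h^2)$ in Proposition~\ref{prop_first_order_perturbation} and the $O(W_2^2)$ and gradient-norm terms above come with constants that a priori depend on the base measure, so one must verify they stay bounded along the relevant (locally compact) piece of the curve near $t$. This is precisely where the regularity packaged into \emph{sufficient smoothness} of $F$ — together with local boundedness of $\mu \mapsto \|\nabla_\mu F(\mu)\|_{L^2(\mu)}$ along $W_2$-convergent sequences — is essential; without it the pointwise expansions fail to assemble into a derivative. A secondary technical point is the sign of $h$: the pushforward $(\id + h v_t)\# \mu_t$ and Proposition~\ref{prop_infinitesimal_accurve} remain valid for small negative $h$ as well, so the one-sided arguments match and deliver a genuine two-sided derivative.
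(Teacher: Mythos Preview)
Your proposal is correct and matches the paper's approach essentially line for line: the paper also splits $F(\mu_{t+h}) - F(\mu_t)$ through the intermediate point $(\id + h v_t)\#\mu_t$, handles the second bracket via Proposition~\ref{prop_first_order_perturbation}, and controls the first bracket by the Wasserstein-gradient expansion together with Cauchy--Schwarz and Proposition~\ref{prop_infinitesimal_accurve}. Your discussion of the uniformity of constants and the two-sided derivative is in fact more careful than the paper's own proof, which glosses over these points.
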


\begin{proof}
    By applying the definition of Wasserstein gradient (Definition \ref{def_wasserstein_gradient_appendix}) for $\mu = \mu_{t+h},~\nu = (\id + h v_t)\# \mu_t$ and $\gamma \in \Gamma_0(\mu_{t+h}, (\id + h v_t) \# \mu_t)$, it holds that 
    \begin{align*}
        \qty|\frac{F(\mu_{t+h}) - F((\id + h v_t) \# \mu_t)}{h}| &\leq \qty|\frac{1}{h} \int \inpro{\nabla_{\mu} F(\mu_t, x)}{y-x} \gamma(\diff x \diff y)| + o(1) \\
        &\leq \norm{\nabla_{\mu} F(\mu_t)}_{L^2(\mu_t)} \frac{W_2(\mu_{t+h},(\id + h v_t) \# \mu_t)}{h} + o(1)\\
        &\to 0, 
    \end{align*}
    as $h\to 0$, where Cauchy Schwarz inequality is used in second line and Proposition \ref{prop_infinitesimal_accurve} is used in third line. Then by Proposition \ref{prop_first_order_perturbation}, we have
    \begin{align*}
        \frac{\diff}{\diff t} F(\mu_t) &= \lim_{h \to 0} \frac{F(\mu_{t+h}) - F(\mu_t)}{h} \\
        &= \lim_{h\to0} \frac{F((\id + h v_t) \# \mu_t) - F(\mu_t)}{h} \\
        &= \inpro{\nabla_{\mu} F(\mu_t)}{v_t}_{L^2(\mu_t)}.
    \end{align*}
\end{proof}

The above proposition shows that the Wasserstein gradient plays a crucial role for first-order perturbations.
In particular, if $\nabla_{\mu} F(\mu) = 0$, the objective function does not change under any first-order perturbation.
This suggests that it is reasonable to define first-order stationary points as points satisfying $\nabla_{\mu} F (\mu) = 0$. 
Furthermore, supporting this observation, \citet{lanzetti2022first} established the following proposition.

\begin{proposition}[First-order necessary condition]
    \label{prop_first_necessary_condition}
    Let $\mu^* \in \P_2(\R^d)$ be a local minimizer of a differentiable functional $F : \P_2(\R^d) \to \R$ i.e. it holds that there exists a constant $r > 0$ such that
    \begin{align*}
        W_2(\mu, \mu^*) < r \implies F(\mu) \leq F(\mu^*). 
    \end{align*}
    Then the Wasserstein gradient of $F$ vanishes at $\mu^*$:
    \begin{align*}
        \nabla_{\mu} F(\mu^*, x) = 0 \quad \mu^* \text{-a.e.} x,
    \end{align*}
    i.e. $\nabla_{\mu} F(\mu^*) = 0$ in $L^2(\mu^*)^d$.
\end{proposition}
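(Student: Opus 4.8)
The plan is to argue by contradiction. Assume $\nabla_\mu F(\mu^*)\neq 0$ in $L^2(\mu^*)^d$ and construct a sequence of measures converging to $\mu^*$ in $W_2$ along which $F$ strictly decreases, contradicting local minimality. The one genuinely delicate point is that $\nabla_\mu F(\mu^*)$ is only an abstract $L^2(\mu^*)$ vector field, so we cannot push $\mu^*$ along it directly; instead we exploit that $\nabla_\mu F(\mu^*)\in\tansp{\mu^*}$, which by Definition \ref{def_tangent_space} is the $L^2(\mu^*)$-closure of $\{\nabla\phi : \phi\in C_0^\infty(\R^d)\}$, to replace it by a smooth compactly supported descent direction.

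First I would pick the perturbation direction. Since $\nabla_\mu F(\mu^*)$ is a nonzero element of $\tansp{\mu^*}$, and this space is the closure of gradients of $C_0^\infty$ functions, there exists $\phi\in C_0^\infty(\R^d)$ with $c:=\inpro{\nabla_\mu F(\mu^*)}{\nabla\phi}_{L^2(\mu^*)}>0$ (replacing $\phi$ by $-\phi$ if necessary). Set $v:=-\nabla\phi$ and, for $h>0$, define $\mu_h:=(\id+hv)\#\mu^*=(\id-h\nabla\phi)\#\mu^*$.

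Next I would control the distance and invoke local minimality. Using the coupling $(\id\times(\id-h\nabla\phi))\#\mu^*$ — or, more cleanly, Proposition \ref{prop_perturbation_otm}, which gives that $\id-h\nabla\phi$ is an optimal transport map for $|h|$ small enough, with $W_2(\mu^*,\mu_h)=h\norm{\nabla\phi}_{L^2(\mu^*)}$ — we get $W_2(\mu^*,\mu_h)\to 0$ as $h\to 0^+$. Hence for all sufficiently small $h>0$ we have $W_2(\mu^*,\mu_h)<r$, so local minimality of $\mu^*$ forces $F(\mu_h)\geq F(\mu^*)$. On the other hand, Proposition \ref{prop_first_order_perturbation} (whose proof uses only Proposition \ref{prop_strong_first_order_expansion} and therefore applies to any functional differentiable at $\mu^*$) yields
\begin{align*}
F(\mu_h)-F(\mu^*)=h\inpro{\nabla_\mu F(\mu^*)}{v}_{L^2(\mu^*)}+O(h^2)=-ch+O(h^2),
\end{align*}
which is strictly negative for $h>0$ small. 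This contradicts $F(\mu_h)\geq F(\mu^*)$, so we must have $\nabla_\mu F(\mu^*)=0$ in $L^2(\mu^*)^d$, i.e. $\nabla_\mu F(\mu^*,x)=0$ for $\mu^*$-a.e.\ $x$.

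The main obstacle — indeed essentially the only nontrivial step — is the transfer from the abstract nonvanishing of $\nabla_\mu F(\mu^*)$ to a concrete smooth perturbation direction along which both the first-order expansion of $F$ and the optimal-transport-map/distance bound of Proposition \ref{prop_perturbation_otm} are available simultaneously; this is exactly what the tangent-space characterization $\tansp{\mu^*}=\overline{\{\nabla\phi:\phi\in C_0^\infty\}}^{L^2(\mu^*)}$ provides. The remaining ingredients are just the linear first-order expansion and the elementary estimate of $W_2$ by the cost of the obvious coupling.
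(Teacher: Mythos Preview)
Your proof is correct and follows the standard route: approximate the nonzero Wasserstein gradient by a smooth test direction $\nabla\phi$ using the tangent-space characterization, push forward along $-\nabla\phi$, control $W_2$ via Proposition~\ref{prop_perturbation_otm}, and obtain the contradiction from the first-order expansion (Proposition~\ref{prop_first_order_perturbation} via Proposition~\ref{prop_strong_first_order_expansion}). The paper does not give its own argument here but simply cites \citet{lanzetti2022first}, Theorem~3.1; your proof is essentially the argument behind that cited result, so there is nothing substantively different to compare.
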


\begin{proof}
    See \citet{lanzetti2022first}, Theorem 3.1.
\end{proof}

\begin{proposition}[First-order sufficient condition]
    \label{prop_first_sufficient_condition}
    Suppose that $F$ is differentiable and $\alpha$-geodesically convex with $\alpha \geq 0$, i.e. it holds that 
    \begin{align*}
        F(\nu) - F(\mu)  \geq \int \inpro{\nabla_{\mu}F(\mu, x)}{y-x} \gamma(\diff x \diff y) + \frac{\alpha}{2} W_2(\mu,\nu)^2 \quad \forall \gamma \in \Gamma_0(\mu,\nu).
    \end{align*}
    Then, $\nabla_{\mu} F(\mu^*) = 0 ~ \mu\text{-a.e.}$ implies that $\mu^*$ is global minimizer of $F$, i.e. $F(\mu) \geq F(\mu^*)$ holds for any $\mu \in \P_2(\R^d)$.
\end{proposition}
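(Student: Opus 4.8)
The plan is to apply the defining inequality of $\alpha$-geodesic convexity directly, taking the base point to be the candidate minimizer $\mu^*$. Fix an arbitrary $\nu \in \P_2(\R^d)$; by Proposition \ref{prop_existence_otp} there exists an optimal transport plan $\gamma \in \Gamma_0(\mu^*,\nu)$. Substituting $\mu = \mu^*$ and this $\gamma$ into the hypothesis yields
\begin{align*}
    F(\nu) - F(\mu^*) \geq \int \inpro{\nabla_{\mu} F(\mu^*, x)}{y - x}\, \gamma(\diff x \diff y) + \frac{\alpha}{2} W_2(\mu^*,\nu)^2.
\end{align*}

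The next step is to show the linear term vanishes. Since $p_1 \# \gamma = \mu^*$ and the hypothesis $\nabla_{\mu} F(\mu^*) = 0$ in $L^2(\mu^*)^d$ means $\nabla_{\mu} F(\mu^*,x) = 0$ for $\mu^*$-a.e.\ $x$, the set $N = \{x : \nabla_{\mu} F(\mu^*,x) \neq 0\}$ has $\mu^*(N) = 0$, hence $\gamma(N \times \R^d) = \mu^*(N) = 0$. Therefore the integrand $\inpro{\nabla_{\mu} F(\mu^*,x)}{y-x}$ is zero $\gamma$-a.e., so the integral is $0$. (Note that the $O(W_2^2)$ remainder inherent in the definition of the Wasserstein gradient plays no role here, because geodesic convexity provides an exact inequality rather than an asymptotic expansion, so no smallness of $W_2(\mu^*,\nu)$ is needed.)

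Finally, since $\alpha \geq 0$ and $W_2(\mu^*,\nu)^2 \geq 0$, the remaining term is nonnegative, and we conclude $F(\nu) - F(\mu^*) \geq 0$, i.e.\ $F(\nu) \geq F(\mu^*)$. As $\nu \in \P_2(\R^d)$ was arbitrary, $\mu^*$ is a global minimizer of $F$. I do not anticipate a genuine obstacle here: the only point requiring a line of care is the measure-theoretic argument that the linear term vanishes $\gamma$-a.e.\ because the offending set has zero mass under the first marginal $\mu^*$; everything else is an immediate substitution into the convexity inequality.
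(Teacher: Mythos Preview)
Your proof is correct. The paper does not give its own argument for this proposition but simply cites \citet{lanzetti2022first}, Theorem 3.3; your direct substitution of $\mu=\mu^*$ into the geodesic convexity inequality, together with the observation that the first-marginal condition $p_1\#\gamma=\mu^*$ forces the linear term to vanish $\gamma$-a.e., is exactly the standard route and is almost certainly what that reference contains.
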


\begin{proof}
    See \citet{lanzetti2022first}, Theorem 3.3.
\end{proof}

% With the above in mind, we define the first-order optimality conditions for functionals on probability measures.
% \begin{definition}
%     \label{def_first_stationary_point}
%     Supoose that a functional $F:\P_2(\R^d) \to \R$ satisfies suitable differentiability. 
%     We say that $\mu \in \P_2(\R^d)$ is a first-order stationary point, if $\mu$ satisfies $\nabla_\mu F (\mu) = 0 ~ \mu$-a.e.
% \end{definition}

\subsection{Second-order Condition}

Building on the results from the previous section, we discuss second-order optimality in measure optimization.
To obtain second-order terms, we first prove the following lemma. The key point is that we obtain the derivative $F(\mu_t)$ not only at $t=0$, but also for $0<t<1$. By obtaining this, we can compute the second-order coefficient.\footnote{
Similar to this proposition, higher-order terms (third-order and beyond) can be computed in the same manner. It is conjectured that higher-order terms can also be expressed as the action of $\nabla_\mu$ or $\nabla$ on $F$.
}
\begin{lemma}
    \label{lemma_for_second_order}
    Let $F$ be a sufficiently smooth functional, $\mu,~\nu \in \P_2(\R^d)$, and $\mu_h$ be a constant geodesic induced by $\gamma \in \Gamma(\mu,\nu)$. Then, for any $0 \leq t < 1$, 
    \begin{align*}
        \frac{\diff}{\diff t} F(\mu_t) = \int \inpro{\nabla_{\mu} F(\mu_t, (1-t)x + ty)}{y-x} \gamma(\diff x \diff y).
    \end{align*}
\end{lemma}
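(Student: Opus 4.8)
The plan is to differentiate $t \mapsto F(\mu_t)$ directly from the definition of the derivative, exploiting the fact that for a geodesic induced by $\gamma \in \Gamma(\mu,\nu)$, the segment from time $t$ to time $t+h$ is itself (up to reparametrization) a geodesic induced by a pushforward of $\gamma$. Concretely, write $\mu_t = \pi_t \# \gamma$ where $\pi_t(x,y) = (1-t)x + ty$. For small $h$, I would express $\mu_{t+h}$ as a pushforward of $\mu_t$ via a transport plan: the plan $\gamma_t^h := (\pi_t \times \pi_{t+h}) \# \gamma \in \Gamma(\mu_t, \mu_{t+h})$ moves mass from the point $(1-t)x+ty$ to the point $(1-(t+h))x + (t+h)y = (1-t)x+ty + h(y-x)$. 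Thus the displacement under $\gamma_t^h$ at the point $\pi_t(x,y)$ is exactly $h(y-x)$, and $\int \|{\cdot}\|^2 \diff\gamma_t^h = h^2 \int \|y-x\|^2 \diff\gamma = O(h^2)$.

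Next I would apply Proposition \ref{prop_strong_first_order_expansion} (the first-order expansion valid for arbitrary, not necessarily optimal, transport plans) with $\mu \leftarrow \mu_t$, $\nu \leftarrow \mu_{t+h}$, and $\gamma \leftarrow \gamma_t^h$. This gives
\begin{align*}
    F(\mu_{t+h}) - F(\mu_t) = \int \inpro{\nabla_\mu F(\mu_t, \pi_t(x,y))}{h(y-x)} \gamma(\diff x \diff y) + O(h^2).
\end{align*}
Dividing by $h$ and letting $h \to 0$ yields the claimed formula, since $\pi_t(x,y) = (1-t)x+ty$. One then checks that the same computation works for a left derivative when $t > 0$, and that the integrand is genuinely integrable: this follows from the sufficient-smoothness hypothesis ($\nabla_\mu F(\mu_t) \in L^2(\mu_t)^d$) combined with Cauchy–Schwarz against $\int \|y-x\|^2 \diff\gamma < \infty$, which is finite since $\mu,\nu \in \P_2(\R^d)$ and $\gamma$ is an admissible coupling.

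The main obstacle is making the $O(h^2)$ error term genuinely uniform in a neighborhood of $t$, rather than just pointwise at a fixed $t$; Proposition \ref{prop_strong_first_order_expansion} as stated controls the remainder by $O(\int\|x-y\|^2 \diff\gamma_t^h)$, but the implied constant a priori depends on the base point $\mu_t$. I would address this by noting that the Wasserstein Hessian bound built into sufficient smoothness (the essential-sup bound on $\nabla\nabla_\mu F$ together with $L^2$-integrability of $\nabla_\mu^2 F$) provides a locally uniform second-order Taylor estimate along the compact segment $\{\mu_s : s \in [t-\epsilon, t+\epsilon]\}$, so the remainder is $\tfrac12 h^2 \sup_s \inpro{y-x}{(\text{Hessian at }\mu_s)(y-x)} + o(h^2)$ uniformly; alternatively, one may invoke the second-order expansion of Proposition \ref{prop_second_order_expansion} directly to get the quadratic remainder with an explicit coefficient. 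The remaining bookkeeping — verifying $\gamma_t^h \in \Gamma(\mu_t,\mu_{t+h})$ and that $\mu_t$ is indeed the curve $\pi_t\#\gamma$ as asserted in Proposition \ref{prop_geodesic} — is routine.
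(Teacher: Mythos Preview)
Your approach is correct and essentially matches the paper's. Both proofs apply Proposition~\ref{prop_strong_first_order_expansion} to a transport plan based at $\mu_t$: you use the plan $(\pi_t \times \pi_{t+h})\#\gamma \in \Gamma(\mu_t,\mu_{t+h})$ directly, while the paper first establishes the formula at $t=0$ and then reparametrizes via $s \mapsto t+(1-t)s$, reducing the general case to the $t=0$ case with the plan $(\pi_t \times p_2)\#\gamma \in \Gamma(\mu_t,\nu)$. Your route is slightly more direct but the content is the same.

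Two minor remarks. First, your concern about uniformity of the $O(h^2)$ constant is unnecessary: the lemma only asserts the value of the derivative at each fixed $t$, and a pointwise $O(h^2)$ remainder (with constant depending on $t$) already suffices for that. Second, your fallback of invoking Proposition~\ref{prop_second_order_expansion} would be circular, since in the paper that proposition is proved \emph{using} this lemma.
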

\begin{proof}
    By Proposition \ref{prop_strong_first_order_expansion}, we obtain the following statement ($*$): let $\mu,~\nu \in \P_2(\R^d),~ \gamma \in \Gamma(\mu,\nu)$ and let $\mu_t \coloneq ((1 - t)p_1 + t p_2) \# \gamma$, then 
    \begin{align*}
        \left. \frac{\diff}{\diff t}\right|_{t=0} F(\mu_t) &= \int \inpro{\nabla_\mu F(\mu,x)}{y-x} \gamma(\diff x \diff y). 
    \end{align*}
    Since we suppose $t < 1$, 
    \begin{align*}
        \frac{\diff}{\diff t} F(\mu_t) &= \lim_{s \to 0} \frac{F(\mu_{t + s}) - F(\mu_t)}{s} \\
        &= \frac{1}{1-t} \lim_{s \to 0} \frac{F(\mu_{t + (1-t)s}) - F(\mu_t)}{s} \\
        &= \frac{1}{1-t} \left. \frac{\diff}{\diff s} \right|_{s=0} F(\mu_{t + (1-t)s}).
    \end{align*}
    Here, it holds that 
    \begin{align*}
        \mu_{t + (1-t)s} &= ((1 - (t + (1-t)s))p_1 + (t + (1-t)s)p_2) \# \gamma \\
        &= ((1-s)((1-t)p_1 + tp_2) + sp_2) \# \gamma \\
        &= ((1-s) p_1 + s p_2)\# (((1-t) p_1 + t p_2) \times p_2) \# \gamma.
    \end{align*}
    %%1行目は\mu_tの定義と因数分解より、2行目は写像の合成。押し出しについて、f\#g\#\mu = (f \circ g)\#\muであることに注意
    Then, by ($*$) for $\mu \gets \mu_t,~ \nu \gets \nu, \gamma \gets ((1-t) p_1 + t p_2 \times \id) \# \gamma$, we have
    \begin{align*}
        \frac{\diff}{\diff t} F(\mu_t)
        &= \frac{1}{1-t} \lim_{s \to 0} \frac{F(\mu_{t + (1-t)s}) - F(\mu_t)}{s} \\
        &= \frac{1}{1-t} \int \inpro{\nabla_{\mu} F(\mu_t, x)}{y-x} ((1-t) p_1 + t p_2 \times \id) \# \gamma(\diff x \diff y) \\
        &= \frac{1}{1-t} \int \inpro{\nabla_{\mu} F(\mu_t, (1-t)x + ty)}{y-((1-t)x + ty)} \gamma(\diff x \diff y) \\
        &= \int \inpro{\nabla_{\mu} F(\mu_t,(1-t)x + ty)}{y-x} \gamma(\diff x \diff y).
    \end{align*}
\end{proof}

\begin{proposition}
    \label{prop_second_order_expansion}
    Suppose $F:\P_2(\R^d)$ is sufficiently smooth. Let $\mu,\nu \in \P_2(\R^d)$, and let $\mu_h$ be a constant geodesic induced by $\gamma \in \Gamma_0 (\mu,\nu)$. Then, as $h\to 0$, 
    \begin{align*}
        F(\mu_h) - F(\mu) &= h \int \inpro{\nabla_\mu F (\mu,x)}{y-x} \gamma(\diff x \diff y) \\
        &\quad \quad + \frac{h^2}{2} \int \inpro{y_1 - x_1}{\nabla_\mu^2 F(\mu,x_1,x_2) (y_2 - x_2)} \gamma(\diff x_1\diff y_1) \gamma(\diff x_2 \diff y_2) \\
        &\quad \quad \quad + \frac{h^2}{2} \int \inpro{y-x}{\nabla \nabla_\mu F(\mu,x)(y-x)} \gamma(\diff x \diff y) + o(h^2).
    \end{align*}
\end{proposition}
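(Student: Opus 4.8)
The plan is to compute the second-order Taylor expansion of $h \mapsto F(\mu_h)$ by differentiating twice using Lemma \ref{lemma_for_second_order}, which already gives the first derivative along the geodesic at every interior time $0 \le t < 1$. Write $g(t) = \int \inpro{\nabla_\mu F(\mu_t, (1-t)x + ty)}{y-x}\,\gamma(\diff x \diff y)$ for the first derivative; then $F(\mu_h) - F(\mu) = h\, g(0) + \tfrac{h^2}{2} g'(0) + o(h^2)$ by Taylor's theorem with the Peano remainder, so the whole task reduces to computing $g'(0)$. Note $g(0)$ already produces the stated linear term $\int \inpro{\nabla_\mu F(\mu,x)}{y-x}\,\gamma(\diff x \diff y)$.

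To evaluate $g'(0)$, I would differentiate the integrand of $g(t)$ in $t$, treating $\nabla_\mu F(\mu_t, (1-t)x+ty)$ as a composition of three $t$-dependences: the measure argument $\mu_t$, the spatial argument $(1-t)x + ty$ moving with velocity $y - x$, and leaving $y - x$ fixed. This yields two contributions at $t=0$. The spatial-argument derivative gives $\inpro{y-x}{\nabla \nabla_\mu F(\mu,x)(y-x)}$ after integrating against $\gamma$, which is the third term in the statement (here I use the differentiability of $\nabla_\mu F(\mu,x)$ in its second coordinate and the uniform bound on $\nabla \nabla_\mu F$ from sufficient smoothness). The measure-argument derivative requires applying the definition of the Wasserstein gradient of $x \mapsto \nabla_\mu F(\mu, x)$ — that is, $\nabla_\mu^2 F$ — along the curve $t \mapsto \mu_t$; since $\mu_t$ is the geodesic induced by $\gamma$, its displacement from $\mu$ is governed by the same plan $\gamma$, so $\tfrac{\diff}{\diff t}\big|_{t=0} \nabla_\mu F(\mu_t, x) = \int \nabla_\mu^2 F(\mu, x, x_2)(y_2 - x_2)\,\gamma(\diff x_2 \diff y_2)$ componentwise. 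Pairing this against $y - x$ and integrating against $\gamma(\diff x \diff y)$ produces the double-integral second term.

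The main obstacle is making the interchange of differentiation and integration rigorous: one must justify differentiating under the $\gamma$-integral sign in $g'(0)$, which requires a uniform domination of the difference quotients. For the spatial part this follows from the $\mu$-essential-sup bound on $\norm{\nabla \nabla_\mu F(\mu,x)}$ in the definition of sufficient smoothness together with $\gamma \in \Gamma_o(\mu,\nu) \subset \P_2$. For the measure part, one needs the error term in the definition of $\nabla_\mu^2 F$ to be controlled by $W_2(\mu,\mu_t)^2 = t^2 W_2(\mu,\nu)^2$ uniformly enough to vanish after dividing by $t$, and then the resulting linear-in-$\gamma$ expression must itself be $\gamma$-integrable — which holds by the $L^2(\mu\otimes\mu)$-integrability of $\nabla_\mu^2 F(\mu)$ and Cauchy–Schwarz, since $\int \norm{y-x}^2\,\gamma < \infty$. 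A secondary subtlety is that Lemma \ref{lemma_for_second_order} is stated for $t < 1$, so strictly speaking $g$ is only known on $[0,1)$; this is harmless since we only need $g$ and $g'$ at $t = 0$, where the one-sided Taylor expansion of $F(\mu_h)$ as $h \to 0^+$ suffices. Assembling the three pieces — $h\,g(0)$, and the two halves of $\tfrac{h^2}{2} g'(0)$ — gives exactly the claimed expansion.
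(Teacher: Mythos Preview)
Your proposal is correct and follows essentially the same route as the paper: apply Lemma~\ref{lemma_for_second_order} to get $g(t) = \frac{\diff}{\diff t} F(\mu_t)$, then differentiate once more at $t=0$, splitting $\frac{\diff}{\diff t}\nabla_\mu F(\mu_t,(1-t)x+ty)$ into the measure-argument contribution (yielding the $\nabla_\mu^2 F$ double integral) and the spatial-argument contribution (yielding the $\nabla\nabla_\mu F$ term). The paper's proof is terser and does not spell out the dominated-convergence justification for differentiating under the integral that you discuss, but the structure is identical.
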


\begin{proof}
    By Lemma \ref{lemma_for_second_order},
    \begin{align*}
        \left. \frac{\diff^2}{\diff h^2} \right |_{h=0} F(\mu_h) &= 
        \left. \frac{\diff}{\diff h} \right |_{h=0} \int \inpro{\nabla_{\mu} F(\mu_h, (1-h)x + hy)}{y-x} \gamma(\diff x \diff y) \\
        &= \int \inpro{\left. \frac{\diff}{\diff h} \right |_{h=0} \nabla_{\mu} F(\mu_h, (1-h)x + hy)}{y-x} \gamma (\diff x \diff y) \\
        &= \int \inpro{\int \nabla_\mu^2 F(\mu, x_1,x_2) (y_1 - x_1) \gamma(\diff x_1 \diff y_1)}{y_2 - x_2}\gamma{\diff x_2 \diff y_2} \\
        & \quad \quad \quad + \int \inpro{\nabla \nabla_\mu F(\mu, x)(y-x)}{y-x} \gamma(\diff x \diff y) \\
        &= \iint \inpro{y_1 - x_1}{\nabla_\mu^2 F(\mu, x_1, x_2)(y_2 - x_2)} \gamma^{\otimes 2}(\diff x_1 \diff y_1 \diff x_2 \diff y_2) \\
        &\quad \quad \quad + \int \inpro{\nabla \nabla_\mu F(\mu, x)(y-x)}{y-x} \gamma(\diff x \diff y).
    \end{align*}
\end{proof}
\begin{remark}
    Except for the term $\nabla \nabla_\mu F$, this expression can be interpreted in a manner similar to Taylor expansion in Euclidean space. The term $\nabla \nabla_\mu F$ arises from the change in the metric $\mu$ of the tangent space. 
    This phenomenon is similar to what occurs on Riemannian manifolds, where the metric of the tangent spaces is not necessarily constant.
    It is worth noting that a similar calculation is performed in Appendix 5 of \citet{bonnet2019pontryagin}.
    As stated in the main text, at first-order stationary points, we have $\nabla_\mu F = 0$, which implies $\nabla \nabla_\mu F = 0$ Therefore, second-order optimality at stationary points can be understood in terms of the integral operator property of $\nabla_\mu^2 F$. 
\end{remark}
By setting $\nu = (\id + h v) \# \mu$ in Proposition \ref{prop_second_order_expansion}, we obtain the following:
\begin{proposition}
    \label{prop_second_order_perturbation}
    Suppose $F:\P_2(\R^d) \to \R$ is sufficiently smooth. Then, it holds that for all $v \in L^2(\mu)^d$, 
    \begin{align*}
        F((\id + h v) \# \mu) - F(\mu) = h \inpro{\nabla_\mu F(\mu)}{v}_{L^2(\mu)} + \frac{h^2}{2} \inpro{v}{(H_\mu + H_\mu') v}_{L^2(\mu)} + o(h^2) \quad (h \to 0).
    \end{align*}
\end{proposition}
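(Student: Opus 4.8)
The goal is to combine Proposition~\ref{prop_second_order_expansion} with a careful choice of the coupling $\gamma$ and the geodesic $\mu_h$, reducing the general second-order expansion to the special case where $\nu$ is obtained by pushing $\mu$ forward along a vector field. The plan is as follows. First I would invoke Corollary~\ref{cor_perturbation_otm} (or Proposition~\ref{prop_perturbation_otm} if $v = \nabla\psi$ with $\psi \in C_c^\infty$): since $\nabla_\mu F$ and its gradient are controlled by sufficient smoothness, for $v \in L^2(\mu)^d$ in the tangent cone the map $\id + h v$ is an optimal transport map from $\mu$ to $\nu_h := (\id + h v)\#\mu$ for $|h|$ small enough. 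More care is needed for a general $v \in L^2(\mu)^d$ that need not be a gradient of a smooth function; here I would either restrict to $v \in \tansp{\mu}$ and approximate by smooth gradients, or argue directly that the second-order expansion only depends on $v$ through the displacement and invoke the sufficient-smoothness hypotheses (boundedness of $\nabla\nabla_\mu F$ and $L^2$-integrability of $\nabla^2_\mu F$) to pass to the limit. Consequently $\gamma_h := (\id \times (\id + hv))\#\mu \in \Gamma_o(\mu, \nu_h)$ and the constant-speed geodesic induced by this coupling is exactly $\mu_h = (\id + h v)\#\mu$, so Proposition~\ref{prop_second_order_expansion} applies.

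Next I would substitute into the three terms of Proposition~\ref{prop_second_order_expansion} using the explicit form of $\gamma$, where $y - x = h v(x)$ under the coupling and the geodesic parameter is re-scaled. Writing $\gamma(\diff x \diff y)$ as $\mu(\diff x)$ with $y = x + hv(x)$:
\begin{itemize}[leftmargin=5mm]
\item the first-order term becomes $h\int \inpro{\nabla_\mu F(\mu,x)}{v(x)}\mu(\diff x) = h\inpro{\nabla_\mu F(\mu)}{v}_{L^2(\mu)}$;
\item the double-integral Hessian term becomes $\frac{h^2}{2}\iint \inpro{v(x_1)}{\nabla_\mu^2 F(\mu,x_1,x_2) v(x_2)}\mu(\diff x_1)\mu(\diff x_2) = \frac{h^2}{2}\inpro{v}{H_\mu v}_{L^2(\mu)}$ by the definition of $H_\mu$;
\item the metric-correction term becomes $\frac{h^2}{2}\int \inpro{v(x)}{\nabla\nabla_\mu F(\mu,x) v(x)}\mu(\diff x) = \frac{h^2}{2}\inpro{v}{H'_\mu v}_{L^2(\mu)}$ by the definition of $H'_\mu$.
\end{itemize}
Summing these and the $o(h^2)$ remainder yields the claimed expansion. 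The identity $\left.\frac{\diff^2}{\diff h^2}\right|_{h=0} F((\id+hv)\#\mu) = \inpro{v}{(H_\mu + H'_\mu)v}_{L^2(\mu)}$ of the preceding unnumbered proposition then follows by differentiating twice at $h = 0$ (the first-order term vanishes upon taking the second derivative, and the $o(h^2)$ term contributes nothing).

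The main obstacle I anticipate is the geodesic/optimal-coupling issue: Proposition~\ref{prop_second_order_expansion} is stated for a \emph{constant-speed geodesic} $\mu_h$ induced by an \emph{optimal} plan $\gamma \in \Gamma_0(\mu,\nu)$, but for finite $h$ the curve $h \mapsto (\id + hv)\#\mu$ is a geodesic only if $\id + hv$ is optimal, which requires $\frac12\|x\|^2 + h\psi$ (when $v = \nabla\psi$) to stay convex — valid only for $|h| \le \bar h$ with $\bar h$ depending on $\mu\text{-ess sup}\,\|\nabla^2\psi\|$. This is exactly the content of Corollary~\ref{cor_perturbation_otm}, so for $v$ equal to (a limit of) smooth gradients the issue is handled; the delicate point is extending to an arbitrary $v \in L^2(\mu)^d$, where I would use that $\tansp{\mu}$ is the $L^2(\mu)$-closure of $\{\nabla\phi : \phi \in C_0^\infty\}$ together with the Lipschitz/boundedness assumptions on $\nabla^2_\mu F$ and $\nabla\nabla_\mu F$ to show that both sides of the expansion are continuous in $v$, so the identity extends by density; for $v$ with a nonzero component in $X_\mu$ the pushforward is unchanged to first order, which must be accounted for when interpreting the statement. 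A secondary technical point is justifying the interchange of $\frac{\diff}{\diff h}$ and the integral in the proof of Proposition~\ref{prop_second_order_expansion}, but that is already absorbed into the cited result.
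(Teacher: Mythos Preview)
Your approach is the paper's approach: the paper's entire proof is the one line ``by setting $\nu = (\id + hv)\#\mu$ in Proposition~\ref{prop_second_order_expansion}'', and your three bullet-point substitutions are exactly what that unpacks to.

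Two small points. First, your parameterization is tangled: you take $\gamma_h = (\id \times (\id+hv))\#\mu$ with $y-x = hv(x)$ and then speak of rescaling the geodesic parameter, but this forces you either to let $\gamma$ depend on $h$ (not the setup of Proposition~\ref{prop_second_order_expansion}) or to evaluate the expansion at geodesic parameter $s=1$, which gives only an $o(1)$ remainder. The clean choice is to fix $\nu = (\id + v)\#\mu$ and $\gamma = (\id \times (\id+v))\#\mu$ once, so that $y-x = v(x)$ and the curve $\mu_h = ((1-h)p_1 + hp_2)\#\gamma = (\id + hv)\#\mu$ already has $h$ as the expansion parameter; the substitution is then immediate and the $o(h^2)$ comes for free.

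Second, your main anticipated obstacle --- optimality of the coupling --- is not actually an obstacle. Although Proposition~\ref{prop_second_order_expansion} is stated for $\gamma \in \Gamma_0(\mu,\nu)$, its proof goes through Lemma~\ref{lemma_for_second_order}, which is stated and proved for arbitrary $\gamma \in \Gamma(\mu,\nu)$ (it rests on Proposition~\ref{prop_strong_first_order_expansion}, valid for any transport plan). So you do not need Corollary~\ref{cor_perturbation_otm}, and the density argument for general $v \in L^2(\mu)^d$ is unnecessary.
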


For the next lemma, we denote the interior and boundary of a set $A \subset \R^d$ as $A^{o}$ and $\partial A$, respectively.

\begin{lemma}
    \label{lemma_second_necessity}
    Suppose that $\mu \in \P^a_2(\R^d)$ and that $f:\R^d \to \R$ is of class $C^1(\R^d)$. 
    Then, it holds that
    \begin{align*}
        f = 0 \quad \mu \text{-a.e.} \implies \nabla f = 0 \quad \mu \text{-a.e.}
    \end{align*}
\end{lemma}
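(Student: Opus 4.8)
The plan is to reduce this $\mu$-statement to a purely Lebesgue-measure statement about $C^1$ functions, invoking absolute continuity of $\mu$ only at the very end. Write $E = \{x \in \R^d : f(x) = 0\}$; by hypothesis $\mu(E^c) = 0$, and since $\mu \ll \mathrm{Leb}$, the conclusion $\nabla f = 0$ $\mu$-a.e.\ follows once we show that the (Borel) set $\{x \in E : \nabla f(x) \neq 0\}$ is Lebesgue-null. So the goal becomes: $\nabla f = 0$ at Lebesgue-a.e.\ point of the zero set $E$.

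First I would invoke the Lebesgue density theorem: Lebesgue-a.e.\ point of $E$ is a density point, i.e.\ $\mathrm{Leb}(E \cap B_\rho(x_0)) / \mathrm{Leb}(B_\rho(x_0)) \to 1$ as $\rho \to 0$. It therefore suffices to show $\nabla f(x_0) = 0$ at every such density point $x_0 \in E$. Suppose instead $v := \nabla f(x_0) \neq 0$, and consider the solid cone $\mathcal{C} = \{x : v^\top (x - x_0) > \tfrac12 \norm{v}\,\norm{x - x_0}\}$, which has apex $x_0$ and a fixed aperture depending only on $d$; by dilation-invariance and the scaling of Lebesgue measure, $\mathrm{Leb}(\mathcal{C} \cap B_\rho(x_0)) = \kappa\, \mathrm{Leb}(B_\rho(x_0))$ for all $\rho > 0$ with $\kappa = \kappa(d) > 0$. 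The $C^1$ regularity gives $f(x) = f(x_0) + v^\top(x - x_0) + o(\norm{x - x_0}) = v^\top(x - x_0) + o(\norm{x-x_0})$ as $x \to x_0$, so there is $r > 0$ with $f(x) \ge \tfrac14 \norm{v}\,\norm{x - x_0} > 0$ for every $x \in \mathcal{C} \cap B_r(x_0) \setminus \{x_0\}$; hence $\mathcal{C} \cap B_\rho(x_0) \subseteq E^c$ up to the single point $x_0$ for all $\rho \le r$. This forces $\mathrm{Leb}(E^c \cap B_\rho(x_0)) \ge \kappa\, \mathrm{Leb}(B_\rho(x_0))$, contradicting that $x_0$ is a density point of $E$. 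Therefore $\nabla f(x_0) = 0$.

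Finally I would assemble the pieces: the set $\{x \in E : \nabla f(x) \neq 0\}$ is Lebesgue-null by the step above, hence $\mu$-null because $\mu \ll \mathrm{Leb}$; combined with $\mu(E^c) = 0$, this yields $\mu(\{\nabla f \neq 0\}) = 0$, i.e.\ $\nabla f = 0$ $\mu$-a.e.

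The only genuinely delicate point is the cone density estimate — that a solid cone of fixed aperture occupies a constant fraction $\kappa(d) > 0$ of every ball centered at its apex — but this is immediate from the scale-invariance of cones under dilation. (An alternative route avoiding the density theorem is to slice by lines parallel to each coordinate axis and use Fubini together with the one-dimensional fact that $\{t : g(t) = 0,\ g'(t) \neq 0\}$ consists of points isolated in $\{g = 0\}$, hence is countable; this likewise reduces everything to the Lebesgue-null claim, which is then upgraded to a $\mu$-null claim via $\mu \ll \mathrm{Leb}$.)
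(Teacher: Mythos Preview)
Your proof is correct and takes a genuinely different route from the paper. The paper argues via the interior of the zero set: with $B = f^{-1}(0)$ it observes $\nabla f \equiv 0$ on $B^{o}$, so $\mu(\{\nabla f \neq 0\}) \le \mu(B^c) + \mu(\partial B) = \mu(\partial B)$, and then asserts that $B$ is Jordan measurable, giving $\mathrm{Leb}(\partial B) = 0$ and hence $\mu(\partial B) = 0$ by absolute continuity. You instead work pointwise at Lebesgue density points of $E = B$, using a cone-plus-Taylor estimate to rule out $\nabla f(x_0) \neq 0$ there. The paper's route is shorter in spirit, but its Jordan-measurability step is not justified and in fact can fail: by Whitney's theorem the zero set of a $C^\infty$ function can be an arbitrary closed set, e.g.\ a fat Cantor set $C \subset \R$ with $B^{o} = \emptyset$ and $\mathrm{Leb}(\partial B) = \mathrm{Leb}(C) > 0$. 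Your density-point argument handles precisely this situation (at Lebesgue-a.e.\ point of such a $C$ the derivative still vanishes, by exactly your cone argument), so your approach is not merely an alternative but a genuine repair of the boundary issue.
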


\begin{proof}
    Let $B \subset \R^d$ be a closed set $B = \qty{x \in \R^d ~ | ~ f(x) = 0}$. 
    For all $x \in B^{o} \subset B$, there exists $r > 0$ such that 
    \begin{align*}
        \norm{y - x} < r \implies y \in B^{o} \implies f(y) = 0. 
    \end{align*}
    Then we have $\nabla f(x) = 0$. Thus,
    \begin{align*}
        \mu\qty( \qty{x \in \R^d | \nabla f(x) = 0} ) &\geq \mu(B^{o}) \\
        &= \mu(B) - \mu(\partial B) \\
        &= 1 - \mu(\partial B).
    \end{align*}
    Since $\mu$ is absolutely continuous with respect to Lebesgue measure and $B$ is a Jordan measurable set, $\mu(\partial B) = 0$ holds. Hence, $\nabla f(x) = 0$ $\mu$-a.e.
\end{proof}

\begin{proposition}[Second-order necessary condition, restatement of Proposition \ref{second_order_necessity}]
    \label{prop_second_necessary_condition}
    Let $F:\P_2(\R^d) \to \R$ be sufficiently smooth. If $\mu^* \in \P_2^a(\R^d)$ be a local minimum of $F$, i.e. it holds that there exists a constant $r > 0$ such that
    \begin{align*}
        W_2(\mu, \mu^*) < r \implies F(\mu) \leq F(\mu^*). 
    \end{align*}
    Then it holds that $H_{\mu^*} \succeq O$. 
\end{proposition}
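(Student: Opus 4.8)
The plan is to combine the second-order perturbation expansion (Proposition \ref{prop_second_order_perturbation}) with the fact that $\mu^*$ is a first-order stationary point, and then exploit absolute continuity to promote perturbations along arbitrary gradient fields $\nabla\varphi$, $\varphi\in C_c^\infty(\R^d)$, to a lower bound on the quadratic form $\inpro{\cdot}{H_{\mu^*}\cdot}_{L^2(\mu^*)}$ over a dense subspace of $\tansp{\mu^*}$. First I would invoke Proposition \ref{prop_first_necessary_condition} to conclude $\nabla_\mu F(\mu^*)=0$ in $L^2(\mu^*)^d$; since $\mu^*\in\P_2^a(\R^d)$ and (under sufficient smoothness) each component of $\nabla_\mu F(\mu^*,\cdot)$ is $C^1$, Lemma \ref{lemma_second_necessity} gives $\nabla\nabla_\mu F(\mu^*,x)=0$ for $\mu^*$-a.e. $x$, i.e. $H'_{\mu^*}=0$. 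Hence the second-order term in Proposition \ref{prop_second_order_perturbation} reduces to $\tfrac{h^2}{2}\inpro{v}{H_{\mu^*}v}_{L^2(\mu^*)}$ and the first-order term vanishes.

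Next, fix $\varphi\in C_c^\infty(\R^d)$ and set $v=\nabla\varphi$. By Proposition \ref{prop_perturbation_otm} there is $\bar h>0$ so that for $|h|\le\bar h$ the map $\id+h\nabla\varphi$ is an optimal transport map from $\mu^*$ to $\mu_h:=(\id+h\nabla\varphi)\#\mu^*$, and $W_2(\mu^*,\mu_h)=|h|\,\norm{\nabla\varphi}_{L^2(\mu^*)}\to 0$ as $h\to 0$. So for $h$ small enough, $W_2(\mu^*,\mu_h)<r$ and local minimality gives $F(\mu_h)\ge F(\mu^*)$. Plugging into the expansion,
\begin{align*}
0\le F(\mu_h)-F(\mu^*)=\frac{h^2}{2}\inpro{\nabla\varphi}{H_{\mu^*}\nabla\varphi}_{L^2(\mu^*)}+o(h^2),
\end{align*}
and dividing by $h^2/2$ and letting $h\to0$ yields $\inpro{\nabla\varphi}{H_{\mu^*}\nabla\varphi}_{L^2(\mu^*)}\ge 0$.

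Finally, since $\{\nabla\varphi:\varphi\in C_c^\infty(\R^d)\}$ is dense in $\tansp{\mu^*}$ and $H_{\mu^*}$ is a bounded self-adjoint operator on $L^2(\mu^*)^d$ (boundedness from the $L^2(\mu^*\otimes\mu^*)$-integrability of $\nabla_\mu^2F(\mu^*)$, symmetry from the symmetry of the kernel), the nonnegativity of the quadratic form extends by continuity from the dense set to all of $\tansp{\mu^*}$, giving $H_{\mu^*}\succeq O$ as an operator on the tangent space, which is the claim. I expect the main obstacle to be the rigorous justification that a local minimum only needs to be tested against perturbations of the form $(\id+h\nabla\varphi)\#\mu^*$ — that is, checking that Proposition \ref{prop_perturbation_otm} indeed places $\mu_h$ inside the $W_2$-ball where the minimality hypothesis applies, and that the $o(h^2)$ remainder in Proposition \ref{prop_second_order_perturbation} is genuinely uniform enough to survive division by $h^2$; the density/closure argument and the reduction $H'_{\mu^*}=0$ are comparatively routine once absolute continuity is in hand.
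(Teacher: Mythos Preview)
Your proposal is correct and follows essentially the same route as the paper: first-order necessity plus Lemma \ref{lemma_second_necessity} kills $H'_{\mu^*}$, then Proposition \ref{prop_second_order_perturbation} together with local minimality gives $\inpro{v}{H_{\mu^*}v}_{L^2(\mu^*)}\ge 0$. The only difference is that the paper applies the expansion directly to arbitrary $v\in L^2(\mu^*)^d$ (the coupling $(\id\times(\id+hv))\#\mu^*$ already gives $W_2(\mu^*,(\id+hv)\#\mu^*)\le h\|v\|_{L^2(\mu^*)}<r$ for small $h$, so optimality of the transport is not needed), which makes your restriction to $v=\nabla\varphi$ and the density step unnecessary and yields $H_{\mu^*}\succeq O$ on the full space rather than only on $\tansp{\mu^*}$.
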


\begin{proof}
    According to Proposition \ref{prop_first_necessary_condition}, $\nabla_{\mu} F(\mu^*) = 0$ $\mu$-a.e. 
    Then it follows from Lemma \ref{lemma_second_necessity} that $\nabla \nabla_{\mu} F(\mu^*) = 0$ a.e. \\
    For any vector field $v \in L^2(\mu^*)$, there exists a constant $\bar{h} > 0$ such that $h \leq \bar{h} \implies W_2(\mu^*, (\id + hv) \# \mu^*) = h \norm{v}_{L^2(\mu^*)} \leq r$. 
    By applying Proposition \ref{prop_second_order_perturbation}, we have
    \begin{align*}
        \inpro{v}{H_{\mu^*} v}_{L^2(\mu^*)} = F((\id + h v) \# \mu^*) - F(\mu^*) + o(1) \geq o(1).
    \end{align*}
    By letting $h \to 0$, we have $\inpro{v}{H_{\mu^*}v}_{L^2(\mu^*)} \geq 0$ i.e. $H_{\mu^*} \succeq O$.
\end{proof}

\newpage
\section{Kernels and Gaussian Processes}
\label{appendix:gp}
The purpose of this section is to present a series of propositions regarding positive semi-definite kernels, the integral operators they define, and Gaussian processes. Additionally, we aim to derive an inequality that evaluates the tail probability of the $L^2(\mu)$ norm of a Gaussian process.

In the following propositions, the kernel function $K : \mathbb{R}^d \times \mathbb{R}^d \to \mathbb{R}^{d \times d}$ of interest corresponds to the integral kernel $K_\mu$ of the squared Hessian defined in (\ref{hessian_guided_kernel}). 
However, we first examine the properties of general kernels and Gaussian processes, applying these results to $K_\mu$ defined in (\ref{hessian_guided_kernel}) at the end of this section. 

In this paper, we consider multivariate Gaussian processes. 
\citet{kim2024transformers} were the first to propose introducing random perturbations to probability measures using a multivariate Gaussian process. 
Another application of multivariate Gaussian processes is modeling vector fields on Riemannian manifolds \citep{hutchinson2021vector}. 
For detailed definition and properties, please refer to \citet{chen2023multivariate}. 

\begin{definition}[Multivariate Gaussian process]
    The vector-valued function $\xi : \R^d \to \R^d$ is said to follow a multivariate Gaussian process if any finite collection of variables $\xi(x_1), \cdots, \xi(x_N)$ are jointly normally distributed. This process is determined by the vector-valued mean function $m:\R^d \to \R^d$ and the matrix-valued covariance function $K:\R^d \times \R^d \to \R^{d\times d}$:
    \begin{align*}
        m(x) &= \mathbb{E}\qty[\xi(x)] \quad (x \in \R^d), \\
        K(x,\tilde{x}) &= \mathbb{E}\qty[(\xi(x) - m(x))(\xi(x) - m(x))^\top] \quad (x,\tilde{x} \in \R^d). 
    \end{align*}
    In this case, we denote $\xi \sim \mathrm{GP}(m,K)$. 
\end{definition}

\begin{proposition}
    \label{eigenvalue_expansion}
    Suppose $K : \mathbb{R}^d \times \mathbb{R}^d \to \mathbb{R}^{d \times d}$ satisfies $\int \|K(x,y)\| ~ \mu^{\otimes 2}(\mathrm{d}x \mathrm{d}y) < \infty$ and $K(x,y)^\top = K(y,x)$ for all $x,y \in \mathbb{R}^d$. Then, there exists a sequence $\{ \kappa_n \}_{n \geq 1} \subset \mathbb{R}$, which is finite or converges to $0$, satisfies $\|T_K\|_{\mathrm{Tr},L^2(\mu)} = \sum_{n \geq 1} |\kappa_n| < \infty$, and is non-increasing. Furthermore, there exists an orthonormal basis $\{ \psi_n \}_{n \geq 1} \subset L^2(\mathbb{R}^d)^d$ such that
    \begin{align*}
        K(x,y) = \sum_{n \geq 1} \kappa_n \psi_n(x) \psi_n(y)^\top,
    \end{align*}
    where the infinite sum converges in the $L^2(\mu)^{d \times d}$ norm.
\end{proposition}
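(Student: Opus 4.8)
\textbf{Proof proposal for Proposition \ref{eigenvalue_expansion}.}

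The plan is to view $K$ as the kernel of an integral operator $T_K$ on $L^2(\mu)^d$ and apply the spectral theorem for compact self-adjoint operators together with Mercer-type expansion arguments. First I would define $T_K : L^2(\mu)^d \to L^2(\mu)^d$ by $(T_K f)(x) = \int K(x,y) f(y)\, \mu(\diff y)$ and verify that it is well defined and bounded; this follows from the hypothesis $\int \|K(x,y)\|\, \mu^{\otimes 2}(\diff x \diff y) < \infty$ by a Cauchy--Schwarz / Jensen estimate, and in fact shows that $T_K$ is Hilbert--Schmidt (with Hilbert--Schmidt kernel $\|K(\cdot,\cdot)\|_{\mathrm{F}}$), hence compact. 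The symmetry condition $K(x,y)^\top = K(y,x)$ gives $\inpro{T_K f}{g}_{L^2(\mu)} = \inpro{f}{T_K g}_{L^2(\mu)}$, so $T_K$ is self-adjoint. By the spectral theorem for compact self-adjoint operators there is a real eigenvalue sequence $\{\kappa_n\}$, finite or tending to $0$, which we arrange in non-increasing order of magnitude, together with an orthonormal system $\{\psi_n\}$ of eigenfunctions in $L^2(\mu)^d$ spanning $\overline{\mathrm{ran}\, T_K}$, such that $T_K = \sum_n \kappa_n \, \psi_n \inpro{\psi_n}{\cdot}_{L^2(\mu)}$.

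Next I would establish the trace-class claim $\|T_K\|_{\mathrm{Tr},L^2(\mu)} = \sum_n |\kappa_n| < \infty$. Here the key point is that the integrability hypothesis on $K$ controls not just the Hilbert--Schmidt norm but the trace norm: writing $\sum_n |\kappa_n| = \sup \sum_n |\inpro{T_K e_n}{f_n}_{L^2(\mu)}|$ over orthonormal systems $\{e_n\},\{f_n\}$, one bounds each term using the kernel representation and the triangle inequality to get $\sum_n |\kappa_n| \le \int \|K(x,y)\|_{\mathrm{F}}\, \mu^{\otimes 2}(\diff x \diff y)$ (or, more cleanly, one diagonalizes and uses $|\kappa_n| = |\inpro{\psi_n}{T_K \psi_n}_{L^2(\mu)}| \le \int |\psi_n(x)^\top K(x,y) \psi_n(y)|\,\mu^{\otimes 2}(\diff x\diff y)$ and sums, invoking that $\{\psi_n\}$ is orthonormal together with a Bessel-type inequality applied coordinatewise). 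Either route reduces the finiteness of the trace norm to the stated integrability assumption.

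Finally I would prove the expansion $K(x,y) = \sum_n \kappa_n \psi_n(x)\psi_n(y)^\top$ with convergence in $L^2(\mu)^{d\times d}$. Let $R_N(x,y) = K(x,y) - \sum_{n \le N} \kappa_n \psi_n(x)\psi_n(y)^\top$. This is again a symmetric matrix kernel whose associated integral operator is $T_K - \sum_{n\le N}\kappa_n \psi_n\inpro{\psi_n}{\cdot}$, i.e.\ the operator with eigenvalues $\{\kappa_n\}_{n > N}$, and one computes that $\int \|R_N(x,y)\|_{\mathrm{F}}^2\, \mu^{\otimes 2}(\diff x\diff y)$ equals the squared Hilbert--Schmidt norm of the tail operator, namely $\sum_{n > N} \kappa_n^2$, which tends to $0$ as $N \to \infty$ since $\sum_n \kappa_n^2 \le (\sup_n|\kappa_n|)\sum_n|\kappa_n| < \infty$. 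I expect the main obstacle to be the book-keeping needed to pass rigorously between the matrix-valued kernel $K$ and the scalar theory of compact operators on the $d$-fold product Hilbert space $L^2(\mu)^d$ — in particular, ensuring that the eigenfunctions $\psi_n$ can indeed be taken in $L^2(\R^d)^d$ (not merely in $L^2(\mu)^d$) as stated, which may require either an additional regularity remark or a harmless redefinition of $\psi_n$ off the support of $\mu$; and verifying the Mercer-type pointwise/trace statements under only the $L^1$-integrability hypothesis on $\|K\|$ rather than continuity of $K$, so that the classical Mercer theorem does not apply verbatim and one must argue through the Hilbert--Schmidt and trace-norm estimates instead.
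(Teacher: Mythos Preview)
Your proposal is correct and follows essentially the same approach as the paper, which simply invokes the spectral theorem for compact self-adjoint operators and asserts the trace-class property without further detail. Your write-up is in fact considerably more careful than the paper's two-line proof, and the technical caveats you flag (the $L^2(\R^d)^d$ versus $L^2(\mu)^d$ distinction, and using Hilbert--Schmidt/trace-norm estimates in place of classical Mercer) are genuine subtleties that the paper does not address.
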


\begin{proof}
    This follows from the eigenvalue expansion theorem for compact self-adjoint operators on Hilbert spaces. The trace-class property $\sum_{n \geq 1} |\kappa_n| < \infty$ follows from $T_k$ being trace-class.
\end{proof}

\begin{proposition}
    Let $\mu \in \mathcal{P}_2(\mathbb{R}^d)$ and $\psi \in L^2(\mu)^d$. Suppose $K: \mathbb{R}^d \times \mathbb{R}^d \to \mathbb{R}^{d \times d}$ is a positive semi-definite kernel satisfying $\int \|K(x,y)\| ~ \mu ^{\otimes 2}(\mathrm{d}x \mathrm{d}y) < \infty$. Then, $\xi \sim \mathrm{GP}(0,K)$ satisfies $\xi \in L^2(\mu)^d$ almost surely.
\end{proposition}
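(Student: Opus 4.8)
The plan is to prove the stronger quantitative statement $\mathbb{E}\,\lVert\xi\rVert_{L^2(\mu)}^2 = \lVert T_K\rVert_{\mathrm{Tr},L^2(\mu)} < \infty$, from which $\lVert\xi\rVert_{L^2(\mu)} < \infty$ almost surely follows by Markov's inequality, i.e. $\xi \in L^2(\mu)^d$ a.s. Fixing a jointly measurable version of the process $(x,\omega)\mapsto\xi(x)(\omega)$ — which exists under the mild regularity of $K$ relevant here, in particular for the Hessian kernel $K_\mu$ — Tonelli's theorem applied to the nonnegative integrand $\lVert\xi(x)\rVert^2$ gives
\begin{align*}
    \mathbb{E}\,\lVert\xi\rVert_{L^2(\mu)}^2 = \mathbb{E}\int\lVert\xi(x)\rVert^2\,\mu(\diff x) = \int\mathbb{E}\,\lVert\xi(x)\rVert^2\,\mu(\diff x) = \int\mathrm{tr}\,K(x,x)\,\mu(\diff x),
\end{align*}
using that $\mathbb{E}[\xi(x)\xi(x)^\top] = K(x,x)$ since $\xi$ is centered. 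It thus remains to identify $\int\mathrm{tr}\,K(x,x)\,\mu(\diff x)$ with the trace norm of $T_K$, which is finite by Proposition~\ref{eigenvalue_expansion}.

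Any positive semi-definite matrix-valued kernel satisfies $K(x,y)^\top = K(y,x)$, so Proposition~\ref{eigenvalue_expansion} applies under the assumed integrability $\int\lVert K(x,y)\rVert\,\mu^{\otimes 2}(\diff x\diff y)<\infty$ and furnishes an expansion $K(x,y) = \sum_{n\ge1}\kappa_n\psi_n(x)\psi_n(y)^\top$ with $\{\psi_n\}$ orthonormal in $L^2(\mu)^d$ and $\sum_{n\ge1}|\kappa_n| = \lVert T_K\rVert_{\mathrm{Tr},L^2(\mu)}<\infty$; since $K$ is positive semi-definite, $T_K$ is a positive operator and hence $\kappa_n\ge0$. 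Evaluating the expansion on the diagonal and taking traces yields $\mathrm{tr}\,K(x,x) = \sum_{n\ge1}\kappa_n\lVert\psi_n(x)\rVert^2$, a series with nonnegative terms, so by the monotone convergence theorem
\begin{align*}
    \int\mathrm{tr}\,K(x,x)\,\mu(\diff x) = \sum_{n\ge1}\kappa_n\int\lVert\psi_n(x)\rVert^2\,\mu(\diff x) = \sum_{n\ge1}\kappa_n = \lVert T_K\rVert_{\mathrm{Tr},L^2(\mu)} < \infty,
\end{align*}
where the second equality uses the $L^2(\mu)^d$-orthonormality of the $\psi_n$. Combined with the previous display this gives $\mathbb{E}\,\lVert\xi\rVert_{L^2(\mu)}^2<\infty$, and the conclusion follows.

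The delicate point is the diagonal evaluation: Proposition~\ref{eigenvalue_expansion} only guarantees convergence of the Mercer series in $L^2(\mu^{\otimes 2})$, whereas the diagonal $\{x=y\}$ is $\mu^{\otimes 2}$-null, so the identity $\mathrm{tr}\,K(x,x)=\sum_n\kappa_n\lVert\psi_n(x)\rVert^2$ is not a mere substitution and in general needs a Mercer-type argument establishing absolute/uniform convergence. A clean way to sidestep this is to invoke the Karhunen–Loève representation: because $(\kappa_n,\psi_n)$ are exactly the eigenpairs of the covariance operator $T_K$ on $L^2(\mu)^d$, the centered Gaussian process admits the expansion $\xi\overset{d}{=}\sum_{n\ge1}\sqrt{\kappa_n}\,Z_n\,\psi_n$ with $Z_n$ i.i.d.\ standard normal, the series converging in $L^2(\mu)^d$ almost surely and in mean square; then, by orthonormality of $\{\psi_n\}$ and independence of $\{Z_n\}$, $\mathbb{E}\,\lVert\xi\rVert_{L^2(\mu)}^2 = \sum_{n\ge1}\kappa_n\,\mathbb{E}[Z_n^2] = \sum_{n\ge1}\kappa_n<\infty$ directly, never touching the diagonal. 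Either route completes the proof; the measurability of $\xi$ used in the Tonelli step is routine and does not affect the law of $\lVert\xi\rVert_{L^2(\mu)}$.
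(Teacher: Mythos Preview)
Your proof is correct and follows essentially the same approach as the paper: compute $\mathbb{E}\,\lVert\xi\rVert_{L^2(\mu)}^2 = \int \mathrm{tr}\,K(x,x)\,\mu(\diff x)$ via Tonelli and the covariance identity, then conclude almost-sure finiteness by Markov's inequality. The paper is actually less careful than you are—it simply asserts $\int \mathrm{tr}\,K(x,x)\,\mu(\diff x)<\infty$ ``using the integrability of $K$ and equivalence of matrix norms'' without addressing the diagonal issue you flag, whereas your identification with $\lVert T_K\rVert_{\mathrm{Tr},L^2(\mu)}$ (or the Karhunen--Lo\`eve alternative) supplies the missing justification.
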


\begin{proof}
    Using the integrability of $k$ and equivalence of matrix norms:
    \begin{align*}
        \mathbb{E}[\|\xi\|_\mu^2] &= \int \mathbb{E}[\xi(x)^\top \xi(x)] ~ \mu(\mathrm{d}x) = \int \mathrm{tr}(K(x,x)) ~ \mu(\mathrm{d}x) < \infty.
    \end{align*}
    Setting $A_n = \{ \|\xi\|_\mu^2 \geq n \}$ for $n \in \mathbb{N}$, it holds that 
    \begin{align*}
        P(\|\xi\|_\mu^2 < \infty) &= 1 - P\left( \textstyle\bigcap_{n \geq 1}\displaystyle A_n \right) = 1 - \lim_{n \to \infty} P(A_n) \geq 1 - \limsup_{n \to \infty} \frac{1}{n} \mathbb{E}[\|\xi\|_\mu^2] = 1.
    \end{align*}
    Thus, $\xi \in L^2(\mu)^d$ almost surely. 
\end{proof}

\begin{proposition}
    \label{gp_component_normally_distributed}
    Let $\mu \in \mathcal{P}_2(\mathbb{R}^d)$ and $\psi \in L^2(\mu)^d$. Suppose $K: \mathbb{R}^d \times \mathbb{R}^d \to \mathbb{R}^{d \times d}$ is a positive semi-definite kernel satisfying $\int \|K(x,y)\| ~ \mu^{\otimes 2}(\mathrm{d}x \mathrm{d}y) < \infty$. Then, for any $\xi \sim \mathrm{GP}(0,K)$, we have $\langle \psi, \xi \rangle_{L^2(\mu)} \sim \mathcal{N}(0, \langle \psi, T_K \psi \rangle_{L^2(\mu)})$.
\end{proposition}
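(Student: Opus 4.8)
The plan is to reduce the statement to a finite-dimensional Gaussian computation via the Mercer-type expansion of $K$ provided by Proposition \ref{eigenvalue_expansion}, and then pass to the limit. First I would write $K(x,y) = \sum_{n \geq 1} \kappa_n \psi_n(x)\psi_n(y)^\top$ with $\{\psi_n\}$ an orthonormal basis of $L^2(\mu)^d$ and $\kappa_n \geq 0$ (positive semi-definiteness of $K$ forces $\kappa_n \geq 0$), so that $T_K \psi = \sum_n \kappa_n \langle \psi_n, \psi\rangle_{L^2(\mu)} \psi_n$ and in particular $\langle \psi, T_K \psi\rangle_{L^2(\mu)} = \sum_n \kappa_n \langle \psi_n,\psi\rangle_{L^2(\mu)}^2$. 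The natural representation is the Karhunen–Loève expansion $\xi = \sum_{n\geq 1} \sqrt{\kappa_n}\, Z_n \psi_n$ with $Z_n \overset{\mathrm{iid}}{\sim} \mathcal{N}(0,1)$; one checks this has the right covariance function $K$ and, by the preceding proposition, lies in $L^2(\mu)^d$ almost surely since $\mathbb{E}\|\xi\|_{L^2(\mu)}^2 = \sum_n \kappa_n = \|T_K\|_{\mathrm{Tr},L^2(\mu)} < \infty$.

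Given this, I would compute $\langle \psi, \xi\rangle_{L^2(\mu)} = \sum_{n\geq 1}\sqrt{\kappa_n}\,\langle\psi_n,\psi\rangle_{L^2(\mu)}\, Z_n$. This is an almost-surely convergent series of independent centered Gaussians: the partial sums $S_N = \sum_{n=1}^N \sqrt{\kappa_n}\langle\psi_n,\psi\rangle_{L^2(\mu)} Z_n$ are $\mathcal{N}(0,\sigma_N^2)$ with $\sigma_N^2 = \sum_{n=1}^N \kappa_n \langle\psi_n,\psi\rangle_{L^2(\mu)}^2 \to \langle\psi,T_K\psi\rangle_{L^2(\mu)}$ as $N\to\infty$ (the sum is finite since $\kappa_n \leq \|T_K\|$ and $\sum_n \langle\psi_n,\psi\rangle^2 = \|\psi\|_{L^2(\mu)}^2 < \infty$; alternatively it is bounded by $\|T_K\|\,\|\psi\|_{L^2(\mu)}^2$). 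Since a limit in probability (or $L^2$) of Gaussians with convergent variances is Gaussian with the limiting variance, we conclude $\langle\psi,\xi\rangle_{L^2(\mu)} \sim \mathcal{N}(0,\langle\psi,T_K\psi\rangle_{L^2(\mu)})$. An alternative, more self-contained route avoiding the explicit KL expansion: approximate $\psi$ by $\psi^{(N)} = \sum_{n=1}^N \langle\psi_n,\psi\rangle\psi_n$, note that $\langle\psi^{(N)},\xi\rangle_{L^2(\mu)}$ is a finite linear combination $\sum_{n=1}^N \langle\psi_n,\psi\rangle\langle\psi_n,\xi\rangle$ of jointly Gaussian variables $\langle\psi_n,\xi\rangle = \int \psi_n(x)^\top\xi(x)\mu(\diff x)$ (an $L^2(\mu)$-limit of finite linear combinations of the Gaussian family $\{\xi(x)\}$, hence Gaussian), and then take $N\to\infty$ in $L^2$ of the probability space.

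The main obstacle is the measurability/integrability bookkeeping needed to justify that $\langle\psi_n,\xi\rangle_{L^2(\mu)}$ is well-defined as an honest Gaussian random variable — i.e., that the infinite-dimensional integral $\int \psi(x)^\top \xi(x)\,\mu(\diff x)$ can be interchanged with expectations and with the Gaussian structure of the finite marginals. This is handled by the $L^2$ bound $\mathbb{E}\|\xi\|_{L^2(\mu)}^2 = \int \mathrm{tr}\,K(x,x)\,\mu(\diff x) < \infty$ from the previous proposition (so Fubini applies to compute $\mathbb{E}\langle\psi_n,\xi\rangle = 0$ and the covariances $\mathbb{E}[\langle\psi_m,\xi\rangle\langle\psi_n,\xi\rangle] = \langle\psi_m,T_K\psi_n\rangle_{L^2(\mu)} = \kappa_n\delta_{mn}$), together with the fact that the closed linear span in $L^2$ of a Gaussian family is itself Gaussian. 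Once this is in place, everything else is the routine limiting argument above.
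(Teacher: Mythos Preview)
Your alternative route and the final paragraph are essentially the paper's proof: the paper defines the Gaussian Hilbert space $E = \overline{\mathrm{span}\{\psi(x)^\top\xi(x) : x\in\R^d\}}^{L^2(P)}$, shows $\langle\psi,\xi\rangle_{L^2(\mu)} \in (E^\perp)^\perp = E$ via a one-line Fubini computation ($\mathbb{E}[Z\langle\psi,\xi\rangle_{L^2(\mu)}] = \int \mathbb{E}[Z\,\psi(x)^\top\xi(x)]\,\mu(\diff x) = 0$ for $Z\in E^\perp$), concludes Gaussianity from closure of Gaussian families under $L^2(P)$-limits, and then computes the mean and variance by Fubini exactly as you describe. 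So that part of your proposal matches the paper.

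Your primary route via the Karhunen--Lo\`eve expansion is problematic in the paper's logical ordering: Proposition~\ref{KL_expansion} is stated and proved \emph{after} the present proposition and relies on it (the assertion $X_n = \langle\psi_n,\xi\rangle_{L^2(\mu)} \sim \mathcal{N}(0,\kappa_n)$ there is exactly the present proposition applied to $\psi=\psi_n$). If instead you mean to \emph{construct} a specific process by the series $\sum_n \sqrt{\kappa_n}\,Z_n\psi_n$ and check it has covariance $K$, you have only proved the claim for that particular realization, not for an arbitrary $\xi\sim\mathrm{GP}(0,K)$ as the proposition demands; arguing that the law of the $L^2(\mu)$-integral is determined by the finite-dimensional marginals is essentially the content being proved. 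Your alternative route sidesteps this circularity and is the cleaner argument here --- and it is what the paper does.
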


\begin{proof}
    The proof follows \citet{kim2024transformers}, Lemma E.9. First, we show that $\langle \psi, \xi \rangle_{L^2(\mu)}$ is normally distributed.

    We define the closed subspace of square-integrable real-valued random variables by $E = \overline{\mathrm{span}\{ \psi(x)^\top \xi(x) \mid x \in \mathbb{R}^d \}}$. For any $Z \in E^\perp$, the following holds: 
    \begin{align*}
        \mathbb{E}[Z \langle \psi, \xi \rangle_{L^2(\mu)}] &= \int \mathbb{E}[Z \psi(x)^\top \xi(x)] ~ \mu(\mathrm{d}x) = 0. 
    \end{align*}
    Thus, $\langle \psi, \xi \rangle_{L^2(\mu)} \in (E^\perp)^\perp = E$ holds, meaning that $\langle \psi, \xi \rangle_{L^2(\mu)}$ is the $L^2(P)$ limit, and therefore, the law convergence limit of of normally distributed random variables.
    As will be shown later, the mean of $\inpro{\psi}{\xi}_{L^2(\mu)}$ is $0$, and its variance is $\inpro{\psi}{T_K \psi}_{L^2(\mu)}$. 
    Therefore, the characteristic function converges pointwise to the characteristic function of a normal distribution, implying that $\inpro{\psi}{\xi}_{L^2(\mu)}$ follows a normal distribution.

    Moreover, the mean and variance are computed as:
    \begin{align*}
        \mathbb{E}[\langle \psi, \xi \rangle_{L^2(\mu)}] &= \int \psi(x)^\top \mathbb{E}[\xi(x)] ~ \mu(\mathrm{d}x) = 0, \\
        \mathbb{E}[\langle \psi, \xi \rangle_{L^2(\mu)}^2] &= \int \psi(x)^\top \mathbb{E}[\xi(x) \xi(y)^\top] \psi(y) ~ \mu^{\otimes 2}(\mathrm{d}x \mathrm{d}y) \\
        &= \int \psi(x)^\top T_K \psi(x) ~ \mu(\mathrm{d}x) = \langle \psi, T_K \psi \rangle_{L^2(\mu)}.
    \end{align*}
    Here, Fubini's theorem and the definition of Gaussian processes are used. 
\end{proof}

\begin{proposition}[Karhunen--Lo\`eve expansion]
    \label{KL_expansion}
    Suppose $K:\R^d \times \R^d \to \R^{d\times d}$ be a positive semidefinite kernel and satisfy $\int \norm{K(x,y)} \mu^{\otimes 2}(\diff x \diff y) < \infty$. 
    Let $\qty{\kappa_n}_{n \geq 1} \subset \R_{\geq 0}$ and $\qty{\psi_n}_{n \geq 1} \subset L^2(\mu)^d$ be as in Lemma \ref{eigenvalue_expansion}. 
    The sequence of the random variables  $X_n \coloneq \inpro{\psi_n}{\xi}_{L^2(\mu)} \sim \mathcal{N}(0,\kappa_n)$ is mutually independent. Furthermore, the Gaussian process $\xi \sim \mathrm{GP}(0, K)$ is represented as follows: 
    \begin{align*}
        \xi(x) = \sum_{n \geq 1} X_n \psi_n(x), 
    \end{align*}
    where the right-hand infinite sum means convergence in $L^2(P \otimes \mu)$. 
\end{proposition}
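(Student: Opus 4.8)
The plan is to prove the statement in three stages: (i) identify the marginal law of each $X_n$, (ii) establish joint Gaussianity and mutual independence of the family $\{X_n\}$, and (iii) prove $L^2(P\otimes\mu)$-convergence of the partial sums $S_N(x) \coloneq \sum_{n=1}^N X_n\psi_n(x)$. For stage (i), I would invoke Proposition~\ref{gp_component_normally_distributed}, which gives $\langle \psi, \xi\rangle_{L^2(\mu)}\sim\mathcal{N}\big(0,\langle\psi,T_K\psi\rangle_{L^2(\mu)}\big)$ for every $\psi\in L^2(\mu)^d$. Applying this with $\psi=\psi_n$ and using that $\psi_n$ is an eigenfunction of $T_K$ with eigenvalue $\kappa_n$ together with $\norm{\psi_n}_{L^2(\mu)}=1$ yields $\langle\psi_n,T_K\psi_n\rangle_{L^2(\mu)}=\kappa_n$, hence $X_n\sim\mathcal{N}(0,\kappa_n)$.

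For stage (ii), I would first note that for any $N$ and scalars $a_1,\dots,a_N$ the linear combination $\sum_{n=1}^N a_nX_n = \langle\sum_{n=1}^N a_n\psi_n,\xi\rangle_{L^2(\mu)}$ is again univariate Gaussian by Proposition~\ref{gp_component_normally_distributed} applied to $\psi=\sum_{n=1}^N a_n\psi_n\in L^2(\mu)^d$; hence $(X_1,\dots,X_N)$ is a Gaussian vector. Then I would compute the covariances using Fubini's theorem (justified by $\int\norm{K(x,y)}\,\mu^{\otimes2}(\diff x\,\diff y)<\infty$) and the defining identity $\mathbb{E}[\xi(x)\xi(y)^\top]=K(x,y)$:
$$\mathbb{E}[X_mX_n] = \iint \psi_m(x)^\top K(x,y)\psi_n(y)\,\mu^{\otimes2}(\diff x\,\diff y) = \langle\psi_m,T_K\psi_n\rangle_{L^2(\mu)} = \kappa_n\langle\psi_m,\psi_n\rangle_{L^2(\mu)} = \kappa_n\delta_{mn}.$$
Since $(X_1,\dots,X_N)$ is jointly Gaussian with diagonal covariance, the variables are independent; as this holds for every $N$ (and every finite subcollection embeds into some such block), the whole sequence $\{X_n\}$ is mutually independent.

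For stage (iii), I would expand the squared $L^2(P\otimes\mu)$-norm of $\xi-S_N$. Using $\langle\xi,\psi_n\rangle_{L^2(\mu)}=X_n$, the orthonormality of $\{\psi_n\}$ in $L^2(\mu)^d$, and the vanishing of the cross-covariances from stage (ii), one gets
$$\mathbb{E}\big[\norm{\xi-S_N}_{L^2(\mu)}^2\big] = \mathbb{E}\big[\norm{\xi}_{L^2(\mu)}^2\big] - 2\sum_{n=1}^N\mathbb{E}[X_n^2] + \sum_{n=1}^N\mathbb{E}[X_n^2] = \mathbb{E}\big[\norm{\xi}_{L^2(\mu)}^2\big] - \sum_{n=1}^N\kappa_n.$$
The remaining ingredient is $\mathbb{E}[\norm{\xi}_{L^2(\mu)}^2] = \int\mathrm{tr}(K(x,x))\,\mu(\diff x) = \mathrm{tr}(T_K) = \sum_{n\ge1}\kappa_n$, combining the computation already performed in the preceding proposition with the fact that the trace of the trace-class operator $T_K$ equals the sum of its (nonnegative) eigenvalues, namely $\norm{T_K}_{\mathrm{Tr},L^2(\mu)}$ from Proposition~\ref{eigenvalue_expansion}. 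Therefore $\mathbb{E}[\norm{\xi-S_N}_{L^2(\mu)}^2] = \sum_{n>N}\kappa_n\to0$ as $N\to\infty$, which is exactly convergence of $S_N$ to $\xi$ in $L^2(P\otimes\mu)$.

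The main obstacle is the measure-theoretic bookkeeping: justifying the interchange of expectation, $\mu$-integration, and infinite summation in the covariance identity and in the trace computation, using only the hypotheses $\int\norm{K(x,y)}\,\mu^{\otimes2}(\diff x\,\diff y)<\infty$ and the trace-class convergence of Proposition~\ref{eigenvalue_expansion} (and, implicitly, the existence of a measurable version of $\xi$ with $\xi\in L^2(\mu)^d$ a.s., already established earlier). Once these interchanges are in place, every step reduces to a direct application of the Gaussian-process facts assembled above together with elementary Hilbert-space algebra.
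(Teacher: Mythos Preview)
Your proposal is correct and follows essentially the same approach as the paper's proof: compute the covariances $\mathbb{E}[X_mX_n]=\kappa_n\delta_{mn}$ via Fubini and the eigenfunction property, deduce independence, then expand $\mathbb{E}\big[\norm{\xi-S_N}_{L^2(\mu)}^2\big]$ to obtain the tail sum $\sum_{n>N}\kappa_n\to 0$. Your explicit verification of joint Gaussianity of $(X_1,\dots,X_N)$ before invoking ``uncorrelated $\Rightarrow$ independent'' is in fact more careful than the paper, which passes directly from zero pairwise covariance and marginal normality to independence.
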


\begin{proof}
    First, we show that the sequence of the random variables $X_n = \inpro{\psi_n}{\xi}_{L^2(\mu)}~(n \geq 1)$ is mutually independent. 
    \begin{align*}
        \mathrm{E}[X_n X_m] &= \mathrm{E} \qty[\int \psi_n(x)^\top \xi(x) \mu(\diff x) \int \xi(y)^\top \psi_m(y) \mu(\diff y)] \\
        &= \int \psi_n(x)^\top \mathrm{E} \qty[\xi(x) \xi(y)^\top] \psi_m(x) \mu \otimes \mu (\diff x \diff y) \\
        &= \int \psi_n(x)^\top \qty(\int K(x,y) \psi_m(y) \mu(\diff y) ) \mu(\diff x) \\
        &= \int \psi_n(x)^\top \qty(\kappa_m \psi_m(x)) \mu(\diff x) \\
        &= \kappa_m \delta_{n,m},
    \end{align*}
    where Fubini's theorem is used in the second and fourth line. The fifth line follows from the fact that $\psi_m$ is the eigenvector of the integral operator $T_K$. 
    
Thus $n \neq m$ implies the covariance of $X_n$ and $X_m$ is equal to $0$. 
    This implies that $X_n$ and $X_m$ are mutually independent, because they follow the normal distribution. 
    Hence we obtain the mutual independency of the sequence $X_n = \inpro{\psi_n}{\xi}_{L^2(\mu)}~(n \geq 1)$. \\
    The Karhunen-Lo\`eve expansion follows from
    \begin{align*}
        \mathrm{E} \qty[\norm{\xi - \sum_{n=1}^N X_n \psi_n}_{L^2(\mu)}^2] 
        =& \mathrm{E} \qty[\norm{\xi}_{L^2(\mu)}^2] - 2 \sum_{n=1}^N \mathrm{E} \qty[X_n \inpro{\xi}{\psi_n}_{L^2(\mu)}] \\
        & \quad \quad \quad \quad + \sum_{n=1}^N \sum_{m=1}^N \inpro{\psi_n}{\psi_m}_{L^2(\mu)} \mathrm{E}\qty[X_n X_m] \\
        =& \mathrm{E} \qty[\norm{\xi}_{L^2(\mu)}^2] - \sum_{n=1}^N \mathrm{E} \qty[X_n^2] \\
        =& \sum_{n=N+1}^\infty \kappa_n \xrightarrow{N \to \infty} 0. 
    \end{align*}
\end{proof}

\begin{proposition}[extended Markov's inequality]
    \label{Markov_expansion}
    For any random variable $X$ and non-decreasing positive-valued measurable function $\varphi:\R \to \R_{>0}$, the following holds : 
    \begin{align*}
        P\qty(X \geq M) \leq \frac{\mathrm{E}[\varphi(X)]}{\varphi(M)} \quad (\forall M \in \R). 
    \end{align*}
\end{proposition}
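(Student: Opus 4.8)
The plan is to replicate the classical proof of Markov's inequality, using the monotonicity of $\varphi$ in place of the usual linear lower bound $t \mapsto t \wedge M$. First I would fix $M \in \R$ and record the elementary observation that, since $\varphi$ is non-decreasing, the event $\{X \geq M\}$ is contained in $\{\varphi(X) \geq \varphi(M)\}$; equivalently, on the event $\{X \geq M\}$ one has the pointwise bound $\varphi(X) \geq \varphi(M)$. Measurability of $\varphi$ ensures that $\varphi(X)$ is a bona fide (nonnegative) random variable, so its expectation is well defined in $[0,\infty]$.

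Next I would pass to the indicator $\mathbf{1}_{\{X \geq M\}}$ and write the chain of pointwise inequalities
\[
\varphi(M)\,\mathbf{1}_{\{X \geq M\}} \;\leq\; \varphi(X)\,\mathbf{1}_{\{X \geq M\}} \;\leq\; \varphi(X),
\]
where the first inequality is the monotonicity observation above and the second uses that $\varphi$ is strictly positive everywhere, so dropping the indicator only increases the value. Taking expectations across this display and using linearity and monotonicity of the expectation gives
\[
\varphi(M)\,P(X \geq M) \;=\; \mathrm{E}\!\left[\varphi(M)\,\mathbf{1}_{\{X \geq M\}}\right] \;\leq\; \mathrm{E}[\varphi(X)].
\]
Dividing both sides by the strictly positive constant $\varphi(M)$ yields the claimed bound $P(X \geq M) \leq \mathrm{E}[\varphi(X)]/\varphi(M)$.

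There is essentially no obstacle in this argument; the only point deserving a sentence of care is the case $\mathrm{E}[\varphi(X)] = \infty$, in which the asserted inequality holds trivially, so the proof goes through uniformly without any integrability hypothesis on $\varphi(X)$. I would note in passing that the intended use is to take $\varphi$ to be a rapidly growing (e.g.\ polynomial or exponential) function, so that control of $\mathrm{E}[\varphi(\lVert \xi \rVert_{L^2(\mu)})]$ for the Gaussian process $\xi$ translates into a sharp tail bound on $\lVert \xi \rVert_{L^2(\mu)}$.
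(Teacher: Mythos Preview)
Your proof is correct and follows essentially the same approach as the paper: both use that $\{X \geq M\} \subset \{\varphi(X) \geq \varphi(M)\}$ by monotonicity, and then apply the standard Markov bound. The paper simply invokes Markov's inequality as a black box for the second step, whereas you unpack it via the indicator estimate $\varphi(M)\mathbf{1}_{\{X\geq M\}} \leq \varphi(X)$; these are the same argument at different levels of detail.
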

\begin{proof}
    By applying Markov's inequality, we have
    \begin{align*}
        P\qty(X \geq M) \leq \mathrm{Pr}\qty(\varphi(X) \geq \varphi(M)) \leq \frac{\mathrm{E}[\varphi(X)]}{\varphi(M)} \quad (\forall M \in \R).
    \end{align*}
\end{proof}

With the above preparations, we obtain an upper bound for the tail probability of the $L^2$ norm of the Gaussian process.

\begin{proposition}
    \label{prop_tail_gp}
	 Suppose $K:\R^d \times \R^d \to \R^{d\times d}$ be a positive semi-definite symmetric kernel and satisfy $\int \norm{K(x,y)} \mu^{\otimes 2}(\diff x \diff y) < \infty$ holds. 
	Let $\qty{\kappa_n}_{n \geq 1} \subset \R_{\geq 0}$ and $\qty{\psi_n}_{n \geq 1} \subset L^2(\mu)^d$ be as in Proposition \ref{eigenvalue_expansion}. 
	The Gaussian process $\xi \sim \mathrm{GP}(0,K)$ with kernel function $K$ satisfies the following: 
	\begin{align*}
		P\qty(\norm{\xi}_{L^2(\mu)} \geq M) \leq \exp \qty(- \frac{e-1}{2e\kappa_1} M^2 + \frac{\sum_{n \geq 1} \kappa_n}{2\kappa_1}) \quad \qty(\forall M > \norm{T_K}_{\mathrm{Tr},L^2(\mu)}^{\frac{1}{2}}). 
	\end{align*}
\end{proposition}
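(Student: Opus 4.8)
The plan is to use the Karhunen--Lo\`eve expansion to reduce the tail bound to a moment generating function computation for a weighted sum of independent $\chi^2$ variables, and then apply the extended Markov's inequality with an exponential test function. By Proposition \ref{KL_expansion}, we can write $\norm{\xi}_{L^2(\mu)}^2 = \sum_{n \geq 1} X_n^2$ where $X_n \sim \mathcal{N}(0,\kappa_n)$ are mutually independent. Equivalently, $\norm{\xi}_{L^2(\mu)}^2 = \sum_{n \geq 1} \kappa_n Z_n^2$ with $Z_n \sim \mathcal{N}(0,1)$ i.i.d. Since the $\kappa_n$ are non-increasing with largest value $\kappa_1 = \norm{T_K}_{\mathrm{op},L^2(\mu)}$ (the operator norm, which equals $\lambda_{\max}$), and $\sum_n \kappa_n = \norm{T_K}_{\mathrm{Tr},L^2(\mu)}$, the sum is well-controlled.

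First I would apply Proposition \ref{Markov_expansion} (extended Markov's inequality) with $X = \norm{\xi}_{L^2(\mu)}^2$, $M \mapsto M^2$, and the test function $\varphi(t) = \exp(\lambda t)$ for a parameter $\lambda \in (0, \tfrac{1}{2\kappa_1})$ to be optimized; this gives
\begin{align*}
    P\qty(\norm{\xi}_{L^2(\mu)}^2 \geq M^2) \leq e^{-\lambda M^2}\, \mathrm{E}\qty[\exp\qty(\lambda \norm{\xi}_{L^2(\mu)}^2)].
\end{align*}
Next, by monotone convergence and independence, $\mathrm{E}[\exp(\lambda \sum_n \kappa_n Z_n^2)] = \prod_{n \geq 1} \mathrm{E}[\exp(\lambda \kappa_n Z_n^2)] = \prod_{n \geq 1} (1 - 2\lambda \kappa_n)^{-1/2}$, valid as long as $2\lambda\kappa_1 < 1$. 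Then I would bound each factor: using $-\tfrac{1}{2}\log(1-u) \leq \tfrac{u}{2(1-u)} \leq \tfrac{u}{2(1-2\lambda\kappa_1)}$ for $u = 2\lambda\kappa_n \in [0, 2\lambda\kappa_1]$, one gets $\sum_n -\tfrac12\log(1-2\lambda\kappa_n) \leq \tfrac{\lambda \sum_n \kappa_n}{1 - 2\lambda \kappa_1}$, so $\mathrm{E}[\exp(\lambda\norm{\xi}_{L^2(\mu)}^2)] \leq \exp\qty(\tfrac{\lambda \sum_n \kappa_n}{1 - 2\lambda\kappa_1})$.

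Finally I would choose $\lambda$ to make the constants match the stated bound: taking $\lambda = \tfrac{1}{2\kappa_1}\cdot\tfrac{e-1}{e}$ (so $1 - 2\lambda\kappa_1 = 1/e$) yields
\begin{align*}
    P\qty(\norm{\xi}_{L^2(\mu)} \geq M) \leq \exp\qty(-\frac{e-1}{2e\kappa_1} M^2 + \frac{e-1}{2\kappa_1}\cdot\frac{\sum_{n\geq1}\kappa_n}{e}\cdot e) = \exp\qty(-\frac{e-1}{2e\kappa_1}M^2 + \frac{(e-1)\sum_{n\geq1}\kappa_n}{2\kappa_1}),
\end{align*}
which is slightly stronger in the constant on the $\sum_n \kappa_n$ term than what is claimed (the claimed coefficient is $\tfrac{1}{2\kappa_1}$ rather than $\tfrac{e-1}{2\kappa_1}$); since $e - 1 > 1$, one can simply weaken the bound, or re-optimize $\lambda$ to land exactly on the stated expression. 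The restriction $M > \norm{T_K}_{\mathrm{Tr},L^2(\mu)}^{1/2} \geq (\mathrm{E}\norm{\xi}_{L^2(\mu)}^2)^{1/2}$ ensures the exponent is negative so the bound is non-trivial. The main obstacle is getting the MGF factorization rigorous in the infinite-dimensional setting---justifying the interchange of expectation and the infinite product, which follows from monotone convergence applied to partial products of nonnegative terms, together with the finiteness $\sum_n \kappa_n < \infty$ guaranteeing the limiting product converges.
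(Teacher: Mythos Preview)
Your overall plan---Karhunen--Lo\`eve expansion followed by a Chernoff bound on $\sum_n \kappa_n Z_n^2$---is exactly the paper's strategy. The gap is in the last step, where you have the direction of the comparison backwards. With your choice $\lambda=\tfrac{e-1}{2e\kappa_1}$ you obtain the exponent $-\tfrac{e-1}{2e\kappa_1}M^2+\tfrac{(e-1)\sum_n\kappa_n}{2\kappa_1}$. Since $e-1>1$, this exponent is \emph{larger} than the claimed one $-\tfrac{e-1}{2e\kappa_1}M^2+\tfrac{\sum_n\kappa_n}{2\kappa_1}$, so your bound is \emph{weaker}, not stronger; you cannot ``weaken'' it to reach the statement. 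Nor does re-optimizing $\lambda$ save the argument: your MGF estimate $-\tfrac12\log(1-u)\le \tfrac{u}{2(1-2\lambda\kappa_1)}$ leads, after minimizing over $\lambda$, to the bound $\exp\!\big(-\tfrac{(M-\sqrt{S})^2}{2\kappa_1}\big)$ with $S=\sum_n\kappa_n$, and a direct comparison shows this is strictly larger than the claimed bound for all $M$ in the interval $\big(e(1-\sqrt{1-2/e})\sqrt{S},\,e(1+\sqrt{1-2/e})\sqrt{S}\big)\approx(1.32\sqrt{S},4.12\sqrt{S})$. So the stated inequality is not recoverable from your MGF bound for all $M>\sqrt{S}$.

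The paper avoids this by using a sharper estimate on the log-MGF: since $u\mapsto -\log(1-u)/u$ is increasing on $(0,1)$ and $\kappa_n\le\kappa_1$, one has $-\tfrac12\log(1-2\kappa_n t)\le \tfrac{\kappa_n}{2\kappa_1}\big(-\log(1-2\kappa_1 t)\big)$, hence $\log\mathrm{E}[e^{t\|\xi\|^2}]\le -\tfrac{S}{2\kappa_1}\log(1-2\kappa_1 t)$. Optimizing $-tM^2-\tfrac{S}{2\kappa_1}\log(1-2\kappa_1 t)$ over $t\in(0,\tfrac{1}{2\kappa_1})$ gives $t^*=\tfrac{1}{2\kappa_1}(1-S/M^2)$ (valid precisely when $M^2>S$), and the resulting exponent $-\tfrac{M^2}{2\kappa_1}+\tfrac{S}{2\kappa_1}+\tfrac{S}{2\kappa_1}\log\tfrac{M^2}{S}$ is then relaxed to the stated form via $-x+\log x\le -\tfrac{e-1}{e}x$. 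Replacing your inequality $-\tfrac12\log(1-u)\le\tfrac{u}{2(1-u)}$ by this monotonicity argument is the missing ingredient.
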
 
\begin{proof}
    First, we calculate the characteristic function of $\norm{\xi}_{L^2(\mu)}^2$. \\
    Since $\xi \sim \mathrm{GP}(0,K)$, by Lemma \ref{KL_expansion}, we can express $\norm{\xi}_{L^2(\mu)}^2 = \sum_{n \geq 1} X_n^2 \eqcolon Y~~~(X_n \overset{\mathrm{i.d.}}{\sim} \mathcal{N}(0,\kappa_n) ~ \forall n \geq 1)$.
    Let $Y_N = \sum_{n=1}^N {X_n}^2$.
    Then for any $s \in \mathbb{C}$, we have
    \begin{align*}
    \mathrm{E}[e^{s {X_n}^2}] = \frac{1}{\sqrt{1-2 \kappa_n s}},
    \end{align*}
    and so, by the independence of $\qty{X_n}_{n\geq1}$, 
    \begin{align*}
    \mathrm{E}[e^{s Y_N}] = \prod_{n=1}^N \frac{1}{\sqrt{1 - 2 \kappa_n s}}.
    \end{align*}
    In particular, setting $s = \mathrm{i} u~(u \in \R)$, the characteristic function of $Y_N$ is
    \begin{align}
        \label{characteristic_function}
        \log {\mathrm{E}[e^{\mathrm{i} u Y_N}]} = -\frac{1}{2} \sum_{n=1}^N \log (1 - 2 \mathrm{i} \kappa_n u). 
    \end{align}
    Let $\theta_n = \mathrm{Arg}(1 - 2 \mathrm{i} \kappa_n u)~(-\frac{\pi}{2} < \theta_n < \frac{\pi}{2})$.
    Then for the real and imaginary parts,
    \begin{align*}
        \qty| \mathrm{Re}\sum_{n=1}^N \qty(\log (1 - 2 \mathrm{i} \kappa_n u)) | &\leq \frac{1}{2} \sum_{n=1}^N \log (1 + 4 \kappa_n^2 u^2) 
 \leq \frac{1}{2} \sum_{n=1}^N 4 {\kappa_n}^2 u^2 \\
 &\leq 2 u^2 \sum_{n\geq1} {\kappa_n}^2 \leq 2 u^2 \qty(\sum_{n\geq1} \kappa_n)^2 < \infty, \\
        \qty| \mathrm{Im}\sum_{n=1}^N \qty(\log (1 - 2 \mathrm{i} \kappa_n u)) | &\leq \sum_{n=1}^N |\theta_n| \leq \sum_{n=1}^N |\tan{\theta_n}| = \sum_{n=1}^N 2\kappa_n |u| \leq 2 |u| \sum_{n\geq1} \kappa_n  < \infty.
    \end{align*}
    Thus, the characteristic function of $Y_N$ converges pointwise as $N \to \infty$.
    By Levy's continuity theorem, $Y_N$ weakly converges to $ Y = \lim_{N \to \infty} Y_N = \sum_{n\geq1} {X_n}^2, $ and its characteristic function is $ \log {\mathrm{E}[e^{\mathrm{i}uY}]} = -\frac{1}{2} \sum_{n\geq1} \log (1 - 2 \mathrm{i} \kappa_n u)$. \\
    Now, we will proceed to
    \begin{itemize}
    \item Show that for some $t>0$, replacing $\mathrm{i} u$ with $t$ makes the series on the right-hand side of (\ref{characteristic_function}) converge. This allows us to show the existence of the moment generating function $\mathrm{E}[e^{tY}]$ for such $t$. 
    \item Use Lemma \ref{Markov_expansion} with $\varphi(x) \gets e^{tx},~X \gets Y$ to obtain the upper bound for the tail probability.
    \item Adjust $t>0$ (within the range where the moment generating function exists) to obtain the best possible upper bound for the tail probability.
    \end{itemize}
    The function $t \mapsto - \frac{1}{2} \log (1-2\kappa_n t)$ is convex, and the tangent at $t=0$ is $t \mapsto \kappa_n t$. Thus for $a > 1$, the equation for $t$; 
    \begin{align*}
        a \kappa_n t = -\frac{1}{2}\log(1 - 2 \kappa_n t)
    \end{align*}
    has a solution $t=t_n>0$ for $t>0$, and it holds that 
    \begin{align*}
        0 \leq t \leq t_n \implies - \frac{1}{2}\log(1 - 2 \kappa_n t) \leq a \kappa_n t. 
    \end{align*}
    Since $\kappa_n t_n$ is constant for $n \geq 1$ and $\qty{\kappa_n}_{n \geq 1}$ is non-increasing, the sequence $\qty{t_n}_{n \geq 1}$ is non-decreasing. 
    Hence, for any $n \geq 1, ~ t \in [0, t_1]$, we have $- \frac{1}{2} \log(1 - 2 \kappa_n t) \leq a \kappa_n t$, so 
    \begin{align*}
        \log \mathrm{E}[e^{t \norm{\xi}_\mu^2}] \leq a t \sum_{n \geq 1} \kappa_n < \infty. 
    \end{align*}
    Therefore, for $t \in [0,t_1]$, the moment generating function $\mathrm{E}[e^{t\norm{\xi}_{\mu}^2}]$ exists, and $ \mathrm{E}[e^{t\norm{\xi}_{\mu}^2}] \leq e^{a t \sum_{n \geq 1} \kappa_n}. $
    Hence, for the upper bound of the tail probability of the Gaussian process norm, we have 
    \begin{align*}
     P \qty(\norm{\xi}_{\mu} \geq M) &\leq \frac{\mathrm{E}[e^{t \norm{\xi}_{\mu}^2}]}{e^{t M^2}} \\ 
        &\leq \exp\qty(- t M^2 + a t \sum_{n \geq 1} \kappa_n). 
    \end{align*}
    By optimizing the right-hand side of this expression with respect to $t,~a$, we obtain the best upper bound. 
    For $a > 1,~ a t \kappa_1 = -\frac{1}{2}\log(1 - 2 \kappa_1 t)$, considering large $M$, we solve 
    \begin{align}
        \label{obejective_upper_bound_tp}
        \mathrm{minimize} ~ f(t) &= - t M^2 + a t \sum_{n \geq 1} \kappa_n, \\
        \label{condition_tp}
        ~~\mathrm{s.t.} ~ t > 0, ~~a &= - \frac{1}{2 \kappa_1 t} \log (1 - 2\kappa_1 t) > 1.
    \end{align}
    The derivative of (\ref{obejective_upper_bound_tp}) is
    \begin{align}
        \label{optimality_condition_tp_1}
        \frac{\diff}{\diff t} f(t) = - M^2 + \frac{\sum_{n \geq 1} \kappa_n}{1 - 2 \kappa_1 t} = 0,
    \end{align}
    which gives 
    \begin{align}
        \label{optimality_condition_tp_2}
        t \coloneq t^* = \frac{1}{2\kappa_1} \qty(1 - \frac{\sum_{n \geq 1} \kappa_n}{M^2}).
    \end{align}
    If $M^2 > \sum_{n \geq 1} \kappa_n$, then $t^*>0, a>1$, satisfying the condition (\ref{condition_tp}). 
    The upper bound becomes
    \begin{align*}
    P \qty(\norm{\xi}_{L^2(\mu)} \geq M) &\leq \exp f(t^*) \\
    &= \exp\qty(- t^* M^2 - \frac{\sum_{n\geq1}\kappa_n}{2 \kappa_1} \log(1-2 \kappa_1 t^*) ) \\
    &= \exp\qty(- \frac{M^2}{2\kappa_1} + \frac{\sum_{n \geq 1} \kappa_n}{2 \kappa_1} + \frac{\sum_{n \geq 1} \kappa_n}{2 \kappa_1} \log \frac{M^2}{\sum_{n \geq 1} \kappa_n} ), 
    \end{align*}
    where in the third line we used (\ref{optimality_condition_tp_1}), (\ref{optimality_condition_tp_2}). 
    Finally, applying the inequality $-x + \log{x} \leq - \frac{e-1}{e}x$ at $x = \frac{M^2}{\sum_{n \geq 1} \kappa_n}$, we obtain 
    \begin{align*}
    f(t^*) &\leq \frac{\sum_{n \geq 1} \kappa_n}{2 \kappa_1} \qty(1 - \frac{e-1}{e} \frac{M^2}{\sum_{n \geq 1} \kappa_n}). 
    \end{align*}
    So, we conclude
    \begin{align*}
    P \qty(\norm{\xi}_{L^2(\mu)} \geq M) \leq \exp \qty(- \frac{e-1}{2e\kappa_1} M^2 + \frac{\sum_{n \geq 1} \kappa_n}{2\kappa_1}).
    \end{align*}
\end{proof}

We apply the results from the previous section.  
\begin{restatable}{lemma}{hessiangp}
    \label{hessian_guided_gaussian_process}
    Let $K_\mu$ be the Hessian-based kernel introduced in (\ref{hessian_guided_kernel}). 
    Then, the Gaussian process $\xi \sim \mathrm{GP}(0, K_\mu)$ satisfies the following : 
    \begin{itemize}
        \item $\inpro{\psi_n}{\xi}_{L^2(\mu^\dagger)} \sim \mathcal{N}(0, {\lambda_n}^2)$. 
        \item P-almost surely, it occurs that $\xi \in \mathcal{R}(H_{\mu}) \subset \tansp{\mu}$. In particular, from the assumptions, $\id + \eta_p \xi$ gives the optimal transport map from $\mu$ to $(\id + \eta_p \xi) \# \mu$ for sufficiently small $\eta_p > 0$ P-a.s. 
        \item For any constant $M > R_2 \geq  \qty(\sum_{n\geq1} {\lambda_n}^2)^{\frac{1}{2}}$, the following holds : 
        \begin{align*}
            P\qty(\norm{\xi}_{L^2(\mu)} \geq M) \leq \exp \qty(- \frac{e-1}{2e{\lambda_1}^2} M^2 + \frac{\sum_{n\geq1}{\lambda_n}^2}{2{\lambda_1}^2}). 
        \end{align*}
    \end{itemize}
\end{restatable}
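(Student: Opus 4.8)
The plan is to obtain all three claims by specializing the general Gaussian-process results of Appendix~\ref{appendix:gp} to the Hessian-guided kernel $K_\mu$ of~\eqref{hessian_guided_kernel}. The first step is to verify that $K_\mu$ is an admissible kernel: it is symmetric, $K_\mu(x,y)^\top=K_\mu(y,x)$, because the Wasserstein Hessian satisfies $\nabla_\mu^2 F(\mu,x,y)^\top=\nabla_\mu^2 F(\mu,y,x)$; it is positive semi-definite, since for finite families $\{x_i\}\subset\R^d$, $\{c_i\}\subset\R^d$ one has $\sum_{i,j}c_i^\top K_\mu(x_i,x_j)c_j=\int\norm{\sum_i\nabla_\mu^2 F(\mu,z,x_i)c_i}^2\,\mu(\diff z)\ge 0$; and $\int\norm{K_\mu(x,y)}\,\mu^{\otimes 2}(\diff x\diff y)<\infty$ by Cauchy--Schwarz together with the $L^2(\mu\otimes\mu)$-integrability of $\nabla_\mu^2 F(\mu)$ guaranteed by sufficient smoothness. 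Its integral operator is $H_\mu^2$, as computed right after~\eqref{hessian_guided_kernel}. Diagonalizing the compact self-adjoint operator $H_\mu$ on $L^2(\mu)^d$ as $H_\mu\psi_n=\lambda_n\psi_n$ with $\{\psi_n\}$ orthonormal and $|\lambda_n|$ non-increasing, the operator $H_\mu^2$ has the same eigenvectors with eigenvalues $\lambda_n^2\ge 0$; these $(\lambda_n^2,\psi_n)$ are exactly the system $(\kappa_n,\psi_n)$ produced by Proposition~\ref{eigenvalue_expansion} for $K_\mu$.

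Given this identification, the first and third claims are direct applications of the cited propositions. For the first, Proposition~\ref{gp_component_normally_distributed} with $\psi=\psi_n$ gives $\inpro{\psi_n}{\xi}_{L^2(\mu^\dagger)}\sim\mathcal N\!\left(0,\inpro{\psi_n}{H_{\mu^\dagger}^2\psi_n}_{L^2(\mu^\dagger)}\right)=\mathcal N(0,\lambda_n^2)$. For the third, Proposition~\ref{prop_tail_gp} applied with $\kappa_n\gets\lambda_n^2$ yields exactly the stated bound, since then $\kappa_1=\lambda_1^2$ and $\sum_{n\ge 1}\kappa_n=\sum_{n\ge 1}\lambda_n^2=\norm{H_\mu}_{\mathrm{HS},L^2(\mu)}^2=\norm{T_{K_\mu}}_{\mathrm{Tr},L^2(\mu)}$; the admissibility threshold $M>\norm{T_{K_\mu}}_{\mathrm{Tr},L^2(\mu)}^{1/2}$ of that proposition is precisely the hypothesis $M>R_2\ge\left(\sum_{n\ge 1}\lambda_n^2\right)^{1/2}$.

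For the second claim I would start from the Karhunen--Lo\`eve expansion (Proposition~\ref{KL_expansion}): almost surely $\xi=\sum_{n\ge 1}X_n\psi_n$ with independent $X_n\sim\mathcal N(0,\lambda_n^2)$, the series converging in $L^2(P\otimes\mu)$. Terms with $\lambda_n=0$ vanish a.s., and each surviving $\psi_n=\lambda_n^{-1}H_\mu\psi_n$ lies in $\mathcal R(H_\mu)$, so $\xi$ lies a.s.\ in the $L^2(\mu)$-closure $\overline{\mathcal R(H_\mu)}=(\ker H_\mu)^\perp$. Using $\nabla_\mu^2 F(\mu,x,y)=\nabla_x\nabla_y\vvar{F}{\mu}(\mu,x,y)$ one has $H_\mu f(x)=\nabla_x\!\left(\int\nabla_y\vvar{F}{\mu}(\mu,x,y)\cdot f(y)\,\mu(\diff y)\right)$, so $\mathcal R(H_\mu)$ consists of ($L^2$) gradient fields and hence $\overline{\mathcal R(H_\mu)}\subset\tansp{\mu}$. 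For the optimal-transport assertion I would show that, almost surely, $\xi=\nabla\phi$ for a potential $\phi\in C^2(\R^d)$ with $\norm{\nabla\phi}_{L^2(\mu)}<\infty$ (from $\mathbb E[\norm{\xi}_{L^2(\mu)}^2]=\int\mathrm{tr}\,K_\mu(x,x)\,\mu(\diff x)<\infty$) and $\mu\text{-ess sup}\,\norm{\nabla^2\phi}<\infty$, the spatial regularity being inherited from that of $\nabla_\mu^2 F$ in its spatial arguments together with Assumption~\ref{assumption_regularity_wg}; then Corollary~\ref{cor_perturbation_otm} produces $\bar\eta_p>0$ such that $\id+\eta_p\xi$ is the optimal transport map from $\mu$ to $(\id+\eta_p\xi)\#\mu$ for all $\eta_p\in[0,\bar\eta_p]$.

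The main obstacle is the optimal-transport part of the second claim. Strictly speaking $\xi$ is \emph{not} an element of $\mathcal R(H_\mu)$: the Karhunen--Lo\`eve series does not terminate and $\sum_n X_n^2/\lambda_n^2=\infty$ a.s., so $\xi$ only lies in the $L^2$-closure of the range. Transferring the pointwise gradient structure and, more delicately, a uniform bound on $\norm{\nabla^2\phi}$ through this $L^2$ limit is the crux; one clean route is to represent $\xi=H_\mu W$ for a white noise $W$ on $L^2(\mu)^d$, which makes $\xi=\nabla_x\Phi$ manifest, but this representation itself needs care to justify rigorously. The remaining claims are routine once $K_\mu$ has been shown to be a trace-class positive semi-definite symmetric kernel with integral operator $H_\mu^2$.
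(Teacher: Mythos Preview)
Your approach is essentially identical to the paper's for the first and third bullets: diagonalize $H_\mu$ to obtain $(\lambda_n,\psi_n)$, identify $K_\mu$ with the kernel of $H_\mu^2$ so that $\kappa_n=\lambda_n^2$, and then invoke Proposition~\ref{gp_component_normally_distributed} and Proposition~\ref{prop_tail_gp} directly. The paper phrases the first step as applying Proposition~\ref{eigenvalue_expansion} to $K=\nabla_\mu^2 F(\mu)$ (rather than to $K_\mu$) and then squaring the expansion to get $K_\mu(x,y)=\sum_{n\ge 1}\lambda_n^2\psi_n(x)\psi_n(y)^\top$, but this is the same computation you perform.

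For the second bullet your treatment is actually more thorough than the paper's. The paper's written proof addresses only the first and third bullets and gives no argument for $\xi\in\mathcal R(H_\mu)\subset\tansp{\mu}$ or for the optimal-transport claim beyond the phrase ``from the assumptions''; implicitly it relies on the Karhunen--Lo\`eve expansion together with the observation that the $\psi_n$ (taken with $\lambda_n\neq 0$) lie in the range of $H_\mu$. Your point that the KL series only places $\xi$ in $\overline{\mathcal R(H_\mu)}$ rather than $\mathcal R(H_\mu)$, and that passing the pointwise gradient structure and a uniform Hessian bound through this $L^2$ limit is the real work needed to invoke Corollary~\ref{cor_perturbation_otm}, is a genuine subtlety that the paper does not resolve. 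So on this item you have not missed an idea present in the paper; rather you have correctly flagged a gap the paper leaves open.
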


\begin{proof}
    Given the assumption $\int \norm{\nabla_\mu^2 F (\mu, x, y)} \mu^{\otimes 2} (\diff x \diff y) < \infty$, Lemma~\ref{eigenvalue_expansion} can be applied to $ K = \nabla_\mu^2 F (\mu) $. 
    Specifically, there exists a sequence of real numbers $\qty{\lambda_n}_{n \geq 1} \subset \R \setminus \qty{0}$ satisfying $\norm{H_\mu}_{\mathrm{HS},L^2(\mu)}^2 = \sum_{n \geq 1} {\lambda_n}^2 < \infty$ and an orthonormal basis $\qty{\psi_n}_{n \geq 1}$ of $L^2(\mu)$, such that  
    \begin{align}
        \label{hessian_expansion}
        \nabla_\mu^2 F (\mu,x,y) 
        = \nabla_1 \nabla_2 \vvar{F}{\mu}(x, y) 
        = \sum_{n \geq 1} \lambda_n \psi_n(x) \psi_n(y)^\top, \quad \mu^{\otimes 2} \text{-a.e.}~(x, y).
    \end{align}
    Regarding the kernel of the squared Hessian $K_\mu$,  
    \begin{align*}
    	K_\mu(x, y) = \sum_{n \geq 1} \lambda_n^2 \psi_n(x) \psi_n(y)^\top, \quad \mu^{\otimes 2}\text{-a.e.}~(x, y).
    \end{align*}
    Here, $K_\mu$ satisfies the assumptions of Lemma~\ref{KL_expansion}, corresponding to the case where $\kappa_n = \lambda_n^2$ in that lemma. Then, from Proposition~\ref{gp_component_normally_distributed}, $\inpro{\psi_n}{\xi}_{L^2(\mu)} \sim \mathcal{N}(0, \lambda_n^2)$ holds. 
    Finally, it follows from Proposition \ref{prop_tail_gp} that for any $M \geq R_2 \geq \norm{H_{\mu}}_{\mathrm{HS},L^2(\mu)} = \norm{H_{\mu}^2}_{\mathrm{Tr},L^2(\mu)} = \qty(\sum_{n\geq1} {\lambda_n}^2)^{\frac{1}{2}}$, 
    \begin{align*}
        P\qty(\norm{\xi}_{L^2(\mu)} \geq M) \leq \exp \qty(- \frac{e-1}{2e{\lambda_1}^2} M^2 + \frac{\sum_{n\geq1}{\lambda_n}^2}{2{\lambda_1}^2}). 
    \end{align*}
\end{proof}
\newpage
\section{Proofs for Convergence Analysis}
\label{appendix_e_convergence}
The lemmas for the main theorem are re-stated and the proofs are provided here.

\subsection{Continuous-time Convergence Analysis}
\subsubsection{Proof Sketch for Lemmas}
The proof of Lemma \ref{lower_bounding_F} is relatively straightforward using the chain rule. 
In contrast, the proof of Lemma \ref{decrease_around_saddle} is lengthy and relies on two sub-lemmas inspired by the analysis of SSRGD \citep{li2019ssrgd} in Euclidean spaces. 

\paragraph{Lemma \ref{not_very_increase}: small increase by perturbation.} This lemma ensures that the increase in the objective function due to perturbation is upper bounded by $F_{\mathrm{thres}}$ with high probability. 
\paragraph{Lemma \ref{lower_bound_metric}: large decrease by WGF.} Put simply, this lemma asserts that a small deviation in the direction of the eigenvector corresponding to the smallest eigenvalue of the Hessian results in exponential decrease in the objective under WGF. 
This result is derived from the fact that the dynamics of slightly deviated two points evolve approximately under $H_{\mu^\dagger}$, causing their ``distance" to grow exponentially over time. 

By combining these results, namely that the objective function does not increase significantly and decreases substantially after perturbation, the lemma is proven.

\subsubsection{Proof of Lemma \ref{lower_bounding_F}}
\boundF*
\begin{proof} 
We have
	\begin{align*} 
		F(\mu_0) - F(\mu_t) &= \int_0^t - \frac{\diff}{\diff \tau} F(\mu_\tau) \diff \tau = \int_0^t \norm{\nabla_\mu F (\mu_\tau)}_{L^2(\mu_\tau)}^2 \diff \tau,
	\end{align*}
due to the chain rule.
\end{proof}
\subsubsection{Proof of Proposition \ref{decrease_around_saddle}} 
\escapesaddle*
Intuitively, Proposition \ref{decrease_around_saddle} indicates that when a perturbation is applied near the saddle point, the objective function decreases over a period of time $T_{\mathrm{thres}}$ with high probability, which corresponds to escaping the saddle point. 
As discussed above, the approach is based on the argument by \citet{li2019ssrgd} and utilizes Lemma \ref{not_very_increase} and Lemma \ref{lower_bound_metric}. These two lemmas postulate that the $L^2$-norm $\norm{\xi}_{L^2(\mu)}$ of the Gaussian process $\xi$ is uniformly bounded. 
This corresponds to the condition that perturbations in Euclidean space are sampled from spheres with a fixed radius, thus the perturbation size was uniformly bounded. 
In infinite-dimensional spaces, such uniform sampling cannot be used, which is one of the reasons why the Gaussian process is employed. 
In this case, the norm of the Gaussian process can take arbitrarily large values even though with low probability. 
Therefore, it is necessary to exploit tail probability estimates of the norm of the Gaussian process, as in the following.

\hessiangp*

Let
\footnote{
The right-hand side implies the upper bound $\sqrt{2} \exp\qty(-\frac{M^2}{4{\lambda_0}^2}) \leq \frac{\zeta'}{4}$, where $\lambda_0$ is the smallest eigenvalue of $H_{\mu}$. This result is utilized later in the proof of Proposition \ref{decrease_around_saddle}.
} 
\begin{align}
    \label{bound_gp}
    M = \qty(\frac{e {R_1}^2}{e-1} \qty(1 + 2 \log \frac{2}{\zeta'}))^{\frac{1}{2}} \vee 2R_1 \qty(\log \frac{4\sqrt{2}}{\zeta'})^{\frac{1}{2}} = \tilde{O}(1). 
\end{align}
From Assumption \ref{assumpotion_differentiable},
\begin{equation*}
{R_1}^2 \geq \int \norm{\nabla_{\mu}^2 F(\mu, x, y)}_{\mathrm{F}}^2 \mu^{\otimes 2}(\diff x \diff y) = \sum_{n\geq1} {\lambda_n}^2 \geq {\lambda_1}^2,
\end{equation*}
so that
\begin{align*}
    M^2 &\geq \frac{e {R_1}^2}{e-1} \qty(1 + 2 \log \frac{2}{\zeta'}) \\ 
    &\geq \frac{e}{e-1} \sum_{n\geq1} {\lambda_n}^2 + \frac{2e}{e-1} {\lambda_1}^2 \log \frac{2}{\zeta'}
\end{align*}
holds. Then it follows that 
\begin{align*}
    \exp \qty(- \frac{e-1}{2e{\lambda_1}^2} M^2 + \frac{\sum_{n\geq1}{\lambda_n}^2}{2{\lambda_1}^2}) \leq \frac{\zeta'}{2}. 
\end{align*} 
Therefore, from Lemma \ref{hessian_guided_gaussian_process}, $\norm{\xi}_{L^2(\mu)} \leq \tilde{M} = \tilde{O}(1)$ occurs with probability at least $1 - \frac{\zeta'}{2}$.

We now take the hyperparameters $\eta_p, ~F_{\mathrm{thres}}, ~T_{\mathrm{thres}}$ as follows:
\begin{align}
	T_{\mathrm{thres}} = \frac{2}{\delta} \log \frac{16 {L_1}^{\frac{1}{2}} {M}}{\sqrt{e} \delta^{\frac{1}{2}} r} &= \tilde{O}\qty(\frac{1}{\delta}) \notag \\
    &= O\qty(\frac{1}{\delta} \log \frac{\log \frac{1}{\zeta'}}{\delta^{\frac{1}{2}}\zeta'}), \notag \\
	\label{hyper_parameter}
	\eta_p = \frac{2 F_{\mathrm{thres}}}{{M}(\epsilon + \sqrt{\epsilon^2 + 2 L_1 F_{\mathrm{thres}}})} &= \tilde{O}\qty(\frac{\delta^3}{\epsilon} \wedge \delta^{\frac{3}{2}}) \\
    &= O\qty(\delta^3 \epsilon^{-1} \qty(\log \frac{1}{\zeta'})^{-1} \qty(\log \frac{\log \frac{1}{\zeta'}}{\delta^{\frac{1}{2}}\zeta'})^{-3} \wedge \delta^{\frac{3}{2}} \qty(\log \frac{1}{\zeta'})^{-1} \qty(\log \frac{\log \frac{1}{\zeta'}}{\delta^{\frac{1}{2}}\zeta'})^{-\frac{3}{2}}), \notag \\
	F_{\mathrm{thres}} = \frac{{T_{\mathrm{thres}}}^{-3}}{18 (L_2 + L_3)^2} \log^2 \frac{3}{2} \notag &= \tilde{O}(\delta^3) \\
    &= O\qty(\delta^3 \qty(\log \frac{\log \frac{1}{\zeta'}}{\delta^{\frac{1}{2}}\zeta'})^{-3}). \notag
\end{align}
It should be noted that $F_{\mathrm{thres}} = \eta_p M \epsilon + \frac{{L_1}^2}{2} {\eta_p}^2 M^2$ holds. 

\begin{lemma}
    \label{not_very_increase}
    Consider the situation of Proposition \ref{decrease_around_saddle}, and let $\mu^{\dagger}$ be an $(\epsilon,\delta)$-saddle point, let the initial point be $\mu_0 \coloneq (\id + \eta_p \xi) \# \mu^\dagger$, and the hyperparameters $\eta_p, F_{\mathrm{thres}}$ be taken as in (\ref{hyper_parameter}). 
    If the $L^2$-norm of the Gaussian process $\xi$ satisfies $\norm{\xi}_{L^2(\mu)} \leq M$, then it holds that
    \begin{align*}
        F(\mu_0) - F(\mu^\dagger) \leq F_{\mathrm{thres}}.
    \end{align*}
\end{lemma}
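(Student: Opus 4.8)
The plan is to realize $\mu_0$ as the endpoint of the interpolation $\mu_h := (\id + h\xi)\#\mu^\dagger$, $h \in [0,\eta_p]$, which starts at $\mu_{h=0} = \mu^\dagger$ and ends at $\mu_{h=\eta_p} = \mu_0$, and to control $F(\mu_0) - F(\mu^\dagger)$ by integrating the derivative of $F$ along this curve. On the event where $\norm{\xi}_{L^2(\mu^\dagger)} \le M$, Lemma~\ref{hessian_guided_gaussian_process} (second bullet) together with Corollary~\ref{cor_perturbation_otm} gives that, for the small hyperparameter $\eta_p$, the map $\id + \eta_p\xi$ is the optimal transport map from $\mu^\dagger$ to $\mu_0$. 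By Proposition~\ref{prop_geodesic} this makes $(\mu_h)_{h\in[0,\eta_p]}$ a constant-speed geodesic with $\otm{\mu^\dagger}{\mu_h} = \id + h\xi$ and $W_2(\mu^\dagger,\mu_h) = h\norm{\xi}_{L^2(\mu^\dagger)}$, so by the Benamou--Brenier formula (Proposition~\ref{prop_benamou_brenier}) its velocity field $v_h \in \tansp{\mu_h}$ satisfies $\norm{v_h}_{L^2(\mu_h)} = \norm{\xi}_{L^2(\mu^\dagger)} \le M$ for a.e.\ $h$.

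Given this, the chain rule (Proposition~\ref{prop_chain_rule}) and the Cauchy--Schwarz inequality give
\begin{align*}
F(\mu_0) - F(\mu^\dagger) = \int_0^{\eta_p} \inpro{\nabla_\mu F(\mu_h)}{v_h}_{L^2(\mu_h)}\,\diff h \;\le\; M \int_0^{\eta_p} \norm{\nabla_\mu F(\mu_h)}_{L^2(\mu_h)}\,\diff h,
\end{align*}
so it remains to bound $\norm{\nabla_\mu F(\mu_h)}_{L^2(\mu_h)}$. For this I would apply the $L_1$-Lipschitz property \eqref{grad_lipschitz} with the coupling $\gamma = (\id \times (\id + h\xi))\#\mu^\dagger \in \Gamma(\mu^\dagger,\mu_h)$: after the change of variables $z = x + h\xi(x)$ this yields $\norm{\nabla_\mu F(\mu^\dagger) - \nabla_\mu F(\mu_h)\circ(\id+h\xi)}_{L^2(\mu^\dagger)} \le L_1 h \norm{\xi}_{L^2(\mu^\dagger)} \le L_1 M h$, while a second change of variables identifies $\norm{\nabla_\mu F(\mu_h)\circ(\id+h\xi)}_{L^2(\mu^\dagger)}$ with $\norm{\nabla_\mu F(\mu_h)}_{L^2(\mu_h)}$. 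The triangle inequality together with the saddle-point bound $\norm{\nabla_\mu F(\mu^\dagger)}_{L^2(\mu^\dagger)} < \epsilon$ then gives $\norm{\nabla_\mu F(\mu_h)}_{L^2(\mu_h)} \le \epsilon + L_1 M h$. Substituting,
\begin{align*}
F(\mu_0) - F(\mu^\dagger) \;\le\; M \int_0^{\eta_p} (\epsilon + L_1 M h)\,\diff h \;=\; M\epsilon\,\eta_p + \frac{1}{2} L_1 M^2 \eta_p^2 \;=\; F_{\mathrm{thres}},
\end{align*}
the last equality being precisely the defining relation between $\eta_p$, $M$, $\epsilon$ and $F_{\mathrm{thres}}$ recorded just after \eqref{hyper_parameter}.

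The only delicate point is the first paragraph: to apply the chain rule and to know the interpolation has constant $L^2$-speed $\norm{\xi}_{L^2(\mu^\dagger)}$, one needs $\id + h\xi$ to be an optimal transport map on all of $[0,\eta_p]$, which via Corollary~\ref{cor_perturbation_otm} reduces to $\eta_p$ being below a threshold determined by the essential supremum of $\norm{\nabla^2\phi_\xi}$, where $\phi_\xi$ is the potential with $\nabla\phi_\xi = \xi$; this regularity is inherited from the smoothness of the Hessian kernel $K_{\mu^\dagger}$ and is packaged (with the accompanying probabilistic bookkeeping) in Lemma~\ref{hessian_guided_gaussian_process}. Everything downstream is a routine one-dimensional estimate.
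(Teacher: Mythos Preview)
Your proof is correct and follows essentially the same route as the paper: interpolate along $h \mapsto (\id + h\xi)\#\mu^\dagger$, bound the derivative via Cauchy--Schwarz and the $L_1$-Lipschitz property \eqref{grad_lipschitz}, and integrate to $\eta_p M\epsilon + \tfrac{1}{2}L_1\eta_p^2 M^2 = F_{\mathrm{thres}}$. The one difference worth flagging is that the paper obtains the derivative directly from Lemma~\ref{lemma_for_second_order}, which yields $\frac{\diff}{\diff h}F(\nu_h) = \eta_p\inpro{\nabla_\mu F(\nu_h)\circ(\id+h\eta_p\xi)}{\xi}_{L^2(\mu^\dagger)}$ for an \emph{arbitrary} coupling $\gamma \in \Gamma(\mu^\dagger,\mu_0)$; this bypasses entirely the optimal-transport/geodesic verification you identify as the only delicate point, so your last paragraph becomes unnecessary.
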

\begin{proof}
    Let $\nu_h = (\id + h \eta_p \xi) \sharp \mu^{\dagger}$ for $h \in [0, 1]$, joining $\mu^\dagger$ and $\mu_0 = (\id + \eta_p \xi) \# \mu^\dagger$. 
    From Lemma \ref{lemma_for_second_order}, 
    \begin{align*}
        \frac{\diff}{\diff h} F(\mu_h)
        &= \eta_p \inpro{\nabla_\mu F (\nu_h) \circ (\id + h \eta_p \xi)}{\xi}_{L^2(\mu^\dagger)} \\
         &\leq \eta_p \norm{\nabla_{\mu} F (\nu_h) \circ (\id + h \eta_p \xi)}_{L^2(\mu^\dagger)} \norm{\xi}_{L^2(\mu^\dagger)} \\
        &\leq \eta_p \qty(\norm{\nabla_{\mu} F (\mu^\dagger)}_{L^2(\mu^\dagger)} + \norm{\nabla_{\mu} F (\nu_h) \circ (\id + h \eta_p \xi) - \nabla_{\mu} F (\mu^\dagger)}_{L^2(\mu^\dagger)}) \norm{\xi}_{L^2(\mu^\dagger)} \\
        &\leq \eta_p \qty(\epsilon + L_1 \qty(\int \norm{x-y}^2 (\id \times (\id + h \eta_p \xi)) \# \mu^\dagger(\diff x \diff y))^{\frac{1}{2}}) \norm{\xi}_{L^2(\mu^\dagger)} \\
        &= \eta_p \norm{\xi}_{L^2(\mu^\dagger)} \epsilon + L_1 \eta_p^2 \norm{\xi}_{L^2(\mu^\dagger)}^2 h.
    \end{align*}
    Here, the Cauchy-Schwarz inequality was used in the second line, the conditions $\norm{\nabla_{\mu} F (\mu)}_{L^2(\mu)} < \epsilon$ and the gradient's Lipschitz continuity were applied in the fourth line, and Proposition \ref{prop_geodesic} was invoked in the fifth line.
    Therefore, 
    \begin{align*}
        F(\mu_0) - F(\mu^{\dagger}) &= \int_0^1 \frac{\diff}{\diff h} F(\mu_h) ~ \diff h \\
            &\leq \eta_p \norm{\xi}_{L^2(\mu^\dagger)} \epsilon + \frac{L_1}{2} \eta_p^2 \norm{\xi}_{L^2(\mu^\dagger)}^2 \\
        &\leq \eta_p M \epsilon + \frac{L_1}{2} \eta_p^2 M^2 \\
        &= F_{\mathrm{thres}}.
    \end{align*}
   The third line follows from $\norm{\xi}_{L^2(\mu^\dagger)} \leq M$ and the fourth line follows from the choice of $\eta_p$ in (\ref{hyper_parameter}).
\end{proof}

\begin{lemma}
    \label{lower_bound_metric}
    Consider the situation of Proposition \ref{decrease_around_saddle}. That is, 
    let $\mu^{\dagger}$ be an $(\epsilon,\delta)$-saddle point, 
    $\mu_0 \coloneq (\id + \eta_p \xi) \# \mu^\dagger$ , 
    and the hyperparameters $\eta_p, F_{\mathrm{thres}}$ be taken as (\ref{hyper_parameter}).
    Furthermore, we set $\tilde{\mu}_0 = (\id + \eta_p \tilde{\xi})\sharp \mu^\dagger$, where $\tilde{\xi} = \xi + r \psi_0$, $r \in \R$ is a constant, and $\psi_0$ is the eigenvector of the Hessian operator $H_{\mu}$ corresponding to the smallest eigenvalue $\lambda_0$.\\
    Letting $\mu_t, ~\tilde{\mu}_t$ be WGF initialized at $\mu_0,~\tilde{\mu}_0$ respectively, then 
    $\norm{\xi}_{L^2(\mu^\dagger)} \leq M$ and $\frac{\sqrt{2\pi}|\lambda_0|\zeta'}{8} \leq |r| \leq 2M$ implies that 
    there exists $t \in [0, T_{\mathrm{thres}}]$ satisfying 
    \begin{align*}
        (F(\mu_0) - F(\mu_t)) \vee (F(\tilde{\mu}_0) - F(\tilde{\mu}_t)) \geq 2 F_{\mathrm{thres}}.
    \end{align*}
\end{lemma}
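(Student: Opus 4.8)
This is the Wasserstein-space version of the ``improve-or-localize'' dichotomy behind the Euclidean analysis of SSRGD \citep{li2019ssrgd}: two WGF trajectories that start near a saddle and differ by a multiple of the most-negative-curvature eigenfunction $\psi_0$ cannot both remain in a small Wasserstein ball around the saddle, so at least one of them must drop the objective by a definite amount. I would argue by contradiction, assuming
\[
F(\mu_0) - F(\mu_t) < 2F_{\mathrm{thres}} \quad\text{and}\quad F(\tilde\mu_0) - F(\tilde\mu_t) < 2F_{\mathrm{thres}} \quad\text{for all } t \in [0, T_{\mathrm{thres}}],
\]
and deriving a contradiction from the calibration \eqref{hyper_parameter} of $\eta_p,T_{\mathrm{thres}},F_{\mathrm{thres}}$ together with the bounds $\tfrac{\sqrt{2\pi}|\lambda_0|\zeta'}{8} \le |r| \le 2M$ and $\norm{\xi}_{L^2(\mu^\dagger)}\le M$.

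\textbf{Step 1 (localization).} Let $X_t$ be the flow map of the WGF issued from $\mu_0$, so $\mu_t = (X_t)\#\mu^\dagger$, $\dot X_t = -\nabla_\mu F(\mu_t)\circ X_t$, $X_0 = \id + \eta_p\xi$, and define $\tilde X_t$ analogously with $\tilde X_0 = \id + \eta_p\tilde\xi = X_0 + \eta_p r\,\psi_0$. Since $\norm{\dot X_t}_{L^2(\mu^\dagger)} = \norm{\nabla_\mu F(\mu_t)}_{L^2(\mu_t)}$ by the pushforward property, the integral Minkowski inequality, Cauchy--Schwarz, Lemma~\ref{lower_bounding_F}, and the contradiction hypothesis give, for $t\le T_{\mathrm{thres}}$,
\[
\norm{X_t - X_0}_{L^2(\mu^\dagger)} \le \int_0^t \norm{\nabla_\mu F(\mu_\tau)}_{L^2(\mu_\tau)}\,\diff\tau \le \sqrt{t}\,\big(F(\mu_0) - F(\mu_t)\big)^{1/2} \le \sqrt{2\,T_{\mathrm{thres}}\,F_{\mathrm{thres}}}.
\]
Combined with $\norm{X_0 - \id}_{L^2(\mu^\dagger)} = \eta_p\norm{\xi}_{L^2(\mu^\dagger)}\le \eta_p M$ and $\norm{\tilde X_0 - \id}_{L^2(\mu^\dagger)}\le\eta_p(M+|r|)\le 3\eta_p M$, this confines both flow maps to the $L^2(\mu^\dagger)$-ball of radius $\rho := 3\eta_p M + \sqrt{2T_{\mathrm{thres}}F_{\mathrm{thres}}}$ around $\id$; in particular $W_2(\mu^\dagger,\mu_t)\vee W_2(\mu^\dagger,\tilde\mu_t)\le\rho$, and by \eqref{hyper_parameter} the quantity $(L_2+L_3)\rho$ is a small fraction of $\delta$.

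\textbf{Step 2 (exponential separation along $\psi_0$).} Put $w_t := \tilde X_t - X_t \in L^2(\mu^\dagger)^d$, so $w_0 = \eta_p r\,\psi_0$. Differentiating and expanding $\nabla_\mu F$ around $\mu^\dagger$ to second order --- using Proposition~\ref{prop_second_order_expansion} made quantitative via the Hessian-Lipschitz bounds of Assumption~\ref{assumption_lipschitz} --- I would write
\[
\dot w_t = -\,H_{\mu^\dagger} w_t + E_t, \qquad \norm{E_t}_{L^2(\mu^\dagger)} \le c\big((L_2 + L_3)(\rho + \norm{w_t}_{L^2(\mu^\dagger)}) + R_2\,\epsilon\big)\norm{w_t}_{L^2(\mu^\dagger)},
\]
where the common first-order drift $-\nabla_\mu F(\mu^\dagger)$ cancels in the difference, the $\rho$-term comes from evaluating $\nabla_\mu^2 F$ and $\nabla\nabla_\mu F$ at $\mu_t,\tilde\mu_t$ instead of $\mu^\dagger$, the $\norm{w_t}$-term is genuinely quadratic in the displacement, and $R_2\epsilon \ge \norm{H'_{\mu^\dagger}}_{L^2(\mu^\dagger)}$ by Assumption~\ref{assumption_regularity_wg}. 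By Step~1 one has $\norm{w_t}_{L^2(\mu^\dagger)}\le 2\rho$ throughout, and since $(L_2+L_3)\epsilon\le\delta^2$ and $(L_2+L_3)\rho$ is small, $\norm{E_t}_{L^2(\mu^\dagger)}\le \epsilon_0\delta\,\norm{w_t}_{L^2(\mu^\dagger)}$ with $\epsilon_0$ a small constant controlled by \eqref{hyper_parameter}. Decomposing $w_t = a_t\psi_0 + w_t^\perp$ with $a_t = \inpro{\psi_0}{w_t}_{L^2(\mu^\dagger)}$, using that $\psi_0$ is an eigenfunction of $H_{\mu^\dagger}$ with $\lambda_0\le-\delta$, and noting that $w_t^\perp$ starts at $0$ and is only fed by the $O(\epsilon_0\delta)$ remainder (hence stays dominated by $a_t$ over $[0,T_{\mathrm{thres}}]$ by a two-component Gr\"onwall argument), I expect $a_t \gtrsim \eta_p|r|\,e^{\delta t}$.

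\textbf{Step 3 (contradiction) and main obstacle.} By Step~1, $\norm{w_t}_{L^2(\mu^\dagger)} \le \norm{X_t-\id}_{L^2(\mu^\dagger)} + \norm{\tilde X_t-\id}_{L^2(\mu^\dagger)} \le 2\rho$, so evaluating Step~2 at $t=T_{\mathrm{thres}}$ gives a constant multiple of $\eta_p|r|\,e^{\delta T_{\mathrm{thres}}}$ bounded above by $2\rho$; but $T_{\mathrm{thres}} = \tfrac{2}{\delta}\log\tfrac{16L_1^{1/2}M}{\sqrt e\,\delta^{1/2}r}$ makes $e^{\delta T_{\mathrm{thres}}}$ large enough that, using $|r|\le 2M$ and the smallness of $\rho$ from \eqref{hyper_parameter}, this inequality is violated --- a contradiction. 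Hence the assumption fails and there is $t\in[0,T_{\mathrm{thres}}]$ with $(F(\mu_0)-F(\mu_t))\vee(F(\tilde\mu_0)-F(\tilde\mu_t))\ge 2F_{\mathrm{thres}}$. The delicate step is Step~2: rigorously reducing the nonlinear, infinite-dimensional WGF dynamics of the two trajectories to the linear semigroup of $-H_{\mu^\dagger}$ on $L^2(\mu^\dagger)$ with a uniformly controllable remainder. Unlike the Euclidean case, one must work with flow maps (which need not remain optimal transport maps), track the second-order Taylor remainder of the Wasserstein gradient along the whole curve through the $L_2,L_3$-Lipschitz bounds, and absorb the non-stationarity of $\mu^\dagger$ (the nonzero $\nabla_\mu F(\mu^\dagger)$ and $H'_{\mu^\dagger}$) using $\norm{\nabla_\mu F(\mu^\dagger)}_{L^2(\mu^\dagger)}<\epsilon$, Assumption~\ref{assumption_regularity_wg}, and the scaling $(L_2+L_3)\epsilon\le\delta^2$.
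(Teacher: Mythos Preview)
Your proposal is correct and follows essentially the same contradiction--localization--linearization--Gr\"onwall route as the paper. The one tactical difference is in your Step~2: instead of projecting onto $\psi_0$ and running a two-component argument, the paper writes $\dot w_t = -H_{\mu^\dagger}w_t - \Delta_t w_t$ via an integral representation along the segment $\nu_h = ((1-h)Y_t + h\tilde Y_t)\#\mu^\dagger$ (so $\Delta_t$ is a bounded operator with $\|\Delta_t\|\le (L_2+L_3)(\sqrt{2}T_{\mathrm{thres}}^{1/2}F_{\mathrm{thres}}^{1/2}+2\eta_pM)+R_2\epsilon$), applies Duhamel to get $w_t = e^{-\lambda_0 t}\eta_p r\,\psi_0 - \int_0^t e^{(\tau-t)H_{\mu^\dagger}}\Delta_\tau w_\tau\,\diff\tau$, and then runs Gr\"onwall directly on the scalar $e^{\lambda_0 t}\|w_t\|_{L^2(\mu^\dagger)}$, which avoids tracking $w_t^\perp$ separately and yields the lower bound $\|w_t\|\ge \eta_p r\,e^{-\lambda_0 t}(2-e^{\Delta t})$ in one stroke.
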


\begin{proof}
    We prove this lemma by contradiction. We assume that for any $t \in [0,T_{\mathrm{thres}}]$, it holds that 
    \begin{align*}
        (F(\mu_0) - F(\mu_t)) \vee (F(\tilde{\mu}_0) - F(\tilde{\mu}_t))  < 2 F_{\mathrm{thres}}.
    \end{align*}
    We denote the characteristics of the vector fields $- \nabla_{\mu} F (\mu_t)$ and $-\nabla_{\mu} F (\tilde{\mu}_t)$ as $X_t, \tilde{X}_t$ respectively. That is, $\mu_t = X_t \# \mu_0, \tilde{\mu}_t = \tilde{X}_t \# \tilde{\mu}_0$, and 
    \begin{align*}
        \frac{\diff}{\diff t}X_t &= - \nabla_{\mu} F (\mu_t) \circ X_t,\quad
        \frac{\diff}{\diff t}\tilde{X}_t = - \nabla_{\mu} F (\tilde{\mu}_t) \circ \tilde{X}_t.
    \end{align*}
    It is worth noting here that 
    \begin{align*}
            \norm{X_t - \id}_{L^2(\mu_0)} &= \norm{\int_0^t \qty(-\nabla_{\mu} F (\mu_\tau) \circ X_\tau) \diff \tau}_{L^2(\mu_0)} \\
        &\leq ~ \int_0^t ~~ \norm{\nabla_{\mu} F (\mu_\tau)}_{L^2(\mu_\tau)} \diff \tau \\
        &\leq t^{\frac{1}{2}} \qty(\int_0^t ~~\norm{\nabla_{\mu} F (\mu_\tau)}_{L^2(\mu_\tau)}^2 \diff \tau)^{\frac{1}{2}} \\
        &\leq \qty(T_{\mathrm{thres}} (F(\mu_0) - F(\mu_t)))^{\frac{1}{2}}, 
    \end{align*}
    where the first line follows from the fact that $X_t$ is the characteristic of $- \nabla_{\mu} F (\mu_t)$, the second from the properties of the Bochner integral, the third from Jensen's inequality, and the fourth from Proposition \ref{lower_bounding_F}. 
    Similarly, $\lVert\tilde{X}_t - \id\rVert_{L^2(\tilde{\mu}_0)} \leq \qty(T_{\mathrm{thres}} F(\tilde{\mu}_0) -  F(\tilde{\mu}_t))$ holds. 
    Furthurmore, we set $Y_t \coloneq X_t \circ (\id + \eta_p \xi)$ and $\tilde{Y}_t = \tilde{X}_t \circ (\id + \eta_p \xi)$, which satisfy
    \begin{align}
        \norm{Y_t - \id}_{L^2(\mu^\dagger)} &\leq \norm{X_t - (\id + \eta_p \xi)}_{L^2(\mu_0)} + \eta_p \norm{\xi}_{L^2(\mu^\dagger)} \notag \\
        &\leq \qty(T_{\mathrm{thres}} (F(\mu_0) - F(\mu_t)))^{\frac{1}{2}} + \eta_p \norm{\xi}_{L^2(\mu^\dagger)} \notag \\
        \label{bound_y_t}
        &\leq \sqrt{2} T_{\mathrm{thres}}^{\frac{1}{2}} F_{\mathrm{thres}}^{\frac{1}{2}} + \eta_p M,
    \end{align}
    and 
    \begin{align}
        \lVert\tilde{Y}_t - \id\rVert_{L^2(\mu^\dagger)} &\leq \qty(T_{\mathrm{thres}} (F(\tilde{\mu}_0) - F(\tilde{\mu}_t)))^{\frac{1}{2}} + \eta_p \norm{\tilde{\xi}}_{L^2(\mu^\dagger)} \notag \\
        &\leq \qty(T_{\mathrm{thres}} (F(\tilde{\mu}_0) - F(\tilde{\mu}_t)))^{\frac{1}{2}} + \eta_p \norm{{\xi}}_{L^2(\mu^\dagger)} + \eta_p r  \notag \\
        \label{bound_tilde_y_t}
        &\leq \sqrt{2} T_{\mathrm{thres}}^{\frac{1}{2}} F_{\mathrm{thres}}^{\frac{1}{2}} + \eta_p M + \eta_p r. 
    \end{align}
    We analyze the vector $w_t \coloneq \tilde{Y}_t - Y_t$. 
    The goal is to obtain a contradiction by confirming that $\norm{w_t}_{L^2(\mu^\dagger)}$ becomes large. 
    To achieve this, we investigate how $w_t$ evolves with respect to time:
    \begin{align}
        \frac{\diff}{\diff t} w_t 
        &= \frac{\diff}{\diff t}\tilde{Y}_t - \frac{\diff}{\diff t}Y_t \notag \\
        &= \frac{\diff}{\diff t} \tilde{X}_t \circ (\id + \eta_p \tilde{\xi}) - \frac{\diff}{\diff t} X_t \circ (\id + \eta_p {\xi}) \notag \\
        &= - \nabla_{\mu} F (\tilde{\mu}_t) \circ \tilde{X}_t \circ (\id + \eta_p \tilde{\xi}) + \nabla_{\mu} F ({\mu}_t) \circ X_t \circ (\id + \eta_p \xi) \notag \\
        \label{differential_wt}
        &= - \int_0^1 \frac{\diff}{\diff h} \qty(\nabla_{\mu} F (\nu_h) \circ Y_h ) \diff h, 
    \end{align}
    where $\nu_h$ is a curve connecting $\mu_t$ and $\tilde{\mu_t}$; $\nu_h \coloneq ((1-h)Y_t + h \tilde{Y}_t) \sharp \mu^\dagger ~ (h \in[0,1])$, $\nu_0 = \mu_t$ and $\nu_1 = \tilde{\mu}_t$ hold. Moreover, its direction vector is always $w_t = \tilde{Y}_t - Y_t$, so the integrand of (\ref{differential_wt}) is obtained by operating $w_t$.
    Specifically, 
    \begin{align*}
        &\frac{\diff}{\diff h} \qty(\nabla_{\mu} F (\nu_h) \circ Y_h)(x) \\
        & \quad \quad = \int \nabla_\mu^2 F (\nu_h, Y_h(x), Y_h(y)) w_t(y) \mu^\dagger(\diff y)  + \nabla \nabla_{\mu} F (\nu_h, Y_h(x)) w_t(x)
    \end{align*}
    holds. Let 
    \begin{align}\label{evolving_w_t}
        \frac{\diff}{\diff t} w_t &= - H_{\mu^\dagger} w_t - \qty(\int_0^1 \Delta_{t,h} \diff h) w_t
        = - H_{\mu^\dagger} w_t - \Delta_t w_t, 
    \end{align}
    where $\Delta_{t,h}$ is an operator on $L^2(\mu^\dagger)^d$, which satisfies 
    \begin{align}
        &\Delta_{t,h} f(x) \notag \\
        & \quad = \int \qty(\nabla_\mu^2 F (\nu_h, Y_h(x), Y_h(y)) - \nabla_\mu^2 F(\mu^\dagger,x,y)) f(y) \mu^\dagger(\diff y) \notag \\
        & \quad \quad \quad + \nabla \nabla_{\mu} F (\nu_h, Y_h (x)) f(x) \notag \\
        & \quad = \int \qty(\nabla_\mu^2 F (\nu_h,Y_h(x),Y_h(y)) - \nabla_\mu^2 F(\mu^\dagger,x,y)) f(y) \mu^\dagger(\diff y) \notag \\
        & \quad \quad \quad + \qty(\nabla \nabla_{\mu} F (\nu_h, Y_h(x)) - \nabla \nabla_{\mu} F (\mu^\dagger, x)) f(x) \notag \\
        \label{Delta_th}
        & \quad \quad \quad \quad + \nabla \nabla_{\mu} F (\mu^\dagger, x) f(x).
    \end{align}
Moreover, the operator $\Delta_t$ is defined as $\Delta_t = \int_0^1 \Delta_{t,h} \diff h$, the norm of which is bounded as $\tilde{O} \qty(T_{\mathrm{thres}}^{\frac{1}{2}} F_{\mathrm{thres}}^{\frac{1}{2}} + \eta_p + \epsilon)$ from (\ref{bound_y_t}), (\ref{bound_tilde_y_t}), and (\ref{Delta_th}). 
    It should be noted that from \eqref{evolving_w_t}, $w_t$ evolves according to $- H_{\mu^\dagger}$ unless $\norm{\Delta_t}_{L^2(\mu^\dagger)}$ is small. 
    Since 
    \begin{align*}
        \frac{\diff}{\diff t} \qty(e^{t H_{\mu^\dagger}} w_t)
        &= e^{t H_{\mu^\dagger}} \qty(\frac{\diff}{\diff t} w_t + H_{\mu^\dagger}w_t) = - e^{t H_{\mu^\dagger}} \Delta_{t} w_t
    \end{align*}
    holds from (\ref{evolving_w_t}), by integrating we have  
    \begin{align*}
        e^{t H_{\mu^\dagger}} w_t - w_0 &= - \int_0^t e^{\tau H_{\mu^\dagger}} \Delta_\tau w_\tau \diff \tau,
    \end{align*}
    and so
    \begin{align*}      
        w_t &= e^{- t H_{\mu^\dagger}} w_0 - \int_0^t e^{(\tau - t) H_{\mu^\dagger}} \Delta_\tau w_\tau \diff \tau = e^{-\lambda_0 t} \eta_p r \psi_0 - \int_0^t e^{(\tau - t) H_{\mu^\dagger}} \Delta_\tau w_\tau \diff \tau, 
    \end{align*}
    where we used the equation 
        $w_0 = \tilde{Y}_0 - Y_0  = \eta_p (\tilde{\xi} - \xi) = \eta_p r \psi_0$. 
    Then, it holds that 
    \begin{align}
        \qty|\norm{w_t}_{L^2(\mu^\dagger)} - e^{-\lambda_0 t} \eta_p r | 
        &= \qty|\norm{w_t}_{L^2(\mu^\dagger)} - \norm{e^{-\lambda_0 t} \eta_p r \psi_0}_{L^2(\mu^\dagger)}| \notag \\
        &\leq \norm{w_t - e^{-\lambda_0 t} \eta_p r \psi_0}_{L^2(\mu^\dagger)} \notag \\
        &\leq \int_0^t \norm{e^{(\tau - t) H_{\mu^\dagger}}}_{L^2(\mu^\dagger)} \norm{\Delta_\tau}_{L^2(\mu^\dagger)} \norm{w_\tau}_{L^2(\mu^\dagger)} \diff \tau \notag \\
        \label{metric_bound}
        &\leq \int_0^t e^{\lambda_0 (\tau - t)}\norm{\Delta_\tau}_{L^2(\mu^\dagger)} \norm{w_\tau}_{L^2(\mu^\dagger)} \diff \tau.
    \end{align}
    Here we use $\lVert e^{(\tau - t) H_{\mu^\dagger}}\rVert_{L^2(\mu^\dagger)} = e^{\lambda_0 (\tau - t)}$, which is implied by $\lambda_0 = \lambda_{\min} (H_{\mu^\dagger})$ and $\tau - t \leq 0$. 
From (\ref{Delta_th}) and the inequality $\lvert H_{\mu^\dagger}'\rVert_{L^2(\mu^\dagger)} \leq R_2 \norm{\nabla_{\mu} F (\mu^\dagger)}_{L^2(\mu^\dagger)} \leq R_2 \epsilon$, it follows from (\ref{bound_y_t}) and (\ref{bound_tilde_y_t}) that   
    \begin{align*}
        \norm{\Delta_{t,h}}_{L^2(\mu^\dagger)} &\leq (L_2 + L_3)\norm{Y_h - \id}_{L^2(\mu^\dagger)} + R_2 \epsilon \\
        &= (1-h)(L_2 + L_3)\norm{Y_t - \id}_{L^2(\mu^\dagger)} \\
        & \quad \quad \quad \quad + h (L_2 + L_3)\norm{\tilde{Y}_t - \id}_{L^2(\mu^\dagger)} + R_2 \epsilon \\
        &\leq (L_2 + L_3) \qty(\sqrt{2} T_{\mathrm{thres}}^{\frac{1}{2}} F_{\mathrm{thres}}^{\frac{1}{2}} + \eta_p M + h \eta_p r ) + R_2 \epsilon. 
    \end{align*}
    Therefore, it holds that 
    \begin{align*}
        \norm{\Delta_t}_{L^2(\mu^\dagger)} &\leq \int_0^1 \norm{\Delta_{t,h}}_{L^2(\mu^\dagger)} \diff h \\
        &\leq (L_2 + L_3) \qty(\sqrt{2} T_{\mathrm{thres}}^{\frac{1}{2}} F_{\mathrm{thres}}^{\frac{1}{2}} + \eta_p M + \frac{1}{2} \eta_p r ) + R_2 \epsilon.  \\
        &\leq (L_2 + L_3) \qty(\sqrt{2} {T_{\mathrm{thres}}}^{\frac{1}{2}} {F_{\mathrm{thres}}}^{\frac{1}{2}} + 2 \eta_p M ) + R_2 \epsilon \\
        &\eqcolon \Delta.
    \end{align*}
    By manipulating the inequality (\ref{metric_bound}), the following is obtained: 
    \begin{align*}
        \qty|e^{\lambda_0 t}\norm{w_t}_{L^2(\mu^\dagger)} - \eta_p r| \leq \Delta \int_0^t e^{\lambda_0 \tau} \norm{w_\tau}_{L^2(\mu^\dagger)} \diff \tau.
    \end{align*}
    The application of Gronwall's inequality to the upper bound yields $e^{\lambda_0 t}\norm{w_t}_{L^2(\mu^\dagger)} \leq \eta_p r e^{\Delta t}$. Then it holds that 
    \begin{align}
        \norm{w_t}_{L^2(\mu^\dagger)}
        & \geq \eta_p r e^{- \lambda_0 t} - \Delta e^{- \lambda_0 t} \int_0^t   e^{\lambda_0 \tau} \norm{w_t}_{L^2(\mu^\dagger)} ~\diff \tau \notag \\
        & \geq \eta_p r e^{- \lambda_0 t} \left (1 - \Delta  \int_0^t  e^{\Delta \tau} ~\diff \tau \right ) \notag \\
        \label{contradiction_ineq}
        & = \eta_p r e^{- \lambda_0 t} (2 - e^{\Delta t}).
    \end{align}
    The left side of (\ref{contradiction_ineq}) is upper-bounded as 
    \begin{align*}
        \norm{w_t}_{L^2(\mu^\dagger)}
        &\leq \norm{\tilde{Y}_t - \id}_{L^2(\mu^\dagger} + \norm{Y_t - \id}_{L^2(\mu^\dagger} \\
        &< 2 \sqrt{2} {T_{\mathrm{thres}}}^{\frac{1}{2}} {F_{\mathrm{thres}}}^{\frac{1}{2}} + 2\eta_p M +\eta_p r \\
        &\leq 4 \sqrt{2} {T_{\mathrm{thres}}}^{\frac{1}{2}} {F_{\mathrm{thres}}}^{\frac{1}{2}}, 
    \end{align*}
    where in the last line it follows from ${T_{\mathrm{thres}}}^{\frac{1}{2}} {F_{\mathrm{thres}}}^{\frac{1}{2}} = \tilde{O}(\delta),~ \eta_p M = {o} (\delta), ~ \eta_p = {o}(\delta)$. \\
    On the other hand, using $\eta_p M T_{\mathrm{thres}} = o(1),~ \epsilon T_{\mathrm{thres}} = o(1)$ and the definition of $F_{\mathrm{thres}}$, we have 
    \begin{align*}
        t \Delta &\leq T_{\mathrm{thres}} \Delta \\
        &= ((L_2 + L_3)(\sqrt{2} {T_{\mathrm{thres}}}^{\frac{1}{2}} {F_{\mathrm{thres}}}^{\frac{1}{2}} + 2\eta_p M) + R_2 \epsilon) T_{\mathrm{thres}} \\
        &= \sqrt{2} (L_2 + L_3) {T_{\mathrm{thres}}}^{\frac{3}{2}} {F_{\mathrm{thres}}}^{\frac{1}{2}} + 2 (L_2 + L_3) \eta_p M T_{\mathrm{thres}} + R_2 \epsilon T_{\mathrm{thres}} \\
        &= \frac{1}{3}\log \frac{3}{2} + \frac{1}{3}\log \frac{3}{2} + \frac{1}{3}\log \frac{3}{2} \\
        &= \log \frac{3}{2}. 
    \end{align*}
    Then the right side of (\ref{contradiction_ineq}) is lower-bounded as 
    \begin{align*}
        \eta_p r e^{- \lambda_0 t} (2 - e^{\Delta t}) &\geq \frac{\eta_p r}{2} e^{\delta t} \\
        &\geq \frac{\eta_p r}{2}(e \delta t)^{\frac{1}{2}} e^{\frac{\delta}{2} t}, 
    \end{align*}
    where we used in the second line the inequality $e^{\frac{x}{2}} \leq \sqrt{e} x^{\frac{1}{2}}$ as $x = \delta t$. 
    Letting $t = T_{\mathrm{thres}}$ and transforming (\ref{contradiction_ineq}) yields 
    \begin{align*}
        e^{\frac{\delta}{2} T_{\mathrm{thres}}} &< \frac{8 \sqrt{2}}{\sqrt{e} } \frac{{F_{\mathrm{thres}}}^\frac{1}{2}}{\delta^{\frac{1}{2}} \eta_p r} \\ 
        &\leq \frac{16{L_1}^{\frac{1}{2}}}{\sqrt{e}} \frac{M}{{\delta}^{\frac{1}{2}} r}.
    \end{align*}
    where the second line is implied by 
    \begin{align*}
        \eta_p M &= \frac{2F_{\mathrm{thres}}}{\epsilon + \sqrt{\epsilon^2 + 2 L_1 F_{\mathrm{thres}} } } \\
        &\geq \frac{2F_{\mathrm{thres}}}{2 \epsilon + \sqrt{2 L_1 F_{\mathrm{thres}} } } \\
        &\geq \frac{1}{2} \sqrt{\frac{2 F_{\mathrm{thres}}}{L_1}}.
    \end{align*}
    Here it follows from the definition of $\eta$ and the assumption $\delta^2 \geq (L_2 + L_3) \epsilon $.
    Since we set $T_{\mathrm{thres}}= \frac{2}{\delta} \log \qty(\frac{16 {L_1}^{\frac{1}{2}} M }{\sqrt{e} \delta^{\frac{1}{2}} r})$, this leads to a contradiction. 
\end{proof}

\begin{proof}[Proof of Proposition \ref{decrease_around_saddle}]
    From the discussion provided after Lemma \ref{hessian_guided_gaussian_process}, by setting $M = \qty(\frac{e C}{e-1} \qty(1 + 2 \log \frac{2}{\zeta'}))^{\frac{1}{2}} = \tilde{O}(1)$, 
    it holds that $\norm{\xi}_{L^2(\mu)} \leq {M} =  \tilde{O}(1)$ with probability $1 - \frac{\zeta'}{2}$. 
    At this point, by choosing the hyperparameters as in (\ref{hyper_parameter}), we have $\eta_p = \tilde{O}\qty(\delta^{\frac{3}{2}} \wedge \frac{\delta^3}{\varepsilon}), ~ T_\mathrm{thres} = \tilde{O}\qty(\frac{1}{\delta}),~ F_{\mathrm{thres}} = \tilde{O}(\delta^3)$, 
    and Lemma \ref{not_very_increase}, Lemma \ref{lower_bound_metric} can be applied. 
    There exists $t \in [0,T_{\mathrm{thres}}]$ such that 
    \begin{align*}
        \mathrm{P}\left (F(\mu_0) - F(\mu_t) \leq 2 F_{\mathrm{thres}}\right ) 
        &\geq \mathrm{P} \qty(F(\mu_0) - F(\mu_t) \leq 2 F_{\mathrm{thres}}, \qty|\inpro{\psi_0}{\xi}_{L^2(\mu^\dagger)}| \leq M)\\
        &\geq \int_{\R \setminus (-\frac{\sqrt{2\pi}|\lambda_0|\zeta'}{4},\frac{\sqrt{2\pi}|\lambda_0|\zeta'}{4})} \frac{1}{\sqrt{2 \pi}|\lambda_0|}\exp \left ( - \frac{x^2}{2 {\lambda_0}^2}\right ) \diff x - \mathrm{P}\qty(\qty|\inpro{\psi_0}{\xi}_{L^2(\mu^\dagger)}| \geq M)\\
        &\geq 1 - \frac{1}{\sqrt{2 \pi}|\lambda_0|} \cdot 2\cdot \frac{\sqrt{2\pi}|\lambda_0|\zeta'}{8} - {\sqrt{2}} e^{-\frac{M^2}{4R_1^2}}\\
        &\geq 1 - \frac{\zeta'}{4} - \frac{\zeta'}{4} \\
        &= 1 - \frac{\zeta'}{2}, 
    \end{align*}
    where several previously established results are used. We provide a detailed explanation below:
    \begin{itemize}
        \item The first line is straightforward.
        \item The second line is followed by Lemma \ref{lower_bound_metric} as shown below. The key point of Lemma \ref{lower_bound_metric} is that when considering two points respectively perturbed but the $\psi_0$-direction of perturbation differs by a fixed amount, the WGF dynamics lead to at least one point having an decrease of the objective greater than $2 F_{\mathrm{thres}}$.
        
        Let $\Omega$ be the sample space associated with the randomness of the Gaussian process $\xi$, and define a random variable $X$ as $X = \langle \psi_0, \xi \rangle_{L^2(\mu^\dagger)}$. 
        
        Take $\omega_1, \omega_2 \in \Omega$ such that $|X(\omega_1)| \vee |X(\omega_2)| \leq M, \xi(\omega_2) = \xi(\omega_1) + r\psi_0$, where $r$ is a real constant satisfying the assumption in Lemma \ref{lower_bound_metric}. We consider applying Lemma \ref{lower_bound_metric} in this setting. 
        
        Since $|X(\omega_1)| \leq M, |X(\omega_2)| \leq M$ implies $|r| = |X(\omega_1) - X(\omega_2)| \leq 2M$, from Lemma \ref{lower_bound_metric}, we obtain that if $|r| \geq |r_0|, ~ r_0 = \frac{\sqrt{2\pi}|\lambda_0|\zeta'}{8}$, then at least one of the samples $\omega_1, \omega_2$ satisfies $
        F(\mu_0) - F(\mu_t) \geq 2 F_{\mathrm{thres}}$ . $---(\star)$

        Based on this, consider the following two cases:
        \begin{itemize}
            \item There exists a point $x_0 \in [-M, M]$ such that $X(\omega)=x_0 \implies F(\mu_0) - F(\mu_t) < 2 F_{\mathrm{thres}}$.
            \item For all points $x_0 \in [-M,M]$, $X(\omega)=x_0 \implies F(\mu_0) - F(\mu_t) \geq 2 F_{\mathrm{thres}}$.
        \end{itemize}
        In the former case, by $(\star)$, $X \in [-M,M] \setminus (x_0 - r_0, x_0 + r_0) \implies F(\mu_0) - F(\mu_t) \geq 2 F_{\mathrm{thres}}$ holds. Therefore, in either case, the following holds: 
        \begin{align*}
        &\mathrm{P} \left (F(\mu_0) - F(\mu_t) \leq 2 F_{\mathrm{thres}}, \left|\langle {\psi_0},{\xi}\rangle_{L^2(\mu^\dagger)} \right| \leq M \right ) \\
            \geq & \mathrm{P} \left( X \in [-M,M ] \setminus \qty(x_0 - r_0, x_0 + r_0) \right) \\
            = & \int_{[-M,M ] \setminus \qty(x_0 - r_0, x_0 + r_0)} \frac{1}{\sqrt{2\pi}\lambda_0} e^{-\frac{x^2}{2 \lambda_0}} d x \\
            \geq & \int_{\mathbb{R} \setminus (x_0 - r_0, x_0 + r_0)} \frac{1}{\sqrt{2\pi}\lambda_0} e^{-\frac{x^2}{2 \lambda_0}} \diff x - \int_{\mathbb{R} \setminus (-M,M)} \frac{1}{\sqrt{2\pi}\lambda_0} e^{-\frac{x^2}{2 \lambda_0}} \diff x
        \end{align*}
        Here, the first term is minimized at $x_0=0$, which justifies the bound used in the second line above. 
        \item The third line : The lower bound of the first term follows from the fact that the Gaussian PDF reaches its maximum $\frac{1}{\sqrt{2 \pi}|\lambda_0|}$ at $x = 0$.

        The lower bound of the second term comes from the Gaussian tail bound. (This follows from Proposition \ref{Markov_expansion} by taking $X = \langle {\psi_0},{\xi}\rangle_{L^2(\mu^\dagger)},~ \varphi(x) = \exp \left (\frac{x^2}{4 \lambda_0^2}\right)$. )
        \item The fourth line follows from the definition of $M$.
    \end{itemize}
    Thus, with probability  $1 - \frac{\zeta'}{2}$, we have 
    \begin{align*}
        F(\mu_0) - F(\mu_t) \geq 2 F_{\mathrm{thres}}. 
    \end{align*}
    Finally, combining with Lemma \ref{not_very_increase}, the following holds:  
    \begin{align*}
        F(\mu^\dagger) - F(\mu_{T_{\mathrm{thres}}}) &= F(\mu\dagger) - F(\mu_0) + F(\mu_0) - F(\mu_{T_{\mathrm{thres}}}) \\
        &\geq - F_{\mathrm{thres}} + 2 F_{\mathrm{thres}} \\
        &= F_{\mathrm{thres}}. 
    \end{align*}
    This occurs with probability more than $1 - \qty(\frac{\zeta'}{2} + \frac{\zeta'}{2}) = 1 - \zeta'$. 
\end{proof}

\subsection{Discrete-time Convergence Analysis}
\label{appendix_discrete_time}
In this section, we prove the convergence of discrete-time PWGF (Theorem \ref{convergence_discrete_time}). For the discretization, the following Lipschitz continuity of the Wasserstein gradient is utilized to bridge the gap with continuous time. 
\begin{lemma}
    \label{lemma_for_discrete_time}
    Let $F:\P_2(\R^d) \to \R$ be sufficiently smooth and suppose that the Wasserstein gradient $\nabla_{\mu} F$ be $L_1$-Lipschitz continuous.\footnote{
        See Assumption \ref{assumption_lipschitz}. 
    }
    Then, it holds that for any $\mu, \nu \in \P_2(\R^d),~ \gamma \in \Gamma(\mu,\nu)$, 
    \begin{align*}
        F(\nu) - F(\mu) \leq \int \inpro{\nabla_{\mu} F (\mu,x)}{y-x} \gamma(\diff x \diff y) + \frac{L_1}{2} \int \norm{x-y}^2 \gamma(\diff x \diff y).
    \end{align*}
\end{lemma}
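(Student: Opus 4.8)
The plan is to interpolate between $\mu$ and $\nu$ along the displacement interpolation $\mu_t = ((1-t)p_1 + t p_2)\#\gamma$, $t\in[0,1]$, induced by the given plan $\gamma$ (so $\mu_0=\mu$, $\mu_1=\nu$), differentiate $F$ along this curve using Lemma \ref{lemma_for_second_order}, and then control the gap between the Wasserstein gradient along the curve and at the basepoint $\mu$ via the $L_1$-Lipschitz assumption \eqref{grad_lipschitz}. Note that Lemma \ref{lemma_for_second_order} is proved only through Proposition \ref{prop_strong_first_order_expansion}, so it is valid for the arbitrary (not necessarily optimal) plan $\gamma$, and it gives, for every $t\in[0,1)$,
\begin{align*}
  \frac{\diff}{\diff t} F(\mu_t) = \int \inpro{\nabla_\mu F(\mu_t,(1-t)x+ty)}{y-x}\,\gamma(\diff x\,\diff y).
\end{align*}

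First I would add and subtract $\nabla_\mu F(\mu,x)$ inside the integrand and apply Cauchy--Schwarz in $L^2(\gamma)$. For the cross term, I apply \eqref{grad_lipschitz} to the plan $\gamma_t := (p_1 \times ((1-t)p_1 + t p_2))\#\gamma \in \Gamma(\mu,\mu_t)$, under which the coupled pair is $(x,(1-t)x+ty)$; this yields
\begin{align*}
  \int \norm{\nabla_\mu F(\mu,x) - \nabla_\mu F(\mu_t,(1-t)x+ty)}^2 \gamma(\diff x\,\diff y) \le L_1^2 t^2 \int \norm{x-y}^2\,\gamma(\diff x\,\diff y),
\end{align*}
and hence, using also $\int\norm{\nabla_\mu F(\mu,x)}^2\gamma = \norm{\nabla_\mu F(\mu)}_{L^2(\mu)}^2$ and Cauchy--Schwarz,
\begin{align*}
  \left|\frac{\diff}{\diff t}F(\mu_t)\right| \le \left(\norm{\nabla_\mu F(\mu)}_{L^2(\mu)} + L_1 t\Big(\textstyle\int\norm{x-y}^2\gamma\Big)^{1/2}\right)\Big(\textstyle\int\norm{x-y}^2\gamma\Big)^{1/2},
\end{align*}
which is bounded uniformly for $t\in[0,1)$. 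With integrability of $\diff F(\mu_t)/\diff t$ thus secured, and continuity of $t\mapsto F(\mu_t)$ up to $t=1$ (which follows from Proposition \ref{prop_strong_first_order_expansion} together with $W_2(\mu_t,\nu)\to 0$), the fundamental theorem of calculus gives $F(\nu)-F(\mu) = \int_0^1 \frac{\diff}{\diff t}F(\mu_t)\,\diff t$.

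Splitting this last integral: the term $\int_0^1 \int \inpro{\nabla_\mu F(\mu,x)}{y-x}\gamma\,\diff t = \int \inpro{\nabla_\mu F(\mu,x)}{y-x}\gamma(\diff x\,\diff y)$ is exactly the first-order term in the claim, while the remainder is bounded, using the estimate above, by $\int_0^1 L_1 t\,\diff t \cdot \int\norm{x-y}^2\gamma(\diff x\,\diff y) = \frac{L_1}{2}\int\norm{x-y}^2\gamma(\diff x\,\diff y)$, which is the stated Hessian-type term.

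The main (and essentially only) subtlety is the passage to the endpoint $t=1$: Lemma \ref{lemma_for_second_order} is stated only on $[0,1)$, so one must separately argue continuity of $F(\mu_t)$ at $t=1$ and integrability of the derivative near $t=1$. Both are handled by the uniform bound on $|\diff F(\mu_t)/\diff t|$ derived above; everything else is routine.
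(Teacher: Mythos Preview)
Your proof is correct and follows essentially the same route as the paper: interpolate along $\mu_t = ((1-t)p_1 + tp_2)\#\gamma$, differentiate using Lemma \ref{lemma_for_second_order}, add and subtract $\nabla_\mu F(\mu,x)$, control the difference by Cauchy--Schwarz together with the Lipschitz assumption applied to the plan $\gamma_t = (p_1 \times ((1-t)p_1 + tp_2))\#\gamma$, and integrate in $t$. Your explicit treatment of the endpoint $t=1$ is a nice touch of extra rigor that the paper's proof leaves implicit.
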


\begin{proof}
    Let $\mu_t = ((1-t)p_1 + tp_2) \# \gamma \in \P_2(\R^d),~ \gamma_t = (\id \times (1-t)p_1 + tp_2)) \# \gamma \in \Gamma(\mu,\mu_t)$. The application of Lemma \ref{lemma_for_second_order} implies 
    \begin{align*}
        \frac{\diff}{\diff t}F(\mu_t) &= \int \inpro{\nabla_{\mu} F(\mu_t, (1-t)x+ty)}{y-x} \gamma(\diff x \diff y) \\
        &= \int \inpro{\nabla_{\mu} F (\mu,x)}{y-x} \gamma(\diff x \diff y) \\
        &\quad \quad \quad + \int \inpro{\nabla_{\mu} F(\mu_t, (1-t)x+ty) - \nabla_{\mu} F (\mu,x)}{y-x} \gamma(\diff x \diff y) \\
        &\leq \int \inpro{\nabla_{\mu} F (\mu,x)}{y-x} \gamma(\diff x \diff y) \\
        &\quad \quad \quad + L_1 \qty(\int \norm{x-y}^2 \gamma_t(\diff x \diff y))^{\frac{1}{2}} \qty(\int \norm{x-y}^2 \gamma(\diff x \diff y))^{\frac{1}{2}} \\
        &= \int \inpro{\nabla_{\mu} F (\mu,x)}{y-x} \gamma(\diff x \diff y) + L_1 t \int \norm{x-y}^2 \gamma(\diff x \diff y).
    \end{align*}
    In the third line, we use the Cauchy-Schwarz inequality and Lipschitz continuity. 
    Then, we have
    \begin{align*}
        F(\nu) - F(\mu) &= \int_0^1 \frac{\diff}{\diff t} F(\mu_t) \diff t \\
        &= \int \inpro{\nabla_{\mu} F (\mu,x)}{y-x} \gamma(\diff x \diff y) + \frac{L_1}{2} \int \norm{x-y}^2 \gamma(\diff x \diff y).
    \end{align*}
\end{proof}
The following proposition is a discrete version of Gronwall's inequality, which will be used in subsequent proofs.
\begin{proposition}
    \label{prop_discrete_gronwall}
    Let $\qty{a_k}_{k \geq 0}$ be real-valued sequence and $b \neq 0,~c \in \R$. Then, 
    \begin{align*}
        a_k - c \leq b \sum_{l=0}^{k-1} a_l ~~~(\forall k \geq 0) ~~ \implies ~~ a_k - a_0 \leq c (b+1)^{k}.
    \end{align*}
\end{proposition}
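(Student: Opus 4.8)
The plan is to prove the slightly stronger bound $a_k \le c(b+1)^k$ for every $k \ge 0$ by strong induction on $k$, which is the natural discrete analogue of Gr\"onwall's lemma. From this the stated conclusion follows at once in the regime that actually arises in the analysis (where $a_0 \ge 0$, indeed $a_0 = 0$ in the application to PWGF), since then $a_k - a_0 \le a_k \le c(b+1)^k$.

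First I would record the base case: setting $k = 0$ in the hypothesis makes the sum empty, so $a_0 - c \le 0$, i.e.\ $a_0 \le c = c(b+1)^0$. For the inductive step, assume $a_l \le c(b+1)^l$ for all $0 \le l < k$. Substituting into the hypothesis, using $b > 0$ (the relevant case, e.g.\ $b$ a step size times a Lipschitz constant) so that multiplying the inductive hypothesis by $b$ preserves the inequality, and invoking the finite geometric sum identity $\sum_{l=0}^{k-1}(b+1)^l = \frac{(b+1)^k - 1}{b}$ (valid since $b \ne 0$), one gets
\begin{align*}
a_k \;\le\; c + b\sum_{l=0}^{k-1} a_l \;\le\; c + bc \sum_{l=0}^{k-1}(b+1)^l \;=\; c + c\bigl((b+1)^k - 1\bigr) \;=\; c(b+1)^k,
\end{align*}
which closes the induction. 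An equivalent route, if one prefers to avoid strong induction, is to set $S_k = \sum_{l=0}^{k-1} a_l$, observe that the hypothesis yields the scalar linear recursion $S_{k+1} = S_k + a_k \le (1+b)S_k + c$ with $S_0 = 0$, solve it to obtain $S_k \le \tfrac{c}{b}\bigl((1+b)^k - 1\bigr)$, and then substitute back into $a_k \le c + bS_k$.

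There is essentially no computational obstacle here; the only points worth care are the sign conventions. The argument uses $b > 0$ so that multiplying the inductive bound by $b$ keeps the inequality direction, and the final passage from $a_k \le c(b+1)^k$ to $a_k - a_0 \le c(b+1)^k$ uses $a_0 \ge 0$ (equivalently $c \ge 0$), which is automatic in every place this proposition is applied. Everything else is the geometric series identity and a one-line induction.
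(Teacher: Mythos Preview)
Your proof is correct and yields the same bound $a_k \le c(b+1)^k$ that the paper's proof actually establishes (the ``$-a_0$'' in the stated conclusion is evidently a typo; the paper's own proof concludes with $a_k \le c(b+1)^k$, and that is exactly what is used in the application). Your alternative route via the partial sums $S_k$ is precisely the paper's argument (they set $d_k = \sum_{l=0}^k a_l$ and iterate the linear recursion $d_k + c/b \le (b+1)(d_{k-1}+c/b)$), while your primary strong-induction route is a minor and equally valid repackaging of the same geometric-series computation.
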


\begin{proof}
    Letting $d_k \coloneq \sum_{l=0}^k a_k$, we have
    \begin{align*}
        d_k + \frac{c}{b} & \leq a_k + d_{k-1} + \frac{c}{b} \leq (b+1) d_{k-1} + c + \frac{c}{b} \\
        &= (b+1)(d_{k-1} + \frac{c}{b}) \leq (b+1)^{k+1}\frac{c}{b}.
    \end{align*}
    Then it holds that 
    \begin{align*}
        a_k &\leq c + b d_{k-1} \leq c + b \frac{c}{b} \qty((b+1)^{k} -1 ) =c (b + 1)^k.
    \end{align*}
\end{proof}

The following proposition corresponds to Lemma \ref{lower_bounding_F} in continuous time and serves to prove two key points: the evaluation of the decrease in $F$ when the gradient is large, and the ability to reduce the objective function near saddle points. 
\begin{proposition}
    \label{discrete_lowerbound_F}
    Let $\qty{\mu^{(l)}}_{l=0}^k$ be a sequence of probability measures generated by discrete-time PWGF (Algorithm \ref{alg_PWGD}) with step size $\eta \leq \frac{1}{L_1}$. Then it holds that
    \begin{align*}
        F(\mu^{(0)}) - F(\mu^{(k)}) \geq \frac{\eta}{2} \sum_{l=0}^{k-1} \norm{\nabla_{\mu} F(\mu^{(l)})}_{L^2(\mu^{(l)}}^2.
    \end{align*}
\end{proposition}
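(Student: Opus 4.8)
The plan is to reduce the claim to a one-step descent inequality and then telescope. The only real tool needed is the discrete descent lemma (Lemma~\ref{lemma_for_discrete_time}), which is available since $\nabla_\mu F$ is $L_1$-Lipschitz (Assumption~\ref{assumption_lipschitz}) and $F$ is sufficiently smooth.

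First I would fix an index $l$ at which Algorithm~\ref{alg_PWGD} performs a gradient-descent update, so that $\mu^{(l+1)} = (\id - \eta \nabla_\mu F(\mu^{(l)})) \# \mu^{(l)}$. Apply Lemma~\ref{lemma_for_discrete_time} with $\mu \gets \mu^{(l)}$, $\nu \gets \mu^{(l+1)}$, and the (generally non-optimal) transport plan $\gamma = (\id \times (\id - \eta \nabla_\mu F(\mu^{(l)}))) \# \mu^{(l)}$. Under this coupling one has $y - x = -\eta\, \nabla_\mu F(\mu^{(l)},x)$ for $\mu^{(l)}$-a.e.\ $(x,y)$, so the linear term of the descent lemma collapses to $-\eta\,\norm{\nabla_\mu F(\mu^{(l)})}_{L^2(\mu^{(l)})}^2$ and the quadratic term equals $\tfrac{L_1}{2}\eta^2\,\norm{\nabla_\mu F(\mu^{(l)})}_{L^2(\mu^{(l)})}^2$. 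Combining,
\[
F(\mu^{(l+1)}) - F(\mu^{(l)}) \le -\eta\left(1 - \tfrac{L_1 \eta}{2}\right)\norm{\nabla_\mu F(\mu^{(l)})}_{L^2(\mu^{(l)})}^2 \le -\frac{\eta}{2}\,\norm{\nabla_\mu F(\mu^{(l)})}_{L^2(\mu^{(l)})}^2,
\]
where the last inequality uses the step-size condition $\eta \le 1/L_1$, which forces $1 - L_1\eta/2 \ge 1/2$. Summing this bound over $l = 0, 1, \dots, k-1$ and cancelling the telescoping terms yields $F(\mu^{(k)}) - F(\mu^{(0)}) \le -\tfrac{\eta}{2}\sum_{l=0}^{k-1}\norm{\nabla_\mu F(\mu^{(l)})}_{L^2(\mu^{(l)})}^2$, which is exactly the claimed bound.

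I expect no substantial obstacle. The one subtlety worth flagging is that the telescoping argument requires every step in the window $0,\dots,k-1$ to be a pure gradient-descent step, since a perturbation step $(\id + \eta_p \xi)\#\mu^{(k)}$ may increase $F$; this is precisely the regime in which the proposition is invoked (e.g.\ the ``State 0'' phases in the proof of Theorem~\ref{convergence_discrete_time}, and the WGF sub-intervals inside the saddle-escape analysis). A second point is that the collapse of the linear term relies on feeding Lemma~\ref{lemma_for_discrete_time} the \emph{non-optimal} plan induced by the gradient map rather than the optimal coupling; Lemma~\ref{lemma_for_discrete_time} is stated for arbitrary $\gamma \in \Gamma(\mu,\nu)$ for exactly this reason, so no extra work is required.
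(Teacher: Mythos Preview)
Your proposal is correct and follows essentially the same argument as the paper: apply Lemma~\ref{lemma_for_discrete_time} with the coupling $\gamma = (\id \times (\id - \eta \nabla_\mu F(\mu^{(l)})))\#\mu^{(l)}$ to obtain the one-step descent inequality $F(\mu^{(l+1)}) - F(\mu^{(l)}) \le -\tfrac{\eta}{2}\norm{\nabla_\mu F(\mu^{(l)})}_{L^2(\mu^{(l)})}^2$ under $\eta \le 1/L_1$, and then telescope. Your additional remarks about the perturbation-free window and the use of a non-optimal plan are accurate and match how the proposition is deployed in the paper.
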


\begin{proof}
    By Lemma \ref{lemma_for_discrete_time}, it holds that for any $l=0,\cdots,k-1$, 
    \begin{align*}
        F(\mu^{(l+1)}) - F(\mu^{(l)}) &\leq \int \inpro{\nabla_{\mu} F (\mu^{(l)},x)}{y-x} \qty(\id \times (\id - \eta \nabla_{\mu} F(\mu^{(l)} ) ) ) \# \mu^{(l)}(\diff x \diff y) \\
        &\quad \quad + \frac{L_1}{2} \int \norm{x-y}^2 \qty(\id \times (\id - \eta \nabla_{\mu} F(\mu^{(l)} ) ) ) \# \mu^{(l)}(\diff x \diff y) \\
        &= - \eta \qty(1 - \frac{L_1 \eta}{2}) \norm{\nabla_{\mu} F(\mu^{(l)})}_{L^2(\mu^{(l)}}^2 \\
        &\leq - \frac{\eta}{2} \norm{\nabla_{\mu} F(\mu^{(l)})}_{L^2(\mu^{(l)}}^2. 
    \end{align*}
    Then we have 
    \begin{align*}
        F(\mu^{(k)}) - F(\mu^{(0)}) &= \sum_{l=0}^{k-1} \qty(F(\mu^{(l+1)}) - F(\mu^{(l)})) \\
        &\leq - \frac{\eta}{2} \sum_{l=0}^{k-1} \norm{\nabla_{\mu} F(\mu^{(l)})}_{L^2(\mu^{(l)}}^2.
    \end{align*}
\end{proof}

In discrete-time PWGF, we take hyperparameters $\eta_p,~F_{\mathrm{thres}}, ~k_{\mathrm{thres}},~ \eta$ as follows: 
\begin{align}
    M &= \qty(\frac{e {R_1}^2}{e-1} \qty(1 + 2 \log \frac{2}{\zeta'}))^{\frac{1}{2}} \vee 2 R_1 \qty(\log \frac{4\sqrt{2}}{\zeta'})^{\frac{1}{2}} = \tilde{O}(1),  \notag \\
    \eta &\leq \frac{1}{L_1} = O(1), \notag \\
    \label{discrete_hyperparameter}
    k_{\mathrm{thres}} &= \frac{2}{\log (1 + \eta \delta )} {\log \qty(\frac{16 \sqrt{2} L_1^{\frac{1}{2}} \eta^{\frac{1}{2}}  M }{\sqrt{e}  r \log^{\frac{1}{2}} (1+\eta \delta)})} = \tilde{O}\qty(\frac{1}{\delta}),  \\
    F_{\mathrm{thres}} &= \frac{\eta^{-3}k_{\mathrm{thres}}^{-3}}{18(L_2 + L_3)^2} \log^2 \frac{3}{2} = \tilde{O}(\delta^3), \notag \\
    \eta_p &= \frac{2 F_{\mathrm{thres}}}{{M}(\epsilon + \sqrt{\epsilon^2 + 2 L_1 F_{\mathrm{thres}}})} = \tilde{O}\qty(\frac{\delta^3}{\epsilon} \wedge \delta^{\frac{3}{2}}). \notag 
\end{align}
Since $\eta_p$ and $M$ are set in the same way as continuous time (\ref{hyper_parameter}), Lemma \ref{not_very_increase} holds in discrete time as well. It should also be noted that $F_{\mathrm{thres}}$ is defined to correspond to $T_{\mathrm{thres}}$ and $\eta k_{\mathrm{thres}}$. 

Next, we present the discrete-time counterpart of Lemma~\ref{lower_bound_metric}. 
\begin{proposition}
    \label{discrete_lower_bound_metric}
    Let $\mu^{\dagger}$ be an $(\epsilon,\delta)$-saddle point, 
    $\mu^{(0)} \coloneq (\id + \eta_p \xi) \# \mu^\dagger$ , 
    and the hyperparameters $\eta_p, F_{\mathrm{thres}}$ be taken as in (\ref{discrete_hyperparameter}). 
    Furthermore, we set $\tilde{\mu}^{(0)} = (\id + \eta_p \tilde{\xi})\sharp \mu^\dagger$, where $\tilde{\xi} = \xi + r \psi_0$, $r \in \R$ is a constant, and $\psi_0$ is the eigenvector of the Hessian operator $H_{\mu}$ corresponding to the smallest eigenvalue $\lambda_0$.\\
    Letting $\mu^{(k)}t, ~\tilde{\mu}^{(k)}$ be WGF initialized at $\mu^{(0)},\tilde{\mu}^{(0)}$ respectively, then 
    $\norm{\xi}_{L^2(\mu^\dagger)} \leq M$ and $\frac{\sqrt{2\pi}|\lambda_0|\zeta'}{4} \leq |r| \leq 2M$ implies that 
    there exists $k = 0,\cdots, k_{\mathrm{thres}}]$ satisfying 
    \begin{align*}
        (F (\mu^{(0)}) - F(\mu^{(k)})) \vee (F (\tilde{\mu}^{(0)}) - F(\tilde{\mu}^{(k)})) \geq 2 F_{\mathrm{thres}}.
    \end{align*}
\end{proposition}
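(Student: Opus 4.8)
The plan is to transcribe the contradiction argument of Lemma~\ref{lower_bound_metric} to the discrete dynamics, replacing the flow $e^{-tH_{\mu^\dagger}}$ by the operator powers $(\id-\eta H_{\mu^\dagger})^k$, the ODE governing the separation vector by a linear recurrence, and time integrals by sums; here Proposition~\ref{discrete_lowerbound_F} plays the role of Lemma~\ref{lower_bounding_F}, and Proposition~\ref{prop_discrete_gronwall} that of Gronwall's inequality. Suppose for contradiction that $\big(F(\mu^{(0)})-F(\mu^{(k)})\big)\vee\big(F(\tilde\mu^{(0)})-F(\tilde\mu^{(k)})\big)<2F_{\mathrm{thres}}$ for every $k\le k_{\mathrm{thres}}$. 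Let $X^{(k)},\tilde X^{(k)}$ be the composed pushforward maps of the discrete WGF, so that $\mu^{(k)}=X^{(k)}\#\mu^{(0)}$ and $\tilde\mu^{(k)}=\tilde X^{(k)}\#\tilde\mu^{(0)}$, and set $Y_k=X^{(k)}\circ(\id+\eta_p\xi)$ and $\tilde Y_k=\tilde X^{(k)}\circ(\id+\eta_p\tilde\xi)$, so that $\mu^{(k)}=Y_k\#\mu^\dagger$ and $\tilde\mu^{(k)}=\tilde Y_k\#\mu^\dagger$. Telescoping $X^{(k)}-\id=-\eta\sum_{l<k}\nabla_\mu F(\mu^{(l)})\circ X^{(l)}$, the discrete Cauchy--Schwarz inequality and Proposition~\ref{discrete_lowerbound_F} give $\norm{X^{(k)}-\id}_{L^2(\mu^{(0)})}\le\big(2\eta k\,(F(\mu^{(0)})-F(\mu^{(k)}))\big)^{1/2}$, whence under the contradiction hypothesis $\norm{Y_k-\id}_{L^2(\mu^\dagger)}$ and $\norm{\tilde Y_k-\id}_{L^2(\mu^\dagger)}$ are $O\big((\eta k_{\mathrm{thres}}F_{\mathrm{thres}})^{1/2}+\eta_pM+\eta_p|r|\big)$, the exact discrete counterparts of \eqref{bound_y_t}--\eqref{bound_tilde_y_t} with $T_{\mathrm{thres}}$ replaced by $\eta k_{\mathrm{thres}}$.

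Next one analyzes the separation $w_k:=\tilde Y_k-Y_k$, noting $w_0=\eta_p r\psi_0$. Writing $\nabla_\mu F(\tilde\mu^{(k)})\circ\tilde Y_k-\nabla_\mu F(\mu^{(k)})\circ Y_k=\int_0^1\frac{\diff}{\diff h}\big(\nabla_\mu F(\nu_h)\circ((1-h)Y_k+h\tilde Y_k)\big)\diff h$ along the curve $\nu_h=((1-h)Y_k+h\tilde Y_k)\#\mu^\dagger$ and differentiating under the integral as in the proof of Proposition~\ref{prop_second_order_expansion} yields the exact recurrence $w_{k+1}=(\id-\eta H_{\mu^\dagger}-\eta\Delta_k)w_k$, where $\Delta_k$ has the same structure as the operator $\Delta_t$ in \eqref{evolving_w_t}--\eqref{Delta_th} and, by the displacement bounds above and Assumption~\ref{assumption_regularity_wg}, $\norm{\Delta_k}_{L^2(\mu^\dagger)}\le\Delta:=(L_2+L_3)\big(2(\eta k_{\mathrm{thres}}F_{\mathrm{thres}})^{1/2}+2\eta_pM\big)+R_2\epsilon$. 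Unrolling gives $w_k=(\id-\eta H_{\mu^\dagger})^kw_0-\eta\sum_{j<k}(\id-\eta H_{\mu^\dagger})^{k-1-j}\Delta_jw_j$; since $\psi_0$ is the $\lambda_0$-eigenvector one has $(\id-\eta H_{\mu^\dagger})^kw_0=(1+\eta|\lambda_0|)^k\eta_p r\psi_0$, while $\norm{(\id-\eta H_{\mu^\dagger})^j}_{L^2(\mu^\dagger)}\le(1+\eta|\lambda_0|)^j$ (see below). Setting $c_k=(1+\eta|\lambda_0|)^{-k}\norm{w_k}_{L^2(\mu^\dagger)}$, the triangle inequality gives $c_k\le\eta_p|r|+\eta\Delta\sum_{j<k}c_j$, so Proposition~\ref{prop_discrete_gronwall} (or its standard sharpening $c_k\le c_0(1+\eta\Delta)^k$) controls $c_k$; feeding this back into $c_k\ge\eta_p|r|-\eta\Delta\sum_{j<k}c_j$ and using that the choice of $F_{\mathrm{thres}}$ in \eqref{discrete_hyperparameter} together with $\eta_pM,\epsilon=o(\cdot)$ makes $\eta k_{\mathrm{thres}}\Delta\le\log\frac32$ (hence $(1+\eta\Delta)^{k_{\mathrm{thres}}}\le\frac32$), exactly as in the continuous proof, one obtains $\norm{w_{k_{\mathrm{thres}}}}_{L^2(\mu^\dagger)}\ge\frac12\eta_p|r|(1+\eta\delta)^{k_{\mathrm{thres}}}$. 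Comparing with the upper bound $\norm{w_{k_{\mathrm{thres}}}}_{L^2(\mu^\dagger)}\le\norm{Y_{k_{\mathrm{thres}}}-\id}+\norm{\tilde Y_{k_{\mathrm{thres}}}-\id}=O\big((\eta k_{\mathrm{thres}}F_{\mathrm{thres}})^{1/2}\big)$ and using the elementary inequality $(1+\eta\delta)^{k/2}\ge\big(e\,k\log(1+\eta\delta)\big)^{1/2}$ (i.e.\ $e^{y-1}\ge y$ at $y=k\log(1+\eta\delta)$) together with $\eta_pM\ge\frac12(2F_{\mathrm{thres}}/L_1)^{1/2}$ and $|r|\le2M$, one arrives at $(1+\eta\delta)^{k_{\mathrm{thres}}/2}<\frac{16\sqrt2\,L_1^{1/2}\eta^{1/2}M}{\sqrt e\,r\,\log^{1/2}(1+\eta\delta)}$, which contradicts the definition of $k_{\mathrm{thres}}$ in \eqref{discrete_hyperparameter}.

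The step requiring the most care, and the one genuinely new relative to the continuous argument, is the operator-norm estimate $\norm{\id-\eta H_{\mu^\dagger}}_{L^2(\mu^\dagger)}=1+\eta|\lambda_0|$: this needs the step-size condition $\eta\le1/L_1$ together with $\norm{H_{\mu^\dagger}}_{L^2(\mu^\dagger)}\le L_1$ — valid because $\mu^\dagger$ is an approximate first-order stationary point, so $\norm{H'_{\mu^\dagger}}\le R_2\epsilon$ is negligible, and $L_1$-Lipschitzness of $\nabla_\mu F$ with Proposition~\ref{prop_second_order_perturbation} forces $-L_1\id\preceq H_{\mu^\dagger}+H'_{\mu^\dagger}\preceq L_1\id$ — so that every eigenvalue $1-\eta\lambda_n$ of $\id-\eta H_{\mu^\dagger}$ lies in $[1-\eta L_1,\,1+\eta|\lambda_0|]\subseteq[0,\,1+\eta|\lambda_0|]$ and the largest in absolute value is attained along $\psi_0$; self-adjointness then gives the bound for the $j$-th power. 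A secondary point is that Proposition~\ref{prop_discrete_gronwall} is slightly lossier than continuous Gronwall, so if one uses it verbatim an extra additive $O(\eta\Delta k)$ term appears in the lower bound on $\norm{w_k}$; this is harmless since $\eta k_{\mathrm{thres}}\Delta\le\log\frac32$ already controls it (and the constant in $F_{\mathrm{thres}}$ can be taken slightly smaller if needed), or one may simply invoke the sharper discrete Gronwall, proved by the same induction, which recovers $c_k\le c_0(1+\eta\Delta)^k$ and hence $c_k\ge\eta_p|r|\big(2-(1+\eta\Delta)^k\big)$ in perfect parallel with the continuous $2-e^{\Delta t}$. Everything else — the displacement estimate from Proposition~\ref{discrete_lowerbound_F}, the precise form of $\Delta_k$, and the concluding algebra — is a mechanical transcription of the proof of Lemma~\ref{lower_bound_metric} under the correspondences (time integral)~$\leftrightarrow$~(sum) and $T_{\mathrm{thres}}\leftrightarrow\eta k_{\mathrm{thres}}$.
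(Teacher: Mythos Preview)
Your proposal is correct and follows essentially the same route as the paper's proof: contradiction hypothesis, displacement bounds via Proposition~\ref{discrete_lowerbound_F}, the recurrence $w^{(k+1)}=(\id-\eta H_{\mu^\dagger}-\eta\Delta^{(k)})w^{(k)}$ unrolled to $(\id-\eta H_{\mu^\dagger})^k w^{(0)}$ plus a remainder, discrete Gronwall (Proposition~\ref{prop_discrete_gronwall}) applied to $a_k=(1-\eta\lambda_0)^{-k}\|w^{(k)}\|$, feeding back to obtain $\|w^{(k)}\|\ge\tfrac{\eta_p r}{2}(1-\eta\lambda_0)^k$ via $\eta k_{\mathrm{thres}}\Delta\le\log\tfrac32$, and the same final algebra contradicting the choice of $k_{\mathrm{thres}}$ in \eqref{discrete_hyperparameter}. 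Your explicit justification of the operator-norm bound $\|\id-\eta H_{\mu^\dagger}\|_{L^2(\mu^\dagger)}\le 1-\eta\lambda_0$ via the step-size condition $\eta\le 1/L_1$ is a point the paper uses without comment.
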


\begin{proof}
    We give a proof by contradiction. Assume that for any $0 \leq k \leq k_{\mathrm{thres}}$, 
    \begin{align*}
        (F (\mu^{(0)}) - F(\mu^{(k)})) \vee (F (\tilde{\mu}^{(0)}) - F(\tilde{\mu}^{(k)})) \vee \Delta_F (\tilde{\mu}^{(0)}, \tilde{\mu}^{(k)}) < 2 F_{\mathrm{thres}}. 
    \end{align*}
    The following vector fields are used to evaluate how $\Delta_F (\mu^{(0)}, \mu^{(k)})$ and $\Delta_F (\tilde{\mu}^{(0)}, \tilde{\mu}^{(k)})$ evolve:
    \begin{align*}
        Y^{(k)} &= (\id - \eta \nabla_{\mu} F(\mu^{(k-1)})) \circ \cdots \circ (\id - \eta \nabla_{\mu} F(\mu^{(0)})) \circ (\id + \eta_p \xi), \\
        \tilde{Y}^{(k)} &= (\id - \eta \nabla_{\mu} F(\tilde{\mu}^{(k-1)})) \circ \cdots \circ (\id - \eta \nabla_{\mu} F(\tilde{\mu}^{(0)})) \circ (\id + \eta_p \tilde{\xi}).
    \end{align*}
    Here, we note that
    \begin{align}
        \norm{Y^{(k)} - \id}_{L^2(\mu^{(0)}} 
        &\leq \eta_p \norm{\xi}_{L^2(\mu^\dagger)} + \sum_{l=0}^{k-1} \norm{Y^{(l+1)} - Y^{(l)}}_{L^2(\mu^{(l)})} \notag \\
        &\leq \eta_p \norm{\xi}_{L^2(\mu^\dagger)} + \eta \sum_{l=0}^{k-1} \norm{\nabla_{\mu} F(\mu^{(l)})}_{L^2(\mu^{(l)})} \notag \\
        &= \eta_p \norm{\xi}_{L^2(\mu^\dagger)} + \eta k \sum_{l=0}^{k-1} \frac{1}{k}\norm{\nabla_{\mu} F(\mu^{(l)})}_{L^2(\mu^{(l)})} \notag \\
        &\leq \eta_p \norm{\xi}_{L^2(\mu^\dagger)} + \eta k \qty(\sum_{l=0}^{k-1} \frac{1}{k}\norm{\nabla_{\mu} F(\mu^{(l)})}_{L^2(\mu^{(l)})}^2)^{\frac{1}{2}} \notag \\
        &= \eta_p \norm{\xi}_{L^2(\mu^\dagger)} + \sqrt{2} \eta^\frac{1}{2} k^{\frac{1}{2}} \qty(\frac{\eta}{2} \sum_{l=0}^{k-1} \norm{\nabla_{\mu} F(\mu^{(l)})}_{L^2(\mu^{(l)})}^2)^{\frac{1}{2}} \notag \\
        &\leq \eta_p \norm{\xi}_{L^2(\mu^\dagger)} + \sqrt{2} \eta^{\frac{1}{2}} k^{\frac{1}{2}} (F(\mu^{(0)}) - F(\mu^{(k)})^{\frac{1}{2}} \notag \\
        \label{evolution_of_y_k}
        &\leq \eta_p M + 2 \eta^{\frac{1}{2}} k_{\mathrm{thres}}^{\frac{1}{2}} F_{\mathrm{thres}}^{\frac{1}{2}}.
    \end{align}
    The third line is due to Jensen's inequality and the fifth line is due to Proposition \ref{discrete_lowerbound_F}. 
    Similarly, 
    \begin{align}
        \label{evolution_of_tilde_y_k}
        \norm{\tilde{Y}^{(k)} - \id}_{L^2(\mu^\dagger)} \leq \eta_p M + \eta_p r + 2 \eta^{\frac{1}{2}} k_{\mathrm{thres}}^{\frac{1}{2}} F_{\mathrm{thres}}^{\frac{1}{2}}.
    \end{align}
    We demonstrate that either $\mu^{(k)}$ or $\tilde{\mu}^{(k)}$ significantly decreases the objective function. To this end, we define the following vector field as a measure of the ``distance" between $\mu^{(k)}$ and $\tilde{\mu}^{(k)}$:
    \begin{align*}
        w^{(k)} := \tilde{Y}^{(k)} - Y^{(k)}. 
    \end{align*}
    $w^{(k)}$ follows the recurrence relation given by the following:
    \begin{align}
        w^{(k+1)} - w^{(k)} &= - \eta \nabla_\mu F(\tilde{\mu}^{(k)}) \circ \tilde{Y}^{(k)} + \eta \nabla_{\mu} F(\mu^{(k)}) \circ Y^{(k)} \notag \\
        &= - \eta \int_0^1 \frac{\diff}{\diff h} \qty(\nabla_\mu F(\nu_h) \circ Y^{(k)}_h) \diff h \notag \\
        \label{recurrence_formula}
        &= - \eta H_{\mu^\dagger} w^{(k)} - \eta \Delta^{(k)} w^{(k)}, 
    \end{align}
    where we set $ Y^{(k)}_h = (1-h)Y^{(k)} + h\tilde{Y}^{(k)},~ \nu_h = Y^{(k)}_h \# \mu^\dagger = ((1-h) Y^{(k)} + h\tilde{Y}^{(k)}) \# \mu^\dagger$ and 
    \begin{align*}
        \Delta^{(k)} &= \int_0^1 \Delta^{(k)}_h \diff h, \\
        \Delta^{(k)}_h f(x) &= \int \qty(\nabla_\mu^2 F (\nu_h,Y_h(x),Y_h(y)) - \nabla_\mu^2 F(\mu^\dagger, x, y)) f(y) \mu^\dagger(\diff y) \\
        & \quad \quad + (\nabla \nabla_{\mu} F(\nu_h,Y_h(x)) - \nabla \nabla_\mu F(\mu^\dagger,x)) f(x) \\
        & \quad \quad \quad + \nabla \nabla_{\mu} F(\mu^\dagger, x) f(x).
    \end{align*}
    The recurrence formula (\ref{recurrence_formula}) yields
    \begin{align*}
        w^{(k)} &= (1 - \eta H_{\mu^\dagger})^k w^{(0)} - \eta \sum_{l=0}^{k-1} (1 - \eta H_{\mu^\dagger})^{k-l-1} \Delta^{(l)} w^{(l)} \\
        &= (1 - \eta \lambda_0)^k w^{(0)} - \eta \sum_{l=0}^{k-1} (1 - \eta H_{\mu^\dagger})^{k-l-1} \Delta^{(l)} w^{(l)}. 
    \end{align*}
    Here, we use the fact that $w^{(0)} = \tilde{Y}^{(0)} - Y^{(0)} = \eta_p r \psi_0$. Then, we have 
    \begin{align*}
        \qty|\norm{w^{(k)}}_{L^2(\mu^\dagger)} - (1-\eta \lambda_0)^k \eta r |
        &\leq \norm{w^{(k)} - (1 - \eta \lambda_0)^k w^{(0)}}_{L^2(\mu^\dagger)} \\
        &\leq \eta \sum_{l=0}^{k-1} \norm{1 - \eta H_{\mu^\dagger}}_{L^2(\mu^\dagger)}^{k-l-1} \norm{\Delta^{(l)}}_{L^2(\mu^\dagger)} \norm{w^{(l)}}_{L^2(\mu^\dagger)} \\
        &\leq \eta \Delta \sum_{l=0}^{k-1} (1 - \eta \lambda_0)^{k-l-1} \norm{w^{(l)}}_{L^2(\mu^\dagger)}, \\
        \therefore \qty|(1-\eta \lambda_0)^{-k}\norm{w^{(k)}}_{L^2(\mu^\dagger)} - \eta_p r | 
        &\leq \eta \Delta \sum_{l=0}^{k-1} (1 - \eta \lambda_0)^{-l-1} \norm{w^{(l)}}_{L^2(\mu^\dagger)},
    \end{align*}
    where the constant $\Delta$ upper bounds the norm of $\Delta^{(k)}$ and set in the same manner as in continuous time:
    \begin{align*}
        \Delta \coloneq (L_2 + L_3)(2 \eta^{\frac{1}{2}} k_{\mathrm{thres}}^{\frac{1}{2}} F_{\mathrm{thres}}^{\frac{1}{2}} + 2 \eta_p M) + R_2 \epsilon \geq \norm{\Delta^{(k)}}_{L^2(\mu^\dagger)}. 
    \end{align*}
    Using the discrete version of Gronwall's inequality (Proposition \ref{prop_discrete_gronwall}) with $a_k = (1-\eta \lambda_0)^{-k} \norm{w^{(k)}}_{L^2(\mu^\dagger)},~ b = \frac{\eta \Delta}{1 - \eta \lambda_0},~ c = \eta_p r$, we obtain the following:
    \begin{align*}
        (1-\eta \lambda_0)^{-l} \norm{w^{(l)}}_{L^2(\mu^\dagger)} \leq \eta_p r \qty(1 + \frac{\eta \Delta}{1 - \eta \lambda_0} )^{l}. 
    \end{align*}
    From this, we have
    \begin{align}
        (1-\eta \lambda_0)^{-k} \norm{w^{(k)}} &\geq \eta_p r - \eta \Delta \sum_{l=0}^{k-1} (1-\eta \lambda_0)^{-l-1} \norm{w^{(l)}}_{L^2(\mu^\dagger)} \notag \\
        &\geq \eta_p r \qty(1 -\frac{\eta \Delta}{1- \eta \lambda_0} \sum_{l=0}^{k-1} \qty(1 + \frac{\eta \Delta}{1 - \eta \lambda_0})^{l})\notag \\
        &= \eta_p r \qty(1 - \frac{\eta \Delta}{1- \eta \lambda_0} \frac{\qty(1 + \frac{\eta \Delta}{1- \eta \lambda_0})^k - 1}{\frac{\eta \Delta}{1- \eta \lambda_0}}) \notag \\
        &= \eta_p r \qty(2 - \qty(1 + \frac{\eta \Delta}{1- \eta \lambda_0})^k) \notag\\
        &\geq \eta_p r \qty(2 - \qty(1 + {\eta \Delta})^k) \notag \\
        &\geq \eta_p r \qty(2 - \exp\qty(\eta k_{\mathrm{thres}} \Delta)) \notag \\
        &\geq \frac{\eta_p r}{2} \notag, 
    \end{align}
    where the last line holds as $\eta k_{\mathrm{thres}} \Delta \leq \log \frac{3}{2}$ in the same manner as in continuous time, 
    \begin{align*}
        \eta k_{\mathrm{thres}} \Delta & \leq \eta k_{\mathrm{thres}}\qty((L_2 + L_3)(2 \eta^{\frac{1}{2}} k_{\mathrm{thres}}^{\frac{1}{2}} F_{\mathrm{thres}}^{\frac{1}{2}} + 2 \eta_p M) + R_2 \epsilon) \\
        &\leq 2(L_2 + L_3) (\eta k_{\mathrm{thres}})^{\frac{3}{2}} F_{\mathrm{thres}}^{\frac{1}{2}} + 2 (L_2 + L_3) \eta_p M \eta k_{\mathrm{thres}} + R_2 \eta k_{\mathrm{thres}} \epsilon \\
        &\leq \frac{1}{3} \log \frac{3}{2} \cdot 3 = \log \frac{3}{2}. 
    \end{align*}
    Then we have
    \begin{align}
        \norm{w^{(k)}} &\geq \frac{\eta_p r}{2} (1 - \eta \lambda_0)^{k} \notag \\
        &\geq \frac{\eta_p r}{2} (1 + \eta \delta)^{k} \notag \\
        \label{discrete_lower_bound_w_k}
        &\geq \frac{\eta_p r}{2} \sqrt{e} k^{\frac{1}{2}} (1 + \eta \delta)^{\frac{k}{2}} \log^{\frac{1}{2}} (1 + \eta \delta). 
    \end{align}
    On the other hand, 
    \begin{align}
        \norm{w^{(k)}} &\leq \norm{\tilde{Y}^{(k)} - \id}_{L^2(\mu^\dagger)} + \norm{{Y}^{(k)} - \id}_{L^2(\mu^\dagger)} \notag \\
        &< 4\eta^{\frac{1}{2}} k_{\mathrm{thres}}^{\frac{1}{2}} F_{\mathrm{thres}}^{\frac{1}{2}} + 2 \eta_p M + \eta_p r \notag \\
        \label{discrete_upper_bound_w_k}
        &\leq 8 \eta^{\frac{1}{2}} k_{\mathrm{thres}}^{\frac{1}{2}} F_{\mathrm{thres}}^{\frac{1}{2}}, 
    \end{align}
    where in the second line (\ref{evolution_of_y_k}) and (\ref{evolution_of_tilde_y_k}) are used, and in the third line $\eta^{\frac{1}{2}} {k_{\mathrm{thres}}}^{\frac{1}{2}} {F_{\mathrm{thres}}}^{\frac{1}{2}} = \tilde{O}(\delta),~ \eta_p M = {o} (\delta), ~ \eta_p r = {o}(\delta)$. 
    Letting $k=k_{\mathrm{thres}}$, it follows from (\ref{discrete_lower_bound_w_k}), (\ref{discrete_upper_bound_w_k}) and $\eta_p M \geq \frac{1}{2} \sqrt{\frac{2F_{\mathrm{thres}}}{L_1}}$ that 
    \begin{align*}
        (1 + \eta \delta)^{\frac{k_{\mathrm{thres}}}{2}} &< \frac{16 \eta^{\frac{1}{2}} F_{\mathrm{thres}}^{\frac{1}{2}}  }{\sqrt{e}  \eta_p r \log^{\frac{1}{2}} (1+\eta \delta)} \\
        &\leq \frac{16 \sqrt{2} L_1^{\frac{1}{2}} \eta^{\frac{1}{2}}  M }{\sqrt{e}  r \log^{\frac{1}{2}} (1+\eta \delta)}. 
    \end{align*}
    This leads to a contradiction, as we had set $k_{\mathrm{thres}}$ as in (\ref{discrete_hyperparameter}). 
\end{proof}

The following proposition corresponds to the discrete-time version of Proposition \ref{decrease_around_saddle}. 
\begin{proposition}
    \label{discrete_decrease_around_saddle}
    Let $\varepsilon,~\delta,~\zeta' > 0$ be chosen such that $(L_2 + L_3)~\epsilon \leq \delta^2$. 
    Suppose $\mu^\dagger \in \P^a_2(\R^d)$ satisfies $\norm{\nabla_\mu F (\mu^\dagger)}_{L^2(\mu^\dagger)} < \epsilon$ and $\lambda_{0} \coloneq \lambda_{\mathrm{min}} H_{\mu^\dagger} \leq - \delta$. x
    Considering $\xi \sim \mathrm{GP}(0,k_\mu)$ and setting $\mu_0 = (\id + \eta_p \xi)\sharp \mu^\dagger$ as the initial value of the discrete time PWGF flow $\mu^{(k)}$, 
    with parameters $\eta = O(1), ~\eta_p = \tilde{O}\qty(\delta^{\frac{3}{2}} \wedge \frac{\delta^3}{\varepsilon}),~ k_\mathrm{thres} = \tilde{O}\qty(\frac{1}{\delta})$, and $F_{\mathrm{thres}} = \tilde{O}(\delta^3)$, the following holds with probability $1-\zeta'$:
    \begin{align*}
        F(\mu^\dagger) - F(\mu^{(k_{\mathrm{thres}})}) \geq F_{\mathrm{thres}}.
    \end{align*}
\end{proposition}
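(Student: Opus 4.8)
The plan is to follow the proof of Proposition~\ref{decrease_around_saddle} essentially verbatim, substituting its two continuous-time ingredients by their discrete analogues, both of which are already in hand. Lemma~\ref{not_very_increase} carries over unchanged, since $\eta_p$ and $M$ in \eqref{discrete_hyperparameter} are chosen exactly as in \eqref{hyper_parameter}, so on the event $\{\norm{\xi}_{L^2(\mu^\dagger)}\le M\}$ the perturbation raises the objective by at most $F_{\mathrm{thres}}$, i.e.\ $F(\mu^{(0)})-F(\mu^\dagger)\le F_{\mathrm{thres}}$. The role of Lemma~\ref{lower_bound_metric} is played by Proposition~\ref{discrete_lower_bound_metric}: for two perturbed initializations whose $\psi_0$-components differ by $r$ with $r_0\le |r|\le 2M$, where $r_0:=\tfrac{\sqrt{2\pi}|\lambda_0|\zeta'}{4}$, at least one of the two discrete-WGF trajectories decreases $F$ by $2F_{\mathrm{thres}}$ within $k_{\mathrm{thres}}$ steps.

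First I would fix $M$ as in \eqref{discrete_hyperparameter}. Using $R_1^2\ge \sum_{n\ge1}\lambda_n^2\ge\lambda_1^2$, the first term of $M$ forces $\exp(-\tfrac{e-1}{2e\lambda_1^2}M^2+\tfrac{\sum_{n\ge1}\lambda_n^2}{2\lambda_1^2})\le\tfrac{\zeta'}{2}$, so by the tail estimate in Lemma~\ref{hessian_guided_gaussian_process} the event $\mathcal{E}:=\{\norm{\xi}_{L^2(\mu^\dagger)}\le M\}$ has $\mathbb{P}(\mathcal{E})\ge 1-\tfrac{\zeta'}{2}$; with this choice $\eta_p=\tilde O(\delta^{3/2}\wedge\delta^3/\epsilon)$, $k_{\mathrm{thres}}=\tilde O(1/\delta)$, $F_{\mathrm{thres}}=\tilde O(\delta^3)$, so both Lemma~\ref{not_very_increase} and Proposition~\ref{discrete_lower_bound_metric} apply. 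On the final event we will have $F(\mu^\dagger)-F(\mu^{(0)})\ge -F_{\mathrm{thres}}$ from the first, and $F(\mu^{(0)})-F(\mu^{(k)})\ge 2F_{\mathrm{thres}}$ for some $k\le k_{\mathrm{thres}}$ from the second; since $F$ is non-increasing along discrete WGF (Proposition~\ref{discrete_lowerbound_F}), the latter persists to step $k_{\mathrm{thres}}$, so adding the two bounds yields $F(\mu^\dagger)-F(\mu^{(k_{\mathrm{thres}})})\ge F_{\mathrm{thres}}$.

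The remaining work is the coupling/probability step. By Lemma~\ref{hessian_guided_gaussian_process}, $X:=\inpro{\psi_0}{\xi}_{L^2(\mu^\dagger)}\sim\mathcal{N}(0,\lambda_0^2)$, and since $\norm{\psi_0}_{L^2(\mu^\dagger)}=1$ we have $|X|\le\norm{\xi}_{L^2(\mu^\dagger)}$, so $\mathcal{E}\subseteq\{|X|\le M\}$. Writing $\xi=X\psi_0+\xi^\perp$ and comparing, for fixed $\xi^\perp$, two realizations with $\psi_0$-components $x,x'$: if both lie in $\mathcal{E}$ then $|x-x'|\le 2M$, and if additionally $|x-x'|\ge r_0$, Proposition~\ref{discrete_lower_bound_metric} forces at least one of them to produce a decrease $\ge 2F_{\mathrm{thres}}$. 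A dichotomy argument identical to the continuous case then shows that the set of ``bad'' values of $X$ (those for which, conditioning on $X=x$, the trajectory does not decrease $F$ by $2F_{\mathrm{thres}}$ within $k_{\mathrm{thres}}$ steps) is contained in an interval of length at most $2r_0$. Bounding the $\mathcal{N}(0,\lambda_0^2)$ density by $\tfrac{1}{\sqrt{2\pi}|\lambda_0|}$ gives $\mathbb{P}(X\text{ bad})\le\tfrac{2r_0}{\sqrt{2\pi}|\lambda_0|}=\tfrac{\zeta'}{2}$; intersecting with $\mathcal{E}$ (and, if one wants the finer split of $\zeta'$ used in the continuous proof, controlling $\mathbb{P}(|X|>M)$ directly via the extended Markov inequality, Proposition~\ref{Markov_expansion}, with $\varphi(x)=e^{x^2/4\lambda_0^2}$, which is what the second term of $M$ is for) shows that with probability at least $1-\zeta'$ there is $k\le k_{\mathrm{thres}}$ with $F(\mu^{(0)})-F(\mu^{(k)})\ge 2F_{\mathrm{thres}}$, whence the conclusion follows as in the previous paragraph.

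I expect the substantive difficulty to lie in Proposition~\ref{discrete_lower_bound_metric}, not in the present assembly: there one pairs the discrete Duhamel representation of $w^{(k)}=\tilde Y^{(k)}-Y^{(k)}$ with the discrete Gronwall inequality (Proposition~\ref{prop_discrete_gronwall}) so that $\norm{w^{(k)}}_{L^2(\mu^\dagger)}$ grows geometrically like $(1+\eta\delta)^k$, while the a priori bounds \eqref{evolution_of_y_k}--\eqref{evolution_of_tilde_y_k} on $\norm{Y^{(k)}-\id}$ and $\norm{\tilde Y^{(k)}-\id}$ cap it at $\tilde O(\eta^{1/2}k_{\mathrm{thres}}^{1/2}F_{\mathrm{thres}}^{1/2})$, the definition of $F_{\mathrm{thres}}$ in \eqref{discrete_hyperparameter} being calibrated precisely so that $\eta k_{\mathrm{thres}}\Delta\le\log\tfrac{3}{2}$ makes these two estimates incompatible. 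Within the present proof the only genuine subtlety is converting the ``some $k$, for at least one of two points'' statement of Proposition~\ref{discrete_lower_bound_metric} into the one-dimensional Gaussian tail estimate on $X=\inpro{\psi_0}{\xi}_{L^2(\mu^\dagger)}$, which is exactly what the dichotomy above does.
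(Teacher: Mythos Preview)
Your proposal is correct and follows essentially the same route as the paper: fix $M$ via the tail bound in Lemma~\ref{hessian_guided_gaussian_process} to get $\mathbb{P}(\norm{\xi}_{L^2(\mu^\dagger)}\le M)\ge 1-\zeta'/2$, invoke Lemma~\ref{not_very_increase} for the perturbation increase bound, run the same dichotomy/coupling argument on $X=\inpro{\psi_0}{\xi}_{L^2(\mu^\dagger)}$ with Proposition~\ref{discrete_lower_bound_metric} in place of Lemma~\ref{lower_bound_metric}, use monotonicity of discrete WGF (Proposition~\ref{discrete_lowerbound_F}) to pass from ``some $k\le k_{\mathrm{thres}}$'' to $k_{\mathrm{thres}}$, and add the two bounds. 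Your accounting of the probability (observing $\mathcal{E}\subseteq\{|X|\le M\}$ and taking $r_0=\tfrac{\sqrt{2\pi}|\lambda_0|\zeta'}{4}$ to match Proposition~\ref{discrete_lower_bound_metric}) is, if anything, slightly cleaner than the paper's, which simply refers back to the continuous proof for this step.
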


\begin{proof}
    From the discussion at the beginning of the previous section, by setting $M \leq \qty(\frac{e C}{e-1} \qty(1 + 2 \log \frac{2}{\zeta'}))^{\frac{1}{2}} = \tilde{O}(1)$, 
    it holds that $\norm{\xi}_{L^2(\mu)} \leq {M} =  \tilde{O}(1)$ with probability $1 - \frac{\zeta'}{2}$. 
    By choosing the hyperparameters as in (\ref{discrete_hyperparameter}), we have $\eta = O(1), ~ \eta_p = \tilde{O}\qty(\delta^{\frac{3}{2}} \wedge \frac{\delta^3}{\varepsilon}), ~ k_\mathrm{thres} = \tilde{O}\qty(\frac{1}{\delta}),~ F_{\mathrm{thres}} = \tilde{O}(\delta^3)$ , 
    and Lemma \ref{not_very_increase} and Proposition \ref{discrete_lower_bound_metric} can be applied. 
    In a similar manner to the proof of Proposirion \ref{decrease_around_saddle} (continuous version), it follows that  
    there exists $0 \leq k \leq k_{\mathrm{thres}}$ such that 
    \begin{align*}
        \mathrm{P}\left (F (\mu^{(0)}) - F(\mu^{(k)}) \geq 2 F_{\mathrm{thres}} \right )
        &\geq 1 - \frac{\zeta'}{2}, 
    \end{align*}
    Thus, with probability  $1 - \frac{\zeta'}{2}$, we have 
    \begin{align*}
        F(\mu^{(0)}) - F(\mu^{(k_{\mathrm{thres}})}) \geq F(\mu^{(0)}) - F(\mu^{(k)}) \geq 2 F_{\mathrm{thres}} .
    \end{align*}
    Combining with Lemma \ref{not_very_increase}, the following holds:  
    \begin{align*}
        F(\mu^\dagger) - F(\mu^{(k_{\mathrm{thres}})}) &= F(\mu\dagger) - F(\mu^{(0)}) + F(\mu^{(0)}) - F(\mu^{(k_{\mathrm{thres}})}) \\
        &\geq - F_{\mathrm{thres}} + 2 F_{\mathrm{thres}} \\
        &= F_{\mathrm{thres}}. 
    \end{align*}
    This occurs with probability more than $1 - \qty(\frac{\zeta'}{2} + \frac{\zeta'}{2}) = 1 - \zeta'$. 
\end{proof}

With the above preparations, we finally prove the convergence of discrete-time PWGF to a second-order stationary point.
\discretetime*
\begin{proof}
    Let $\varepsilon,~\delta,~\zeta > 0$ be chosen arbitrarily chosen such that $(L_2 + L_3)~\epsilon \leq \delta^2$, and set $\zeta' > 0$ such that $\zeta'$ is polynomial in $\frac{1}{\delta}$ and $\zeta$ up to logarithmic factors, provided later. 
    By the settings of $\eta$, $\eta_p$, $k_{\mathrm{thres}}$, and $F_{\mathrm{thres}}$ as in Proposition \ref{discrete_decrease_around_saddle}, we have 
    $\eta = O(1)$, $\eta_p = \tilde{O}\qty(\delta^{\frac{3}{2}} \wedge \frac{\delta^3}{\varepsilon})$, $k_\mathrm{thres} = \tilde{O}\qty(\frac{1}{\delta})$, and $F_{\mathrm{thres}} = \tilde{O}(\delta^3)$. \\
    From Proposition \ref{discrete_decrease_around_saddle}, perturbations occur at most $m \coloneq \lceil \frac{\Delta F}{F_{\mathrm{thres}}} \rceil$ times. 
    Thus, the probability of failure after $m$ perturbations is at most $1 - (1-\zeta')^m \leq m\zeta'$. 
    Setting $\zeta' = \frac{\zeta}{m}$ ensures that the algorithm reaches an $(\varepsilon, \delta)$-second order stationary point with probability at least $1 - \zeta$. 

Discrete time PWGF determines whether the objective decreases by at least $F_{\mathrm{thres}}$ after a certain number of iterations $k_{\mathrm{thres}}$ following the application of a perturbation. 
    Then, we define the period between the application of a perturbation and this evaluation as \emph{State $1$}, and all other times as \emph{State $0$}.
    Let $k_0$ denote the total time spent in State 0, where $\norm{\nabla_\mu F (\mu)}_{L^2(\mu)} \geq \varepsilon$. 
    By Lemma \ref{lower_bounding_F}, the decrease in the objective function during this time is at least $\varepsilon^2 k_0$, implying $k_0 \leq \frac{\Delta F}{\varepsilon^2}$. 
    Moreover, the total time $k_1$ in State 1 is upper bounded by $k_1 \leq m T_{\mathrm{thres}} = \frac{\Delta F k_{\mathrm{thres}}}{F_{\mathrm{thres}}} = \tilde{O}\qty(\frac{1}{\delta^4})$. 
    Hence, the algorithm halts in $k_0 + k_1 = \tilde{O}\qty(\Delta F \qty(\frac{1}{\varepsilon^2} + \frac{1}{\delta^4}))$ iterations.
\end{proof}
\newpage
\section{Numerical Experiments}
\label{appendix:experiment}

\subsection{ICFL Functional}

\begin{figure}[ht]
\centering
\includegraphics[width=65mm]{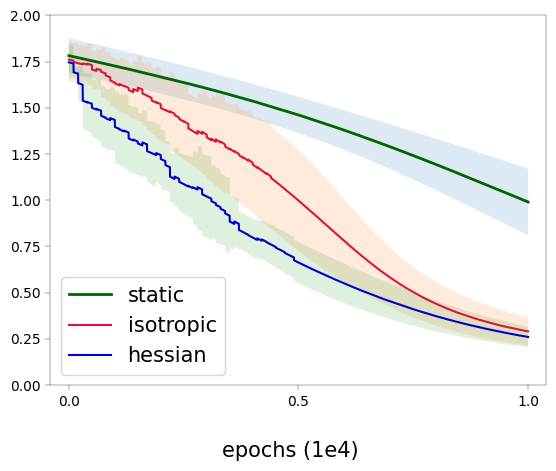}
\caption{Trajectories of the training loss for no-noise (``static''), isotropic noise (``isotropic'') and Hessian guided noise (``hessian'') settings.}
\label{fig:training_loss_icfl}
\end{figure}

We conducted numerical experiments using the loss functional of in-context learning of Transformers from \citet{kim2024transformers} as the objective functional. See below for details.

We compared the dynamics of the loss function under three variants of WGF; WGF without noise (static), WGF with isotropic noise (isotropic), and WGF with noise guided by the Hessian (hessian).

As Figure \ref{fig:training_loss_icfl} shows, the loss decreases gradually in the ``static'' case, whereas the ``isotropic'' and ``hessian'' cases exhibit a significant reduction in loss, leading to saturation. Furthermore, the Hessian-based noise demonstrates a more efficient decrease in loss. 

\paragraph{Experimental details.}

%In this section, we describe the details of our experiments. 

We provide a minimal explanation of the loss function used in our numerical experiments for in-context learning in Transformers. 
For an in-depth exposition on the problem setup, derivation of the loss functional, and an analysis of the loss landscape, we refer to the work by \citet{kim2024transformers}. 

We consider a mean field two-layer neural network with a sigmoid activation function,  which takes $l$-dimensional data inputs and $k$-dimensional data outputs : 
\begin{align*}
    h_{\mu}(z) = \int h_x(z) \mu(\diff x) = \int a \sigma(w^\top z) \mu(\diff x) \quad (x = (a,w) \in \R^k \times \R^l),
\end{align*}
where $z \in \R^l$ is a given data and follows a certain distribution. 
We also define the following matrices : 
\begin{align*}
    \Sigma_{\mu, \nu} = \mathrm{E}_z\qty[h_\mu(z) h_{\nu}(z)^\top] \quad (\mu,\nu \in \P_2(\R^{k+l})). 
\end{align*} 
We consider performing in-context learning using $h_\mu$ as feedforward layer, followed by a reparametrized linear self-attention mechanism which can be described by a single attention matrix $W \in \R^{k\times k}$. 
The optimal value of $W$ is determined to satisfy $\Sigma_{\mu^o,\mu} W = \Sigma_{\mu^o, \mu} \Sigma_{\mu,\mu}^{-1}$ with given $\mu$. 
Thus, the objective with optimal $W$ is derived as follows: 
\begin{align}
    \label{obj_icfl}
    F(\mu) = \frac{1}{2} \mathrm{E} \qty[\norm{h_{\mu^o}(z) - \Sigma_{\mu^o,\mu} \Sigma_{\mu,\mu}^{-1} h_{\mu}(z)}^2], 
\end{align}
where $\mu^o \in \P_2(\R^{k + l})$ is the true feature and $\zeta_{\mu^o,\mu}(z) = h_{\mu^o}(z) - \Sigma_{\mu^o,\mu} \Sigma_{\mu,\mu}^{-1} h_{\mu}(z)$. 
The Wasserstein gradient of the objective (\ref{obj_icfl}) is computed as 
\begin{align*}
    \nabla_{\mu} F(\mu,a,w) = \mqty(- \Sigma_{\mu,\mu}^{-1} \Sigma_{\mu,\mu^o} \mathrm{E}_z\qty[\sigma(w^\top z) \zeta_{\mu^o,\mu}(z)] \\
    a^\top \Sigma_{\mu,\mu}^{-1} \Sigma_{\mu,\mu^o} \mathrm{E}_z \qty[\sigma'(w^\top z) \zeta_{\mu^o,\mu}(z)z^\top]).
\end{align*}
Furthermore, Hessian $\nabla_{\mu}^2 F$ at a first-order optimal point $\mu$ is computed as 
\begin{align*}
    \nabla_{\mu}^2 F(\mu,a,w,b,v) = \mqty(H_{11} & H_{12} \\ H_{21} & H_{22})
\end{align*}
where 
\begin{align*}
    &H_{11}(\mu,a,w,b,v)\\
    &= \nabla_a \nabla_{b}^\top \vvar{F}{\mu} (\mu,a,w,b,v) \\
    &= \qty(\mathrm{E}_z[\sigma(w^\top z) \sigma(v^\top z)] - \mathrm{E}_z[\sigma(w^\top z) h_{\mu}(z)]^\top \Sigma_{\mu,\mu}^{-1} \mathrm{E}_z[\sigma(v^\top z) h_{\mu}(z)]) \Sigma_{\mu,\mu}^{-1} \Sigma_{\mu,\mu^o} \Sigma_{\mu^o,\mu} \Sigma_{\mu,\mu}^{-1} \\
    &\quad + \mathrm{E}_z [\sigma(w^\top z) \zeta_{\mu^o,\mu}(z)]^\top \mathrm{E}_z [\sigma(v^\top z) \zeta_{\mu^o,\mu}(z)] \Sigma_{\mu,\mu}^{-1}
\end{align*}
and
\begin{align*}
    &H_{12}(\mu,a,w,b,v) = H_{21}(\mu,b,v,a,w)^\top \\
    &= \nabla_{a} \nabla_v^\top \vvar{F}{\mu}(\mu,a,w,b,v) \\
    &= \Sigma_{\mu,\mu}^{-1} \Sigma_{\mu,\mu^o} \Sigma_{\mu^o,\mu} \Sigma_{\mu,\mu}^{-1} b\\
    &\quad\cdot \qty(\mathrm{E}_z\qty[\sigma(w^\top z) \sigma'(v^\top z)z^\top] - \mathrm{E}_z\qty[\sigma(w^\top z) h_{\mu}(z)]^\top \Sigma_{\mu,\mu}^{-1} \mathrm{E}_z \qty[h_\mu (z) \sigma'(v^\top z)z^\top]) \\
    & \quad - \Sigma_{\mu,\mu}^{-1} b \mathrm{E}_z\qty[\sigma(w^\top z) \zeta_{\mu^o,\mu}(z)^\top]\mathrm{E}_z\qty[\zeta_{\mu^o,\mu}(z)\sigma'(v^\top z) z^\top] \\
    & \quad + \qty(\mathrm{E}_z \qty[\sigma(w^\top z) h_\mu(z)^\top] \Sigma_{\mu,\mu}^{-1} b ) \Sigma_{\mu,\mu}^{-1} \Sigma_{\mu,\mu^o} \mathrm{E}_z\qty[\zeta_{\mu^o,\mu}(z) \sigma'(v^\top z) z^\top],
\end{align*}
as well as
\begin{align*}
    &H_{22}(\mu,a,w,b,v)\\
    &= \qty(a^\top \Sigma_{\mu,\mu}^{-1} \Sigma_{\mu,\mu^o} \Sigma_{\mu^o,\mu} \Sigma_{\mu,\mu}^{-1} b)\\
    &\quad \cdot \qty(\mathrm{E}_z\qty[z \sigma'(w^\top z) \sigma'(v^\top z) z^\top]- \mathrm{E}_z \qty[z \sigma'(w^\top z) h_{\mu}(z)^\top] \Sigma_{\mu,\mu}^{-1} \mathrm{E}_z\qty[h_{\mu}(z) \sigma'(v^\top z) z^\top]) \\
    & \quad - \qty(a^\top \Sigma_{\mu,\mu}^{-1} b) \mathrm{E}_z\qty[z \sigma'(w^\top z) \zeta_{\mu^o,\mu}(z)^\top] \mathrm{E}_z\qty[\zeta_{\mu^o,\mu}(z) \sigma'(w^\top z) z^\top]  \\
    & \quad + \mathrm{E}_z \qty[z \sigma'(w^\top z) h_{\mu}(z)^\top] \Sigma_{\mu,\mu}^{-1} b a^\top \Sigma_{\mu,\mu}^{-1} \Sigma_{\mu,\mu^o} \mathrm{E}_z \qty[\zeta_{\mu^o,\mu}(z) \sigma'(v^\top z) z^\top]\\
    & \quad + \mathrm{E}_z \qty[z \sigma'(w^\top z) \zeta_{\mu^o,\mu}(z)^\top] \Sigma_{\mu^o,\mu} \Sigma_{\mu,\mu}^{-1} b a^\top \Sigma_{\mu,\mu}^{-1} \mathrm{E}_z \qty[h_{\mu}(z) \sigma'(v^\top z) z^\top]. 
\end{align*}

\begin{algorithm}[tb]
    \caption{PWGD ( time/ space discrete )}
    \begin{algorithmic}
        \STATE initialize $x_1^{(0)},...x_N^{(0)}$, $\mu^{(0)} \gets \frac{1}{N} \sum_{j=1}^N \delta_{x_j^{(0)}}$
        \FOR {$k = 0,1,...$}
            \IF {$\norm{\nabla_\mu F(\mu^{(k)})}_{L^2(\mu^{(k)})} \leq \varepsilon$ and $k - k_{\mathrm{p}} > k_{\mathrm{thres}} $}
                \STATE $\xi \sim \mathrm{GP}(0,k_{\mu^{(k)}})$
                \STATE $(\xi_1,...\xi_N) \gets (\xi(x_1^{(k)}),...,\xi(x_N^{(k)}))$
                \STATE $x_j^{(k)} \gets x_j^{(k)} + \eta_p \xi_j \quad (j=1,...,N)$
                \STATE $\mu^{(k)} \gets \frac{1}{N} \sum_{j=1}^N \delta_{x_j^{(k)}}$
                \STATE $k_p \gets k$
            \ENDIF
            \IF {$k = k_{\mathrm{p}} + k_{\mathrm{thres}}$ and $F(\mu^{(k_{\mathrm{p}})}) - F(\mu^{(k)}) \leq F_{\mathrm{thres}}$}
                \STATE \bf{return} $\mu^{(k_{\mathrm{p}})}$
            \ENDIF
            \STATE $x_j^{(k+1)} \gets x_j^{(k)} - \eta \nabla_\mu F(\mu) (\mu^{(k)}, x_j^{(k)}),~ (j=1,...,N)$,
            $ \mu^{(k+1)} \gets \frac{1}{N} \sum_{j=1}^N \delta_{x_j^{(k+1)}}$
        \ENDFOR
    \end{algorithmic}
\end{algorithm}

The experimental setup is as follows. 
We compared the dynamics of the loss function under three variants of WGF; WGF without noise (static), WGF with isotropic noise (isotropic), and PWGF (hessian).
To ensure a fair comparison of the three algorithms, no stopping criteria were incorporated into the algorithms. 

The input and output dimensions were set to $l=20, ~k = 5$. 
We approximated the measure using $400$ neurons and generated $800$ i.i.d. input data points $z$ from the standard normal distribution $\mathcal{N}(0,1)$ for each coordinate. 
The optimization probability measure was randomly initialized and we conducted fiveive experiments under the same conditions, plotting the mean and standard deviation. We used parameters as $\eta_p = 0.015$, $k_{\mathrm{thres}} = 100$. In addition, SGD was used in the optimization process with the learning rate $\eta = 10^{-7}$. 

\subsection{Matrix-Decomposition Functional}
Next, we conducted experiments using the matrix decomposition setting presented in Example \ref{eg:matrix_decomposition}. Details of the objective function, including analytical expressions for the gradient and Hessian, as well as a proposition suggesting strict benignity of the matrix decomposition objective, are provided in Appendix \ref{appendix_g}. 

\begin{figure}[htbp]
  \centering
  \begin{minipage}[b]{0.45\linewidth}
    \centering
    \includegraphics[width=\linewidth]{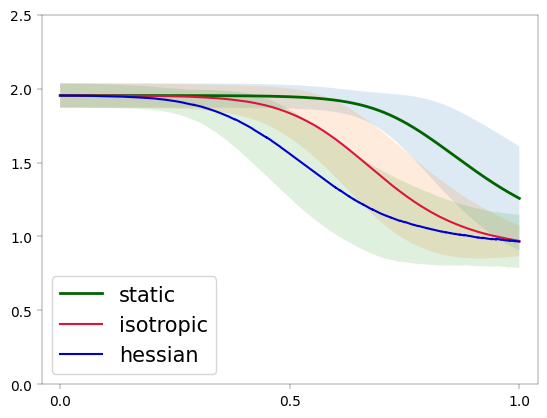}
  \end{minipage}
  \hspace{0.05\linewidth}
  \begin{minipage}[b]{0.45\linewidth}
    \centering
    \includegraphics[width=\linewidth]{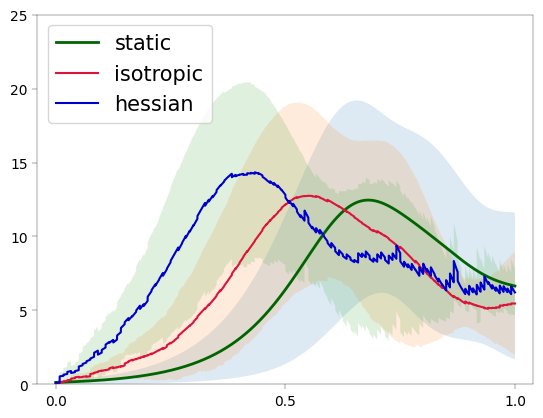}
  \end{minipage}
  \caption{Trajectories of the training loss and the norm of the gradient for no-noise (``static''), isotropic noise (``isotropic'') and Hessian guided noise (``hessian'') settings.}
  \label{fig:matrix_decomp}
\end{figure}

Due to the stochastic nature of the algorithms, we report the mean and standard deviation over 10 runs, with the corresponding error bars. As shown in Figure \ref{fig:matrix_decomp}, the Hessian and isotropic noise injection methods achieve faster objective reduction and exhibit earlier peaks in the gradient norm, demonstrating a more efficient escape from the initial critical point. In contrast, the perturbation-free method tends to stagnate for longer periods. The Hessian method shows the best performance, although the performance of isotropic noise is comparable. The effectiveness of isotropic noise can be attributed to the fact that the infinite-dimensional nature of the problem has not yet manifested due to the number of particles used in the approximation.

In practice, at points where the gradient norm is small, but the point is not a saddle, adding noise may hinder the gradient descent. This issue is particularly pronounced for the method with Hessian noise, where the magnitude of the noise depends on the local curvature. Consequently, whether to inject noise should be determined adaptively in combination with the criteria discussed above.

\paragraph{Experimental details.} The input and output dimensions are $l=15,k=5$. We approximate the measure using $400$ neurons and generate $800$ i.i.d. input data points $z$ from the standard normal distribution $\mathcal{N}(0,1)$ for each coordinate. Similarly to the ICFL case, SGD was used in the optimization process and the learning rate $\eta = 10^{-6}$. We set $k_{\mathrm{thres}} = 100, ~ \eta_p = 3\times 10^{-3}, ~ F_{\mathrm{thres}} = 10^{-2}$. 
\newpage
\newpage
\section{Landscape Analysis of Matrix Decomposition}{\label{appendix_g}}
We analyze matrix factorization (Example \ref{eg:matrix_decomposition}) as an example of non-convex stochastic optimization:
\begin{align*}
    F(\mu) &= \frac{1}{2}\mathrm{E}_z[\lVert h_{\mu}(z) h_{\mu}(z)^\top - h_{\mu^o}(z) h_{\mu^o} (z)^\top \rVert^2] \\
    &= \frac{1}{2} E_z\qty[\lVert M_\mu(z) - M_{\mu^o}(z) \rVert^2] \\
    &=\frac{1}{2} E_z\qty[\lVert\Delta M_\mu (z) \rVert^2], 
\end{align*}
where we set 
\begin{align*}
    h_{\mu} (z) &= \int h_{a,w}(z) \mu(\diff a \diff w) \\
    h_{a,w} (z) &= h_{a,w} (z) \\
    &= a \sigma(w^\top z) \\
    M_{\mu}(z) &= h_{\mu}(z) h_{\mu}(z)^\top \\
    \Delta M_\mu (z) &= M_\mu (z) - M_{\mu^o}(z)
\end{align*}
The Wasserstein gradient is computed as:
\begin{align}{\label{mat_decomp_wg}}
    \nabla_\mu F(\mu, a_1, a_2, w) &= 
    2 \mathrm{E}_z\left[ \nabla_{a,w} h_{a,w} (z) \Delta M_{\mu}(z) h_{\mu}(z) \right] \notag \\
    &=\mqty(2 \mathrm{E}_z\left [ \sigma(w^\top z) \Delta M_{\mu}(z)h_{\mu}(z) \right ] \\
    2 \mathrm{E}_z \left[ z \sigma'(w^\top z) h_{\mu}(z)^\top \Delta M_{\mu}(z) \right] a_1)
\end{align}
The Hessian is computed as:
\begin{align}
    &\nabla_\mu^2 F(\mu,x,y) = \nabla_{\mu}^2 F(\mu,a,w,b,v) \\
    &= \mathrm{E}_z\left[ \nabla_x h_x(z) \left ( 2M_{\mu} (z) + \norm{h_{\mu}(z)}^2 I_{k} - M_{\mu^o}(z) \right ) \qty(\nabla_y h_y(z))^\top \right] \label{hess_matrix_decomp}
\end{align}
From \eqref{mat_decomp_wg} and \eqref{hess_matrix_decomp}, 
\begin{itemize}
    \item For any $\mu \in \P_2(\R^l)$, $\mu = \delta_0 \otimes \tilde{\mu} ~ $ is a strict saddle point. 
    \item $\mu = (\pm\mathrm{Id}_{\R^k}) \times (\mathrm{Id}_{\R^l}) \# \mu^o$ is a global optima. 
\end{itemize}

Furthermore, by a similar argument to \citet{ge2017no}, we can deduce the following proposition. This proposition asserts that for an $\epsilon$-stationary point $\mu$ which is not a global minimizer, the objective function can be strictly decreased. This suggests that $F$ possesses strict benignity.
\begin{proposition}
    \label{mat_decomp}
    Let $\mu \in \mathcal{P}(\R^d)$ be $\epsilon$-stationary and not a global optima, i.e; satisfy $\norm{\nabla_{\mu} F(\mu)} \leq \epsilon$ and $F(\mu) \neq 0$. If $h_{\mu}(z) \geq 0 ~ a.s.$
    \footnote{
        This condition can be regarded as an extension of non-negative matrix factorization. 
    },
     Assumption \ref{assumption_regularity_wg} and $W_2(\mu,\mu^o) \leq C$ hold,
    then there exists a curve $\mu_t$ s.t. $\mu_0 = \mu$ and at $t=0$,
    \begin{align*}
        \frac{\diff^2}{\diff t^2} F(\mu_t)&\leq - \mathrm{E}_z[\norm{\qty(h_{\mu_t} h_{\mu_t}^\top - h_{\mu^o} h_{\mu^o}^\top)(z)}_{F}^2] + O(\epsilon). 
    \end{align*}
\end{proposition}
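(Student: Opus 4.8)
The plan is to mimic the Euclidean argument of \citet{ge2017no} for matrix factorization, lifted to the Wasserstein setting via the second-order expansion \eqref{second_order_taylor_expansion} and the explicit formulas \eqref{mat_decomp_wg}, \eqref{hess_matrix_decomp}. The key structural fact we will exploit is that, for the functional $F(\mu)=\tfrac12\mathrm{E}_z[\|M_\mu(z)-M_{\mu^o}(z)\|_{\mathrm F}^2]$, the Hessian acting on a vector field $v=(v_a,v_w)$ can be organized as a ``curvature'' term coming from the quadratic structure $M_\mu(z)=h_\mu(z)h_\mu(z)^\top$ plus a residual term proportional to $\Delta M_\mu(z)$. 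Concretely, writing $\delta h(z):=\int \nabla_x h_x(z)\,v(x)\,\mu(\diff x)$ for the first-order change of $h_\mu$ along the pushforward by $v$, one has
\begin{align*}
\tfrac12\tfrac{\diff^2}{\diff t^2}\Big|_{t=0}\mathrm{E}_z\|M_{\mu_t}(z)\|_{\mathrm F}^2 \ \text{(realizable part)} = \mathrm{E}_z\big\|\,\delta h(z) h_\mu(z)^\top + h_\mu(z)\,\delta h(z)^\top\,\big\|_{\mathrm F}^2 + \text{(terms with }\Delta M_\mu),
\end{align*}
so that at a point where $\|\nabla_\mu F(\mu)\|_{L^2(\mu)}\le\epsilon$ (and hence, by Assumption \ref{assumption_regularity_wg}, $\|\nabla\nabla_\mu F(\mu)\|_{L^2(\mu)}\lesssim\epsilon$), the second-order term along a well-chosen direction is controlled by $-\mathrm{E}_z\|\Delta M_\mu(z)\|_{\mathrm F}^2$ up to an $O(\epsilon)$ error.

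First I would fix the escape direction. Following \citet{ge2017no}, the natural choice is a ``rotation toward the target'': since $\mu^o=(\pm\mathrm{Id}_{\R^k})\times\mathrm{Id}_{\R^l}\#\mu^o$ is a global optimum and $W_2(\mu,\mu^o)\le C$, take an optimal coupling $\gamma\in\Gamma_o(\mu,\mu^o)$ and let $\mu_t$ be the geodesic (or a suitable variant thereof) induced by $\gamma$, i.e. $\mu_t=((1-t)p_1+tp_2)\#\gamma$. By Proposition \ref{prop_second_order_expansion}, $\tfrac{\diff^2}{\diff t^2}F(\mu_t)|_{t=0}=\inpro{v}{(H_\mu+H'_\mu)v}_{L^2(\mu)}$ where $v$ is the displacement field of the coupling. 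The core computation is then to expand $\inpro{v}{H_\mu v}$ using \eqref{hess_matrix_decomp}: the term $\mathrm{E}_z[\delta h(z)(2M_\mu(z)+\|h_\mu(z)\|^2 I - M_{\mu^o}(z))\delta h(z)^\top]$ splits into the PSD part $2M_\mu+\|h_\mu\|^2 I$ (which we must \emph{absorb} or dominate — this is where the nonnegativity hypothesis $h_\mu(z)\ge 0$ a.s. enters, guaranteeing the right sign/cancellation as in nonnegative matrix factorization) and the indefinite part $-M_{\mu^o}(z)$, and after rearranging using the explicit form of $\delta h$ along the target-pointing direction one recognizes $-\mathrm{E}_z\|\Delta M_\mu(z)\|_{\mathrm F}^2$ as the leading negative contribution, exactly as the finite-dimensional analysis produces the ``$-\|UU^\top-M^*\|_{\mathrm F}^2$'' bound.

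The remaining steps are bookkeeping of the error terms. I would bound $\inpro{v}{H'_\mu v}_{L^2(\mu)}\le \|H'_\mu\|_{L^2(\mu)}\|v\|_{L^2(\mu)}^2\le R_2\epsilon\,W_2(\mu,\mu^o)^2\le R_2 C^2\epsilon=O(\epsilon)$ using Assumption \ref{assumption_regularity_wg} and $W_2(\mu,\mu^o)\le C$; and I would bound the cross-terms in the Hessian expansion that are linear in $\Delta M_\mu$ by Cauchy--Schwarz against $\|\nabla_\mu F(\mu)\|_{L^2(\mu)}\le\epsilon$ (these are precisely the terms that in \citet{ge2017no} are controlled by the gradient being small), again yielding $O(\epsilon)$ with constants depending on $C$ and the sigmoid bounds $\|\sigma\|_\infty,\|\sigma'\|_\infty\le 1$. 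Collecting, $\tfrac{\diff^2}{\diff t^2}F(\mu_t)|_{t=0}\le -\mathrm{E}_z\|\Delta M_\mu(z)\|_{\mathrm F}^2+O(\epsilon)$, which is the claim after identifying $\Delta M_\mu=h_{\mu}h_{\mu}^\top-h_{\mu^o}h_{\mu^o}^\top$.

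\textbf{Main obstacle.} The delicate point is the sign analysis of the PSD part $2M_\mu(z)+\|h_\mu(z)\|^2 I_k$ in \eqref{hess_matrix_decomp}: unlike the symmetric matrix factorization model where the target-pointing direction makes the realizable curvature telescope cleanly, here $h_\mu(z)$ is an \emph{integral} over $\mu$ of sigmoid features, so $\delta h(z)$ picks up contributions from both the $a$- and $w$-coordinates of $v$, and one must verify that the chosen geodesic direction actually exposes the negative eigenvalue rather than being dominated by the positive realizable curvature. This is exactly the role of the nonnegativity assumption $h_\mu(z)\ge 0$ (mirroring nonnegative matrix factorization, where the analogous obstruction disappears), but checking that it suffices in this mean-field parametrization — i.e. that the target-direction perturbation of the nonnegative features still yields the desired cancellation up to $O(\epsilon)$ — will require the most care. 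A secondary technical nuisance is justifying that the geodesic $\mu_t$ (or an approximating pushforward curve) is admissible for the second-order expansion of Proposition \ref{prop_second_order_expansion} when $\mu$ is not absolutely continuous; if needed, one replaces $v$ by a smooth $C_c^\infty$ gradient field $\epsilon$-close in $L^2(\mu)$ and invokes Corollary \ref{cor_perturbation_otm}, at the cost of another $O(\epsilon)$ term.
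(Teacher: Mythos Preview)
Your plan has a genuine gap in the choice of curve. The paper does \emph{not} take the Wasserstein geodesic; it takes the mixture path $\mu_t=(1-t)\mu+t\mu^o$. This matters because the Ge--Jin--Zheng telescoping argument hinges on the identity $\tfrac{\diff}{\diff t}M_{\mu_t}\big|_{t=0}=-\Delta M_\mu-\dot h\,\dot h^\top$ with $\dot h:=\tfrac{\diff}{\diff t}h_{\mu_t}\big|_{t=0}=h_{\mu^o}-h_\mu$, which holds \emph{exactly} along the mixture path since $\mu\mapsto h_\mu$ is linear (so $\tfrac{\diff^2}{\diff t^2}h_{\mu_t}=0$). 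With this identity the second derivative collapses to $\mathrm E_z\|\dot h\,\dot h^\top\|_{\mathrm F}^2-3\,\mathrm E_z\|\Delta M_\mu\|_{\mathrm F}^2-4\tfrac{\diff}{\diff t}F(\mu_t)\big|_{t=0}$, the nonnegativity hypothesis gives $\|\dot h\,\dot h^\top\|_{\mathrm F}^2\le 2\|\Delta M_\mu\|_{\mathrm F}^2$ pointwise in $z$, and the first-derivative term is shown to be $O(\epsilon)$ by Taylor-expanding $h_{\mu^o}-h_\mu$ along an optimal coupling and invoking $\|\nabla_\mu F\|\le\epsilon$, Assumption~\ref{assumption_regularity_wg}, and $W_2(\mu,\mu^o)\le C$.

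Along the Wasserstein geodesic your $\delta h(z)=\int\nabla_x h_x(z)\,v(x)\,\mu(\diff x)$ is only the \emph{linearization} of $h_{\mu^o}-h_\mu$ in the parameters $x=(a,w)$, and since $x\mapsto h_x(z)=a\sigma(w^\top z)$ is nonlinear the discrepancy $\delta h-(h_{\mu^o}-h_\mu)$ is $O(W_2(\mu,\mu^o)^2)=O(C^2)$, not $O(\epsilon)$. Consequently the telescoping identity fails by an $O(1)$ amount, and the step where you ``recognize $-\mathrm E_z\|\Delta M_\mu\|_{\mathrm F}^2$ as the leading negative contribution'' does not go through: the positive block $2M_\mu+\|h_\mu\|^2 I_k$ in \eqref{hess_matrix_decomp} acts on $\delta h$, not on $h_{\mu^o}-h_\mu$, and nonnegativity of $h_\mu$ alone does not control the mismatch. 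This is exactly the obstacle you flagged, but it is not a sign-analysis technicality---it is the reason the paper abandons the Wasserstein curve altogether. The finite-dimensional analogy works because there the ``target direction'' $U^*-U$ is already in the parameter space; here the correct analogue lives in the linear (total-variation) structure of $\mu$, not in the pushforward/tangent structure. Switching to $\mu_t=(1-t)\mu+t\mu^o$ fixes the argument immediately.
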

\begin{proof}
    We define a curve $\mu_t$ by $\mu_t = (1-t) \mu + t \mu^o$. Then we obtain at $t=0$ 
    \begin{align}
        \frac{\diff}{\diff t} F(\mu_t) &= \mathrm{E}_z \qty[\mathrm{tr}\qty( \qty(\frac{\diff}{\diff t}(h_{\mu_t} h_{\mu_t}^\top - h_{\mu^o} h_{\mu^o}^\top) \qty(h_{\mu_t} h_{\mu_t}^\top - h_{\mu^o} h_{\mu^o}^\top))(z))] \notag \\
        &= \mathrm{E}_z\qty[\mathrm{tr}\qty( \qty(\frac{\diff}{\diff t}h_{\mu_t} h_{\mu_t}^\top + h_{\mu_t} \frac{\diff}{\diff t}h_{\mu_t}^\top ) \qty(h_{\mu_t} h_{\mu_t}^\top - h_{\mu^o} h_{\mu^o}^\top)(z))], 
        \label{mat_decomp_0}
    \end{align}

    \begin{align}
        \frac{\diff^2}{\diff t^2} F(\mu_t) 
        &= \mathrm{E}_z \qty[\mathrm{tr}\qty( \qty(\frac{\diff^2}{\diff t^2}h_{\mu_t} h_{\mu_t}^\top + h_{\mu_t} \frac{\diff^2}{\diff t^2}h_{\mu_t}^\top) \qty(h_{\mu_t} h_{\mu_t}^\top - h_{\mu^o} h_{\mu^o}^\top)(z))] \notag \\
        &~~ + \mathrm{E}_z \qty[\mathrm{tr}\qty(\qty(2 \frac{\diff}{\diff t}h_{\mu_t} \frac{\diff}{\diff t}h_{\mu_t}^\top )^\top\qty(h_{\mu_t} h_{\mu_t}^\top - h_{\mu^o} h_{\mu^o}^\top)(z))] \notag \\
        &~~+  \mathrm{E}_z \qty[\mathrm{tr}\qty(\qty(\frac{\diff}{\diff t}h_{\mu_t} h_{\mu_t}^\top + h_{\mu_t} \frac{\diff}{\diff t}h_{\mu_t}^\top)^\top \qty(\frac{\diff}{\diff t}h_{\mu_t} h_{\mu_t}^\top + h_{\mu_t} \frac{\diff}{\diff t}h_{\mu_t}^\top)(z))] \notag \\
        &= \mathrm{E}_z \qty[(4-3)\mathrm{tr}\qty(\qty(h_{\mu_t} h_{\mu_t}^\top - h_{\mu^o} h_{\mu^o}^\top)^\top\qty(h_{\mu_t} h_{\mu_t}^\top - h_{\mu^o} h_{\mu^o}^\top)(z))] \notag \\
        &~~+4\mathrm{E}_z \qty[\mathrm{tr}\qty(\qty( \frac{\diff}{\diff t}h_{\mu_t} \frac{\diff}{\diff t}h_{\mu_t}^\top )^\top\qty(h_{\mu_t} h_{\mu_t}^\top - h_{\mu^o} h_{\mu^o}^\top)(z))] \notag \\
        &~~+\mathrm{E}_z \qty[\mathrm{tr}\qty(\frac{\diff}{\diff t}h_{\mu_t} \frac{\diff}{\diff t}h_{\mu_t}^\top \frac{\diff}{\diff t}h_{\mu_t} \frac{\diff}{\diff t}h_{\mu_t}^\top(z) ) ] \notag \\
        &= \mathrm{E}_z \qty[\mathrm{tr} \qty(\frac{\diff}{\diff t}h_{\mu_t} \frac{\diff}{\diff t}h_{\mu_t}^\top \frac{\diff}{\diff t}h_{\mu_t} \frac{\diff}{\diff t}h_{\mu_t}^\top(z))] \notag \\
        &~~- 3 \mathrm{E}_z \qty[\mathrm{tr}\qty(\qty(h_{\mu_t} h_{\mu_t}^\top - h_{\mu^o} h_{\mu^o}^\top)^\top\qty(h_{\mu_t} h_{\mu_t}^\top - h_{\mu^o} h_{\mu^o}^\top)(z))] \notag \\
        &~~ - 4 \frac{\diff}{\diff t}F(\mu_t), 
        \label{mat_decomp_1}
    \end{align}
    where we used, in the sixth line, the equations; 
    \begin{align*}
        \left. \frac{\diff^2}{\diff t^2} \right |_{t=0} h_{\mu_t}(z) &= 0, \\
        \left. \frac{\diff}{\diff t} \right |_{t=0} h_{\mu_t} h_{\mu_t}^\top(z) + h_{\mu_t} \left. \frac{\diff}{\diff t} \right |_{t=0} h_{\mu_t}^\top (z) &= (h_{\mu} - h_{\mu^o})h_{\mu}^\top(z) + h_{\mu} (h_{\mu} - h_{\mu^o})^\top(z) \\
        &= -(h_{\mu} h_{\mu}^\top - h_{\mu^o} h_{\mu^o})(z) - (h_{\mu} - h_{\mu^o})(h_{\mu} - h_{\mu^o})^\top(z) \\
        &= -(h_{\mu} h_{\mu}^\top - h_{\mu^o} h_{\mu^o})(z) - \left. \frac{\diff}{\diff t} \right |_{t=0} h_{\mu_t} \left. \frac{\diff}{\diff t} \right |_{t=0} h_{\mu_t}^\top(z),
    \end{align*}
    and in the ninth line;
    \begin{align*}
        \left. \frac{\diff}{\diff t}  F(\mu_t) \right |_{t=0}
        &= - \mathrm{E}_z\qty[\mathrm{tr}\qty( \qty(h_{\mu} h_{\mu}^\top - h_{\mu^o} h_{\mu^o}^\top) \qty(h_{\mu} h_{\mu}^\top - h_{\mu^o} h_{\mu^o}^\top)(z))] \\
        & ~~ \left. - \mathrm{E}_z\qty[\mathrm{tr}\qty( \frac{\diff}{\diff t}h_{\mu_t} \frac{\diff}{\diff t}h_{\mu_t}^\top \qty(h_{\mu_t} h_{\mu_t}^\top - h_{\mu^o} h_{\mu^o}^\top)(z))] \right |_{t=0} , 
    \end{align*}
    which are derived from the definition of $\mu_t$. \\
    Noting that 
    \begin{align}
        \label{mat_decomp_2}
        \left. \norm{\frac{\diff}{\diff t}h_{\mu_t} \frac{\diff}{\diff t}h_{\mu_t}^\top}_{F}^2(z) \right |_{t=0} \leq 2 \norm{h_{\mu_t} h_{\mu_t}^\top - h_{\mu^o} h_{\mu^o}^\top}_{F}^2(z), 
    \end{align}
     which is obtained by straightforward calculation and the assumption $h_{\mu} \geq 0$ a.s.;
     \begin{align*}
         &\left. 2 \norm{h_{\mu_t} h_{\mu_t}^\top - h_{\mu^o} h_{\mu^o}^\top}_{F}^2(z) - \norm{\frac{\diff}{\diff t}h_{\mu_t} \frac{\diff}{\diff t}h_{\mu_t}^\top}_{F}^2(z) \right |_{t=0} \\
         &=2 \norm{h_{\mu} h_{\mu}^\top - h_{\mu^o} h_{\mu^o}^\top}_{F}^2(z) - \norm{(h_{\mu^o} - h_{\mu}) (h_{\mu^o} - h_{\mu})^\top}_{F}^2(z)\\
         &= \norm{h_{\mu}}^4(z) + \norm{h_{\mu^o}}^4(z) - 2|h_{\mu}^\top h_{\mu^o}|^2(z) - \norm{h_{\mu} - h_{\mu^o}}^4(z) \\
         &= \qty(\norm{h_{\mu}}^2(z) - \norm{h_{\mu^o}}^2(z))^2 + 4 h_{\mu}^\top h_{\mu^o} \norm{h_{\mu} - h_{\mu^o}}^2(z) \\
         & \geq 0
     \end{align*}
     Then we obtain, from (\ref{mat_decomp_1}) and (\ref{mat_decomp_2}), 
     \begin{align}
         \label{conclusioin}
         \left. \frac{\diff^2}{\diff t^2} \right |_{t=0} F(\mu_t) 
         &\leq - \mathrm{E} \qty[\norm{h_{\mu} h_{\mu}^\top - h_{\mu^o} h_{\mu^o}^\top}_{F}^2(z)] - 4 \left. \frac{\diff}{\diff t} \right |_{t=0} F(\mu_t).
     \end{align}
     Finally we will show that 
     \begin{align}
         \left|{\left. \frac{\diff}{\diff t} \right |_{t=0} F(\mu_t)} \right|
         &= 2 \qty|\mathrm{E}_z \qty[(h_{\mu^o} - h_{\mu})^\top \delta M_{\mu}(z) h_{\mu}(z)]| \notag \\
         \label{innequation}
         &\leq \tilde{C} \norm{\nabla_{\mu} F(\mu)}_{L^2(\mu)} = O(\epsilon)
     \end{align}
     for some $\tilde{C}>0$. For any $\gamma \in \Gamma_o(\mu,\mu^o)$, 
     \begin{align*}
         &\left|{\left. \frac{\diff}{\diff t} \right |_{t=0} F(\mu_t)} \right|\\
         &= 2 \qty|\mathrm{E}_z \qty[(h_{\mu^o} - h_{\mu})^\top \Delta M_{\mu}(z)) h_{\mu}(z)]| \\
         &= 2 \int \mathrm{E}_z[(h_x(z) - h_{y}(z))^\top \Delta M_{\mu}(z) h_{\mu}(z)] \gamma(\diff x \diff y) \\
         &= 2 \int (x - y)^\top \mathrm{E}_z[\nabla h_{x+\theta(y - x)}(z)^\top \Delta M_{\mu}(z) h_{\mu}(z)] \gamma(\diff x \diff y) \\
         &= 2\int \qty((x-y)^\top \nabla_{\mu}F(\mu,x) + (x-y)^\top \nabla \nabla_{\mu} F(\mu,x+\tilde{\theta}(y-x)) (x-y))\gamma(\diff x \diff y) \\
         &\leq 2W_2(\mu,\mu^o) \norm{\nabla_\mu F(\mu)}_{L^2(\mu)} + W_2(\mu,\mu^o)^2 \sup_{x}\norm{\nabla \nabla_{\mu}F(\mu,x)} \\
         &\leq 2(C + R_2 C^2) \norm{\nabla_{\mu}F(\mu)}_{L^2(\mu)}
     \end{align*}
     where Taylor's expansion is used and $\theta,\tilde{\theta} \in [0,1]$ in the third and fourth lines, the Cauchy–Schwarz inequality and the definition of the 2-Wasserstein distance in the fifth line, and the assumptions $W_2(\mu,\mu^o)\leq C$ and Assumption \ref{assumption_regularity_wg} in the sixth line. Setting $\tilde{C} = 2(C + R_2 C^2)$, we obtain \eqref{innequation} and hence, from \eqref{conclusioin}, 
     \begin{align*}
         \left. \frac{\diff^2}{\diff t^2} \right |_{t=0} F(\mu_t) 
         &\leq - \mathrm{E} \qty[\norm{h_{\mu} h_{\mu}^\top - h_{\mu^o} h_{\mu^o}^\top}_{F}^2(z)] + O(\epsilon).
     \end{align*}

\end{proof}

%%テンソル分解的なものを考える

%\newpage\input{checklist}

\end{document}